\colorlet{lightgray}{black!15}
\tikzset{->-/.style={decoration={
  markings,
  mark=at position .5 with {\arrow{>}}},postaction={decorate}}}
\tikzset{midarrow/.style={decoration={
    markings,
    mark=at position {#1} with {\arrow{>}}},postaction={decorate}}}
\newtheorem{theorem}{Theorem}[subsection]
\newtheorem{prop}[theorem]{Proposition}
\newtheorem{lemma}[theorem]{Lemma}
\newtheorem{cor}[theorem]{Corollary}
\newtheorem{fact}[theorem]{Fact}
\theoremstyle{definition}
\newtheorem{definition}[theorem]{Definition}
\newtheorem{observation}[theorem]{Observation}
\newtheorem{terminology}[theorem]{Terminology}
\newtheorem{remark}[theorem]{Remark}
\newtheorem{example}[theorem]{Example}
\newtheorem{notation}[theorem]{Notation}
\newtheorem{l.notation}[theorem]{Local Notation}
\newtheorem{convention}[theorem]{Convention}
\theoremstyle{remark}
\definecolor{orange}{rgb}{.95,0.5,0}
\definecolor{light-gray}{gray}{0.75}
\definecolor{brown}{cmyk}{0, 0.8, 1, 0.6}
\definecolor{plum}{rgb}{.5,0,1}
\DeclareMathOperator{\pr}{\mathsf{pr}}
\DeclareMathOperator{\ev}{\mathsf{ev}}
\DeclareMathOperator{\TwAr}{\sf TwAr}
\DeclareMathOperator{\sSet}{\sf sSet}
\DeclareMathOperator{\Aut}{\sf Aut}
\DeclareMathOperator{\colim}{{\sf colim}}
\DeclareMathOperator{\limit}{{\sf lim}}
\DeclareMathOperator{\hocolim}{\sf hocolim}
\DeclareMathOperator{\holim}{\sf holim}
\DeclareMathOperator{\Hom}{\sf Hom}
\DeclareMathOperator{\Fun}{{\sf Fun}}
\DeclareMathOperator{\Map}{\sf Map}
\DeclareMathOperator{\cMap}{\sf cMap}
\DeclareMathOperator{\Cat}{{\sf Cat}}
\DeclareMathOperator{\Ar}{{\sf Ar}}
\DeclareMathOperator{\Diff}{{\sf Diff}}
\DeclareMathOperator{\BDiff}{{\sf BDiff}}
\DeclareMathOperator{\op}{\mathsf{op}}
\DeclareMathOperator{\cBun}{{\sf c}\cB\mathsf{un}}
\DeclareMathOperator{\sd}{\mathsf{sd}}
\DeclareMathOperator{\Top}{\mathsf{Top}}
\DeclareMathOperator{\Emb}{\mathsf{Emb}}
\DeclareMathOperator{\spaces}{\mathsf{Spaces}}
\DeclareMathOperator{\Spaces}{\spaces}
\DeclareMathOperator{\sm}{\mathsf{sm}}
\DeclareMathOperator{\fr}{\sf fr}
\DeclareMathOperator{\Bord}{\sf Bord}
\DeclareMathOperator{\dBord}{\delta\sf Bord}
\DeclareMathOperator{\tBord}{\sf Bord}
\DeclareMathOperator{\Th}{\mathsf{Th}}
\DeclareMathOperator{\BO}{\sf BO}
\DeclareMathOperator{\BSO}{\sf BSO}
\DeclareMathOperator{\oo}{\infty}
\newcommand{\lag}{\langle}
\newcommand{\rag}{\rangle}
\newcommand{\w}{\widetilde}
\newcommand{\ov}{\overline}
\newcommand{\ra}{\rightarrow}
\newcommand{\la}{\leftarrow}
\newcommand{\xra}{\xrightarrow}
\newcommand{\xla}{\xleftarrow}
\def\cB{\mathcal B}
\def\cE{\mathcal E}
\def\cI{\mathcal I}
\def\cP{\mathcal P}
\def\cX{\mathcal X}
\def\BB{\mathbb B}\def\DD{\mathbb D}
\def\RR{\mathbb R}\def\SS{\mathbb S}
\def\sB{\mathsf B}\def\sC{\mathsf C}\def\sD{\mathsf D}
\def\sH{\mathsf H}
\def\sN{\mathsf N}
\def\sT{\mathsf T}
\def\bDelta{\mathbf\Delta}
\DeclareMathOperator{\Obj}{\mathsf{obj}}
\DeclareMathOperator{\PShv}{\mathsf{PShv}}
\DeclareMathOperator{\id}{\sf id}
\DeclareMathOperator{\Mor}{\sf mor}
\DeclareMathOperator{\Sets}{\sf Sets}
\DeclareMathOperator{\Un}{\sf Un}
\newcommand{\bit}[1]{\textbf{\textit{#1}}}
\DeclareMathOperator{\lacts}{\curvearrowright}
\DeclareMathOperator{\Sub}{\sf Sub}
\DeclareMathOperator{\dSub}{\delta{\sf Sub}}
\DeclareMathOperator{\tSub}{{\sf Sub}}
\begin{document}

\title{A parametrized Pontryagin--Thom theorem}

\author{David Ayala \& John Francis}

\address{Department of Mathematics\\Montana State University\\Bozeman, MT 59717}
\email{david.ayala@montana.edu}
\address{Department of Mathematics\\Northwestern University\\Evanston, IL 60208}
\email{jnkf@northwestern.edu}
\thanks{ 
DA was partially supported by the National Science Foundation under award 1945639; DA was sponsored in part by the Air Force Research Laboratory under Agreement Number FA8750-24-1-1019. The U.S. Government is authorized to reproduce and distribute reprints for government purposes notwithstanding any copyright notation thereon. Any opinions, findings, and conclusions or recommendations expressed in this material are those of the authors and do not necessarily reflect the view of the funder.
JF was supported by the National Science Foundation under award 1812057. 
This material is based upon work supported by the National Science Foundation under award 1928930 while the authors participated in a program supported by the Mathematical Sciences Research Institute during the Summer
of 2022 in partnership with the Universidad Nacional Autonoma de Mexico.
}

\begin{abstract}
We prove a space-level enhancement of the Pontryagin--Thom theorem, identifying the space of maps from a manifold to a Thom space with a moduli space of submanifolds.
\end{abstract}

\keywords{Cobordism. Transversality. Mapping spaces.}

\subjclass[2020]{Primary 57R90. Secondary 57N75, 55U10, 58D29.}

\maketitle

\tableofcontents

\section{Introduction}

Let $M$ be an $(n+k)$-dimensional manifold with boundary.
Suppose $M$ is finitary, which is to say it may be identified as a compact manifold with corners remove some faces.
Let $\xi = (E \to B)$ be a rank-$k$ vector bundle, with associated Thom space $\Th(\xi)$. The Pontryagin--Thom theorem identifies the set of homotopy classes of pointed maps from the one-point compactification $M^+$ to $\Th(\xi)$ with a set of embedded cobordism classes of codimension-$k$ submanifolds of $M$. 

This paper answers the following question. 
\begin{itemize}
    \item[]
    {\bf Question.}
    {\it Is there a space-level refinement of the Pontryagin--Thom theorem, in which the space of pointed maps $\Map_\ast(M^+,\Th(\xi))$ is homotopy equivalent to a moduli space of submanifolds of $M$?}
\end{itemize}

This moduli space of submanifolds involves the following topological data: A \bit{codimension-$\xi$ submanifold} of $M$ is a codimension-$k$ submanifold $W \subset M$ with boundary $\partial W = W \cap \partial M$ such that $W$ is transverse to the boundary $\partial M$, which is furthermore equipped with a map $g\colon W \ra B$ and an isomorphism between vector bundles over $W$
\[
\alpha\colon \nu_{W\subset M}
\xra{~\cong~}
g^\ast \xi
\]
from the normal bundle of $W\subset M$ to the pullback of $\xi$ along $g$.
For $(W_0,g_0,\alpha_0)$ and $(W_1,g_1,\alpha_1)$ compact codimension-$\xi$ submanifolds of $M$, an \bit{embedded cobordism} from the first to the second is a compact codimension-$\xi$ submanifold $(W,g,\alpha)$ of $[0,1]\times M$ such that, for each $i=0,1$, there are identities $W_i = W \cap ( \{i\} \times M)$ and $g_i = g_{|W_i}$ and $\alpha_i = \alpha_{|W_i}$.

Our answer to the main question involves, for each $p\geq 0$, the space $\Bord^\xi_p(M)$ of compact codimension-$\xi$ submanifolds of $\Delta^p\times M$,
organized as a simplicial topological space
\[
\Bord^\xi_\bullet(M)
\]
in a way introduced by Quinn~\cite{quinn.thesis}.
In particular, $\Bord^\xi_0(M)$ is a space of compact codimension-$\xi$ submanifolds of $M$, and $\Bord^\xi_1(M)$ is a space of codimension-$\xi$ embedded cobordisms between such. 
Forgetting the topology on the topological spaces $\Bord^\xi_p(M)$ defines a simplicial set $\dBord^\xi_\bullet(M)$.

The following is the main result of this paper. 
For a $X$ a pointed space, let $\cMap(M,X)$ denote the subspace of $\Map(M,X)$ consisting of the {\it compactly-supported} maps (i.e., maps $f:M\ra X$ that carry the complement of a compact subspace to the basepoint of $X$).

\begin{theorem}[Parametrized Pontryagin–Thom]\label{thm.main}
Let $\xi$ be a vector bundle of rank $k$.
Let $M$ be a smooth manifold with boundary.
There are equivalences between spaces,
\[
\bigl|\dBord_\bullet^{\xi}(M)\bigr|
~\simeq~
\bigl|\Bord_\bullet^{\xi}(M)\bigr|
~\simeq~
{\sf cMap}\bigl(M, \Th(\xi)\bigr)
~,
\]
which are covariantly functorial with respect to open embeddings in $M$.
Furthermore, the simplicial set $\dBord^\xi_\bullet(M)$ is a Kan complex, and the simplicial space $\Bord^\xi_\bullet(M)$ satisfies the Kan condition.

\end{theorem}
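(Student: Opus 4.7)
The plan is to produce, in both directions, natural maps between the three spaces in the theorem and to show they are mutually inverse, with the bulk of the argument devoted to parametric transversality and to coherent tubular-neighborhood choices for the Thom--Pontryagin collapse. The Kan conditions and the comparison $|\dBord_\bullet^\xi(M)|\simeq|\Bord_\bullet^\xi(M)|$ are then handled separately, by geometric extension and by a bisimplicial argument exploiting the defining property of Quinn's topology.

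To build the collapse $|\Bord^\xi_\bullet(M)|\to\cMap(M,\Th(\xi))$, start with a $p$-simplex $(W,g,\alpha)$, i.e.\ a codimension-$\xi$ submanifold of $\Delta^p\times M$. A tubular neighborhood of $W$, together with $\alpha$, identifies an open neighborhood with an open subset of the total space of $g^\ast\xi$; collapsing the complement and composing with the structure map $g^\ast\xi\to\xi$ yields a compactly-supported map $\Delta^p\times M\to\Th(\xi)$, i.e.\ a $p$-simplex of $\cMap(M,\Th(\xi))$. Tubular-neighborhood choices form a contractible space, and to rigidify the construction I would introduce an auxiliary simplicial space $\Bord^{\xi,\mathsf{tub}}_\bullet(M)$ whose $p$-simplices carry the tube as data, show that the forgetful map is a levelwise acyclic Kan fibration (hence an equivalence on realizations), and define the collapse canonically on this resolution. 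In the reverse direction, a $p$-simplex of $\cMap(M,\Th(\xi))$ is a compactly-supported map $\varphi\colon\Delta^p\times M\to\Th(\xi)$; parametric Thom--Mather transversality, carried out relative to a collar of $\partial\Delta^p$, shows that maps transverse to the zero section $B\subset\Th(\xi)$ form a dense and contractibly-nonempty subspace, and then $\varphi^{-1}(B)$ together with the canonical normal framing inherited from $\varphi$ defines an element of $\Bord^\xi_p(M)$. That the two constructions are mutually inverse reduces to the classical identity that collapsing a tube and then taking the transverse preimage of $B$ returns the original submanifold, combined with the contractibility of the space of transverse approximations.

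For the Kan conditions, both statements reduce to the following extension problem: given a codimension-$\xi$ submanifold of $|\Lambda^n_k|\times M$ with the required data, produce a coherent extension over $\Delta^n\times M$. A smooth retraction $r\colon U\to|\Lambda^n_k|$ from a neighborhood $U\subset\Delta^n$, combined with product-extension along the transverse direction, provides such an extension, and the construction is identical for the topologized and underlying-set versions, so both $\Bord^\xi_\bullet(M)$ and $\dBord^\xi_\bullet(M)$ are Kan. For the comparison $|\dBord^\xi_\bullet(M)|\simeq|\Bord^\xi_\bullet(M)|$, the crucial observation is that Quinn's topology is defined precisely so that a continuous map $\Delta^q\to\Bord^\xi_p(M)$ is the same datum as a codimension-$\xi$ submanifold of $\Delta^q\times\Delta^p\times M$ transverse in the appropriate sense to all faces. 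This exhibits both realizations as the total realization of a common bisimplicial set of submanifolds of $\Delta^q\times\Delta^p\times M$, giving the equivalence by the standard diagonal argument. Functoriality in open embeddings is immediate: both pushforward of submanifolds and extension-by-basepoint of compactly-supported maps are compatible with all the constructions above.

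The main obstacle is the coherent tubular-neighborhood choice underlying the Thom collapse, together with the parametric transversality argument carried out rel.\ the boundaries of all simplices. The resolution $\Bord^{\xi,\mathsf{tub}}_\bullet(M)$ isolates the first issue, and one must verify that the forgetful map has contractible fibers smoothly in families, appealing to the classical uniqueness of tubular neighborhoods. The second requires carefully chained transversifications rel.\ collars so that face and degeneracy maps preserve transversality; I expect this to be the most delicate bookkeeping step of the proof.
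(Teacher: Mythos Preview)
Your overall architecture---Pontryagin--Thom collapse in one direction, transversality in the other---matches the paper's, but the paper organizes the proof quite differently, and one of your claimed steps rests on a false premise.

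\textbf{The intermediary is different.} Rather than introduce $\Bord^{\xi,\mathsf{tub}}_\bullet(M)$ and manage coherent tubular-neighborhood data, the paper introduces the simplicial space $\cMap^\pitchfork_\bullet(M,\Th(\xi))$ of compactly-supported maps $\Delta^p\times M\to\Th(\xi)$ that are smooth and transverse to the zero-section on every face. Transversality is a \emph{condition}, not extra structure, so there is no simplicial coherence to arrange. The paper then proves two things: (i) the inclusion $\cMap^\pitchfork_\bullet\hookrightarrow\cMap_\bullet$ is a trivial Kan fibration (this is the parametrized transversality theorem), and (ii) the preimage map $f\mapsto f^{-1}(B)$ is a \emph{levelwise} equivalence $\cMap^\pitchfork_\bullet(M,\Th(\xi))\xrightarrow{\simeq}\Bord^\xi_\bullet(M)$. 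The hard technical content is not coherence of tubes but rather proving that $f\mapsto f^{-1}(B)$ is \emph{continuous} for the compact-open $C^\infty$ topology on the source and the quotient-of-embedding-spaces topology on the target; the paper devotes an entire subsection to this, using arguments close to Palais's isotopy extension. The collapse map then enters only to check that $(-)^{-1}(B)$ is a homotopy equivalence on each level, over compact parameter disks, so no global coherent tube is needed. Your $\Bord^{\xi,\mathsf{tub}}$ approach could plausibly be made to work, but making tubular neighborhoods simplicially functorial (compatible with all face restrictions) is nontrivial---the paper in fact confronts a version of this when building the fiberwise normal bundle over $\bDelta_{/\Bord^k_\bullet(M)}$---and the $\cMap^\pitchfork$ route sidesteps it.

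\textbf{The bisimplicial step has a gap.} You assert that the topology on $\Bord^\xi_p(M)$ is ``defined precisely so that a continuous map $\Delta^q\to\Bord^\xi_p(M)$ is the same datum as a codimension-$\xi$ submanifold of $\Delta^q\times\Delta^p\times M$.'' This is not the definition in the paper, and it is not true. The topology on $\Sub^k(C)$ is the quotient topology from $\coprod_{[W]}\Emb(W,C)$ with the compact-open $C^\infty$ topology; locally it is modeled on $\Gamma^{\sf sm}(\nu_{W\subset C})$. A continuous map from $\Delta^q$ into this space need not be smooth in the $\Delta^q$-direction, so the resulting family $\widetilde{W}\subset\Delta^q\times C$ is a topological fiber bundle of smooth submanifolds but not in general a smooth submanifold of the product. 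The paper explicitly remarks that a lift to a simplicial $C^\infty$-sheaf is only \emph{expected}, not established. Consequently your diagonal argument does not go through as stated. The paper instead proves directly that $\dBord^\xi_\bullet(M)\to\Bord^\xi_\bullet(M)$ is a trivial Kan fibration: given a submanifold $\widetilde{W}\subset\Delta^p\times M$ together with a path of boundary data in $\Sub^\xi(\partial\Delta^p\times M)$, one constructs an isotopy of $\widetilde{W}$ realizing that path, via an explicit inductive extension over a fine subdivision of the path using sections of normal bundles. Smooth approximation does enter, but only to straighten individual paths, and the surjectivity-on-$\pi_0$ statement is established by hand rather than by a bisimplicial identification.
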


This result is closely related to a number of previous works, which we discuss in \S\ref{sec.precedingworks}. The principal new idea we introduce is a simplicial space of transverse maps, so as to interpolate between the mapping space and the moduli space of submanifolds:
\[
\xymatrix{
&\ar[dl]_-{\bf \S\ref{sec.transv}} \cMap^\pitchfork_\bullet\bigl(M, \Th(\xi)\bigr)\ar[dr]^-{\bf \S\ref{sec.PT}}\\
\cMap_\bullet\bigl(M, \Th(\xi)\bigr)&& \Bord_\bullet^\xi(M)~.
}
\]
In {\bf \S\ref{sec.transv}}, we establish a parametrized transversality theorem (Theorem~\ref{t80}), which states that the inclusion of transverse maps into all maps induces an equivalence on geometric realizations:
    \[
    \bigl|\cMap^\pitchfork_\bullet\bigl(M, \Th(\xi)\bigr)\bigr| 
    ~\simeq~ 
    \cMap\bigl(M, \Th(\xi)\bigr)~.
    \]
    The proof relies on extension properties of smooth transverse maps (Lemmas~\ref{t85} and~\ref{lemma.sm.sing}), similar to the Whitney extension theorem~\cite{whitney}.

 In {\bf \S\ref{sec.PT}}, we prove that the assignment sending a map $f\colon M\ra \Th(\xi)$ to the preimage $f^{-1}(B)\subset M$ of the zero-section $B\subset \Th(\xi)$ defines a levelwise equivalence between simplicial spaces,
    \[
    (-)^{-1}(B)\colon \cMap^\pitchfork_\bullet\bigl(M, \Th(\xi)\bigr)
    \longrightarrow
    \Bord_\bullet^\xi(M)
    ~,
    \]
    where the mapping space is endowed with the compact-open $\sC^\infty$ topology, and the space of submanifolds is topologized as a quotient of embedding spaces $\Emb(W,M)$ indexed by diffeomorphism classes of $W$. In particular the assignment $(-)^{-1}(B)$ is {\it continuous} with respect to these topologies, which is the most technical result in this paper (Lemma~\ref{t11}). The techniques involved are similar to Palais's proof of the parametrized isotopy extension theorem~\cite{palais}. Once this continuity is established, then the proof of levelwise equivalence is a straightforward implementation of Pontryagin--Thom theory in compactly parameter families, organized by simplicial homotopy theory:
    parametrized collapse maps (Theorem~\ref{t10}) define a homotopy inverse to the zero-section preimage map.

In order to implement the above arguments of \S\ref{sec.PT}, the preceding {\bf \S\ref{sec.Bord.xi}} constructs the simplicial space $\Bord^\xi_\bullet(M)$. The key technical result is Lemma~\ref{t50} which coherently matches normal bundles along the simplicial structure maps of $\Bord^k_\bullet(C)$; its proof occupies \S\ref{sec.normal}. The presentation of $\Bord^k_\bullet(M)$ as a simplicial topological space allows for the construction of the map in Lemma~\ref{t27} and the map~(\ref{e47}).

In {\bf \S\ref{sec.Kan.condition}}, we construct a simplicial set $\dBord_\bullet^\xi(M)$, which we prove is a Kan complex. We construct a natural map $\dBord_\bullet^\xi(M)\ra \Bord_\bullet^\xi(M)$ and prove that it is a trivial Kan fibration (Corollary~\ref{cor.dBord.Bord.kan}), which is a slight weakening of the condition that a simplicial topological space $\tBord_\bullet^\xi(M)$ is Reedy fibrant (see Remark~\ref{rem.Reedy}). This implies that their geometric realizations are equivalent, and so $\bigl|\dBord_\bullet^\xi(M)\bigr|$ also models the mapping space $\cMap(M,\Th(\xi))$.

\begin{remark}\label{rem.GH}
Although we have defined $\Bord_\bullet^\xi(M)$ as a simplicial space, we think of it as an $(\infty,\infty)$-category, in which the $r$-morphisms are $(n+r)$-dimensional submanifolds of $\Delta^r\times M$. Consequently, this putative $(\oo,\oo)$-category has noninvertible $r$-morphisms for all $r> 0$.
In the work~\cite{gepner.heine}, $(\infty,\infty)$-categories are presented as presheaves on a category $\w{\bDelta}$ that satisfy certain descent conditions.
The category $\w{\bDelta}$ is the \bit{oriental category} -- it is gaunt, locally finite, and it admits a monomorphism $\bDelta \hookrightarrow \w{\bDelta}$ from the simplex category that is an equivalence between sets of objects.
We expect the simplicial space $\Bord_\bullet^\xi(M) \in \PShv(\bDelta)$ naturally extends as a presheaf on $\w{\bDelta}$ and thereby presents an $(\infty,\infty)$-category.\footnote{Our expectation is founded on observing a natural extension of the functor $\bDelta^{\op} \xra{\Delta^\bullet} \cBun$, in the sense of~\cite{striation}, which selects the skeletally stratified simplices, to $\w{\bDelta}^{\op} \to \cBun$.}
The work~\cite{gepner.heine} does not impose univalence in their notion of of an $(\infty,\infty)$-category, and we likewise expect that the existence of non-trivial embedded h-cobordisms to mean that the $(\infty,\infty)$-category $\Bord^\xi_\bullet(M)$ will not be univalent complete. 
Fulfilling these expectations would be an interesting project, which we pose for an interested reader.
Premised on these expectation, we further expect the $(\infty,\infty)$-category presented by $\Bord_\bullet^\xi(M)$ to be a sequential colimit of the finite tangle $(\oo,r)$-categories defined in a standard way (e.g., in terms of $\boldsymbol{\Theta}_r$), at least in the case that $B \xra{\xi}\BO(k)$ is base-changed from a stable vector bundle $\w{B} \xra{\w{\xi}} \BO$ along $\BO(k) \to \BO$:
\[
\colim\Bigl(
\Bord^{\zeta_0}_0(\RR^k)
\to 
\Bord^{\zeta_1}_1(\RR^k)
\to 
\dots
\to 
\Bord^{\zeta_r}_r(\RR^k)
\to\cdots
\Bigr)
~\simeq~
\Bord^\xi_\bullet(\RR^k)
~,
\]
where each $\zeta_r$ is the base-change of $\w{B} \xra{\w{\xi}}\BO$ along $\BO(r) \to \BO \xra{-1} \BO$.

\end{remark}

\begin{remark}
    For $\xi$ a smooth vector bundle over an infinite-dimensional manifold, we expect the simplicial space $\Bord^\xi_\bullet(M)$ to have a refinement as a simplicial $\sC^\infty$-sheaf.  (See Remark~\ref{r.smoothness}.)
    Such a lift of $\Bord^\xi_\bullet(M)$ might be interesting in its own right; it might also simplify several arguments in this work, notably around the repeated use of smooth approximation.
\end{remark}

\subsection{Classical Pontryagin--Thom from parametrized Pontryagin--Thom}

Applying $\pi_0$ to the equivalence of Theorem~\ref{thm.main} recovers the following form of the classical Pontryagin--Thom theorem (e.g., see Proposition~4.1 of~\cite{brs}): 

\begin{theorem}[Pontryagin--Thom]
\label{t5}
    Let $M$ be a finitary smooth manifold with boundary, and let $\xi$ be a rank-$k$ vector bundle over $B$.
    There is a bijection between the set of embedded cobordism classes of compact codimension-$\xi$ compact submanifolds of $M$ with boundary and the set of pointed homotopy classes of pointed maps to the Thom space of $\xi$:
    \[
    \Bigl\{
    W \underset{\rm codim{\text -}\xi}\subset M
    \Bigl\}_{/\rm Emb~Cob}
    ~\cong~
    [M^+,\Th(\xi)]
    ~.
    \]
    
\end{theorem}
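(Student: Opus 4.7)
The plan is to obtain Theorem~\ref{t5} by applying $\pi_0$ to the equivalence $\bigl|\Bord_\bullet^\xi(M)\bigr|\simeq\cMap\bigl(M,\Th(\xi)\bigr)$ provided by Theorem~\ref{thm.main}, and identifying each side with the stated set.

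First I would identify the right-hand side with $[M^+,\Th(\xi)]$. Since $\Th(\xi)$ is pointed at infinity, a compactly supported map $f\colon M\to\Th(\xi)$ extends uniquely to a pointed map $f^+\colon M^+\to\Th(\xi)$ sending $\infty\mapsto\infty$. On $\pi_0$ this extension is a bijection: injectivity uses that a compactly supported homotopy extends to a pointed homotopy of $M^+$; surjectivity uses that, since $M$ is finitary, $\infty$ admits a collar neighborhood in $M^+$ along which any pointed map can be deformed rel $\infty$ to one with compact support. Hence $\pi_0\cMap\bigl(M,\Th(\xi)\bigr)\cong[M^+,\Th(\xi)]$.

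Next I would identify the left-hand side with embedded cobordism classes. Since $\Bord_\bullet^\xi(M)$ satisfies the Kan condition (asserted in Theorem~\ref{thm.main}), the standard computation of $\pi_0$ of a realization gives
\[
\pi_0\bigl|\Bord_\bullet^\xi(M)\bigr|
~=~
\pi_0\Bord_0^\xi(M)\big/\sim,
\]
where $[W_0]\sim[W_1]$ whenever there exists a $1$-simplex $\sigma\in\Bord_1^\xi(M)$ with $d_1\sigma=W_0$ and $d_0\sigma=W_1$. By the definition of $\Bord_1^\xi(M)$, such a $\sigma$ is precisely a compact codim-$\xi$ embedded cobordism from $W_0$ to $W_1$ in $[0,1]\times M$. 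A continuous path in $\Bord_0^\xi(M)$ between submanifolds of a fixed diffeomorphism type $W$, by the description of its topology as a quotient of $\Emb(W,M)$, is a smooth isotopy whose trace is itself an embedded cobordism; hence the passage to $\pi_0\Bord_0^\xi(M)$ is already absorbed by the embedded-cobordism relation.

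Finally I would verify that embedded cobordism is itself an equivalence relation, so that the displayed quotient is exactly the set of embedded cobordism classes: reflexivity via the product cobordism $[0,1]\times W$, symmetry via the diffeomorphism $t\mapsto 1-t$ of $[0,1]$, and transitivity via concatenation of cobordisms along their shared boundary with a standard smoothing using boundary collars. I expect the smoothing argument underlying transitivity and the surjectivity step in the right-hand identification (which relies on the finitariness hypothesis on $M$) to be the most delicate points, though both are routine in cobordism theory. Combining these two $\pi_0$ identifications with Theorem~\ref{thm.main} yields the claimed bijection.
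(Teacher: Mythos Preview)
Your proposal is correct and follows essentially the same approach as the paper: apply $\pi_0$ to the equivalence of Theorem~\ref{thm.main} and identify each side. The paper is terser on both identifications---it uses the general fact that $\pi_0|X_\bullet|\cong(\pi_0 X_0)_{/\sim}$ with $\sim$ the equivalence relation \emph{generated} by edges (so it does not separately verify reflexivity, symmetry, and transitivity of embedded cobordism), and it defers the identification $\pi_0\cMap(M,\Th(\xi))\cong[M^+,\Th(\xi)]$ to the proof of Corollary~\ref{t13}, where the finitariness hypothesis is used exactly as you describe.
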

\begin{proof}[Proof of Theorem~\ref{t5} from Theorem~\ref{thm.main}]
Recall that, for $X_\bullet$ a simplicial space, there is a canonical bijection between sets
\[
\pi_0 |X_\bullet| 
~\cong~
(\pi_0 X_0)_{/\sim}
~,
\]
where $\sim$ is the equivalence relation generated by declaring the faces of each edge to be equivalent: $[d_0 e] \sim [d_1 e]$ for each $e\in X_1$.
In the case of the simplicial space $\Bord^\xi_\bullet(M)$, we identify the space of 0-simplices as a space of compact codimenion-$\xi$ submanifolds of $M$ with boundary.
So $\pi_0 \Bord^\xi_0(M)$ is the set of compact codimension-$\xi$ submanifolds of $M$ with boundary, up to isotopy between such.  
Similarly, $\Bord^\xi_1(M)$ is a space of compact codimension-$\xi$ submanifolds of $[0,1] \times M$ with corners.
For $i=0,1$, the $i^{th}$ face map $\Bord^\xi_1(M) \xra{d_i} \Bord^\xi_0(M)$ is given by $[0,1]\times M \supset W \mapsto W \cap (\{i\} \times M) \subset M$.
Consequently, we identify the set of path-components
\[
\pi_0 |\Bord^\xi_\bullet(M)|
~\cong~
\left\{
W \subset M
\right\}_{/\rm Emb~Cob}
\]
as the set of embedded cobordism classes of compact codimension-$\xi$ submanifolds of $M$. 
The result follows. 
\end{proof}

The stable Pontryagin--Thom theorem has been particularly central in algebraic topology.
Let $\boldsymbol{\xi}$ denote a {\it stable} vector bundle, by which we mean the following data:
\begin{itemize}

    \item a sequence of maps $B_0 \xra{i_0} B_1 \xra{i_1} B_2 \xra{i_2} \cdots\ra B= \varinjlim B_k$~;
    
    \item a  rank-$k$ vector bundle $\xi_k = (E_k \to B_k)$, for each $k \geq 0$;
    
    \item an isomorphism between vector bundles $\xi_k \oplus \epsilon^1_{B_k} \cong i_k^\ast \xi_{k+1}$ over $B_k$, for each $k \geq 0$.

\end{itemize}
These data determine, for each $k \geq 0$, a pointed map
\[
\Sigma \Th(\xi_k)
\simeq
\Th(\xi_k \oplus \epsilon^1_{B_k})
\longrightarrow
\Th(\xi_{k+1})
~.
\]
The \bit{Thom spectrum} of $\boldsymbol{\xi}$ is the spectrum $\Th(\boldsymbol{\xi})$ associated to this sequence of pointed maps.
The stabilization of the classical Pontryagin--Thom theorem can be formulated in the following terms: a stably $\boldsymbol{\xi}$-framed compact $n$-manifold with boundary over $M$ consists of:
\begin{itemize}
\item a compact $n$-manifold with boundary $W^n$;
\item a map $W^n\xra{g} B$;
\item a smooth map $W^n\xra{f} M$ transverse to $\partial M$ and for which $\partial W= f^{-1}(\partial M)$; and
\item an isomorphism $f^\ast \tau_M\cong \tau_W \oplus g^\ast \boldsymbol{\xi}$ between stable vector bundles.
\end{itemize}
Taking the direct limit of the Pontryagin--Thom theorem for $M\times\RR^r$, as $r\mapsto\infty$, implies the following (e.g., see Theorem 5.1 of~\cite{brs}).

\begin{theorem}[Stable Pontryagin--Thom]\label{theorem.stable.PT}
\label{t5'}
    Let $n,k \geq 0$.
    Let $M$ be a finitary smooth $(n+k)$-manifold with boundary, and let $\boldsymbol{\xi}$ be as above.
    There is a bijection between the set of stably $\boldsymbol{\xi}$-framed cobordism classes of stably $\boldsymbol{\xi}$-framed compact $n$-manifolds with boundary over $M$ and the $k^{th}$ reduced cohomology of $M^+$ with coefficients in the Thom spectrum $\Th(\boldsymbol{\xi})$:
    \[
    \Bigl\{
    W^n \to M
    \Bigr\}_{/\rm Cob}
    ~\cong~
    \widetilde\sH^{k}\bigl(M^+, \Th(\boldsymbol{\xi})\bigr)
    ~.
    \]

\end{theorem}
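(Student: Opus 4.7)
The plan is to deduce Theorem~\ref{theorem.stable.PT} from the classical Pontryagin--Thom statement of Theorem~\ref{t5} by taking a direct limit in the stabilizing parameter $r$, applied to the family of manifolds $M \times \RR^r$ with bundle $\xi_{k+r}$.

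First, I would fix $r\geq 0$ and apply Theorem~\ref{t5} to the finitary $(n+k+r)$-manifold with boundary $M \times \RR^r$ and the rank-$(k+r)$ vector bundle $\xi_{k+r}$. This yields a bijection
\[
\bigl\{W \subset M\times \RR^r~\text{codim-}\xi_{k+r}\bigr\}_{/\rm Emb~Cob}
~\cong~
\bigl[(M\times \RR^r)^+,\,\Th(\xi_{k+r})\bigr]
~.
\]
Using the standard homeomorphism $(M\times \RR^r)^+ \cong \Sigma^r M^+$ (since $(\RR^r)^+ = S^r$), the right-hand side becomes $[\Sigma^r M^+,\Th(\xi_{k+r})]$. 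The suspension--Thom identifications $\Sigma \Th(\xi_{k+r})\simeq \Th(\xi_{k+r}\oplus \epsilon^1_{B_{k+r}})\to \Th(\xi_{k+r+1})$ assemble these into a directed system whose colimit is, by definition of stable cohomotopy with coefficients in a spectrum, the reduced cohomology group $\widetilde{\sH}^k\bigl(M^+, \Th(\boldsymbol{\xi})\bigr)$.

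Next, I would identify the colimit of the left-hand side with the set of stably $\boldsymbol{\xi}$-framed cobordism classes. A codimension-$\xi_{k+r}$ submanifold $W\subset M\times \RR^r$ has dimension $n$ and comes equipped with $g\colon W\to B_{k+r}$ and $\alpha\colon \nu_{W\subset M\times \RR^r}\cong g^\ast \xi_{k+r}$. Composing the embedding with projection gives $f\colon W\to M$, and from $\tau_W\oplus \nu_{W\subset M\times\RR^r}\cong f^\ast \tau_M \oplus \epsilon^r_W$ one extracts an isomorphism
\[
f^\ast \tau_M \oplus \epsilon^r_W
~\cong~
\tau_W \oplus g^\ast \xi_{k+r}
~,
\]
which, upon passing to stable bundles and using the compatibility isomorphisms $\xi_{k+r}\oplus \epsilon^1 \cong i_{k+r}^\ast \xi_{k+r+1}$, descends to an isomorphism $f^\ast \tau_M \cong \tau_W\oplus g^\ast \boldsymbol{\xi}$ of stable vector bundles on $W$. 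The stabilization map from the $r$-th to the $(r+1)$-st level of the directed system sends an embedding $W\hookrightarrow M\times\RR^r$ to $W\times \{0\}\hookrightarrow M\times\RR^{r+1}$, with $g$ replaced by $i_{k+r}\circ g$ and $\alpha$ stabilized accordingly, which is precisely the operation identifying representatives of the same stable cobordism class.

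The main obstacle — and the step requiring the most care — is showing that the colimit $\varinjlim_r \{W\subset M\times\RR^r\}_{/\rm Emb~Cob}$ is in bijection with the set of stably $\boldsymbol{\xi}$-framed $n$-manifolds over $M$, up to stable cobordism. Surjectivity uses Whitney embedding in high codimension: any $(W,f,g,\text{stable iso})$ can, for $r\gg 0$, be realized by a concrete embedding $W\hookrightarrow M\times \RR^r$ over $f$ whose normal bundle is identified with $g^\ast \xi_{k+r}$, since obstructions to lifting the stable data to an unstable representative vanish once $r$ is large enough. Injectivity uses the analogous fact for embedded cobordisms: two stably equivalent representatives admit, after further stabilization, an embedded cobordism in $[0,1]\times M\times\RR^{r}$ realizing the stable cobordism, because any two embeddings of a fixed $n$-manifold into $M\times\RR^r$ over $f$ become isotopic once $r$ is large relative to $n$. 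Combining this identification on the geometric side with the colimit identification on the topological side, and using compatibility of the Pontryagin--Thom bijections of Theorem~\ref{t5} with the stabilization maps on both sides, yields the desired bijection.
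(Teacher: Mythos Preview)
Your proposal is correct and follows exactly the approach the paper indicates: the paper simply states that the result follows by ``taking the direct limit of the Pontryagin--Thom theorem for $M\times\RR^r$, as $r\to\infty$'' and cites Theorem~5.1 of~\cite{brs}. Your write-up faithfully unpacks this one-line sketch, including the identification $(M\times\RR^r)^+\cong \Sigma^r M^+$, the colimit description of $\widetilde{\sH}^k(M^+,\Th(\boldsymbol{\xi}))$, and the Whitney-embedding/isotopy argument identifying the geometric colimit with the set of stably $\boldsymbol{\xi}$-framed cobordism classes.
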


Stabilizing Theorem~\ref{thm.main} gives a parametrized version of the stable Pontryagin--Thom theorem.
For each choice of $k \geq 0$ and $M$, the data $\boldsymbol{\xi}$ determine a sequence of morphisms between simplicial spaces
\[
\Bord^{\xi_k}_\bullet(M \times \RR^0)
\to 
\Bord^{\xi_{k+1}}_\bullet(M \times \RR^1)
\to 
\Bord^{\xi_{k+2}}_\bullet(M \times \RR^2)
\to 
\cdots
~,
\]
as well as a sequence of morphisms between simplicial sets $\dBord^{\xi_k}_\bullet(M \times \RR^0)
\to 
\dBord^{\xi_{k+1}}_\bullet(M \times \RR^1)
\to 
\cdots
$,
which are implemented by the inclusions
\[
\Delta^\bullet \times M \times \RR^r = \Delta^\bullet \times M \times \RR^r \times \{0\} 
~\hookrightarrow~ 
\Delta^\bullet \times M \times \RR^{r} \times \RR^1 = \Delta^\bullet \times M \times \RR^{r+1}
~.
\]
Denote the colimit simplicial space
\[
\Bord^{\Sigma^k \boldsymbol{\xi}}_\bullet(M)
~:=~
\underset{r \geq 0} \colim
\Bord^{\xi_{k+r}}_\bullet(M \times \RR^r)
~,
\]
and the colimit simplicial set $\dBord^{\Sigma^k \boldsymbol{\xi}}_\bullet(M) := \underset{r \geq 0} \colim
\Bord^{\xi_{k+r}}_\bullet(M \times \RR^r)$.
Evidently, for each $r \geq 0$, framed embeddings among finite disjoint unions of $\RR^r$ defines an $\cE_r$-algebra structure on the simplicial space $\Bord^{\xi_{k+r}}_\bullet(M \times \RR^r)$.
As detailed in~\S4.1 of~\cite{bord}, these structures determine the structure of an $\cE_\infty$-algebra on the simplicial space $\Bord^{\Sigma^k \boldsymbol{\xi}}_\bullet(M)$.
Denote the $k^{th}$ space of the Thom spectrum
\[
\Omega^{\infty} \Sigma^k \Th(\boldsymbol{\xi}) := \underset{r\geq 0}\colim \ \Omega^r\Th(\xi_{k+r})~,
\]
which is an $\cE_\infty$-algebra in $\Spaces$.

\begin{cor}[Parametrized stable Pontryagin--Thom]\label{cor.param.stable.PT}
\label{t13}
Let $M$ be a finitary manifold with boundary, and let $\boldsymbol{\xi}$ be a stable vector bundle.
For each $k \geq 0$, there are equivalences between spaces,
    \[
    \bigl| \dBord^{\Sigma^k \boldsymbol{\xi}}_\bullet(M) \bigr|
    ~\simeq~
    \bigl| \Bord^{\Sigma^k \boldsymbol{\xi}}_\bullet(M) \bigr|
    ~\simeq~
    \Map_\ast\bigl(
    M^+ , \Omega^{\infty} \Sigma^k \Th(\boldsymbol{\xi}) \bigr)
    ~,
    \]
    the second of which is one between $\cE_\infty$-algebras.
\end{cor}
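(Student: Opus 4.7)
The plan is to obtain Corollary~\ref{cor.param.stable.PT} by applying Theorem~\ref{thm.main} levelwise at each stage $r$ (to the manifold $M\times\RR^r$ and bundle $\xi_{k+r}$), then passing to the sequential colimit in $r$, and invoking standard adjunction identities together with the finitary hypothesis on $M$. First, for each $r\geq 0$, Theorem~\ref{thm.main} supplies equivalences
\[
\bigl| \dBord_\bullet^{\xi_{k+r}}(M \times \RR^r) \bigr|
~\simeq~
\bigl| \Bord_\bullet^{\xi_{k+r}}(M \times \RR^r) \bigr|
~\simeq~
\cMap\bigl(M \times \RR^r , \Th(\xi_{k+r})\bigr),
\]
which are covariantly functorial with respect to the open inclusions $M\times\RR^r\hookrightarrow M\times\RR^{r+1}$ used to define the stabilization maps. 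Since geometric realization of simplicial spaces (and simplicial sets) commutes with sequential colimits, taking $\colim_r$ immediately yields the first displayed equivalence of the corollary. Invoking the adjunction $\cMap(M\times\RR^r,X)\simeq \Map_\ast(M^+\wedge S^r,X)\simeq \Map_\ast(M^+,\Omega^r X)$ rewrites the right-hand term as $\Map_\ast(M^+,\Omega^r\Th(\xi_{k+r}))$.

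Next, I would commute the sequential colimit past the mapping space. The finitary hypothesis on $M$ ensures that $M^+$ has the homotopy type of a finite pointed CW complex, so the functor $\Map_\ast(M^+,-)$ preserves filtered colimits. This gives
\[
\underset{r\geq 0}\colim\, \Map_\ast\bigl(M^+,\Omega^r\Th(\xi_{k+r})\bigr)
~\simeq~
\Map_\ast\Bigl(M^+,\underset{r\geq 0}\colim\, \Omega^r\Th(\xi_{k+r})\Bigr)
~=~
\Map_\ast\bigl(M^+,\Omega^\infty\Sigma^k\Th(\boldsymbol{\xi})\bigr),
\]
which combined with the preceding step establishes the second displayed equivalence as an equivalence of spaces.

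It remains to promote this to an equivalence of $\cE_\infty$-algebras. At each finite $r$, both sides carry an $\cE_r$-algebra structure: on the bordism side from framed embeddings of disjoint copies of $\RR^r$ into $\RR^r$ (as recalled in \S4.1 of~\cite{bord}); on the mapping side from the canonical iterated-loop-space structure on $\Omega^r\Th(\xi_{k+r})$. The main obstacle, and the only step requiring genuine verification, is to check that the zero-section preimage equivalence $\cMap^\pitchfork_\bullet(M\times\RR^r,\Th(\xi_{k+r}))\xra{\sim}\Bord^{\xi_{k+r}}_\bullet(M\times\RR^r)$ constructed in \S\ref{sec.PT} is $\cE_r$-equivariant: concretely, that the collapse-map homotopy inverse carries the $\cE_r$-operation of disjoint union in configuration-ordered copies of $\RR^r$ to loop concatenation in those $r$ coordinates. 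This is a direct consequence of the naturality under open embeddings in Theorem~\ref{thm.main}, together with the explicit form of the collapse map. Passing to the colimit over $r$ and using the $\cE_r$-to-$\cE_\infty$ compatibility recorded in~\S4.1 of~\cite{bord} then assembles these into an $\cE_\infty$-equivalence, finishing the argument.
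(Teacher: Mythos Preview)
Your proposal is correct and follows essentially the same argument as the paper: apply Theorem~\ref{thm.main} at each stage $r$, commute geometric realization with the sequential colimit, use the loop--suspension adjunction together with compactness of $M^+$ to identify the target, and deduce the $\cE_\infty$-structure from naturality under open embeddings. The one step the paper handles more carefully than you do is the identification $\cMap(M\times\RR^r,\Th(\xi_{k+r}))\simeq\Map_\ast\bigl((M\times\RR^r)^+,\Th(\xi_{k+r})\bigr)$: this is not a purely formal adjunction (compactly-supported maps versus pointed maps from the one-point compactification), and the paper gives a separate argument for it using the finitary hypothesis via collar neighborhoods of the missing faces of $\ov{M}$.
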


The stable Pontryagin--Thom Theorem~\ref{theorem.stable.PT} follows from Corollary~\ref{cor.param.stable.PT} by applying path-components.

\begin{remark}
    Corollary~\ref{t13} gives a Kan complex of stably-framed iterated cobordisms whose geometric realization is  $\Omega^\infty \SS$, the zeroth space of the sphere spectrum.
    That is, in the case that $\boldsymbol{\xi}=\fr$ codifies stable normal framings (i.e., $B_k = \ast$ for all $k\geq 0$), Corollary~\ref{t13} gives a Kan complex $\dBord^{\fr}_\bullet(M)$ of iterated stably-framed cobordisms whose geometric realization  $|\dBord^{\fr}_\bullet(M) | \simeq \Omega^\infty \SS$ is equivalent with the zeroth space of the sphere spectrum.
\end{remark}

\begin{proof}[Proof of Corollary~\ref{t13} from Theorem~\ref{thm.main}]
For each $r\geq 0$, the morphism between simplicial sets $\dBord^{\xi_{k+r}}_\bullet(M \times \RR^r) \to \dBord^{\xi_{k+r+1}}_\bullet(M \times \RR^{r+1})$ is evidently a monomorphism.
Consequently, the canonical map between spaces $\underset{r\geq 0} \colim |\dBord^{\xi_{k+r}}_\bullet(M)| \to | \dBord^{\boldsymbol{\xi}}_\bullet(M)|$ is an equivalence.  
Meanwhile, the canonical map between spaces $\underset{r\geq 0} \colim |\Bord^{\xi_{k+r}}_\bullet(M)| \to | \Bord^{\boldsymbol{\xi}}_\bullet(M)|$ is an equivalence because the functor $\Fun(\bDelta^{\op},\Spaces) \xra{|-|} \Spaces$ commutes with colimits.
In this way, the first equivalence follows from the first equivalence of Theorem~\ref{thm.main}, by taking sequential colimits.

The second equivalence is a composition of the following sequence of routine equivalences between spaces:
\begin{eqnarray*}
    \bigl| \Bord^{\Sigma^k \boldsymbol{\xi}}_\bullet(M) \bigr|
&
    \simeq
&
    \underset{r \geq 0} \colim
\bigl|\Bord^{\xi_{k+r}}_\bullet(M \times \RR^r)\bigr|
\\
&
  \simeq
&
    \underset{r \geq 0} \colim
    \cMap\bigl(M \times \RR^r, \Th(\xi_{k+r})\bigr)
\\
&
\simeq
&
    \underset{r \geq 0} \colim
    \Map_\ast\bigl((M\times\RR^r)^+, \Th(\xi_{k+r})\bigr)
\\
&
\simeq
&
    \underset{r \geq 0} \colim
    \Map_\ast\bigl(\Sigma^r M^+, \Th(\xi_{k+r})\bigr)
\\
&
    \simeq
&
    \underset{r \geq 0} \colim
    \Map_\ast\bigl(M^+, \Omega^r\Th(\xi_{k+r})\bigr)
\\
&
       \simeq
&
    \Map_\ast\Bigl(M^+, \underset{r \geq 0} \colim \ \Omega^r\Th(\xi_{k+r})\Bigr)
\\
&
    \simeq
&
    \Map_\ast\bigl(M^+, \Omega^{\infty} \Sigma^k \Th(\boldsymbol{\xi})\bigr)~.
\end{eqnarray*}
The first equivalence follows by intercommutation of colimits: in particular, geometric realizations commute with direct limits. The second equivalence is Theorem~\ref{thm.main} applied to $M\times\RR^r$ and $\xi_{k+r}$.
For the third equivalence, since $M\times\RR^r$ is locally compact, there is a canonical map 
\begin{equation}\label{x30}
\cMap(M\times\RR^r,\Th(\xi)) \longrightarrow \Map_\ast((M\times\RR^r)^+,\Th(\xi))
~.
\end{equation}
We postpone to the next paragraph a proof that~(\ref{x30}) is an equivalence. The fourth equivalence follows since the one-point compactification of the product is equivalent to the smash product of the one-point compactifications: $(M\times\RR^r)^+\simeq M^+\wedge (\RR^r)^+\simeq \Sigma^r M^+$.
The fifth equivalence is the suspension-loop adjunction. The penultimate equivalence again uses that $M$ is finitary, so that $M^+$ is a compact object in the $\infty$-category $\Spaces$. The last equivalence is definitional.

We now prove~(\ref{x30}) is an equivalence.
Consider the poset ${\sf Cpt}(M)$ of compact subspaces of $M$ ordered by inclusion.
Recognize the subspace 
\[
\Map(M,\Th(\xi))
~\supset~
\cMap(M,\Th(\xi))
~\simeq~
\underset{K \in {\sf Cpt}(M)}\colim  \Map_K(M,\Th(\xi)) 
\]
as a colimit, where $\Map_K(M,\Th(\xi)) \subset \cMap(M,\Th(\xi))$ is the subspace of consisting of those smooth compactly-supported maps $f\colon M \ra \Th(\xi)$ such that ${\sf Supp}(f) := \ov{f^{-1}(E)} \subset K$.
Consequently, the map~(\ref{x30}) may be implemented as precomposition by the ${\sf Cpt}(M)$-indexed system of collapse-maps 
\[
M^+
\longrightarrow
M \underset{M\smallsetminus C} \amalg \ast
~.
\]
Next, using that $M$ is finitary, choose a compact manifold $\ov{M}$ with corners such that $M$ is identified as $\ov{M}$ remove some faces.  
Denote $\partial M := \ov{M} \smallsetminus M$.
Consider the full subposet $\sC \subset {\sf Open}(\ov{M})$ consisting of the collar-neighborhoods of $\partial M \subset \ov{M}$.
This collection $\sC$ of neighborhoods is a basis for the topology about $\partial M \subset \ov{M}$. 
In particular, $\sC$ is cofiltered, and the functor between posets $\sC^{\op} \xra{C\mapsto M \smallsetminus C} {\sf Cpt}(M)$ is final, as a functor between $\infty$-categories. 
In particular, we have a canonical identification between spaces:
\[
\underset{C \in \sC^{\op}}\colim \Map_{M \smallsetminus C}(M,\Th(\xi)) 
\xra{~\simeq~}
\underset{K \in {\sf Cpt}(M)}\colim \Map_K(M,\Th(\xi)) 
~\simeq~
\cMap(M,\Th(\xi))
~.
\]
Now, for $C \in \sC$ a collar-neighborhood of $\partial M \subset \ov{M}$, a choice of diffeomorphism $C \cong \partial M \times [0,1)$ reveals that the collapse-map $M^+ \to M \underset{M\smallsetminus C} \amalg \ast$ implements an equivalence between spaces:
\[
\Map_{M \smallsetminus C}(M,\Th(\xi))
\xra{~\simeq~}
\Map_\ast(M^+ , \Th(\xi))
~.
\]
Using that $\sC^{\op}$ is filtered, this implies the canonical map
\[
\Map_\ast(M^+,\Th(\xi))
\xra{~\simeq~}
\underset{C^\circ \in \sC^{\op}}\colim \Map_{M\smallsetminus C}(M,\Th(\xi))
\]
is an equivalence.
We conclude that the map~(\ref{x30}) is an equivalence, as desired.

For each $r \geq 0$, both $\Bord^{\xi_r}_\bullet(N)$ and $\Map_\ast\left( N^+ , \Th(\xi_r) \right)$ are functorial among open embeddings in the argument $N$, carrying disjoint unions to products and isotopy equivalences to equivalences.  
Therefore, both $\Bord^{\xi_r}_\bullet(M \times \RR^r)$ and $\Map_\ast\left( (M \times \RR^r)^+ , \Th(\xi_r) \right) \simeq \Map_\ast\left( M^+, \Omega^r \Th(\xi_r) \right)$ have canonical structures of $\cE_r$-algebras.  
The naturality of Theorem~\ref{thm.main} therefore implies its equivalence between spaces $\left|\Bord^{\xi_r}_\bullet(M \times \RR^r)\right| \simeq \Map_\ast\left( M^+, \Omega^r \Th(\xi_r) \right)$ lifts as one between $\cE_r$-algebras.
Consequently, the second equivalence is an equivalence of $\cE_\infty$-algebras.

\end{proof}

\subsection{Comparison to preceding works}\label{sec.precedingworks}
Our parametrized Pontryagin--Thom theorem is distinct from the enhancements of Pontryagin--Thom of Galatius--Madsen--Tillmann--Weiss~\cite{GMTW}, Ayala~\cite{phd}, Randal-Williams~\cite{oscar}, and Schommer-Pries~\cite{chrisSP} in the following two ways.
First, while both we and they recover the set of cobordism classes of $n$-manifolds as $\pi_0$ of a classifying space, these spaces are substantially different. 
We expect our space is the classifying space of an $(\oo,\oo)$-category (see Remark~\ref{rem.GH}), in which the $r$-morphisms are $(n+r)$-dimensional manifolds with corners.
For \cite{GMTW},~\cite{phd},~\cite{oscar},~\cite{chrisSP}, the space is a classifying space of an $(\oo,m)$-category for necessarily finite $m$: its objects are $n$-manifolds, and its $k$-morphisms are $(k-1)$-parameter families of diffeomorphisms for $k\geq m$.
Second, those works employ Graeme Segal's technique of \emph{scanning maps} (as in~\cite{segal},~\cite{mcduff1},~\cite{mcduff2}), and consequently are compatible with \emph{tangential} structures on the manifold $M$. Our method uses the more classical Pontryagin--Thom collapse maps, and hence is compatible with \emph{normal} structures. Stably these are the same (hence both our theorems imply the stable Pontryagin--Thom theorem), but unstably they are distinct and thus yield different unstable parametrized versions of the Pontryagin--Thom theorem.

The essential idea of building a simplicial set (or at least semi-simplicial) whose $p$-simplices are codimension-$k$ submanifolds with corners in $\Delta^p$ dates at least to Quinn's thesis: See \S1.4~\cite{quinn.thesis}, as well as a discussion of this history by Laures--McClure in \S1.1~\cite{laures.mcclure}. In particular, Quinn~\cite{quinn} constructed a semi-simplicial set (i.e., a $\Delta$-set) similar to the stabilization of $\dBord_\bullet(M)$ in the argument $M$, in the case in which $\xi$ is the universal bundle over $\BSO$. Using the Pontryagin--Thom theorem, Quinn deduces that the homotopy groups of the realization of this semi-simplicial set are the bordism homology groups of $M$, a result closely related to Theorem~5.1 of Buoncristiano--Rourke--Sanderson~\cite{brs}.

In the setting of topological manifolds, Laures--McClure~\cite{laures.mcclure} and Randal-Williams~\cite{oscar2} both consider bordism-type models for the Thom spectrum of $\Top$. In Appendix~B~\cite{laures.mcclure}, prove a topological version of Quinn's theorem, that ${\sf MTop}$ is the realization of a Quinn-type semi-simplicial set. Randal-Williams constructs a semi-simplicial set ${\sf Mock}(d,n)$ which is a topological analogue of our $\dBord_\bullet^{\gamma_{n-d}}(\RR^n)$, where $\gamma_{n-d}$ is the universal bundle over $\BSO(n-d)$. 
In \S4.3~\cite{oscar2}, he outlines how, in the large $n$ limit, topological transversality would prove an equivalence between the realization $|{\sf Mock}(d)|$ and the space $\Omega^{\oo+d}{\sf MSTop} = \Omega^{\infty+d} \Th(\boldsymbol{\gamma})$.

Both Quinn and Randal-Williams include a {\it collaring} condition in the definition of their semi-simplicial sets of manifolds, which is convenient for verifying the Kan condition. In block automorphism spaces of manifolds, the inclusion of this collaring condition is standard (see~\cite{blr}, \S1.3 of~\cite{krannich}, Definition~2.2.1 of~\cite{HLLR}) because it makes the Kan condition straightforward to establish. The downside is that it complicates the definition of degeneracies, resulting in only semi-simplicial sets, rather than simplicial sets. 
However, this collaring condition is not expected to affect the homotopy type: See Theorem~2.60~\cite{gimm}. Like in~\cite{gimm}, the present work does not include a collaring condition in our definition of the spaces $\Bord_\bullet^\xi(M)$, which allows us to construct simplicial, rather than merely semi-simplicial, objects. We nevertheless verify the Kan condition for the simplicial space $\Bord_\bullet^\xi(M)$ as well as the simplicial set $\dBord_\bullet^\xi(M)$.

Similarly, in the study of smooth singular complexes, it is also common to include a collaring condition. See, for example,~\cite{kihara} and~\cite{CW}. See~Remark~1.7 and Warning~2.24 of~\cite{oh.hiro} for a discussion of diffeologies on simplices and the effect on the Kan condition for smooth singular complexes. 
Similar to our definition of $\Bord^\xi_\bullet(M)$ and $\dBord^\xi_\bullet(M)$, we do not impose a collaring condition in the definition of the simplicial smooth mapping spaces $\Map^{\sm}_\bullet(M,\Th(\xi))$ and $\Map^{\pitchfork}_\bullet(M,\Th(\xi))$, but still verify the Kan condition.

\subsection*{Acknowledgments}
We thank our advisors, Ralph L. Cohen and Mike Hopkins, for teaching us Pontryagin--Thom theory.
DA thanks the Wednesday Seminar at Montana State University for building community around the subject.  
JF thanks Josh Cynamon for coworking during the writing of this paper.
We thank Søren Galatius, Sam Gunningham, Manuel Krannich, Oscar Randal-Williams, and Hiro Lee Tanaka for helpful comments on this paper.

\subsection*{Terminology}
    \begin{itemize}
    
        \item The term \bit{manifold}, and \bit{manifold with boundary}, refers to a smooth manifold, and a smooth manifold with boundary, respectively. 
        The term \bit{manifold with corners} refers to a smooth manifold with corners in the sense of Definition 2.1 of~\cite{joyce}.
        For $M$ an $n$-manifold with corners, its underlying topological space is canonically stratified by the poset $[n]=\{0<\dots<n\}$, such that, for $i\in [n]$, the (locally closed) $i$-stratum is the locus of those $x\in M$ for which there exists an open embedding $\RR_{\geq 0}^i \times \RR^{n-i} \xra{\varphi} M$ such that $\varphi(0)=x$.
        For $i\in [n]$, an $i$-\bit{face} is a connected component $F \subset M$ of the $i$-stratum.

                \item A \bit{space} is an object in the $\oo$-category of spaces, $\Spaces$. A \bit{topological space} is an object of the ordinary category of topological spaces, $\Top$. There is a functor
                \[
                \Top \ra \Spaces
                \]
                which associates to a topological space its underlying space. We will use the same notation for both a topological space and its underlying space. (E.g., $\Sub^\xi(M)$ is defined as a space in Definition~\ref{d5} and as a topological space through Proposition~\ref{t88}.)

                \item For $\chi = (E \to B)$ a vector bundle, we often denote its total space as $E(\chi) := E$ and its base as $B(\chi) := B$.

    \end{itemize}

\section{Parametrized transversality}\label{sec.transv}

Throughout this section, we fix the following.
$M$ is a manifold with boundary.
$\xi = (E \to B)$ is a smooth rank-$k$ vector bundle.
$C$ is a manifold with corners. 
$X$ is a pointed topological space; the complement $X\smallsetminus \ast$ of the base point has the structure of a smooth manifold; and $X_0\subset X\smallsetminus\ast$ is a properly embedded smooth submanifold whose closure in $X$ does not contain the base point.

\subsection{Spaces of compactly-supported maps}

\begin{definition}
    A \bit{compactly-supported map} from $C$ to $X$ is a map $f\colon C \ra X$ whose support ${\sf Supp}(f) := \ov{f^{-1}(X \smallsetminus \ast)} \subset C$ is compact. 
    The topological space
    \[
    \cMap(C,X)
    \]
    is the set of compactly-supported maps from $C$ to $X$, endowed with the compact-open topology.
    \end{definition}

    \begin{definition}\label{d40}
    A \bit{smooth compactly-supported map} from $C$ to $X$ is a compactly-supported map $f\colon C \ra X$ such that the restriction $f_|\colon f^{-1}(X\smallsetminus \ast) \ra X\smallsetminus \ast$ is smooth.
    The topological space of smooth compactly-supported maps from $C$ to $X$
     \[
    \cMap^{\sm}(C,X)
    \]
    is the set of smooth compactly-supported maps endowed with the topology generated by the collection of subsets
    \[
    N(f, K,K',(U,\varphi), (V,\psi), V', \epsilon)
    ~\subset~
    \cMap^{\sf sm}(C,X)
    ~,
    \]
    given as follows.  
    \begin{itemize}
        \item[] Let $\{(U, \varphi)\}$ be a member of the atlas for $C$, and let $K \subset K' \subset U$ be concentric compact subspaces contained in $U$.
        Let $\{(V,\psi)\}$ be a member of the atlas for $X\smallsetminus \ast$, and let $V \subset V' \subset X$ be an open subset containing $V$.
        Let $f\in\cMap^{\sm}(C,X)$ such that $f(K')\subset V'$ and $f(K)\subset V$. 
        Let $\epsilon>0$.
        The subset $N(f, K, K', (U,\varphi), (V,\psi), V', \epsilon)$ consists of those elements $g\in \cMap^{\sf sm}(C,X)$ such that
    \begin{itemize}
        \item $g(K')$ is contained in $V'$;
        \item $g(K)$ is contained in $V$;
        \item $|\!|D^{(r)}_x(\psi f\varphi^{-1})-D^{(r)}_x(\psi g\varphi^{-1}) |\!|<\epsilon$ for all $x\in K$.
    \end{itemize}
    \end{itemize}
\end{definition}

\begin{remark}
    Given a choice of Riemannian metric on $X\smallsetminus\ast$, a sequence $f_n$ in $\cMap^{\sf sm}(C,X)$ converges to $f$ if and only if: it converges uniformly on compact subspaces; for each $x\in M$ such that $f(x)\neq \ast$, the derivatives $D^{(r)}f_n(x)$ converge uniformly to $D^{(r)}f(x)$.
\end{remark}
\begin{definition}\label{d41}
 A \bit{transverse compactly-supported map} from $C$ to $X$ is a smooth compactly-supported map $f\colon C \ra X$ such that for each face $F \subset C$, the smooth map $f_|\colon F \cap f^{-1}(X \smallsetminus \ast ) \ra X \smallsetminus \ast$ is transverse to $X_0$.
    The topological subspace
     \[
    \cMap^{\pitchfork}(C,X)
    ~\subset~
    \cMap^{\sf sm}(C,X)
    \]
    consists of the transverse compactly-supported maps.
\end{definition}

\subsection{Simplicial spaces of compactly-supported maps}

We recall some standard cosimplicial objects.
Let $p\geq 0$.
The \bit{topological $p$-simplex} and the \bit{extended $p$-simplex} are the respective subspaces:
\[
\Delta^p
~:=~
\left\{
\{0,\dots,p\} \xra{t} [0,1]
\mid
\underset{i\in \{0,\dots,p\}} \sum t_i = 1
\right\}
~\subset~
[0,1]^{\{0,\dots,p\}}
~,
\]
and
\[
\Delta^p_e
~:=~
\left\{
\{0,\dots,p\} \xra{t} \RR
\mid
\underset{i\in \{0,\dots,p\}} \sum t_i = 1
\right\}
~\subset~
\RR^{\{0,\dots,p\}}
~.
\]
Each morphism $[p]\xra{\sigma} [q]$ in $\bDelta$ determines maps
\[
\Delta^p \xra{~\sigma_\ast}~ \Delta^q
\qquad\text{ and }\qquad
\Delta^p_e \xra{~\sigma_\ast~} \Delta^q_e
\]
both given by
\[
\sigma_\ast(t)
\colon
j
\longmapsto
\underset{i \in \sigma^{-1}(j)} \sum t_i
~.
\]

\begin{observation}\label{t44}
\begin{enumerate}
\item[]
\item For each morphism $[p] \xra{\sigma} [q]$ in $\bDelta$, the map $\Delta^p_e \xra{\sigma_\ast} \Delta^q_e$ is an affine map between affine subspaces of Euclidean spaces.
In particular, $\sigma_\ast$ factors as a submersion followed by a proper embedding.

\item For each $p \geq 0$, the topological $p$-simplex inherits the structure of a $p$-manifold with corners, from the standard structure of a compact manifold with corners on finite-fold products of the closed interval.
The evident inclusion $\Delta^p \subset \Delta^p_e$ witnesses a collar extension.

\item These data organize as a cosimplicial smooth manifold $\Delta^\bullet_e$ and as a cosimplicial smooth manifold with corners $\Delta^\bullet$.  

\item 
For each morphism $[p] \xra{\sigma} [q]$ in $\bDelta$, the map $\Delta^p \xra{\sigma_\ast} \Delta^q$ is proper and it restricts to each face of of $\Delta^p$ as a submersion onto the face containing its image.

\end{enumerate}
\end{observation}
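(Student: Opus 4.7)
The plan is to verify the four claims in turn; each is an elementary unpacking of definitions, with part (2) being the only one requiring a modest geometric input.

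For (1), I would note that $\sigma_\ast$ is manifestly the restriction to $\Delta^p_e$ of the linear map $\RR^{\{0,\dots,p\}} \to \RR^{\{0,\dots,q\}}$ that sums coordinates along fibers of $\sigma$; this linear map preserves the affine hyperplanes $\{\sum t_i = 1\}$, so $\sigma_\ast$ is affine between affine subspaces of Euclidean space. Factor $\sigma = \iota \circ \pi$ in $\bDelta$ with $\pi$ an order surjection and $\iota$ an order injection. Then $\pi_\ast$ is a surjective affine map between affine spaces, hence a submersion; and $\iota_\ast$ is an injective affine map between affine spaces, hence a closed affine embedding, hence proper.

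For (2), I would realize $\Delta^p$ as the intersection inside the cube $[0,1]^{\{0,\dots,p\}}$ with the affine hyperplane $H := \{\sum t_i = 1\}$. The function $\sum t_i$ has constant nonzero differential $(1,\dots,1)$, which is transverse to the tangent space of every face stratum of the cube; hence $\Delta^p = H \cap [0,1]^{\{0,\dots,p\}}$ inherits a $p$-dimensional manifold-with-corners structure from the standard manifold-with-corners structure on the cube. The extended simplex $\Delta^p_e = H$ itself is a $p$-dimensional affine subspace, so the same transverse intersection argument applied to $\RR^{\{0,\dots,p\}}$ in place of the cube realizes the inclusion $\Delta^p \hookrightarrow \Delta^p_e$ as locally extending each corner chart $\RR^i_{\geq 0} \times \RR^{p-i}$ to the open chart $\RR^i \times \RR^{p-i}$ via the same coordinates, exhibiting it as a collar extension.

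For (3), the identity $(\tau \sigma)_\ast = \tau_\ast \sigma_\ast$ is a direct calculation from the explicit summation formula for $\sigma_\ast$, and the affineness of transition maps has already been noted in (1); so the data assembles into cosimplicial objects in the appropriate categories.

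For (4), properness of $\Delta^p \xra{\sigma_\ast} \Delta^q$ is automatic because $\Delta^p$ is compact Hausdorff. An $i$-face $F \subset \Delta^p$ corresponds to a subset $T \subset \{0,\dots,p\}$ of size $i$, with $F$ the open stratum on which $t_j = 0$ precisely for $j \in T$. Set $T' := \{j \in \{0,\dots,q\} \mid \sigma^{-1}(j) \subseteq T\}$. Then $\sigma_\ast(F)$ lies in the face of $\Delta^q$ indexed by $T'$, and the restriction $\{0,\dots,p\}\setminus T \to \{0,\dots,q\} \setminus T'$ of $\sigma$ is an order surjection: for any $j \notin T'$, by definition some $i \in \sigma^{-1}(j)$ lies outside $T$. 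Consequently the restricted map between open simplicial faces is a surjective affine map, hence a submersion. I expect no serious obstacle: the only nontrivial input is the transversality statement in (2), which follows at once from the constant nonvanishing differential of the summation function.
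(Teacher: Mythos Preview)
Your proposal is correct and complete. The paper itself provides no proof of this observation---it is stated as self-evident---so your careful verification of each part, including the transversality argument for the corner structure in (2) and the explicit face-index bookkeeping in (4), goes beyond what the paper supplies.
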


Observation~\ref{t44} implies the following.
\begin{observation}\label{t44'}
Let $[p]\xra{\sigma} [q]$ be a morphism in $\bDelta$. 
\begin{enumerate}
    \item The map $\Delta^p \times M \xra{\sigma_\ast \times M} \Delta^q \times M$ is a proper smooth map between manifolds with corners.

    \item Let $\Delta^q \times M \xra{f} X$ be a smooth map. 
 If $f$ is transverse to $X_0 \subset X$, then the composite map $f \circ (\sigma_\ast \times M) \colon \Delta^p \times M \xra{\sigma_\ast \times M} \Delta^q \times M \xra{f} X$ is transverse to $X_0$. 
    
\end{enumerate}

\end{observation}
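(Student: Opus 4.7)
The plan is to deduce each clause directly from the corresponding item of Observation~\ref{t44}, with essentially no new geometric input.

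For clause (1), properness is a set-theoretic consequence of Observation~\ref{t44}(4): given a compact $K \subset \Delta^q\times M$, the projections $K_\Delta \subset \Delta^q$ and $K_M \subset M$ are compact, and the preimage $(\sigma_\ast\times \id_M)^{-1}(K)$ is a closed subspace of the compact set $\sigma_\ast^{-1}(K_\Delta)\times K_M$, hence compact. Smoothness in the sense of manifolds with corners follows because $\Delta^\bullet$ is cosimplicial in smooth manifolds with corners (Observation~\ref{t44}(3)) and a product of smooth maps between manifolds with corners is again smooth.

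For clause (2), I would work face-by-face, which is exactly how Definition~\ref{d41} frames transversality. Each face of the manifold with corners $\Delta^p\times M$ has the form $F_p\times F_M$, where $F_p\subset \Delta^p$ is a face and $F_M\subset M$ is either $M^\circ$ or a component of $\partial M$. By Observation~\ref{t44}(4), there is a unique face $F_q\subset \Delta^q$ that contains the image $\sigma_\ast(F_p)$, and the restriction $\sigma_\ast|_{F_p}\colon F_p\to F_q$ is a submersion; consequently the restricted product map
\[
(\sigma_\ast\times \id_M)|_{F_p\times F_M}\colon F_p\times F_M \longrightarrow F_q\times F_M
\]
is a submersion between boundaryless manifolds. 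By hypothesis $f|_{F_q\times F_M}$ is transverse to $X_0$, and precomposition with a surjective submersion preserves transversality (since for any $y\in F_p\times F_M$ with image $x=(\sigma_\ast\times\id_M)(y)\in f^{-1}(X_0)$, the differential $d(\sigma_\ast\times\id_M)_y$ is surjective, so the image of $d(f\circ(\sigma_\ast\times\id_M))_y$ equals the image of $df_x$, which already complements $T_{f(x)}X_0$). Applying this to each face gives the claim.

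The only point requiring modest care is the identification of the face $F_q$ that contains $\sigma_\ast(F_p)$; this is a direct unwinding of the formula $\sigma_\ast(t)_j=\sum_{i\in\sigma^{-1}(j)} t_i$, showing that the face corresponding to a subset $S\subset\{0,\dots,p\}$ is sent into the face corresponding to $\sigma(S)\subset\{0,\dots,q\}$. I do not anticipate any substantive obstacle; the statement is essentially bookkeeping on top of Observation~\ref{t44}.
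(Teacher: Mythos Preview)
Your argument is correct and is exactly the unpacking the paper has in mind: the paper offers no proof beyond the one-line ``Observation~\ref{t44} implies the following,'' and you have faithfully traced that implication, using item~(4) of Observation~\ref{t44} to get the face-wise submersion and then the standard fact that precomposition by a submersion preserves transversality. The only cosmetic point is that surjectivity of the submersion is not needed in your parenthetical transversality computation (and indeed your computation does not use it), though as you note the restriction $\sigma_\ast|_{F_p}$ is in fact surjective onto its target face.
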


Observation~\ref{t44'} allows us to define the simplicial structure maps in the following.

\begin{definition}\label{def.transv.sing}
We denote simplicial topological spaces
\begin{eqnarray*}
{\cMap}_\bullet(M,X)
&
~:=~
&
\cMap(\Delta^\bullet \times M,X)
~,
\\
{\cMap}_\bullet^{\sm}(M,X)
&
~:=~
&
\cMap^{\sf sm}(\Delta^\bullet \times M , X)
~,
\\
{\cMap}_\bullet^\pitchfork(M,X)
&
~:=~
&
\cMap^\pitchfork(\Delta^\bullet \times M , X)
~.
\end{eqnarray*}

\end{definition}

\begin{observation}\label{t44''}
There are canonical injective morphisms between simplicial topological spaces
\begin{equation}\label{e80}
\cMap^\pitchfork_\bullet(M,X)
\longrightarrow
\cMap^{\sf sm}_\bullet(M,X)
\longrightarrow
\cMap_\bullet(M,X)
\longleftarrow
\cMap(M,X)
~,
\end{equation}
in which the domain of the last leftward morphism is the constant simplicial topological space, and it is induced by the unique natural transformation $\Delta^\bullet \to \ast$, which is by proper maps.

\end{observation}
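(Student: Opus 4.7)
The plan is to address each of the three morphisms separately; all reduce to immediate consequences of the definitions together with Observations~\ref{t44} and~\ref{t44'}. I will first treat the two rightward inclusions, then the leftward morphism from the constant simplicial space.

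For the two rightward maps $\cMap^\pitchfork_\bullet(M,X)\to \cMap^{\sm}_\bullet(M,X)\to \cMap_\bullet(M,X)$, injectivity at each simplicial level is tautological: by Definitions~\ref{d40} and~\ref{d41}, transverse compactly-supported maps are smooth compactly-supported maps, which are in particular compactly-supported maps. Continuity at each level is similarly formal: the topology in Definition~\ref{d40} is generated by the sets $N(f,K,K',(U,\varphi),(V,\psi),V',\epsilon)$, each of which already restricts the $C^0$-behaviour on compact subsets (through the conditions $g(K)\subset V$ and $g(K')\subset V'$) and so refines the compact-open topology; and $\cMap^\pitchfork$ carries the subspace topology from $\cMap^{\sm}$. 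Compatibility with simplicial structure maps amounts to the assertion that, for each $[p]\xra{\sigma}[q]$ in $\bDelta$ and each $f\colon \Delta^q\times M\to X$ in the relevant class, the composite $f\circ(\sigma_\ast\times M)$ again lies in that class. For compactly-supported maps this uses properness of $\sigma_\ast\times M$ (Observation~\ref{t44'}(1)); for smooth maps it uses smoothness of $\sigma_\ast\times M$, a consequence of Observation~\ref{t44}; and for transverse maps it is precisely Observation~\ref{t44'}(2).

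For the leftward morphism, I will use the projections $\pi_p\colon \Delta^p\times M \to M$, obtained from the unique natural transformation $\Delta^\bullet\to\ast$ of cosimplicial topological spaces. Because $\Delta^p$ is compact, $\pi_p$ is proper, hence for any compactly-supported $f\colon M\to X$ the composite $f\circ\pi_p$ has support $\Delta^p\times {\sf Supp}(f)$ and so is itself compactly-supported. Precomposition $f\mapsto f\circ\pi_p$ is continuous for the compact-open topology by the standard exponential-law argument, and it is injective since $\pi_p$ is surjective. Simplicial compatibility reduces to the identity $\pi_q\circ(\sigma_\ast\times M)=\pi_p$ for each $[p]\xra{\sigma}[q]$, which is automatic: both sides send $(t,m)$ to $m$. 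Consequently, the assignment $p\mapsto \pi_p^\ast$ assembles into a morphism from the constant simplicial topological space at $\cMap(M,X)$ to $\cMap_\bullet(M,X)$.

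I do not anticipate any serious obstacle. The mildest technical point is parsing Definition~\ref{d40} carefully enough to confirm that its topology genuinely refines the subspace compact-open topology inherited from $\cMap$; everything else is a direct unwinding of the definitions, which is why the authors record this as an observation rather than a lemma.
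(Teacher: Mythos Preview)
Your proposal is correct. The paper records this as an observation without proof, so there is no ``paper's own proof'' to compare against; your careful unwinding of Definitions~\ref{d40}--\ref{d41} and appeal to Observations~\ref{t44} and~\ref{t44'} is exactly the sort of routine verification the authors are implicitly relying on.
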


\subsection{Smooth extension and approximation results}

In this technical subsection, we prove a few results about extending smooth maps respecting properness and transversality.

\begin{definition}\label{d20}
Let $C$ be a manifold with corners.
A closed union of faces, $C_0 \subset C$, is \bit{properly acyclic} if it satisfies the following condition.
\begin{itemize}
    \item[] Let $C_0 \subset V \subset C$ be an open neighborhood of $C_0$ whose complement $C \smallsetminus V \subset C$ is compact.
    There exists a smooth proper map $C \xra{e} C$ with the following properties.
    \begin{enumerate}
        \item It is a diffeomorphism onto its image, which is open.
        
        \item There is containment of its image $e(C) \subset V$.
        
        \item It extends the inclusion $C_0 \hookrightarrow C$.
    \end{enumerate}
\end{itemize}
\end{definition}

We will be particularly concerned with the following example.

\begin{example}
    Let $0\leq i \leq p$. 
    The closed union of faces $\Lambda^p_i \times M \subset \Delta^p \times M$ is properly acyclic.
    
\end{example}

\begin{lemma}\label{lemma.sm.sing}    
Consider a solid diagram
\[\begin{tikzcd}
	{C_0} & T \\
	C
	\arrow["f", from=1-1, to=1-2]
	\arrow[hook', from=1-1, to=2-1]
	\arrow["{\w{f}}"', dashed, from=2-1, to=1-2]
\end{tikzcd}\]
in which $T$ is a manifold, $C$ is a manifold with corners, $C_0 \subset C$ is a closed union of faces, and $f$ is a map whose restriction to each face in $C_0$ is smooth.
\begin{enumerate}
    \item 
    If $T$ is a vector space, then there is a smooth extension $\w{f}$.

    \item 
    If $f$ is proper and $T$ is a vector space whose dimension is greater than that of $C$, then there is a proper smooth extension $\w{f}$.

\item 
If $C_0 \subset C$ is properly acyclic,
then there is a smooth extension $\w{f}$.

\end{enumerate}

\end{lemma}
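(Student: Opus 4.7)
The plan is to handle the three cases separately, as each relies on a distinct construction.

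For part (1), the idea is the classical extension via iterated collars and partition of unity, with the vector-space structure on $T$ being essential. Since $C_0 \subset C$ is a closed union of faces of a manifold with corners, an inductive argument on the codimension of faces produces an open neighborhood $V \supset C_0$ together with a smooth retraction $r \colon V \to C_0$. Because the restriction of $f$ to each face is smooth, the composite $f \circ r \colon V \to T$ is smooth. Choosing a smooth cutoff $\rho \colon C \to [0,1]$ that is identically $1$ on a smaller neighborhood of $C_0$ and supported in $V$, I set
\[
\tilde f := \rho \cdot (f \circ r),
\]
extended by the zero vector outside $\mathrm{supp}(\rho)$. This is smooth and restricts to $f$ on $C_0$; the vector-space structure of $T$ is used both for the scalar multiplication by $\rho$ and for the extension-by-zero.

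For part (3), the properly acyclic condition substitutes for the vector-space structure on $T$. I choose an open neighborhood $V \supset C_0$ admitting a smooth retraction $r \colon V \to C_0$ (via the collar structure of the faces) with $C \smallsetminus V$ compact, and then invoke the properly acyclic hypothesis to obtain a smooth proper open embedding $e \colon C \to C$ with $e(C) \subset V$ and $e|_{C_0}$ equal to the inclusion. The composite $\tilde f := f \circ r \circ e \colon C \to T$ is then smooth and restricts to $f$ on $C_0$, with no partition of unity required; the proper embedding $e$ does all the work of retracting $C$ into a neighborhood of $C_0$.

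For part (2), I would begin with a smooth extension $g \colon C \to T$ from part (1) and modify it to be proper while preserving $g|_{C_0} = f$. Let $\lambda \colon C \to [0,\infty)$ be a smooth proper exhaustion of $C$, and let $\rho \colon C \to [0,1]$ be a smooth cutoff with $\rho \equiv 1$ on a neighborhood of $C_0$ and $\mathrm{supp}(\rho) \subset V$, where $V$ is a neighborhood of $C_0$ with smooth retraction onto $C_0$. Using that $\dim T > \dim C$, the image $g(C)$ has dimension strictly less than that of $T$, so one can choose a smooth proper map $h \colon C \to T$ whose behavior at infinity is in a direction of $T$ complementary to $g(C_0)$ at infinity. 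The combination $\tilde f := \rho \cdot g + (1-\rho) \cdot h$ is then smooth, restricts to $f$ on $C_0$, and is proper provided $h$ is chosen so that $h$ dominates $g$ outside $\mathrm{supp}(\rho)$ in an appropriate sense.

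The main obstacle will be ensuring properness in part (2). The noncompactness of $C_0$ precludes the simple trick of choosing a proper smooth function on $C$ that vanishes on $C_0$ (no such function exists unless $C_0$ is compact), so the proper correction must be constructed along a direction of $T$ rather than being controlled by a scalar function on $C$. The dimension hypothesis $\dim T > \dim C$ is precisely what guarantees the existence of such a direction, independent of the particular extension $g$ produced in part (1).
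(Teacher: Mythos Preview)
Your approach to parts (1) and (3) has a genuine gap: the smooth retraction $r \colon V \to C_0$ you assume does not exist in general. Consider $C = [0,1]^2$ with $C_0 = (\{0\}\times[0,1]) \cup ([0,1]\times\{0\})$. If $r \colon V \to C$ were a smooth map restricting to the identity on $C_0$, then at the corner $(0,0)$ its derivative along each edge would be the identity, so $Dr_{(0,0)}$ would be invertible; by the inverse function theorem $r$ would then be a local diffeomorphism, contradicting that its image lies in the one-dimensional set $C_0$. This obstruction arises whenever distinct faces of $C_0$ meet, which is exactly the case of interest (e.g., $C_0 = \Lambda^p_i \times M$). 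Even if you relax ``smooth retraction'' to something weaker, the composite $f \circ r$ will not be smooth at such junctions, since $f$ is only assumed smooth \emph{facewise}.

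The paper's remedy is an inclusion--exclusion formula: in a local chart identifying a neighborhood of $x \in C_0$ with $\RR^\ell \times \RR_{\geq 0}^d$ and $C_0$ with a union $\bigcup_{S \in I} \RR_{\geq 0}^S$ of coordinate subspaces, one sets
\[
\w{f}_x := \sum_{S \in I} (-1)^{d-|S|+1}\, f \circ \mathrm{proj}_S,
\]
which is smooth (each summand is) and agrees with $f$ on each face by cancellation of signs. These local extensions are then glued by a partition of unity, again using that $T$ is a vector space. This formula is the missing idea you need for (1), and hence for (3), which the paper proves by embedding $T$ in some $\RR^N$, applying (1) there, and then using the properly acyclic hypothesis to retract into a tubular neighborhood of $T$.

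For part (2), your sketch points in a different direction from the paper and is not quite correct as stated: the assertion that $g(C)$ ``has dimension strictly less than that of $T$'' does not preclude $g(C)$ from being dense. The paper instead uses Sard's theorem to find $t_0 \notin \w{f}(C)$, composes with a diffeomorphism of $T$ fixing a neighborhood of $f(C_0)$ to push the image away from $t_0$, and then radially rescales about $t_0$ by a factor $\sigma \geq d / \lVert \w{f} - t_0 \rVert$ where $d$ is a proper continuous extension of $\lVert f - t_0 \rVert$. This is a different and more concrete mechanism than your convex-combination idea, which as written does not guarantee properness in the transition region.
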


\begin{proof}
We first prove statement~(1). 
Using that $C_0 \subset C$ is a closed union of faces, for each $x\in C_0$ choose the following data:
\begin{itemize}
    \item Non-negative integers $\ell \geq 0$ and $d \geq 0$.
    
    \item A down-closed subset $I \subset \cP(\{1,\dots,d\})$ of the power set.

    \item A smooth open embedding
\[
\RR^\ell \times \RR_{\geq 0}^d
\xra{~\varphi_x~}
C
\]
such that $\varphi_x(0) = x$ and $\varphi_x^{-1}(C_0) = \RR^\ell \times B(I)$. 
Here, 
\[
B(I)
~:=~ 
\Bigl\{ 
(r_1,\dots,r_d)
\big| \ \{i \mid r_i \neq 0\} \in I
\Bigr\}
=
\bigcup_{S\in I}\RR_{\geq 0}^S
~\subset~
\RR_{\geq 0}^d
\]
is the union of the coordinate planes, where $\RR^S$ is identified with the subspace of $\RR^d$ whose $i^{th}$ coordinate is zero for each $i\in \{1,\ldots,n\}\smallsetminus S$.
\end{itemize}  
Define the extension
\[\begin{tikzcd}
	{\RR^\ell \times B(I)} & {C_0} & T \\
	{\RR^\ell \times \RR_{\geq 0}^d}
	\arrow[swap, "{{\varphi_x}_{|}}"', from=1-1, to=1-2]
	\arrow[hook', from=1-1, to=2-1]
	\arrow["f", from=1-2, to=1-3]
	\arrow["{\w{\iota \circ f \circ \varphi_x}_|}"', dashed, from=2-1, to=1-3]
\end{tikzcd}\]
by the expression
\[
\w{f \circ {\varphi_x}_|}
~:=~
\sum_{S \in I} (-1)^{d-|S|+1} f \circ {\varphi_x}_| \circ (\id_{\RR^\ell} \times {\sf proj}_S)
\]
where, for $S\in I$, the map $\RR_{\geq 0}^d \xra{{\sf proj}_S} \RR_{\geq 0}^S$ is orthogonal projection onto the subspace non-negatively spanned by the $S$-coordinates. 
Note that this expression is defined in terms of the vector space structure on $T$.
It is easily checked that the map $\w{f\circ {\varphi_x}_|}$ is smooth, and that  signs cancel to ensure that it extends $f \circ {\varphi_x}_|$. 
Denote the open neighborhood $x\in U_x \subset C$ that is the image of $\varphi_x$.
Denote the composite map 
\[
\w{f}_x
\colon 
U_x
\xra{~\varphi_x^{-1}~}
\RR^\ell \times \RR_{\geq 0}^d
\xra{~\w{f \circ {\varphi_x}_|}~}
T
~.
\]
This map $\w{f}_x$ is a composition of smooth maps, and it is therefore smooth. 
Furthermore, the maps $\w{f}_x$ and $f$ restrict identically along $U_x \cap C_0$.

Let $x\in C \smallsetminus C_0$.
Using that $C_0 \subset C$ is a closed subspace, for each $x\in C \smallsetminus C_0$ choose an open neighborhood $x\in U_x \subset C$ such that $U_x \cap C_0 = \emptyset$.  
Choose a smooth map $U_x \xra{\w{f}_x} T$.

Next, choose a smooth partition of unity $(\rho_x)_{x\in C}$ subordinate to the open cover $(U_x)_{x\in C}$.  
The map
\[
\w{f}
\colon 
C
\longrightarrow
T
~,\qquad
\w{f}
~:=~
\sum_{x\in C} \rho_x \cdot  \w{f}_x
~,
\]
is defined, and smooth, and it extends $f$.  
This completes the proof of statement~(1).

We now prove statement~(2).
Once and for all, choose a linear inner product on $T$.
Using statement~(1), choose a smooth extension $C \xra{\w{f}} T$.
By Sard's Theorem, the assumed dimension bound implies $\w{f}$ is not surjective.
So choose $t_0 \in T$ that is not contained in the image of $f$.
Using that $f$ is assumed proper, there exists $r>0$ such that the open ball $\sB_{2r}(t_0) \subset T$ is disjoint from the image of $f$.
Choose a diffeomorphism $\alpha \colon T \smallsetminus \{t_0\} \cong T \smallsetminus \ov{\sB}_r(0)$ from $T$ remove $t_0$ to $T$ remove the closed disk of radius $r$ about $t_0$, such that $\alpha(t) = t$ for each $t \in T \smallsetminus \sB_{2r}(t_0)$.  
Observe that the resulting map $C \xra{\alpha \circ \w{f} } T$ is well-defined, smooth, and proper, and that it extends $C_0 \xra{f} T$.
Upon replacing $\w{f}$ with $\alpha \circ \w{f}$, in what follows we assume the image of $\w{f}$ is disjoint from $\sB_r(t_0)$.

Choose a proper continuous filler of the diagram
\[\begin{tikzcd}
	{C_0} & {\RR_{\geq r}} \\
	C
	\arrow["{\lVert f - t_0\rVert}", from=1-1, to=1-2]
	\arrow[hook', from=1-1, to=2-1]
	\arrow["d"', from=2-1, to=1-2]
\end{tikzcd}
~.
\]
Choose an open neighborhood $C_0 \subset U \subset C$ that admits a retraction $U \xra{\rho} C_0$, which is a deformation retraction.
Homotope $d$ as needed to assume the restriction $d_{|U} \colon U \xra{\rho} C_0 \xra{\lVert f - t_0 \rVert} \RR_{\geq r}$ factors through $C_0$ via the retraction.
Choose a smooth map $C \xra{\sigma} \RR_{\geq 1}$ such that $\sigma(c) = c$ for all $c\in C_0$ and $\sigma \geq \frac{d}{\lVert \w{f}-t_0\rVert}$ -- the assumption on $d_{|U}$ ensures such a $\sigma$ exists.
Consider the map
\[
C \xra{~\w{f}_\sigma} T
~,\qquad
c
\longmapsto
\sigma(c) \Bigl( \w{f}(c) - t_0 \Bigr) + t_0
~,
\]
which is evidently smooth.
For each $c\in C$, observe the inequality
\[
\lVert \w{f}_\sigma(c) - t_0 \rVert
~=~
\sigma \lVert \w{f}(c) - t_0 \rVert
~\geq~
\frac{d(c)}{\lVert \w{f}(c)-t_0 \rVert} \lVert \w{f}(c) - t_0 \rVert
~=~
d(c)
~.
\]
Because $d$ is proper, this inequality ensures $\w{f}_\sigma$ is also proper.
This completes the proof of statement~(2).

We now prove statement~(3).
Choose a proper smooth embedding $T \xra{\iota} \RR^N$ for some $N \geq 0$.
Choose a tubular neighborhood $\iota(T) \subset \w{T} \subset \RR^N$, and a smooth retraction $\w{T} \xra{\pi} T$.
Using statement~(1), choose a smooth extension $\w{\iota \circ f}$ of $\iota \circ f$.
Because $\w{\iota \circ f}$ extends $\iota \circ f$, we have containment 
\[
C_0 
~\subset~ 
\w{\iota \circ f}^{-1}(\w{T}) 
~\subset~ 
C
\]
in the preimage of the tubular neighborhood $\w{T}\subset \RR^N$ of $\iota(T)$.
Using that $C_0 \subset C$ is properly acyclic, choose a smooth proper embedding $C \xra{e} C$ extending the inclusion $C_0 \hookrightarrow C$ whose image is contained in $\w{\iota \circ f}^{-1}(\w{T})$. 
Consider the composite map
\[
\w{f}
\colon 
C
\xra{~e~}
\w{\iota \circ f}^{-1}(\w{T})
\xra{~\w{\iota \circ f}~}
\w{T}
\xra{~\pi~}
T
~.
\]
This map is a composition of smooth maps, and therefore it is smooth.
Furthermore, this map $\w{f}$ extends $f$ because $\w{\iota \circ f}$ extends $\iota \circ f$, and because $\pi$ is a retraction.

\end{proof}

\begin{lemma}\label{t85}
Consider a solid diagram
\[\begin{tikzcd}
	{C_0} & \Th(\xi) \\
	C
	\arrow["f", from=1-1, to=1-2]
	\arrow[hook', from=1-1, to=2-1]
	\arrow["{\w{f}}"', dashed, from=2-1, to=1-2]
\end{tikzcd}\]
in which $\xi = (E \to B)$ is a smooth rank-$k$ vector bundle, $C$ is a manifold with corners, $C_0 \subset C$ is a properly acyclic closed union of faces, and $f$ is a compactly-supported map that restricts to the closure of each face in $C_0$ as a smooth compactly-supported map.
\begin{enumerate}
    \item There is a smooth compactly-supported extension $\w{f}$.

\item If the restriction of $f$ along each face $F$ in $C_0$ is transverse to the zero-section, $f_{|F} \pitchfork B$, then there exists a smooth compactly-supported extension $\w{f}$ that is also transverse to the zero-section, $\w{f} \pitchfork B$.
\end{enumerate}

\end{lemma}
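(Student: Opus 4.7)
My plan is to build $\w f$ as a composition $f\circ r$, where $r$ is a smooth retraction from a neighborhood of $C_0$ to $C_0$, and then cut off to a compact region by choosing the cutoff boundary so that the composition $f\circ r$ already equals the basepoint $\ast$ there---thereby avoiding any delicate Thom-space interpolation. First, I would construct a smooth retraction $r \colon \nu \to C_0$ on an open neighborhood $\nu$ of $C_0$ in $C$ with $r|_{C_0} = \id_{C_0}$, such that: (i) $r$ restricts to each face of $\nu$ as a submersion onto a face of $C_0$; and (ii) $r$ is proper over compact subsets of $C_0$. For a closed union of faces in a manifold with corners, such a retraction can be built via iterated collar neighborhoods, starting from the highest-codimension corner strata and working outward; this is the main technical ingredient.

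Next, set $K_0 := {\sf Supp}(f) \subset C_0$ and choose a compact neighborhood $K_1 \subset C_0$ of $K_0$ with the strict containment $K_0 \subset K_1^\circ$. Let $U := r^{-1}(K_1^\circ) \subset \nu$, whose closure $\overline{U} \subset r^{-1}(K_1)$ is compact by property~(ii). Define
\[
\w f(x) \;:=\; \begin{cases} (f\circ r)(x) & x \in U, \\ \ast & x \notin U. \end{cases}
\]
The crucial continuity check is at $\partial U$: a point $x \in \partial U$ satisfies $r(x) \in \partial K_1 \subset C_0 \smallsetminus K_0$, and $f$ equals $\ast$ on all of $C_0 \smallsetminus K_0$, so $f\circ r$ is already $\ast$ on $\partial U$, matching the piecewise definition. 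The remaining verifications are routine: the identity $\w f|_{C_0} = f$ follows from $r|_{C_0} = \id_{C_0}$ together with $K_0 \subset K_1^\circ$; smoothness of $\w f$ on $\w f^{-1}(E) = U \cap r^{-1}(f^{-1}(E))$ is a composition of smooth maps; and ${\sf Supp}(\w f) \subset \overline{U}$ is compact.

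For part~(2), under the additional hypothesis, for a face $F' \subset \nu$ projecting submersively via $r$ onto a face $F \subset C_0$, the chain rule yields $d(f\circ r)_x(T_x F') = df_{r(x)}(T_{r(x)} F)$ at every $x \in F'$, so $(f\circ r)|_{F'}\pitchfork B$ iff $f|_F \pitchfork B$, which holds by hypothesis. Hence $\w f \pitchfork B$ face-by-face on $U$, and $\w f^{-1}(B) \cap (C \smallsetminus U) = \emptyset$ trivially. The main obstacle will be Step~1: producing a smooth retraction with the submersion-on-faces and fiberwise-properness properties. For a single boundary face this is the classical collar theorem, but for higher-codimension corner strata or for unions of faces that meet, it requires a careful iterated-collar construction in the category of manifolds with corners; this is where the properly-acyclic hypothesis on $C_0 \subset C$ should come into play, in the style of Lemma~\ref{lemma.sm.sing}(3).
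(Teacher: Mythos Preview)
Your approach has a genuine gap: the smooth retraction $r\colon \nu \to C_0$ that you propose does not exist when $C_0$ fails to be a manifold with corners, and this is precisely the case of interest (e.g., $C_0 = \Lambda^p_i \times M$). Here is the obstruction in the smallest instance. Take $C = \RR_{\geq 0}^2$ and $C_0$ the union of the two coordinate rays. A smooth map $r = (r_1, r_2)\colon \nu \to \RR_{\geq 0}^2$ with image in $C_0$ and $r|_{C_0} = \id$ would satisfy $r_1 r_2 \equiv 0$, $r_1(x,0) = x$, $r_2(0,y) = y$, and $r_1(0,y) = r_2(x,0) = 0$. By Hadamard's lemma write $r_1 = x\,\w r_1$ and $r_2 = y\,\w r_2$ with $\w r_1(x,0) = \w r_2(0,y) = 1$; then $\w r_1 \w r_2 \equiv 0$ on the open quadrant, hence at the origin, contradicting $\w r_1(0,0)\w r_2(0,0) = 1$. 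Your condition~(i) is separately inconsistent with $r$ being a retraction: the interior of $\nu$ is connected, so it would submerse onto a single face $F$ of $C_0$, forcing $r(\nu)\subset \ov F$ and preventing $r(C_0) = C_0$ when $C_0$ has several top-dimensional faces. Even dropping~(i), smoothness of $f\circ r$ would not follow: $f$ is only \emph{face-wise} smooth on $C_0$, and no notion of ``smooth $r$'' makes the composition smooth across a non-manifold corner of $C_0$.

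The paper sidesteps this by retracting on the \emph{target} side. It embeds $\Th(\xi)$ into $(\RR^{2N})^+$ and extends $\iota\circ f$ over $C$ to land in $\RR^{2N}$ using the linear structure via Lemma~\ref{lemma.sm.sing}; the inclusion--exclusion formula there is exactly what glues the face-wise smooth data across the non-manifold corners of $C_0$. A tubular-neighborhood retraction $U \to \iota(\Th(\xi))$ in the target is then applied, and the properly-acyclic hypothesis enters only to supply an embedding $e\colon C \hookrightarrow C$ pushing $C$ into $\w{\iota\circ f}^{-1}(U)$. Part~(2) is handled by a second use of proper acyclicity, pushing $C$ into the open locus on which the already-constructed smooth extension is transverse to $B$.
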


\begin{proof}
These statements readily reduces to the case in which $C$ is connected. 
We now reduce these statements to the case in which $B$ is connected.
Write $B = \underset{\alpha} \coprod B_\alpha$ as a coproduct of its connected components.
The Thom space $\Th(\xi) \cong \underset{\alpha} \bigvee \Th(\xi_{|B_\alpha})$ 
is canonically homeomorphic to the wedge sum of the Thom spaces of the base-change of $\xi$ over each connected component of $B$.
So the map $f$ is a $\pi_0(B)$-indexed sequence $(f_\alpha)_{\alpha \in \pi_0(B)}$ of compactly-supported maps with mutually disjoint supports: ${\sf Supp}(f_\alpha) \cap {\sf Supp}(f_\beta) = \emptyset$ for $\alpha \neq \beta$. 
Note the identity ${\sf Supp}(f) = \underset{\alpha\in \pi_0(B)} \bigcup {\sf Supp}(f_\alpha)$ between subspaces of $C$.
The assumption that $f$ is compactly-supported implies the set $\{\alpha \in \pi_0(B) \mid {\sf Supp}(f_\alpha) \neq \emptyset\}$ is finite.  
Using that $C$ is normal, for each $\alpha\in \pi_0(B)$ such that ${\sf Supp}(f_\alpha) \neq \emptyset$, choose an open neighborhood ${\sf Supp}(f_\alpha) \subset U_\alpha \subset C$ with compact closure such that $U_\alpha \cap U_\beta = \emptyset$ for $\alpha \neq \beta$.
Let 
\[\begin{tikzcd}
	U_\alpha \cap C_0 & \Th(\xi_{|B_\alpha}) \\
	U_\alpha
	\arrow["f", from=1-1, to=1-2]
	\arrow[hook', from=1-1, to=2-1]
	\arrow["{\w{f}_\alpha}"', dashed, from=2-1, to=1-2]
\end{tikzcd}\]
be a smooth extension as in statement~(1) in the case that $B$ is connected.
The map
\[
C \xra{~\w{f}~} \underset{\alpha \in \pi_0(B)} \bigvee \Th(\xi_{|B_\alpha}) \cong \Th(\xi)
~,\qquad
c
\longmapsto 
\begin{cases}
    \w{f}_\alpha(c)
    &
    ,~\text{ if } c\in U_\alpha
    \\
    +
    &
    ,~
    \text{ if } c \notin \underset{\alpha} \bigcup U_\alpha
\end{cases}
~,
\]
is a smooth extension as in statement~(1).
Clearly, if $\w{f}_\alpha$ is transverse to $B_\alpha$ for each $\alpha$, then $\w{f}$ is transverse to $B$.
This completes the reduction to the case in which $B$ is connected.

In what follows, we assume both $C$ and $B$ are connected.  
In particular, both $C$ and $B$ have constant dimension.  
Using this, choose $N> {\sf dim}(C)/2$ together with a smooth embedding $\xi \hookrightarrow \epsilon^N_{\RR^N}$ into a trivial rank-$N$ vector bundle over $\RR^N$ such that the smooth embedding between bases $B \hookrightarrow \RR^N$ has compact closure.  
This embedding uniquely extends as a pointed embedding $\Th(\xi) \xra{\iota} \Th(\epsilon^N_{\RR^N}) \hookrightarrow (\RR^{2N})^+$ to the one-point compactification of $\RR^{2N}$, such that restriction $E \xra{\iota_|} \RR^{2N}$ is smooth.

Because $f$ is continuous and compactly-supported, the preimage $f^{-1}(E) = (\iota \circ f)^{-1}(\RR^{2N}) \subset C_0$ is an open subset whose closure is compact.  
It follows that the smooth map $f^{-1}(E) \xra{f_|} E \xra{\iota_|} \RR^{2N}$ is proper.
Using that $C$ is locally compact, choose an open subset $\w{f}^{-1}(E) \subset C$ whose closure is compact and such that $\w{f}^{-1}(E) \cap C_0 = f^{-1}(E)$.
Note that $f^{-1}(E) \subset \w{f}^{-1}(E)$ is a closed union of faces in a manifold with corners; in particular, the inclusion $f^{-1}(E) \hookrightarrow \w{f}^{-1}(E)$ is a proper map.
Using Lemma~\ref{lemma.sm.sing}(2), choose a proper smooth extension:
\[\begin{tikzcd}
	{f^{-1}(E)} & E & {\RR^{2N}} \\
	{\w{f}^{-1}(E)}
	\arrow["{f_|}", from=1-1, to=1-2]
	\arrow[hook', from=1-1, to=2-1]
	\arrow["{\iota_|}", from=1-2, to=1-3]
	\arrow["{\w{\iota_| \circ f_|}}"', from=2-1, to=1-3]
\end{tikzcd}
~.
\]
Define 
\[
\w{\iota \circ f} \colon 
C 
\xra{~c~}
(\w{f}^{-1}(E))^+
\xra{~(\w{\iota_| \circ f_|})^+~}
(\RR^{2N})^+
~,
\]
where $c$ is the collapse map of $C$ onto the one-point compactification of its open subspace $\w{f}^{-1}(E) \subset C$.
This map is a smooth compactly-supported map because it is a composition of such.
Commutativity of the diagram
\[
\begin{tikzcd}
	{C_0} & {\Th(\xi)} & {(\RR^{2N})^+} \\
	C & {(f^{-1}(E))^+} \\
	& {(\w{f}^{-1}(E))^+}
	\arrow["f", from=1-1, to=1-2]
	\arrow[hook', from=1-1, to=2-1]
	\arrow["c"', from=1-1, to=2-2]
	\arrow["\iota", from=1-2, to=1-3]
	\arrow["c"', from=2-1, to=3-2]
	\arrow["{(\iota_| \circ f_|)^+}"', from=2-2, to=1-3]
	\arrow[hook', from=2-2, to=3-2]
	\arrow["{(\w{\iota_| \circ f_|})^+}"', shift right=5, from=3-2, to=1-3]
\end{tikzcd}
\]
ensures this map $\w{\iota \circ f}$ extends $\iota \circ f$.

Using the tubular neighborhood theorem applied to $\iota(E) \subset \RR^{2N}$, together with the complement of a large open ball about the origin in $\RR^{2N}$, choose the following.
\begin{itemize}
    \item An open neighborhood $\iota(\Th(\xi)) \subset U \subset (\RR^{2N})^+$.
    
    \item A deformation retraction $[0,1]\times U \xra{\rho} U$ of $U$ onto $\iota(\Th(\xi))$ such that the restriction $[0,1] \times \rho_1^{-1}(\iota(E)) \xra{\rho_|} \rho_1^{-1}(\iota(E))$ is a smooth deformation retraction of the preimage $\rho_1^{-1}(\iota(E))$ onto $\iota(E)$.  (So, $\rho_0 = \id_U$ and $\rho_1$ is a retraction.)
\end{itemize}

Using that $\w{\iota \circ f}$ is continuous, and that $\iota \circ f(C_0) \subset \iota(\Th(\xi))$, the preimage
\[
C_0
~\subset~
\w{\iota \circ f}^{-1}(U)
~\subset~
C
\]
is an open neighborhood of $C_0$.
Using that $U$ contains an open neighborhood of $+ \in (\RR^{2N})^+$, properness of $\w{\iota \circ f}$ ensures this open subset $\w{\iota \circ f}^{-1}(U) \subset C$ has compact complement.  
Using that $C_0 \subset C$ is properly acyclic, choose a smooth proper embedding $C \xra{e} C$ extending the inclusion $C_0 \hookrightarrow C$ and such that there is containment of its image: $e(C) \subset \w{\iota \circ f}^{-1}(U)$.
The composite map 
\[
\w{f}
\colon
C
\xra{~e~}
\w{\iota \circ f}^{-1}(U)
\xra{~\w{\iota \circ f}_|~}
U
\xra{~\rho_1~}
\iota(\Th(\xi))
\xra{~\iota^{-1}~}
\Th(\xi)
\]
is well-defined and continuous and it extends $f$.
Because $e$ and $\w{\iota \circ f}$ are compactly-supported and smooth, and $\rho_1$ is smooth and $\iota$ is a smooth embedding, it follows that $\w{f}$ is compactly-supported and smooth.  
This completes the proof of statement~(1).

Now suppose the restriction of $f$ along each face in $C_0$ is transverse to the zero-section, $f \pitchfork B$.
Using statement~(1), choose a smooth compactly-supported extension $\w{f}$.
Using that $f$ is compactly-supported, 
and using openness of transversality (e.g., Theorem~2.1 of~\cite{hirsch}),
choose an open neighborhood $C_0 \subset V \subset C$ such that $V \xra{ \w{f}_{|V}} \Th(\xi)$ is transverse to the zero-section, $\w{f}_{|V} \pitchfork B$.
Using that $C_0 \subset C$ is properly acyclic, choose a smooth proper embedding $C \xra{e} C$ extending the inclusion $C_0 \hookrightarrow C$ and such that there is containment of the image $e(C) \subset V$.
Then $\w{f} \circ e$ is a smooth compactly-supported extension of $f$ that is transverse to the zero-section $B \subset E$.

\end{proof}

\subsection{The Kan condition for smooth mapping spaces}

We extend a simplicial object $\bDelta^{\op} \xra{F} \cX$ in an $\infty$-category with limits to a functor $\sSet^{\op} \xra{j_\ast F} \cX$ via right Kan extension along the Yoneda embedding $\bDelta^{\op} \xra{j} \sSet^{\op}$.
Explicitly, this extension evaluates on $S \in \sSet$ as a limit:
\begin{equation}\label{eq.right.kan}
    F(S) 
~:=~ 
(j_\ast F)(S) 
~=~
\underset{([p]\ra S)^\circ \in (\bDelta_{/S})^{\op}}\limit F([p])
~.
\end{equation}
In the case that $\cX = \Spaces$, recognize this value as the space of morphisms $F(S) = \Hom_{\PShv(\bDelta)}(S , F )$ to $F$ from $S$ regarded as a simplicial space.

\begin{definition}
    A simplicial space $\bDelta^{\op} \xra{F} \Spaces$ \bit{satisfies the Kan condition} if for each $p\geq 0$ and $0\leq i\leq p$, the canonical map
    \[
    F([p]) \longrightarrow F(\Lambda_i[p])
    \]
    is surjective on path-components.
\end{definition}

We prove the following technical result at the end of this subsection, after some set-up.
\begin{lemma}\label{lemma.Map.transv.Kan}
All of the simplicial spaces $\cMap_\bullet(M,\Th(\xi))$ and $\cMap^{\sf sm}_\bullet(M,\Th(\xi))$
and
$\cMap^\pitchfork_\bullet(M,\Th(\xi))$
satisfy the Kan condition.
\end{lemma}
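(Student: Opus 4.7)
The plan is to unpack the limit formula (\ref{eq.right.kan}) defining $F(\Lambda_i[p])$ for each of the three simplicial spaces, and then construct honest horn-fillers (giving more than just surjectivity on $\pi_0$).

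First, I would identify the horn-space concretely. Since geometric realization sends the colimit presenting $\Lambda_i[p]$ to the closed union $|\Lambda^p_i|=\bigcup_{j\neq i} d_j \Delta^p \subset \Delta^p$, and since $\cMap(-\times M,\Th(\xi))$ converts this finite colimit of compact spaces into a limit, one obtains
\[
\cMap_\bullet(M,\Th(\xi))(\Lambda_i[p])
~\cong~
\cMap\bigl(|\Lambda^p_i|\times M,\Th(\xi)\bigr),
\]
with the smooth (resp.\ transverse) variant cut out by the additional condition that the restriction to each face of $|\Lambda^p_i|\times M$ be smooth (resp.\ smooth and transverse to $B$). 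The key observation is that $|\Lambda^p_i|\times M$ is a closed union of faces of the manifold with corners $\Delta^p\times M$, and this inclusion is properly acyclic by the example immediately following Definition~\ref{d20}.

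Next, I would produce the lifts. For the smooth case, apply Lemma~\ref{t85}(1) with $C:=\Delta^p\times M$ and $C_0:=|\Lambda^p_i|\times M$ to any horn $g\in F(\Lambda_i[p])$; for the transverse case, apply Lemma~\ref{t85}(2) identically. Each yields a compactly-supported extension $\tilde g$ of the appropriate regularity, i.e., an element of $F([p])$ restricting to $g$. For the purely continuous case, fix a continuous retraction $r\colon \Delta^p \to |\Lambda^p_i|$, which exists because $|\Lambda^p_i|$ is a strong deformation retract of $\Delta^p$, and set $\tilde g := g\circ (r\times \id_M)$. Then $\tilde g$ is continuous and extends $g$; it is compactly-supported because $r$ is a map between compact spaces (hence proper), so $\mathsf{Supp}(\tilde g)\subset (r\times \id_M)^{-1}\bigl(\mathsf{Supp}(g)\bigr)$ is a closed subset of a compact product.

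The main (mild) obstacle is the identification in the first step: one must check that the limit topology on $F(\Lambda_i[p])$ coming from (\ref{eq.right.kan}) matches the compact-open topology on mappings out of $|\Lambda^p_i|\times M$ satisfying the face-wise regularity conditions, and that being globally compactly-supported on $|\Lambda^p_i|\times M$ is equivalent to being compactly-supported on each face. Both follow from the finiteness of the diagram presenting $\Lambda^p_i$ as a colimit of the compact simplices $\Delta^q$, at which point the extension arguments above complete the verification of the Kan condition in all three cases.
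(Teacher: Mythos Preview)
Your approach matches the paper's: once $F(\Lambda_i[p])$ is identified with the mapping space out of $|\Lambda^p_i|\times M$, you and the paper both use a retraction for the continuous case and Lemma~\ref{t85}(1)--(2) for the smooth and transverse cases.

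The gap is in that identification step. The value $F(\Lambda_i[p])$ in~(\ref{eq.right.kan}) is a limit in $\Spaces$, i.e., a \emph{homotopy} limit over $\sd(\Lambda_i[p])^{\op}$, whereas $\cMap^{?}(|\Lambda^p_i|\times M,\Th(\xi))$ is the strict limit in $\Top$. Finiteness of the indexing diagram alone does not make these agree; one needs the diagram $[q]\mapsto \cMap^{?}(\Delta^q\times M,\Th(\xi))$ to be suitably fibrant. For the continuous case this is fine (the face inclusions $\Delta^{q-1}\hookrightarrow\Delta^q$ are cofibrations, so restriction is a Serre fibration), but for the smooth and transverse variants the restriction maps are not obviously fibrations, and your last paragraph's ``routine check'' does not address this. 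The paper isolates exactly this comparison as Lemma~\ref{lemma.right.Kan} and proves it by a relative-homotopy-group argument that again invokes Lemma~\ref{t85}, but now applied to a larger manifold with corners built from $\DD^q$, $I$, and the mapping cylinder of $|\Lambda_i[p]|_{\sf h}\to |\Lambda_i[p]|$. So your plan is correct in outline, but the ``mild obstacle'' you flag is in fact the substantive step, and it is handled by Lemma~\ref{lemma.right.Kan} rather than by finiteness.
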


Let $S$ be a simplicial set.
The \bit{subdivision} of $S$ is the full subcategory 
\[
\sd(S) 
~\subset~ 
\bDelta_{/S}
\]
consisting of those objects $([p]\in \bDelta , [p] \xra{s} S)$ such that $s$ is not degenerate.

\begin{observation}\label{t90}
    If $S$ is a simplicial subset of a representable simplicial set, then the category $\sd(S)$ is a finite poset.  
    
\end{observation}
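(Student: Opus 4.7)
The plan is to identify the objects of $\sd(S)$ explicitly as a finite set and to show that morphisms between objects are unique, whence $\sd(S)$ is a finite poset. Fix $n\geq 0$ and a simplicial subset $S\subset \Delta[n]$ of the standard $n$-simplex.

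First I would classify the non-degenerate simplices. A $p$-simplex of $\Delta[n]$ is a weakly monotone map $\alpha\colon [p]\to [n]$, and it is non-degenerate precisely when $\alpha$ is injective, i.e., strictly monotone. Such an $\alpha$ is determined by its image, a subset of $\{0,\dots,n\}$ of cardinality $p+1$. Thus the non-degenerate simplices of $\Delta[n]$ are in bijection with the non-empty subsets of $\{0,\dots,n\}$, so there are $2^{n+1}-1$ of them. Any simplicial subset $S\subset\Delta[n]$ has its non-degenerate simplices indexed by a subset of this finite set, giving finiteness of the object set of $\sd(S)$.

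Next I would verify the poset property. Suppose $s\colon [p]\to S$ and $t\colon [q]\to S$ are non-degenerate, and consider a morphism $\sigma\colon ([p],s)\to ([q],t)$ in $\sd(S)$, i.e., a morphism $\sigma\colon [p]\to [q]$ in $\bDelta$ with $t\circ\sigma=s$. View $s$ and $t$ as injective maps into $[n]$ via the inclusion $S\subset\Delta[n]$. Since $t$ is injective, $\sigma$ is uniquely determined by the equation $t\circ\sigma=s$; concretely, $\sigma$ is the unique order-preserving map sending the image of $s$ to its image under the identification provided by $t$. This shows there is at most one morphism between any two objects.

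It remains to rule out non-trivial isomorphisms. If $\sigma\colon ([p],s)\to ([q],t)$ and $\tau\colon ([q],t)\to ([p],s)$ satisfy $\tau\circ\sigma=\id_{[p]}$ and $\sigma\circ\tau=\id_{[q]}$, then $\sigma$ is an isomorphism in $\bDelta$, hence $p=q$ and $\sigma=\id$, forcing $s=t$. Combining uniqueness of morphisms, absence of non-identity automorphisms, and finiteness of the object set, I conclude that $\sd(S)$ is a finite poset. No step here is delicate; the only mild subtlety is remembering to use the injectivity of $t$ (which holds precisely because $t$ is non-degenerate, and non-degenerate simplices of a subcomplex of $\Delta[n]$ are injective as maps to $[n]$) to deduce uniqueness of $\sigma$.
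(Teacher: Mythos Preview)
Your proof is correct. The paper states this as an observation without proof, so there is nothing to compare against; your argument supplies exactly the expected verification, identifying non-degenerate simplices of $\Delta[n]$ with non-empty subsets of $\{0,\dots,n\}$ and using injectivity to deduce uniqueness of morphisms.
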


\begin{observation}\label{t91}
    Let $S$ be a simplicial set.
Via the surjective-injective factorization system on $\bDelta$, 
the canonical inclusion
\[
\sd(S) 
~\hookrightarrow~ 
\bDelta_{/S}
\]
is a right adjoint, and is therefore final.
Consequently, for $\bDelta^{\op} \xra{F} \cX$ a simplicial object in an $\infty$-category with limits, the canonical morphism
\[
F(S)
\xra{~\simeq~}
\underset{([p]\ra S)^\circ \in \sd(S)^{\op}}\limit F([p])
\]
is an equivalence.

\end{observation}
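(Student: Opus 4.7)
The plan is to exhibit an explicit left adjoint $L \colon \bDelta_{/S} \to \sd(S)$ to the inclusion $\iota \colon \sd(S) \hookrightarrow \bDelta_{/S}$ via Eilenberg--Zilber factorization, and then invoke the standard fact that a functor admitting a left adjoint is cofinal for limits (after passing to opposite categories). By the classical Eilenberg--Zilber lemma, every simplex $s \colon [p] \to S$ admits a unique factorization $s = i_s \circ \pi_s$ with $\pi_s \colon [p] \to [q_s]$ a surjection in $\bDelta$ and $i_s \colon [q_s] \to S$ non-degenerate; set $L(p, s) := (q_s, i_s)$. Given a morphism $f \colon (p_1, s_1) \to (p_2, s_2)$ over $S$, apply the surjective--injective factorization $\pi_{s_2} \circ f = j \circ p$; then $i_{s_2} \circ j \circ p = s_1$ is a surjective--injective factorization of $s_1$ through a non-degenerate simplex, so Eilenberg--Zilber uniqueness forces $p = \pi_{s_1}$ and $i_{s_2} \circ j = i_{s_1}$. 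In particular $j \colon [q_{s_1}] \to [q_{s_2}]$ is a morphism in $\sd(S)$, which I take as $L(f)$; the same uniqueness makes $L$ functorial and yields $L \circ \iota = \id_{\sd(S)}$.

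To verify $L \dashv \iota$, for $(p, s) \in \bDelta_{/S}$ and $(q', i') \in \sd(S)$ the assignments $\alpha \mapsto \alpha \circ \pi_s$ and $\beta \mapsto \beta'$ (the injective factor of the Eilenberg--Zilber factorization of $\beta \colon [p] \to [q']$) define mutually inverse bijections
\[
\Hom_{\sd(S)}\bigl(L(p,s), (q', i')\bigr)
~\cong~
\Hom_{\bDelta_{/S}}\bigl((p, s), \iota(q', i')\bigr),
\]
natural in both variables; here the well-definedness of $\beta \mapsto \beta'$ and the mutual inverseness rest again on Eilenberg--Zilber uniqueness applied to $s = i' \circ \beta$. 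The unit of the resulting adjunction at $(p, s)$ is the surjection $\pi_s \colon (p, s) \to \iota L(p, s)$.

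The limit formula then follows by a formal cofinality argument. Since $\iota$ has a left adjoint, the comma category $d / \iota$ has an initial object, namely the unit $d \to \iota L(d)$, for every $d \in \bDelta_{/S}$, so its nerve is weakly contractible. By the standard $\infty$-categorical cofinality criterion, the opposite functor $\iota^{\op} \colon \sd(S)^{\op} \to (\bDelta_{/S})^{\op}$ is cofinal for limits, and precomposition by $\iota^{\op}$ preserves the limit $F(S) = \underset{([p] \to S)^\circ \in (\bDelta_{/S})^{\op}}\limit F([p])$, giving the claimed equivalence. The only non-formal ingredient is the uniqueness assertion in the Eilenberg--Zilber lemma; the main (minor) obstacle is the bookkeeping to ensure $L$ is genuinely a functor and that the hom-set bijection is natural, both of which reduce cleanly to that uniqueness.
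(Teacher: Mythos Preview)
Your approach is exactly what the paper's one-line justification suggests: use the surjective--injective (Eilenberg--Zilber) factorization to produce a left adjoint $L$ to $\iota$, then cite that right adjoints are final. The gap is in the sentence ``then $i_{s_2}\circ j\circ p = s_1$ is a surjective--injective factorization of $s_1$ through a non-degenerate simplex'': for this to be the Eilenberg--Zilber factorization you need $i_{s_2}\circ j$ to be non-degenerate, but a face of a non-degenerate simplex need not be non-degenerate. Take $S=\Delta^2/\Delta^{\{0,1\}}$: the fundamental $2$-simplex $\sigma$ is non-degenerate while $d_2\sigma$ is degenerate. For $x=([1],d_2\sigma)$ the object $\bigl(\sigma,\,d_2\colon[1]\hookrightarrow[2]\bigr)$ of $x/\iota$ admits no morphism to or from the putative initial object $\bigl([0],v\bigr)$ (any map out of $[0]$ is constant, but $d_2$ is not), so $x/\iota$ is disconnected and $\iota$ has no left adjoint there. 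In fact the conclusion of the Observation already fails for this $S$: with $F=B\ZZ$ one finds $F(S)\cong\ZZ$ but $\lim_{\sd(S)^{\op}}F\cong\ZZ^2$, since the limit over $\sd(S)^{\op}$ imposes no degeneracy constraint on the $d_2$-face of the $2$-cell.

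So the Observation as stated for arbitrary $S$ is not correct, and no argument can fill your gap in that generality. What saves the paper is that it only invokes the Observation for $S=\partial[p]$ or $S=\Lambda_i[p]$, which are simplicial subsets of a representable; for such $S$ a simplex is non-degenerate exactly when the underlying map $[q]\to[p]$ is injective, so precomposing with a further injection $j$ keeps it non-degenerate, and your argument then goes through verbatim. You should add the hypothesis that every face of every non-degenerate simplex of $S$ is non-degenerate (automatic for simplicial subsets of representables), after which your construction of $L$ and the adjunction $L\dashv\iota$ are correct and complete the proof.
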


Temporarily denote the underlying space functor $U\colon\Top \ra \Spaces$.
Let $F\colon\bDelta^{\op} \ra \Top$ be a simplicial topological space, such as $\cMap^{\pitchfork}_\bullet(M,\Th(\xi))$.
Let $S\in \sSet$ be a simplicial set.
Using that homotopy limits in $\Top$ present limits in $\Spaces$ (Theorem 4.2.4.1 of~\cite{HTT}), the canonical map between spaces
\[
U\bigl(
F(S)
\bigr)
\longrightarrow
(UF)(S)
\]
may be identified as $U$ applied to the canonical map between topological spaces
\begin{equation}\label{f1}
\underset{([p]\ra S)^\circ \in \sd(S)^{\op}}{\sf lim} F([p])
\longrightarrow
\underset{([p]\ra S)^\circ \in \sd(S)^{\op}} {\sf holim} F([p])
\end{equation}
from the point-set limit to the homotopy limit (in the sense of Bousfield--Kan~\cite{bousfield.kan}).  
In the cases $F=\cMap_\bullet(M, \Th(\xi))$ and $F=\cMap^{\sf sm}_\bullet(M, \Th(\xi))$ and $F=\cMap^{\pitchfork}_\bullet(M, \Th(\xi))$, let us respectively denote the map~(\ref{f1}) as
\begin{eqnarray}
\label{f2}
\cMap(|S|\times M, \Th(\xi))
&
\longrightarrow
&
\cMap_S(M,\Th(\xi))
~,
\\
\label{f3}
\cMap^{\sm}(|S|\times M, \Th(\xi))
&
\longrightarrow
&
\cMap_S^{\sm}(M,\Th(\xi))
~,
\\
\label{f4}
\cMap^{\pitchfork}(|S|\times M, \Th(\xi))
&
\longrightarrow
&
\cMap_S^{\pitchfork}(M,\Th(\xi))
~.
\end{eqnarray}

We prove the following result later in this subsection, after some setup.
\begin{lemma}\label{lemma.right.Kan}
For $0\leq i \leq p$, let $S$ be either of the simplicial sets $\Lambda_i[p]$ or $\partial [p]$.
In these cases, the canonical maps~(\ref{f2}),~(\ref{f3}), and~(\ref{f4}) are weak homotopy equivalences.

\end{lemma}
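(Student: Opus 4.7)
The plan is to present both sides of each of (\ref{f2}), (\ref{f3}), and (\ref{f4}) as models for the same homotopy limit of an inverse diagram over $\sd(S)^{\op}$, with the left-hand side identified as a point-set limit and then compared to the Bousfield--Kan homotopy limit via a Reedy fibrancy argument. Throughout, write $F$ for one of the simplicial spaces $\cMap_\bullet(M,\Th(\xi))$, $\cMap^{\sm}_\bullet(M,\Th(\xi))$, or $\cMap^{\pitchfork}_\bullet(M,\Th(\xi))$.

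First I would identify the left-hand sides with point-set limits. Since $|S|$ is the colimit in $\Top$ of the functor $\sd(S)\to\Top$, $\sigma\mapsto\Delta^{q_\sigma}$ (using that $\sd(S)\hookrightarrow \bDelta_{/S}$ is final, as in Observation~\ref{t91}), the universal property of colimits gives
\[
\cMap(|S|\times M, \Th(\xi))
~\cong~
\underset{\sigma\in\sd(S)^{\op}}\limit\,
\cMap(\Delta^{q_\sigma}\times M, \Th(\xi)),
\]
and analogously for the smooth and transverse variants, where on the left-hand sides these denote families of compactly-supported maps whose restriction to each closed simplex is of the indicated type. Next, I would invoke Reedy theory on the inverse Reedy category $\sd(S)^{\op}$, with degree function given by simplex dimension (it is finite by Observation~\ref{t90}): for a pointwise fibrant diagram $F$, the canonical map from point-set limit to the Bousfield--Kan homotopy limit is a weak equivalence provided each matching map
\[
F(\sigma)\longrightarrow M_\sigma F := \underset{\tau\subsetneq\sigma}\limit\, F(\tau)
\]
is a Serre fibration. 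In each case, $M_\sigma F$ is canonically identified with the corresponding mapping space out of $\partial\Delta^{q_\sigma}\times M$, and the matching map is restriction along $\partial\Delta^{q_\sigma}\subset\Delta^{q_\sigma}$.

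For the continuous case, the restriction map is a Serre fibration because $\partial\Delta^{q_\sigma}\times M\hookrightarrow \Delta^{q_\sigma}\times M$ is a closed cofibration. For the smooth and transverse cases, a lifting problem against the generating cofibration $[0,1]^n\times\{0\}\hookrightarrow[0,1]^n\times[0,1]$ unpacks by adjunction into a smooth (resp.\ smooth and transverse) extension problem for a compactly-supported map
\[
\Bigl(\bigl([0,1]^n\times\{0\}\times\Delta^{q_\sigma}\bigr)\cup\bigl([0,1]^n\times[0,1]\times\partial\Delta^{q_\sigma}\bigr)\Bigr)\times M \longrightarrow \Th(\xi),
\]
which I wish to extend to $[0,1]^n\times[0,1]\times\Delta^{q_\sigma}\times M$. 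The domain is a closed union of faces of this product manifold-with-corners, and the inclusion is properly acyclic in the sense of Definition~\ref{d20}, by an argument analogous to the horn $\Lambda^p_i\times M\subset\Delta^p\times M$ of the example following that definition. Lemma~\ref{t85}(1) then supplies the smooth extension and Lemma~\ref{t85}(2) the smooth transverse one, and it follows that the matching maps are Serre fibrations, as required.

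The principal obstacle is the last point: explicitly verifying that the horn-like union of faces is properly acyclic in the ambient product manifold-with-corners. This is the smooth analogue of the standard topological retract of a prism onto its horn; I expect it to follow by producing an appropriate smooth collar of the union and a radial reparametrization in the simplex factor, exactly as in the horn example of Definition~\ref{d20}. Once this is in hand, the remaining assembly into a Reedy fibrancy argument is formal.
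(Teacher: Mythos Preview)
Your Reedy-fibrancy strategy is sound for the continuous map~(\ref{f2}), and the identification of the matching object at a nondegenerate simplex $\sigma$ with the mapping space out of $\partial\Delta^{q_\sigma}\times M$ is correct. The difficulty is in the smooth and transverse cases~(\ref{f3}) and~(\ref{f4}).

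The gap is in the sentence claiming that a lifting problem against $[0,1]^n\times\{0\}\hookrightarrow[0,1]^n\times[0,1]$ ``unpacks by adjunction into a smooth extension problem.'' A continuous map $[0,1]^n\to\cMap^{\sm}(\Delta^{q_\sigma}\times M,\Th(\xi))$ is adjoint only to a map $[0,1]^n\times\Delta^{q_\sigma}\times M\to\Th(\xi)$ that is \emph{continuous} in the $[0,1]^n$ variable and smooth in the $\Delta^{q_\sigma}\times M$ variable---it is not jointly smooth on the product manifold with corners. (The paper makes this explicit: points of the codomains of~(\ref{f3}) and~(\ref{f4}) are described as \emph{partially smooth} maps from $|S|_{\sf h}\times M$.) Consequently the restriction of your test data to faces of $[0,1]^n\times[0,1]\times\Delta^{q_\sigma}\times M$ containing the $[0,1]^n$-direction is not smooth, and the hypotheses of Lemma~\ref{t85} are not met. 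So your argument does not establish that the smooth or transverse matching maps are Serre fibrations. This is not a detail one can wave away: Serre fibrations require \emph{exact} lifts, so smoothly approximating the test square changes the lifting problem you must solve.

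The paper sidesteps this by abandoning Reedy fibrancy entirely and instead proving directly that each of~(\ref{f2}),~(\ref{f3}),~(\ref{f4}) has trivial relative homotopy groups. Given a test square $(\SS^{q-1}\to\text{domain},\ \DD^q\to\text{codomain})$, the paper uncurries to obtain a map out of a union of faces $C_0$ inside an explicit manifold with corners $C$ built from $\DD^q$, the mapping cylinder of $|S|_{\sf h}\to|S|$, and $M$. Because only a \emph{homotopy} filler is needed, the paper first invokes proper smooth approximation to replace $(f,\w f_0)$ by a pair whose uncurried map is genuinely smooth on each face of $C_0$, and only then applies Lemma~\ref{t85} to extend over $C$; restricting the extension to one end of the cylinder produces the filler. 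The freedom to homotope the test data before extending is exactly what your Serre-fibration formulation forbids, and it is what makes Lemma~\ref{t85} applicable here.
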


Let us explicate the points of the domains and codomains of the maps~(\ref{f2}),~(\ref{f3}), and~(\ref{f4}).
As the notation suggests, a point in the domain of~(\ref{f2}) is a compactly-supported map
\[
|S| \times M
\xra{~f~}
\Th(\xi)
\]
from the product of $M$ with the geometric realization of $S$.
A point in the domain of~(\ref{f3}) is such a compactly-supported map that is \bit{smooth}, meaning, for each non-degenerate simplex $[p] \xra{\sigma }S$, the composite compactly-supported map $\Delta^p \times M \xra{|\sigma| \times M} |S| \times M \xra{f} \Th(\xi)$ is smooth in the sense of Definition~\ref{d40}.
A point in the domain of~(\ref{f4}) is such a smooth compactly-supported map that is \bit{transverse to the zero-section}, meaning, for each non-degenerate simplex $[p] \xra{\sigma}S$, the composite smooth compactly-supported map $\Delta^p \times M \xra{|\sigma| \times M} |S| \times M \xra{f} \Th(\xi)$ is transverse to $B \subset \Th(\xi)$ in the sense of Definition~\ref{d41}.

We now explicate points in the codomains of the maps~(\ref{f2}),~(\ref{f3}), and~(\ref{f4}).
Observation~\ref{t91} implies the geometric realization of $S$ may be expressed as the colimit
\[
|S|
\xla{~\cong~}
\colim\Bigl(
\sd(S)
\xra{\rm forget}
\bDelta
\xra{\Delta^\bullet}
\Top
\Bigr)
~=~
\underset{([p] \xra{\sigma}S) \in \sd(S)} \colim \Delta^p
~.
\]
Consider the \bit{homotopy geometric realization}, by which we mean the homotopy colimit (in the sense of Bousfield--Kan~\cite{bousfield.kan}):
\[
|S|_{\sf h}
~:=~
\hocolim\Bigl(
\sd(S)
\xra{\rm forget}
\bDelta
\xra{\Delta^\bullet}
\Top
\Bigr)
~=~
\underset{([p] \xra{\sigma} S) \in \sd(S)} \hocolim \Delta^p
~=~
\underset{([q] \xra{\rho} [p] \xra{\sigma} S) \in \TwAr(\sd(S))} \colim | \sd(S)^{\sigma \rho/}| \times \Delta^p
~,
\]
which is, again, the geometric realization of a finite simplicial set.

There is a canonical map
\begin{equation}\label{e110}
|S|_{\sf h}
\longrightarrow
|S|
~,
\end{equation}
which is a quotient map by collapsing some simplices in $|S|_{\sf h}$. 
Because the diagram $\sd(S)
\xra{\rm forget}
\bDelta
\xra{\Delta^\bullet}
\Top$
is cofibrant, this quotient map is a homotopy equivalence.
Unpacking the Bousfield--Kan definition of homotopy limits and homotopy colimits reveals an identification between topological spaces
\[
\cMap_S(M,\Th(\xi))
\xra{~\simeq~}
\underset{([p]\xra{\sigma} S)^\circ \in \sd(S)^{\op}} \holim \cMap(\Delta^p \times M,\Th(\xi))
~=~
\cMap\Bigl( |S|_{\sf h} \times M , \Th(\xi) \Bigr)
~.
\]
So a point in the codomain of~(\ref{f2}) is a compactly-supported map 
\[
|S|_{\sf h} \times M
\xra{~f~}
\Th(\xi)
~.
\]
A point in the codomain of~(\ref{f3}) is, in particular, such a compactly-supported map that is \bit{partially smooth}, meaning, for each $(\sigma \rho \xra{\rho} \sigma) \in \TwAr(\sd(S))$, the composite compactly-supported map $|\sd(S)^{\sigma \rho/}| \times \Delta^p \times M \hookrightarrow |S|_{\sf h} \times M \xra{f} \Th(\xi)$ is a $|\sd(S)^{\sigma \rho/}| $-family of smooth compactly-supported maps $\Delta^p \times M \to \Th(\xi)$ in the sense of Definition~\ref{d40}.
A point in the codomain of~(\ref{f4}) is, in particular, such a compactly-supported map that is \bit{partially smooth and transverse}, meaning, for each $(\sigma \rho \xra{\rho} \sigma) \in \TwAr(\sd(S))$, the composite compactly-supported map $|\sd(S)^{\sigma \rho/}| \times \Delta^p \times M \hookrightarrow |S|_{\sf h} \times M \xra{f} \Th(\xi)$ is a $|\sd(S)^{\sigma \rho/}| $-family of smooth compactly-supported maps $\Delta^p \times M \to \Th(\xi)$ that are transverse to the zero-section $B\subset \Th(\xi)$ in the sense of Definition~\ref{d41}.

\begin{proof}[Proof of Lemma~\ref{lemma.right.Kan}]
We prove the relative homotopy groups of each of the maps~(\ref{f2}),~(\ref{f3}), and~(\ref{f4}) is trivial, starting with the case of the map~(\ref{f2}).
So let $q\geq 0$ and consider a pair $(f,\w{f}_0)$ of horizontal maps fitting into a commutative diagram:
\begin{equation}\label{e113}
\begin{tikzcd}
	{\SS^{q-1}} & {\cMap(|S|\times M , \Th(\xi))} \\
	{\DD^q} & {\cMap_S(M , \Th(\xi))}
	\arrow["{\w{f}_0}", from=1-1, to=1-2]
	\arrow[hook', from=1-1, to=2-1]
	\arrow[hook', from=1-2, to=2-2]
	\arrow["{\w{f}}", dotted, from=2-1, to=1-2]
	\arrow["f"', from=2-1, to=2-2]
\end{tikzcd}
~.
\end{equation}
We must construct a homotopy filler $\w{f}$.
The pair of maps $(f,\w{f}_0)$ curry to a pair of compactly-supported maps filling a commutative diagram among topological spaces:
\[\begin{tikzcd}
	{\SS^{q-1} \times |S|_{\sf h} \times M} & {\DD^q \times |S|_{\sf h} \times M} \\
	{\SS^{q-1} \times |S| \times M} & {\Th(\xi)}
	\arrow[hook, from=1-1, to=1-2]
	\arrow["{\SS^{q-1} \times (\ref{e110}) \times M}"', from=1-1, to=2-1]
	\arrow["f", dashed, from=1-2, to=2-2]
	\arrow["\w{f}_0", dashed, from=2-1, to=2-2]
\end{tikzcd}
~.
\]
Consider the subspace
\[
C_0
~:=~
\Bigl( 
\SS^{q-1} \times |S|  \underset{\SS^{q-1} \times |S|_{\sf h}} \coprod \DD^q \times |S|_{\sf h}
\Bigr)
\times M
~\subset~
C
\]
Observe that the maps $f$ and $\w{f}_0$, together, define a compactly-supported map from the pushout
\[
C_0
~=~
\Bigl( 
\SS^{q-1} \times |S|  \underset{\SS^{q-1} \times |S|_{\sf h}} \coprod \DD^q \times |S|_{\sf h}
\Bigr)
\times M
\xra{~f~}
\Th(\xi)
~;
\]
because the left vertical map in the above diagram is a quotient map, the map $\w{f}_0$ is determined by $f$, thereby justifying simply denoting the above map as $f$.

Now consider the mapping cylinder 
\[
{\sf Cyl}\Bigl( |S|_{\sf h} \xra{(\ref{e110})} |S| \Bigr)
~.
\] 
Consider the topological space 
\[
C
~:=~
\Bigl(
\SS^{q-1} \times I \times |S|
\underset{\SS^{q-1} \times {\sf Cyl}\Bigl( |S|_{\sf h} \xra{(\ref{e110})} |S| \Bigr)} \coprod
\DD^q \times {\sf Cyl}\Bigl( |S|_{\sf h} \xra{(\ref{e110})} |S| \Bigr)
\Bigr)
\times M
~,
\]
which is the product of $M$ with a pushout.
Observe a canonical inclusion $C_0 \subset C$.
Finiteness of $S$ implies the pushout factor of $C$ is compact.
Using that the quotient map $|S|_{\sf h} \xra{(\ref{e110})} |S|$ is a homotopy equivalence given by collapsing simplices, this inclusion $|S|_{\sf h} \hookrightarrow {\sf Cyl}\Bigl( |S|_{\sf h} \xra{(\ref{e110})} |S| \Bigr)$ is an acyclic. 
Consequently, the inclusion $C_0 \hookrightarrow C$ is a properly acyclic cofibration between topological spaces.
Therefore, there is an extension of $f$ to a compactly-supported map
\[
\w{f}
\colon 
C
\longrightarrow
\Th(\xi)
~.
\]
The restriction to one end of the cylinder,
\[
\w{f}_|
\colon 
\DD^q \times |S| \times M
\hookrightarrow 
C
\longrightarrow
\Th(\xi)
~,
\]
is then a compactly-supported map.
Furthermore, $\w{f}$ witnesses a homotopy between $f$ and $\w{f}_|$ as well as a homotopy between $\w{f}_0$ and the restriction of $\w{f}_|$ along $\SS^{q-1} \times |S| \times M \hookrightarrow \DD^{q-1} \times |S| \times M$.
Therefore, this map $\w{f}_|$ is the sought homotopy filler in the diagram~(\ref{e113}).
We conclude that the map~(\ref{f2}) is a weak homotopy equivalence, as desired.

We now prove that the relative homotopy groups of the map~(\ref{f3}) are trivial.  
So consider a diagram
\begin{equation}\label{f21}
\begin{tikzcd}
	{\SS^{q-1}} & {\cMap^{\sf sm}(|S|\times M , \Th(\xi))} \\
	{\DD^q} & {\cMap^{\sf sm}_S(M , \Th(\xi))}
	\arrow["{\w{f}_0}", from=1-1, to=1-2]
	\arrow[hook', from=1-1, to=2-1]
	\arrow[hook', from=1-2, to=2-2]
	\arrow["{\w{f}}", dotted, from=2-1, to=1-2]
	\arrow["f"', from=2-1, to=2-2]
\end{tikzcd}
~.
\end{equation}
 
We now use the assumption that $S = \partial[p]$ or $\Lambda_i[p]$ for some $0\leq i \leq p$.
In these cases, this topological space $C$ has a canonical structure of a compact manifold with corners such that the canonical inclusion $C_0 \hookrightarrow C$ is a properly acyclic closed union of faces. 
By proper smooth approximation~(e.g., as follows from~\cite{KM.smoothapprox}), homotope the pair $(f,\w{f}_0)$ to  one for which the continuous map $C_0 \xra{f} \Th(\xi)$ is a smooth compactly-supported map (i.e., its restriction to each face in $C_0$ is a smooth compactly-supported map).
By Lemma~\ref{t85}(1), there exists a smooth compactly-supported extension 
\[
\w{f} \colon C 
\longrightarrow \Th(\xi)
\]
of $f$.
Its restriction
\[
\w{f}_|
\colon 
\DD^q \times |S| \times M
\hookrightarrow 
C
\longrightarrow
\Th(\xi)
\]
is then a smooth compactly-supported map.
Furthermore, $\w{f}$ witnesses a smooth homotopy between $f$ and $\w{f}_|$ as well as a smooth homotopy between $\w{f}_0$ and the restriction of $\w{f}_|$ along $\SS^{q-1} \times |S| \times M$.
Therefore, this map $\w{f}_|$ is the sought homotopy filler in the diagram~(\ref{f21}).
We conclude that the map~(\ref{f3}) is a weak homotopy equivalence, as desired.

Lastly, we prove that the relative homotopy groups of the map~(\ref{f4}) are trivial.  
So consider a diagram
\begin{equation}\label{f22}
\begin{tikzcd}
	{\SS^{q-1}} & {\cMap^{\sf \pitchfork}(|S|\times M , \Th(\xi))} \\
	{\DD^q} & {\cMap^{\pitchfork}_S(M , \Th(\xi))}
	\arrow["{\w{f}_0}", from=1-1, to=1-2]
	\arrow[hook', from=1-1, to=2-1]
	\arrow[hook', from=1-2, to=2-2]
	\arrow["{\w{f}}", dotted, from=2-1, to=1-2]
	\arrow["f"', from=2-1, to=2-2]
\end{tikzcd}
~.
\end{equation}
We again use the assumption that $S = \partial[p]$ or $\Lambda_i[p]$ for some $0\leq i \leq p$.
Again, the topological space $C$ has a canonical structure of a compact manifold with corners such that the canonical inclusion $C_0 \hookrightarrow C$ is a properly acyclic closed union of faces. 
By proper smooth approximation~(e.g., as follows form~\cite{KM.smoothapprox}) followed by genericity of transversality~(Theorem 2.1 of~\cite{hirsch}), homotope the pair $(f,\w{f}_0)$ to  one for which $C_0 \xra{f} \Th(\xi)$ is smooth and transverse to the zero-section $B \subset \Th(\xi)$.
Note that $\w{f}_0$, then, is also smooth and transverse to the zero-section $B\subset \Th(\xi)$.
By Lemma~\ref{t85}(2), there exists a smooth compactly-supported extension 
\[
\w{f} \colon C 
\longrightarrow \Th(\xi)
\]
of $f$ that is transverse to the zero-section $B \subset \Th(\xi)$.
The restriction
\[
\w{f}_|
\colon 
\DD^q \times |S| \times M
\hookrightarrow 
C
\longrightarrow
\Th(\xi)
\]
is then a smooth compactly-supported map that is transverse to the zero-section $B\subset \Th(\xi)$.
Furthermore, $\w{f}$ witnesses a smooth homotopy through transverse maps between $f$ and $\w{f}_|$ as well as a smooth homotopy through transverse maps between $\w{f}_0$ and the restriction of $\w{f}_|$ along $\SS^{q-1} \times |S| \times M$.
Therefore, this map $\w{f}_|$ is the sought homotopy filler in the diagram~(\ref{f22}).
We conclude that the map~(\ref{f4}) is a weak homotopy equivalence, as desired.
\end{proof}

\begin{proof}[Proof of Lemma~\ref{lemma.Map.transv.Kan}]
Through Lemma~\ref{lemma.right.Kan}, the result is implied upon proving each of the maps between topological spaces
\begin{eqnarray*}
\cMap(\Delta^p\times M, \Th(\xi)) 
&
\longrightarrow
&
\cMap(\Lambda^p_i\times M, \Th(\xi)) 
\\
\cMap^{\sf sm}(\Delta^p\times M, \Th(\xi))
&
\longrightarrow
&
\cMap^{\sf sm}(\Lambda^p_i\times M, \Th(\xi)) 
\\
\cMap^\pitchfork(\Delta^p\times M, \Th(\xi)) 
&
\longrightarrow
&
\cMap^\pitchfork(\Lambda^p_i\times M, \Th(\xi)) 
\end{eqnarray*}
is surjective on path-components.

The inclusion $\Lambda^p_i\hookrightarrow \Delta^p$ is a section of a retraction between topologiacl spaces.
The resulting retraction $r: \Delta^p\times M\ra \Lambda^p_i\times M$ is a proper map. 
Precomposition with $r$ then defines a section of the restriction map:
\[
\cMap(\Delta^p\times M, \Th(\xi)) 
\cMap(\Lambda^p_i\times M, \Th(\xi)) 
~.
\]
Therefore, this restriction map is surjective on path-components, thereby proving the first case.

For the next case,
recall that the subspace $C_0=\Lambda^p_i\times M \subset \Delta^p \times M = C$ is a properly acyclic closed union of faces of a manifold with corners.
Let $f \in \cMap^{\sf sm}(\Lambda^p_i \times M , \Th(\xi))$.
Lemma~\ref{t85}(1) ensures the existence of a smooth compactly-supported extension:
\[
\xymatrix{
\Lambda_i^p\times M\ar[d]\ar[r]^-f&\Th(\xi)\\
\Delta^p \times M\ar@{-->}[ur]_-{\w{f}}}
~.
\]
This implies the sought surjectivity on path-components for the second case.

Similarly, if $f\in \cMap^\pitchfork(\Lambda^p_i \times M , \Th(\xi))$ is a smooth compactly-supported map that is transverse with the zero-section $B \subset \Th(\xi)$, then Lemma~\ref{t85}(2) ensures the existence of such a smooth extension $
\w{f} \in \cMap^\pitchfork(\Delta^p \times M , \Th(\xi))$ that is transverse with the zero-section.
This implies the sought surjectivity on path-components for the third case.

\end{proof}

\subsection{Parametrized transversality}

We now state the main result of this section.
\begin{theorem}\label{t80}
    Each of the morphisms in~(\ref{e80}) induces an equivalence between geometric realizations:
    \[
\bigl| \cMap^\pitchfork_\bullet\bigl(M,\Th(\xi)\bigr)\bigr|
\xra{~\simeq~}
\bigl| \cMap^{\sf sm}_\bullet\bigl(M,\Th(\xi)\bigr)\bigr|
\xra{~\simeq~}
\bigl| \cMap_\bullet\bigl(M,\Th(\xi)\bigr) \bigr|
\xla{~\simeq~}
\cMap\bigl(M,\Th(\xi)\bigr)
~.
\]
\end{theorem}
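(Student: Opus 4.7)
The plan is to prove the three equivalences separately, treating the rightmost map first and then handling the two leftward inclusions together via a homotopy group computation.

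For the rightmost map, I claim that the simplicial topological space $\cMap_\bullet(M, \Th(\xi))$ is levelwise equivalent to the constant simplicial space at $\cMap(M, \Th(\xi))$. Indeed, for each $p \geq 0$, the projection $\Delta^p \times M \to M$ is a proper homotopy equivalence (since $\Delta^p$ is compact and contractible), so precomposition yields a homotopy equivalence $\cMap(M, \Th(\xi)) \xra{\simeq} \cMap_p(M, \Th(\xi))$. Geometric realization sends levelwise equivalences of simplicial spaces to equivalences, so the constant-diagram map $\cMap(M, \Th(\xi)) \to |\cMap_\bullet(M, \Th(\xi))|$ is an equivalence.

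For the two leftward inclusions, the plan is to compute $\pi_n$ of each realization using the Kan condition from Lemma~\ref{lemma.Map.transv.Kan} together with the identifications of right Kan extensions from Lemma~\ref{lemma.right.Kan}. Since each of the three simplicial spaces satisfies the Kan condition, $\pi_n$ of its realization is represented by classes of maps from $\partial[n+1]$ into the simplicial space modulo bounding via $[n+1]$. By Lemma~\ref{lemma.right.Kan} applied at $S = \partial[n+1]$ and $S = \Lambda_i[n+1]$, such classes correspond respectively to continuous, smooth, or transverse compactly-supported maps $|\partial \Delta^{n+1}| \times M \to \Th(\xi)$, modulo extension across $|\Delta^{n+1}| \times M$. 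Lemma~\ref{t85} is the essential tool: part~(1) supplies smooth compactly-supported extensions along properly acyclic unions of faces (as $|\partial \Delta^{n+1}| \times M \hookrightarrow |\Delta^{n+1}| \times M$ indeed is), and part~(2) upgrades this to transverse extensions. Surjectivity on $\pi_n$ follows by first smoothly approximating and transversalizing the boundary representative on $|\partial \Delta^{n+1}| \times M$, then extending across the interior via Lemma~\ref{t85}(2). Injectivity is the analogous argument one dimension higher: a homotopy in $\cMap_\bullet$ between two smooth transverse classes presents a continuous extension on the horn $|\Lambda_0[n+1]| \times M$, which Lemma~\ref{t85}(2) smooths and transversalizes to a filling $\Delta^{n+1} \times M \to \Th(\xi)$.

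The principal obstacle is ensuring that approximation and extension preserve compact support throughout and respect the stratification of $\Delta^p \times M$ by its faces. Lemma~\ref{t85} is tailored precisely for this: its properly acyclic hypothesis guarantees that smooth and transverse extensions can be made compactly supported, and its inductive form on closed unions of faces is exactly what is needed to manipulate representatives of $\pi_n$ of the realizations. Once the $\pi_n$-bijection is established at each $n$ and each basepoint, the Kan condition upgrades the statement to a weak equivalence of realizations, completing the proof.
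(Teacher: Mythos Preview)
Your treatment of the rightmost map is correct and identical to the paper's.

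For the two leftward inclusions there is a genuine gap. You assert that the Kan condition of Lemma~\ref{lemma.Map.transv.Kan} lets you compute $\pi_n$ of the realization of a simplicial space $F$ as ``maps $\partial[n+1] \to F$ modulo bounding via $[n+1]$''. This is not valid for simplicial \emph{spaces}. The paper's Kan condition is merely $\pi_0$-surjectivity of horn-filling maps; it says nothing about the higher homotopy of the constituent spaces $F([p])$, which contributes nontrivially to $\pi_*(|F|)$. For a concrete failure, take $F$ to be the constant simplicial space at a space $X$: the Kan condition holds trivially, the matching map $F([n]) \to F(\partial[n])$ is the inclusion of constants $X \hookrightarrow \Map(S^{n-1},X)$, and your recipe does not recover $\pi_n(X)$. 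The formula you invoke is correct for Kan simplicial \emph{sets}, not for simplicial spaces satisfying only the $\pi_0$-lifting condition. (The closing remark that ``the Kan condition upgrades the $\pi_n$-bijection to a weak equivalence'' is also off: a $\pi_n$-bijection at all $n$ and basepoints is already a weak equivalence by Whitehead; the Kan condition plays no role there.)

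The paper avoids this problem by not computing homotopy groups at all. Instead it proves (Lemmas~\ref{prop.smooth.approx} and~\ref{thm.param.transv}) that each leftward morphism is a \emph{trivial Kan fibration} in the sense of Definition~\ref{defKan}: for every $p$, the map
\[
F([p]) \longrightarrow F(\partial[p]) \underset{G(\partial[p])}{\times} G([p])
\]
is $\pi_0$-surjective. Lemma~\ref{lemma.claim} then yields the equivalence on realizations directly. The smooth-approximation and transverse-extension arguments you outline via Lemma~\ref{t85} are precisely the content of verifying this relative $\pi_0$-surjectivity (this is what the proofs of Lemmas~\ref{prop.smooth.approx} and~\ref{thm.param.transv} do), so your geometric input is correct --- it just needs to be repackaged as checking the trivial Kan fibration condition rather than as a direct $\pi_n$ computation of the realizations.
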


We make use of the following powerful criterion for establishing when a map between simplicial spaces induces an equivalence between geometric realizations.

\begin{definition}\label{defKan}
    A morphism $F \ra G$ between simplicial spaces is a \bit{trivial Kan fibration} if, for each $p\geq 0$, the map
    \[
    F([p]) \longrightarrow F(\partial[p])\underset{G(\partial[p])}\times G([p])
    \]
    is surjective on path-components.
\end{definition}

\begin{terminology}
    We will say that a morphism $F \to G$ between simplicial topological spaces is a \bit{trivial Kan fibration} if the associated morphism between simplicial spaces is. 
    That is, if the canonical map to the homotopy pullback is surjective on path-components, where $F(\partial [p])$ and $G(\partial[p])$ are the homotopy right Kan extensions of $F$ and $G$ to ${\sf sSet}$ as in the formula~(\ref{eq.right.kan}).
\end{terminology}

For the proof of the following, see Proposition~6 of~\cite{lurie.kan.fibration}.

\begin{lemma}\label{lemma.claim}
Let $f\colon F \ra G$ be a morphism between simplicial spaces.
If $f$ is a trivial Kan fibration, then the map between geometric realizations, $|F| \xra{|f|} |G|$,
is an equivalence.
\end{lemma}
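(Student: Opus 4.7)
The plan is to show that $|f|\colon |F|\to |G|$ has the homotopy right lifting property against the sphere inclusions $S^{n-1}\hookrightarrow D^n$ for every $n\geq 0$, which will imply it is a weak equivalence since both spaces have the homotopy type of CW complexes. The hypothesis can be reformulated as a $\pi_0$-level lifting property at the simplicial level: for every commutative square of simplicial spaces
\[
\begin{tikzcd}
\partial[p] \ar[r, "\alpha"] \ar[d, hook] & F \ar[d, "f"] \\
{[p]} \ar[r, "\beta"] & G
\end{tikzcd}
\]
there exists a lift $\gamma\colon [p]\to F$ with $f\gamma=\beta$ and a path in $F(\partial[p])$ from $\gamma_{|\partial[p]}$ to $\alpha$ (the base case $p=0$ being read with the convention $\partial[0]=\emptyset$).

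To transfer this simplicial lifting property to geometric realizations, I would use simplicial approximation. Given a lifting problem $(\alpha\colon S^{n-1}\to |F|,\beta\colon D^n\to |G|)$, homotope and approximate to obtain a finite triangulation $(T,T')$ of $(D^n,S^{n-1})$ together with simplicial maps $\beta'\colon T\to G$ and $\alpha'\colon T'\to F$ representing $\beta$ and $\alpha$ up to homotopy. Then I would build a simplicial lift $\gamma'\colon T\to F$ of $\beta'$ extending $\alpha'$ by induction along the skeletal filtration of $T$: for a non-degenerate $p$-simplex of $T$ not in $T'$, its boundary together with the partial lift already constructed determines a point of $F(\partial[p])\times_{G(\partial[p])}G([p])$, which the hypothesis lifts to a point of $F([p])$. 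Geometrically realizing $\gamma'$ would then produce the desired homotopy lift of $(\alpha,\beta)$. For the uniqueness-up-to-homotopy half of the argument, apply the same strategy to a triangulation of $(D^n\times[0,1],\, D^n\times\{0,1\}\cup S^{n-1}\times[0,1])$ so as to promote two lifts and a homotopy downstairs to a homotopy upstairs.

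The principal obstacle in this plan is the ``path-component'' qualifier in the hypothesis: the lift $\gamma$ does not restrict to $\alpha$ along $\partial[p]\hookrightarrow [p]$ on the nose, only to a point in the same path-component of $F(\partial[p])$. Maintaining coherence of these chosen paths as the induction extends across many simplices is nontrivial, because a path chosen at stage $p$ becomes part of the boundary data for a $(p{+}1)$-simplex, and must be extended into a higher homotopy there. Resolving this either requires working $\infty$-categorically (so that the compatible choices are organized as a coherent system of higher homotopies) or a careful inductive argument that bakes each path into the next lifting problem via a mapping-cylinder construction. This coherence issue is the essential content of the cited Proposition~6 of~\cite{lurie.kan.fibration}.
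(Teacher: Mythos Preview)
The paper gives no proof of this lemma; it simply cites Proposition~6 of Lurie's lecture notes and moves on. So there is nothing in the paper to compare against, and your proposal is really an attempt to sketch what that cited argument might look like.

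Your outline has a more serious gap than the coherence issue you flag at the end. The step ``homotope and approximate to obtain a finite triangulation $(T,T')$ of $(D^n,S^{n-1})$ together with simplicial maps $\beta'\colon T\to G$'' does not work for simplicial \emph{spaces}. Simplicial approximation produces maps into realizations of simplicial \emph{sets}; if $G$ is a simplicial space and $T$ is a simplicial set (regarded as a discrete simplicial space), then a morphism $T\to G$ is an element of $G(T)=\lim_{[p]\to T} G([p])$, which is far too rigid. For instance, if $G$ is the constant simplicial space at a connected space $X$, then $|G|\simeq X$ but every morphism $T\to G$ with $T$ connected realizes to a constant map, so the identity $X\to X$ cannot be approximated this way. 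In short, there is no simplicial approximation theorem for $|G|$ when $G_\bullet$ is not levelwise discrete.

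One way to repair this is to pass to the diagonal of the bisimplicial set $\mathrm{Sing}_\bullet G_\bullet$, whose realization does model $|G|$ and to which classical simplicial approximation applies. But then a $p$-simplex is a continuous map $\Delta^p\to G_p$, and the lifting problem against $\partial[p]\hookrightarrow[p]$ in that diagonal is not literally the trivial Kan fibration condition you have for $F\to G$; translating between the two is exactly where the work lies. Your coherence concern is real, but it is downstream of this more basic issue. Both points are handled in the cited reference, which is why the paper is content to defer to it.
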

\qed

\begin{lemma}\label{prop.smooth.approx}
    The canonical morphism between simplicial spaces
    \[
    \cMap^{\sm}_\bullet(M, \Th(\xi)) \longrightarrow \cMap_\bullet(M, \Th(\xi))
    \]
    is a trivial Kan fibration.
\end{lemma}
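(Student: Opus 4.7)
The plan is to verify Definition~\ref{defKan} concretely: for each $p \geq 0$, I would show that the map from $\cMap^{\sm}(\Delta^p\times M, \Th(\xi))$ to the homotopy pullback
\[
\cMap^{\sm}_{\partial[p]}(M, \Th(\xi)) \underset{\cMap_{\partial[p]}(M, \Th(\xi))}{\times^h} \cMap(\Delta^p \times M, \Th(\xi))
\]
is surjective on $\pi_0$. By Lemma~\ref{lemma.right.Kan} applied with $S = \partial[p]$, the two terms indexed by $\partial[p]$ are canonically equivalent to the honest mapping spaces $\cMap^{\sm}(\partial\Delta^p \times M, \Th(\xi))$ and $\cMap(\partial\Delta^p \times M, \Th(\xi))$. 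A point of the homotopy pullback is thus represented, up to path components, by a triple $(f_0, f, h)$: a smooth compactly-supported $f_0$ on $\partial\Delta^p \times M$, a continuous compactly-supported $f$ on $\Delta^p \times M$, and a continuous compactly-supported homotopy $h$ from $f|_{\partial}$ to $f_0$.

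First, I would invoke the homotopy extension property of the Hurewicz cofibration $\partial\Delta^p \times M \hookrightarrow \Delta^p \times M$ to extend $h$ to a homotopy of $f$, replacing it by a homotopic continuous compactly-supported $f'$ satisfying $f'|_{\partial} = f_0$ strictly. The remaining task is to produce a smooth compactly-supported $\tilde{f}\colon \Delta^p \times M \to \Th(\xi)$ with $\tilde{f}|_{\partial} = f_0$ and a homotopy $\tilde{f} \sim f'$ rel $\partial\Delta^p \times M$; the triple $(f_0, \tilde{f}, \mathrm{const})$ will then be path-connected to $(f_0, f, h)$ in the homotopy pullback.

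For this smooth approximation step, I would adapt the embedding technique used in the proof of Lemma~\ref{t85}. Fix a smooth embedding $\xi \hookrightarrow \epsilon^N_{\RR^N}$ with compact base image, extending to a pointed embedding $\iota\colon \Th(\xi) \hookrightarrow (\RR^{2N})^+$ whose restriction to $E$ is smooth, and choose a tubular neighborhood $\iota(\Th(\xi)) \subset U \subset (\RR^{2N})^+$ with a smooth retraction $\rho\colon U \to \iota(\Th(\xi))$ fixing the basepoint. Working on the open preimage of $\RR^{2N} \subset (\RR^{2N})^+$ and invoking the classical smooth approximation theorem relative to a closed subset on which the map is already smooth (e.g., via Kriegl--Michor~\cite{KM.smoothapprox}), I would produce a $C^0$-close smooth compactly-supported approximation of $\iota \circ f'$ with image in $U$, agreeing with $\iota \circ f'$ on $\partial\Delta^p \times M$. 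Composing with $\iota^{-1}\circ\rho$ gives the desired $\tilde{f}$, and the straight-line homotopy in $U$ projected by $\rho$ provides the rel-boundary homotopy $\tilde{f} \sim f'$.

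The main obstacle is that $\Th(\xi)$ is not a smooth manifold at the basepoint, obstructing a direct application of smooth approximation for maps into a smooth manifold. The embedding-plus-retraction strategy from Lemma~\ref{t85} sidesteps this, but requires careful bookkeeping to preserve compact support throughout the approximation and projection, and to ensure the straight-line homotopy genuinely stays within the tubular neighborhood $U$.
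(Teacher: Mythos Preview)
Your proposal is correct and follows essentially the same route as the paper. Both arguments invoke Lemma~\ref{lemma.right.Kan} to replace the abstract values on $\partial[p]$ by honest mapping spaces on $\partial\Delta^p\times M$, then reduce to a relative smooth approximation statement: given a compactly-supported continuous map that is smooth on $\partial\Delta^p\times M$, deform it rel boundary to a globally smooth one. The only cosmetic difference is that the paper replaces the homotopy pullback by the strict pullback using that restriction along the cofibration $\partial\Delta^p\hookrightarrow\Delta^p$ is a Serre fibration, whereas you unwind the homotopy pullback as a triple $(f_0,f,h)$ and straighten $h$ via the homotopy extension property; these are interchangeable maneuvers. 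Your more explicit account of the smooth approximation step (embedding $\Th(\xi)$ in $(\RR^{2N})^+$ and retracting from a tubular neighborhood, as in Lemma~\ref{t85}) is a valid elaboration of what the paper compresses into a citation of~\cite{KM.smoothapprox}.
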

\begin{proof}
By Lemma~\ref{lemma.right.Kan}, we have an equivalence
\[
\cMap^{\sm}_{\partial[p]}\bigl(M, \Th(\xi)\bigr)
\simeq
\cMap^{\sm}\bigl(\partial\Delta^p\times M, \Th(\xi)\bigr)
\]
and likewise for continuous mapping spaces.
Consequently, to apply Lemma~\ref{lemma.claim}, it then suffices to show that for each $p\geq 0$, the map
\[
\cMap^{\sm}(\Delta^p\times M, \Th(\xi))\longrightarrow
\cMap^{\sm}(\partial\Delta^p\times M, \Th(\xi))\underset{\cMap(\partial\Delta^p\times M, \Th(\xi))}{\times}\cMap(\Delta^p\times M, \Th(\xi))
\]
is surjective on path-components, where the righthand side is understood as the homotopy pullback.
Because each inclusion $\partial \Delta^p \hookrightarrow \Delta^p$ is a cofibration, then each restriction map
\[
\cMap(\Delta^p\times M, \Th(\xi))\longrightarrow\cMap(\partial\Delta^p\times M, \Th(\xi))
\]
is a Serre fibration.  
Therefore, the canonical map from the pullback to the homotopy pullback is a weak homotopy equivalence.
We momentarily denote the point-set pullback as
\[
\cMap^{\sm}_{\partial\Delta^p\times M}(\Delta^p\times M, \Th(\xi))
~.
\]
Explicitly, this is a topological space consisting of compactly-supported maps $f:\Delta^p\times M\ra \Th(\xi)$ such that the restriction $f_{|M\times\partial \Delta^p}$ is smooth. 
Evidently, the map in question factors through this point-set pullback.  
It thus suffices to show the canonical map
\[
\cMap^{\sm}(\Delta^p\times M, \Th(\xi))
\longrightarrow
\cMap^{\sm}_{\partial\Delta^p\times M}(\Delta^p\times M, \Th(\xi))
\]
is surjective on path-components. 
This follows by proper smooth approximation (e.g., see~\cite{KM.smoothapprox}): for any $f\in \cMap^{\sm}_{\partial\Delta^p\times M}(\Delta^p\times M, \Th(\xi))$, there is a homotopy $f_t$ which is constant on $\partial\Delta^p\times M$ and such that $f_0=f$ and $f_1$ is smooth.

\end{proof}

\begin{lemma}\label{thm.param.transv}
The canonical morphism between simplicial spaces
\[
\cMap_\bullet^\pitchfork(M,\Th(\xi))\longrightarrow \cMap^{\sf sm}_\bullet(M,\Th(\xi))
\]
is a trivial Kan fibration.
\end{lemma}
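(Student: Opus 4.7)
The strategy is to mirror the template of Lemma~\ref{prop.smooth.approx}. Via Lemma~\ref{lemma.right.Kan}, the Kan extensions at $\partial[p]$ are identified with $\cMap^{\pitchfork}(\partial\Delta^p\times M, \Th(\xi))$ and $\cMap^{\sm}(\partial\Delta^p\times M, \Th(\xi))$; by Lemma~\ref{lemma.claim} and Definition~\ref{defKan}, it thus suffices to show that for each $p\geq 0$, the natural map
\[
\cMap^{\pitchfork}(\Delta^p \times M, \Th(\xi)) \longrightarrow \cMap^{\pitchfork}(\partial\Delta^p \times M, \Th(\xi)) \underset{\cMap^{\sm}(\partial\Delta^p \times M, \Th(\xi))}{\times^h} \cMap^{\sm}(\Delta^p \times M, \Th(\xi))
\]
is surjective on $\pi_0$. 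I would first reduce this homotopy pullback to the strict pullback
\[
P_p ~:=~ \bigl\{\, f \in \cMap^{\sm}(\Delta^p\times M, \Th(\xi)) ~\bigm|~ f|_{\partial\Delta^p\times M} \pitchfork B \,\bigr\},
\]
by the same argument as in Lemma~\ref{prop.smooth.approx}: the restriction map $\cMap^{\sm}(\Delta^p\times M, \Th(\xi)) \to \cMap^{\sm}(\partial\Delta^p\times M, \Th(\xi))$ is a Serre fibration, since a lifting problem curries into a smooth extension problem from the properly acyclic closed union of faces $D^q \times (\{0\}\times\Delta^p \cup I\times\partial\Delta^p)\times M$ of $D^q\times I\times\Delta^p\times M$, solved by Lemma~\ref{t85}(1).

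The main step is then to show $\cMap^{\pitchfork}(\Delta^p\times M, \Th(\xi))\hookrightarrow P_p$ is surjective on path-components: given $f\in P_p$, I would construct a smooth path in $P_p$ joining $f$ to a fully transverse map. The key geometric observation is that transversality of $f|_{\partial}$ at a boundary point $x$ forces $f$ itself to be transverse at $x$ (the boundary tangent space is a subspace of the ambient one), and transversality is a pointwise open condition; hence $f$ is transverse to $B$ on some open neighborhood $\partial\Delta^p\times M\subset U\subset\Delta^p\times M$. After choosing nested open neighborhoods $\partial\Delta^p\times M\subset U'\subset\overline{U'}\subset U$ and a smooth cutoff $\rho\colon\Delta^p\times M\to[0,1]$ vanishing on $U'$ and equal to $1$ outside $U$, I would apply Thom's transversality theorem to $f$ in a tubular neighborhood of the compact set $f^{-1}(B)\setminus U'$ (which lies in the smooth locus $E\subset\Th(\xi)$), producing a small smooth compactly-supported perturbation $\eta$ supported away from $\overline{U'}$ so that the homotopy $f_t := f + t\rho\eta$ --- interpreted in the local coordinates of the tubular neighborhood, and equal to $f$ elsewhere --- is a smooth compactly-supported path from $f=f_0$ to a transverse map $f_1$. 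Because $\rho\equiv 0$ on $U'$, the restriction $f_t|_{\partial\Delta^p\times M}=f|_{\partial\Delta^p\times M}$ remains transverse for all $t$, so the path $(f_t)$ lies in $P_p$, connecting $f$ to a point of $\cMap^{\pitchfork}(\Delta^p\times M, \Th(\xi))$.

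The main obstacle will be making the cutoff-based perturbation globally well-defined, given that $\Th(\xi)$ is singular at the basepoint. This is handled exactly as in the proof of Lemma~\ref{t85}(2): since $f^{-1}(B)\setminus U'$ is compact and contained in the smooth locus $E\subset\Th(\xi)$, the perturbation can be constructed inside a tubular neighborhood of this compact set within $E$, extended by the identity outside, and globally cut off via $\rho$. With this machinery already developed in the preceding subsections, no essentially new technical difficulties beyond those addressed in Lemmas~\ref{t85} and~\ref{prop.smooth.approx} are required.
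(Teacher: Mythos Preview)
Your overall strategy is right and close to the paper's, but there is a gap in your reduction step.

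You claim that the restriction $\cMap^{\sm}(\Delta^p\times M,\Th(\xi)) \to \cMap^{\sm}(\partial\Delta^p\times M,\Th(\xi))$ is a Serre fibration by currying a lifting problem into an extension problem on $D^q\times I\times\Delta^p\times M$ and invoking Lemma~\ref{t85}(1). But a continuous map $D^q \to \cMap^{\sm}(\Delta^p\times M,\Th(\xi))$ is only smooth in the $\Delta^p\times M$ direction for each fixed $d\in D^q$; the curried map is \emph{not} a smooth compactly-supported map from the manifold with corners $D^q\times C_0$ in the sense of Definition~\ref{d40}, so Lemma~\ref{t85}(1) does not apply as stated. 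Your analogy with Lemma~\ref{prop.smooth.approx} is imperfect: there the relevant restriction map is between \emph{continuous} mapping spaces, where the fibration claim follows immediately from $\partial\Delta^p\hookrightarrow\Delta^p$ being a cofibration; that argument does not transfer to the smooth mapping spaces here. The paper sidesteps this issue entirely: rather than reduce to the strict pullback, it factors through $P_p$ and proves $\pi_0$-surjectivity of the second factor $P_p \to (\text{homotopy pullback})$ directly, by unpacking a point of the homotopy pullback as a map on the properly acyclic pair
\[
\bigl(I\times\partial\Delta^p\times M\bigr)\cup_{\{1\}\times\partial\Delta^p\times M}\bigl(\{1\}\times\Delta^p\times M\bigr) ~\subset~ I\times\Delta^p\times M,
\]
smoothing in the $I$-direction, extending via Lemma~\ref{t85}(1), and contracting the result back via the multiplication map $(s,t)\mapsto st$ on $I\times I$.

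Your second step---showing $\cMap^\pitchfork(\Delta^p\times M,\Th(\xi)) \hookrightarrow P_p$ is $\pi_0$-surjective via the observation that boundary transversality propagates to an open collar, followed by a cutoff perturbation---is correct and is an explicit version of what the paper does in one line (dense image by Hirsch's transversality theorem, plus local path-connectedness of $P_p$). To complete your proof you should either supply a parametrized version of Lemma~\ref{t85}(1) valid for continuous $D^q$-families of smooth maps (plausible, since the extension formulas in Lemma~\ref{lemma.sm.sing} are linear in $f$ and the properly acyclic squeeze can be chosen uniformly over a compact parameter space), or follow the paper and handle the homotopy pullback directly.
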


\begin{proof}
Recall by Lemma~\ref{lemma.right.Kan}, the maps
\[
\cMap^{\sm}(\partial\Delta^p\times M, \Th(\xi)) \longrightarrow \cMap^{\sm}_{\partial[p]}(M,\Th(\xi))
\]
\[
\cMap^{\pitchfork}(\partial\Delta^p\times M, \Th(\xi)) \longrightarrow \cMap^{\pitchfork}_{\partial[p]}(M,\Th(\xi))
\]
are weak homotopy equivalences.
So to verify the map is a trivial Kan fibration, we must verify $\pi_0$-surjectivity of the following map to the homotopy pullback
\[
\xymatrix{
\cMap^{\pitchfork}(\Delta^p\times M, \Th(\xi))\ar[r]\ar@{-->}[rd]&\cMap^{\pitchfork}(\partial\Delta^p\times M, \Th(\xi))\underset{\cMap^{\sm}(\partial\Delta^p\times M, \Th(\xi))}{\times^{\sf h}}\cMap^{\sm}(\Delta^p\times M, \Th(\xi))\\
&\cMap^{\sm}_{\pitchfork \partial\Delta^p\times M}(\Delta^p\times M, \Th(\xi))\ar@{-->}[u]}
\]
which factors through the point-set pullback, which we will temporarily denote
\[
\cMap^{\sm}_{\pitchfork \partial\Delta^p\times M}(\Delta^p\times M, \Th(\xi))~.
\]
Observe that the point-set pullback is exactly the subspace of $\cMap^{\sm}(\Delta^p\times M, \Th(\xi))$ consisting of all maps $f$ for which the restriction $f_{|\partial\Delta^p\times M}$ is transverse to the zero-section $B\subset \Th(\xi)$.

To show $\pi_0$-surjectivity of the composite map, it therefore suffices to show that each of the factored maps are $\pi_0$-surjective separately. 
By standard transversality (e.g., Theorem~2.1 of \cite{hirsch}), the first map
\[
\cMap^{\pitchfork}(\Delta^p\times M, \Th(\xi))
\hookrightarrow
\cMap^{\sm}_{\pitchfork \partial\Delta^p\times M}(\Delta^p\times M, \Th(\xi))
\]
has dense image. Since the codomain is locally path-connected, the map is surjective on path-components. The remainder of the proof establishes that the second factor,
\begin{equation}\label{e82}
\cMap^{\sm}_{\pitchfork \partial\Delta^p\times M}(\Delta^p\times M, \Th(\xi))
\longrightarrow 
\cMap^{\pitchfork}(\partial\Delta^p\times M, \Th(\xi))\underset{\cMap^{\sm}(\partial\Delta^p\times M, \Th(\xi))}{\times^{\sf h}}\cMap^{\sm}(\Delta^p\times M, \Th(\xi))
~,
\end{equation}
is surjective on path-components.
Denote the closed interval $I := [0,1]$.
A point in the codomain of~(\ref{e82}), which is a homotopy limit, is adjoint to a compactly-supported map
\[
I \times \partial \Delta^p \times M
\underset{
\{1\}\times \partial \Delta^p \times M
}
\coprod
\{1\} \times \Delta^p \times M
\xra{~f~}
\Th(\xi)
~,
\]
with the following properties.
\begin{itemize}
    \item The restriction of $f$ to each face of $\Delta^p\times M$ is smooth.

    \item For each $t\in I$, the restriction of $f$ to each face of $\{t\}\times \partial \Delta^p \times M$ is smooth.

    \item The restriction of $f$ to $\{0\} \times \partial \Delta^p \times M$ is transverse to the zero-section.
\end{itemize}
By smooth approximation relative to $\partial I \times \partial \Delta^p\times M$, we can homotope $f$ to be smooth, and so that the homotopy is constant on $\partial I \times \partial \Delta^p\times M$.
We may therefore assume the point in the codomain of~(\ref{e82}) is adjoint to a compactly-supported map 
\[
I \times \partial \Delta^p \times M
\underset{
\{1\}\times \partial \Delta^p \times M
}
\coprod
\{1\}\times \Delta^p \times M
\longrightarrow
\Th(\xi)
\]
whose restriction to each face of $\Delta^p \times M$ and of $I \times \partial \Delta^p \times M$ is a smooth compactly-supported map.

Consider the following manifold with corners $C$, and closed union $C_0$ of faces therein:
\[
\Bigl(
~C_0 
~\subset
~C~
\Bigr) 
~:=~ 
\Bigl(
~
I \times \partial \Delta^p \times M 
\underset{
\{1\}\times \partial \Delta^p \times M
}
\coprod
\{1\}\times \Delta^p \times M
~\subset~ 
I \times \Delta^p \times M
~
\Bigr)
~.
\]
Observe that this pair $C_0 \subset C$ is properly acyclic. 
Therefore, Lemma~\ref{t85} applies to achieve a smooth extension 
\[
\w{f}\colon C = I \times \Delta^p \times M \longrightarrow \Th(\xi)
\]
of $f$.
Now, by assumption, the restriction of $f$ along $\{0\}\times \partial \Delta^p \times M$ is transverse to the zero-section $B \subset \Th(\xi)$.
Because $\w{f}$ extends $f$, the composite map 
\[
\w{f}_0
\colon
\Delta^p \times M = \{0\}\times \Delta^p \times M
~\hookrightarrow~
I \times \Delta^p \times M 
\xra{~\w{f}~}
\Th(\xi)
\]
is therefore an element in the domain of~(\ref{e82}).
Consider the map
\[
\mu
\colon 
I \times C
=
I \times I \times (\Delta^p \times M)
\xra{~m \times (\Delta^p\times M)~}
I \times (\Delta^p\times M)
=
C
~,
\]
where $I\times I \xra{m} I$ is given by $m(s,t) := st$.  
The composite map,
\[
I \times C_0
~\hookrightarrow~
I \times C
\xra{~\mu~}
C
\xra{~\w{f}~}
\Th(\xi)
~,
\]
is adjoint to a path in the codomain of~(\ref{e82}) to the original point from the image of $\w{f}_0$.

\end{proof}

We can prove the main theorem of this section, our parametrized transversality theorem.

\begin{proof}[Proof of Theorem~\ref{t80}]
Using Lemma~\ref{lemma.claim}, Lemma~\ref{thm.param.transv} implies the first map is an equivalence and Lemma~\ref{prop.smooth.approx} implies the second map is an equivalence.  

The map between cosimplicial topological spaces $\Delta^\bullet \ra \ast$ is by proper homotopy equivalencs.
Therefore, the canonical morphism between simplicial spaces,
$\cMap(M,X)
\to
\cMap_\bullet(M,X)
$,
is an equivalence. 
It follows that the last map is an equivalence.

\end{proof}

\section{Codimension-$\xi$ tangles in $\Delta^\bullet \times M$}\label{sec.Bord.xi}

In this section, for $M$ a manifold with boundary, and for $\xi$ a rank-$k$ vector bundle, we construct a simplicial space $\Bord^\xi_\bullet(M)$ whose space of vertices is a moduli space of codimension-$\xi$ tangles in $M$, and whose space of edges is a moduli space of embedded cobordisms between such.

\subsection{Codimension-$k$ submanifolds}

In this subsection, we fix non-negative integers $p,n,k\geq 0$, and we fix a $(p+n+k)$-manifold with corners $C$.
The key construction here is that of a moduli space $\Sub^k(C)$ of compact codimension-$k$ submanifolds of $C$.

\begin{definition}\label{d7}
A \bit{codimension-$k$ submanifold} of $C$ is a subspace $W \subset C$ such that, for each $x\in W$, there exists a smooth open embedding $\RR^k \times \RR^j \times \RR_{\geq 0}^d \xra{\varphi} C$ such that $\varphi(0)=x$ and $\varphi^{-1}(W) = \{0\} \times \RR^j \times \RR_{\geq 0}^d$.
\end{definition}

\begin{remark}
    Note that, for $F\subset C$ a face of $C$, then $F$ is not a submanifold of $C$ in the sense of Definition~\ref{d7}.
\end{remark}

\begin{remark}
Informally, a submanifold of $C$ is a smooth submanifold with corners $W \subset C$ such that $W \pitchfork F$ for each face $F \subset C$, and each face of $W$ is $W \cap F$, an intersection of $W$ with a face of $C$.
\end{remark}

\begin{definition}
\label{d4}
Let $W$ be a compact $(p+n)$-manifold with corners.
    \begin{enumerate}
    \item An \bit{embedding} of $W$ into $C$ is a smooth embedding $f\colon W \hookrightarrow C$ between manifolds with corners such that, for each $x\in W$, there exists smooth open embeddings $\RR^j \times \RR_{\geq 0}^d \xra{\varphi} W$ and $\RR^k \times \RR^j \times \RR_{\geq 0}^d \xra{\psi}C$ such that $\varphi(0)=x$ and such that the diagram
    \[
\begin{tikzcd}
	{\RR^j \times \RR_{\geq 0}^d} && {\RR^k \times \RR^j \times \RR_{\geq 0}^d} \\
	W && C
	\arrow["{(s,t)\mapsto (0,s,t)}", from=1-1, to=1-3]
	\arrow["\varphi"', hook, from=1-1, to=2-1]
	\arrow["\psi", hook, from=1-3, to=2-3]
	\arrow["f", from=2-1, to=2-3]
\end{tikzcd}
    \]
    commutes.

    \item The topological space of \bit{embeddings} of $W$ into $C$ is the subspace
    \[
    \Emb(W,C)
    ~\subset~
    \Map^{\sm}(W,C)
    \]
    of the space of smooth maps from $W$ to $C$ (endowed with the compact-open $\sC^\infty$ topology) consisting of the embeddings.    

    \item The topological space of \bit{diffeomorphisms}
    \[
    \Diff(W)
    ~\subset~
    \Emb(W,W)
    \]
    is the subspace consisting of those embeddings that admit a smooth inverse.    
    
    \end{enumerate}
\end{definition}

\begin{observation}\label{t56}
    Let $W \xra{f} C$ be an embedding.
    Its image $f(W) \subset C$ is a submanifold of $C$ in the sense of Definition~\ref{d7}.
    In particular, $f$ is transverse to each face of $C$.
\end{observation}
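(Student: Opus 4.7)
My plan is to work locally in a chart supplied by Definition~\ref{d4} and then refine it using the fact that $f\colon W \hookrightarrow C$ is a closed embedding. Fix $y \in f(W)$, write $y = f(x)$ for the unique $x\in W$, and choose charts $\RR^j \times \RR_{\geq 0}^d \xra{\varphi} W$ and $\RR^k \times \RR^j \times \RR_{\geq 0}^d \xra{\psi} C$ as in Definition~\ref{d4}(1), with $\varphi(0)=x$ and $\psi(0)=y$ and $\psi^{-1} \circ f \circ \varphi(s,t) = (0,s,t)$. Denote $U := \varphi(\RR^j \times \RR_{\geq 0}^d) \subset W$, an open neighborhood of $x$. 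The naive candidate chart witnessing $f(W)$ as a submanifold is $\psi$ itself; the verification that $\psi^{-1}\bigl( f(W) \bigr) = \{0\}\times \RR^j \times \RR_{\geq 0}^d$ breaks into the equality on $f(U)$, which is free from the hypotheses, and the assertion that the image of $\psi$ is disjoint from $f(W \smallsetminus U)$, which is where the actual work lies.

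The main obstacle is thus to arrange this disjointness. This is where compactness of $W$ enters: since $W$ is compact and $C$ is Hausdorff, the smooth embedding $f$ is a closed embedding, so $f(W \smallsetminus U)$ is closed in $C$. Because $f$ is injective and $x \in U$, the point $y = f(x)$ lies in the complement $C \smallsetminus f(W\smallsetminus U)$, which is therefore an open neighborhood of $y$. I will then choose $\epsilon>0$ so that
\[
\psi\bigl( (-\epsilon,\epsilon)^k \times (-\epsilon,\epsilon)^j \times [0,\epsilon)^d \bigr)
~\subset~
\psi(\RR^k \times \RR^j \times \RR_{\geq 0}^d) \cap \bigl( C \smallsetminus f(W\smallsetminus U) \bigr)~,
\]
and precompose $\psi$ with a diffeomorphism $\RR^k \times \RR^j \times \RR_{\geq 0}^d \cong (-\epsilon,\epsilon)^k \times (-\epsilon,\epsilon)^j \times [0,\epsilon)^d$ fixing the origin. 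The resulting chart $\psi'$ has the same form required by Definition~\ref{d7}, and by construction $\psi'^{-1}\bigl( f(W) \bigr) = \psi'^{-1}\bigl(f(U)\bigr) = \{0\} \times \RR^j \times \RR_{\geq 0}^d$. This verifies $f(W)\subset C$ is a codimension-$k$ submanifold.

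For the transversality claim, I will inspect the same chart. Let $F \subset C$ be an $i$-face of $C$ and suppose $x \in W$ satisfies $f(x) \in F$. In the chart $\psi$, the face $F$ is the $\psi$-image of a subspace of the form $\RR^k \times \RR^j \times \RR_{\geq 0}^S$ for some subset $S \subset \{1,\dots,d\}$ with $|S|=i$, after permuting the $\RR_{\geq 0}$-coordinates. The tangent space $T_{f(x)} F$ therefore contains the full $\RR^k \times \RR^j$-directions, while $df_x(T_x W)$ coincides with the $\{0\}\times \RR^j \times \RR^d$-directions by the chart identity $\psi^{-1}\circ f \circ \varphi(s,t) = (0,s,t)$. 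The sum of these two subspaces is all of $T_{f(x)} C$, witnessing transversality $f\pitchfork F$. No further argument beyond direct inspection of the local normal form is required.
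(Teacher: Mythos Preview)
Your argument is correct. The paper states this as an Observation without proof, treating it as an immediate consequence of Definitions~\ref{d7} and~\ref{d4}, so your write-up supplies the details the authors omit. Two small remarks: first, compactness of $W$ is convenient but not essential here---since $f$ is already a topological embedding, $f(U)$ is open in $f(W)$, so there is an open set $O\subset C$ with $y\in O$ and $O\cap f(W)\subset f(U)$, which is all you need to shrink $\psi$; second, your description of the face $F$ in the chart as $\RR^k\times\RR^j\times\RR_{\geq 0}^S$ with $|S|=i$ does not quite match the paper's convention (an $i$-face lies in the $i$-stratum, and since $\psi(0)=y\in F$ the origin has corner depth $d$, forcing $i=d$ and $F$ to be locally $\RR^k\times\RR^j\times\{0\}$), but this does not affect your tangent-space computation, which only uses that $T_{f(x)}F$ contains the $\RR^k\times\RR^j$-directions.
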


\begin{definition}
\label{d3}
    The topological space
    \[
    \Sub^k(C)
    ~:=~
    \left\{
    W
    \subset
    C
    \text{ codimension-$k$ submanifold}
    \right\}
    \]
    has underlying set the compact codimension-$k$ submanifolds of $M$; its topology is the finest for which, for each compact $(p+n)$-manifold with corners $W$, the map
    \[
    \Emb(W,C)
    \xra{~{\sf Image}~}
    \Sub^k(C)
    ~,\qquad
    (W \xra{f} C)
    \longmapsto 
    f(W)
    ~,
    \]
    is continuous.
\end{definition}

\begin{observation}
Let $W$ be a compact $(p+n)$-manifold with corners.
Composition of diffeomorphisms defines a continuous group structure on $\Diff(W)$.
With respect to this group structure, precomposing embeddings by diffeomorphisms defines a continuous action
    \[
    \Diff(W)
    \lacts
    \Emb(W,C)
    ~.
    \]
    This action is free because an embedding is, in particular, injective.
\end{observation}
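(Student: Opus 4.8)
The plan is to verify the three assertions in turn --- the topological group structure on $\Diff(W)$, continuity of the precomposition action, and freeness --- noting in advance that only the continuity of inversion requires any analysis, the rest being formal consequences of the definitions together with continuity of composition.

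First I would record continuity of composition. The compact-open $\sC^\infty$ topology on $\Map^{\sm}(W,C)$, for $W$ compact, has neighborhoods of a map $g$ cut out by finitely many source charts from the atlas of $W$, finitely many target charts from the atlas of $C$, concentric compact subsets of the source charts, a finite derivative order $r$, and an $\epsilon>0$, exactly as in Definition~\ref{d40}. With this description, the composition map $\Map^{\sm}(W,W)\times\Map^{\sm}(W,C)\to\Map^{\sm}(W,C)$, $(f,g)\mapsto g\circ f$, is continuous: a basic neighborhood of $g\circ f$ determined by a compact $K\subset W$ and derivative bounds up to order $r$ contains, by the chain rule together with uniform continuity of the relevant derivatives of $g$ on a compact neighborhood of $f(K)$, the image of a product of suitable basic neighborhoods of $f$ and of $g$. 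This is the classical argument for manifolds, and the passage to manifolds with corners changes nothing, since one simply works with the half-space chart models $\RR^\ell\times\RR_{\geq 0}^d$. Restricting along $\Diff(W)\subset\Emb(W,W)\subset\Map^{\sm}(W,W)$ and $\Emb(W,C)\subset\Map^{\sm}(W,C)$, and using that a composite of diffeomorphisms is a diffeomorphism and that a composite of a diffeomorphism with an embedding is an embedding (Definition~\ref{d4} and Observation~\ref{t56}), this yields continuity of the composition maps $\Diff(W)\times\Diff(W)\to\Diff(W)$ and $\Diff(W)\times\Emb(W,C)\to\Emb(W,C)$, $(\phi,f)\mapsto f\circ\phi$.

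The substantive point is continuity of inversion $\Diff(W)\xra{(-)^{-1}}\Diff(W)$. Here I would first reduce to continuity at the identity via the identity $\phi^{-1}=\phi_0^{-1}\circ(\phi\circ\phi_0^{-1})^{-1}$: since $\phi\circ\phi_0^{-1}\to\id_W$ as $\phi\to\phi_0$ by the continuity of composition just established, continuity of $(-)^{-1}$ at an arbitrary $\phi_0$ follows from continuity at $\id_W$ together with two applications of continuity of composition. Continuity at $\id_W$ is the classical assertion that $\Diff(W)$ is a topological group when $W$ is a compact manifold: if $\phi$ is $\sC^1$-close and $C^0$-close to $\id_W$, then the inverse function theorem --- applied with estimates that are uniform precisely because $W$ is compact --- produces $\phi^{-1}$ that is $C^0$-close and $\sC^1$-close to $\id_W$, and bounds on the higher derivatives of $\phi^{-1}$ are obtained by inductively differentiating the relation $\phi\circ\phi^{-1}=\id_W$. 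For manifolds with corners one runs this chart-by-chart, using that a diffeomorphism of $W$ preserves the corner stratification (immediate from Definition~\ref{d4}), so that the locally constructed inverses are again maps of half-space chart models; a finite subcover of $W$ by such charts, together with a subordinate partition of unity, patches the local inverses into a global smooth inverse and transports the derivative estimates. I expect this patching-and-transport step, rather than the inverse function theorem itself, to be the only place that demands real care.

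Granting continuity of inversion, the precomposition action $\Diff(W)\lacts\Emb(W,C)$, $(\phi,f)\mapsto f\circ\phi^{-1}$, is continuous as the composite of continuous maps $\Diff(W)\times\Emb(W,C)\xra{((-)^{-1},\id)}\Diff(W)\times\Emb(W,C)\to\Emb(W,C)$, and it is a left action since $(\phi\psi)\cdot f=f\circ(\phi\psi)^{-1}=(f\circ\psi^{-1})\circ\phi^{-1}=\phi\cdot(\psi\cdot f)$. Finally, freeness is immediate: if $\phi\cdot f=f$, i.e.\ $f\circ\phi^{-1}=f$, then post-composing with the inverse of the bijection $f\colon W\to f(W)$ gives $\phi^{-1}=\id_W$, hence $\phi=\id_W$ --- which is exactly the observation that an embedding is injective. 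Together these steps establish all assertions of the statement.
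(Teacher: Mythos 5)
The paper states this as an \emph{Observation} and supplies no proof of its own, treating it as a classical fact; so there is nothing to compare line by line. Your write-up is a correct and reasonably complete filling-in, and it follows the standard route one would expect: continuity of composition via the chain rule and uniform estimates on compacta, continuity of inversion by reducing to a neighborhood of the identity via $\phi^{-1}=\phi_0^{-1}\circ(\phi\circ\phi_0^{-1})^{-1}$ and then invoking the inverse function theorem with compactness-uniform bounds, continuity of the action as a composite of already-continuous maps, and freeness from injectivity of embeddings. All three conclusions are reached correctly, and the left-action convention $\phi\cdot f=f\circ\phi^{-1}$ is the right one given the $\lacts$ notation.

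One small simplification worth flagging: the ``patching-and-transport'' step you single out as the delicate point in the inversion argument is not actually needed. Since $\phi$ is already a bijection $W\to W$, its set-theoretic inverse $\phi^{-1}$ exists globally and requires no gluing; the inverse function theorem is only needed to establish that $\phi^{-1}$ is smooth and to control its derivatives, both of which are purely local statements that can be checked chart-by-chart against a finite atlas with no partition of unity. The compactness of $W$ enters only to make the chart-wise estimates uniform, not to assemble a global inverse. Removing that step would tighten the argument without changing its substance. You might also note, as a sanity check on Definition~\ref{d4}, that for $k=0$ the embedding condition $\RR^j\times\RR_{\geq0}^d\hookrightarrow\RR^0\times\RR^j\times\RR_{\geq0}^d$ degenerates to the requirement that $f$ carry corner charts to corner charts, which is precisely what a diffeomorphism of a manifold with corners does, so that $\Diff(W)\subset\Emb(W,W)$ really is a well-defined subspace with the induced topology.
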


\begin{observation}
\label{t15}
The definition of an embedding is such that, for $f,g \colon W \to C$ two embeddings from a compact $(p+n)$-manifold with corners, if ${\sf Image}(f)={\sf Image}(g)$ then $W \xra{g^{-1} \circ f} W$ is a diffeomorphism.  
Consequently, by Definition~\ref{d3}, there is a canonical homeomorphism
    \[
    \underset{[W]} \coprod \Emb(W,C)_{/\Diff(C)}
    \xra{~\cong~}
    \Sub^k(C)
    ~,
    \]
    from a coproduct indexed by the set of isomorphism classes of compact $(p+n)$-manifolds with corners.
\end{observation}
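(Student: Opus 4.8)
The plan is to prove the pointwise claim first and then deduce that the assignment ${\sf Image}$ is a well-defined continuous bijection whose inverse is also continuous. For the pointwise claim, let $f,g\colon W\to C$ be embeddings (in the sense of Definition~\ref{d4}(1)) with ${\sf Image}(f)={\sf Image}(g)=:W'$. Since $f$ and $g$ are both homeomorphisms onto $W'$ with its subspace topology, the composite $g^{-1}\circ f\colon W\to W$ is at least a continuous bijection, and it suffices to check that it and its inverse are smooth. I would do this locally: fix $w\in W$, set $c:=f(w)=g\bigl((g^{-1}\circ f)(w)\bigr)\in C$, and apply Definition~\ref{d4}(1) to $f$ at $w$ and to $g$ at $(g^{-1}\circ f)(w)$ to obtain adapted charts $\varphi\colon\RR^j\times\RR_{\geq 0}^d\xra{\cong}U\subset W$, $\psi\colon\RR^k\times\RR^j\times\RR_{\geq 0}^d\xra{\cong}V\subset C$ for $f$, and analogous charts $\varphi',\psi'$ for $g$, with $\psi$ and $\psi'$ both centered at $c$. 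Shrinking to a neighborhood of $w$ on which $f$ lands inside $V'$ and inside the image of $\varphi'$ under $g$, one reads off $\varphi'^{-1}\circ(g^{-1}\circ f)=\pr\circ\,\psi'^{-1}\circ f$, where $\pr\colon\RR^k\times\RR^j\times\RR_{\geq 0}^d\to\RR^j\times\RR_{\geq 0}^d$ is the coordinate projection that inverts the neat slice model on the slice; the right side is a composite of smooth maps, so $g^{-1}\circ f$ is smooth near $w$. Exchanging the roles of $f$ and $g$ shows $f^{-1}\circ g$ is smooth, so $g^{-1}\circ f$ is a diffeomorphism. The same computation applied to embeddings $f\colon W_0\to C$, $g\colon W_1\to C$ with equal image produces a diffeomorphism $W_0\xra{\cong}W_1$.

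Next I would study the map $\Phi\colon\coprod_{[W]}\Emb(W,C)/\Diff(W)\to\Sub^k(C)$, $[f]\mapsto{\sf Image}(f)$, where the quotient is along the free precomposition action of $\Diff(W)$. It is well defined, since precomposing an embedding by a diffeomorphism does not change its image. It is injective: if ${\sf Image}(f)={\sf Image}(g)$ with $f\in\Emb(W_0,C)$, $g\in\Emb(W_1,C)$, then by the previous paragraph $g^{-1}\circ f\colon W_0\xra{\cong}W_1$, so $[W_0]=[W_1]$ and, under this identification, $f$ and $g$ lie in one $\Diff(W)$-orbit. It is surjective: a compact codimension-$k$ submanifold $W'\subset C$ is, by the local models of Definition~\ref{d7}, canonically a compact $(p+n)$-manifold with corners whose inclusion $W'\hookrightarrow C$ satisfies the conditions of Definition~\ref{d4}(1) (the charts witnessing Definition~\ref{d7} are precisely the adapted charts of Definition~\ref{d4}(1), cf.\ Observation~\ref{t56}), so choosing an abstract representative $W$ of its diffeomorphism class and any diffeomorphism $W\xra{\cong}W'$ exhibits $W'$ as the image of an embedding $W\to C$.

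Finally, for bicontinuity: the domain carries the final topology along the composites $q_W\colon\Emb(W,C)\twoheadrightarrow\Emb(W,C)/\Diff(W)\hookrightarrow\coprod_{[W']}\Emb(W',C)/\Diff(W')$, and by Definition~\ref{d3} the space $\Sub^k(C)$ carries the final topology along the maps ${\sf Image}_W\colon\Emb(W,C)\to\Sub^k(C)$. Since $\Phi\circ q_W={\sf Image}_W$ for every $W$, continuity of $\Phi$ is immediate from the universal property of the domain. Conversely, if $A$ is open in the domain then $q_W^{-1}(A)$ is open for all $W$; because $\Phi$ is a bijection, $q_W^{-1}(A)={\sf Image}_W^{-1}(\Phi(A))$, so $\Phi(A)$ is open in $\Sub^k(C)$ by Definition~\ref{d3}; hence $\Phi$ is an open map, and a continuous open bijection is a homeomorphism. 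The one genuinely delicate step is the pointwise first claim: for arbitrary smooth injective maps a continuous bijection between two embedded copies of the same subset need not be smooth, and here it is the \emph{neat} local normal form hypothesized in Definition~\ref{d4}(1) that forces smoothness. The care in writing this up goes entirely into the corner bookkeeping in the adapted charts — in particular verifying that $\pr$ restricts to an honest diffeomorphism of the slice respecting the $\RR_{\geq 0}$-factors, so that the argument sees the faces of $W$ correctly; everything else is formal manipulation of final topologies.
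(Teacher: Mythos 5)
Your proposal is correct and supplies exactly the argument the paper leaves implicit by presenting this as an unproved Observation: the neat local normal form of Definition~\ref{d4}(1) forces smoothness of the continuous bijection $g^{-1}\circ f$ via the chart identity $\varphi'^{-1}\circ(g^{-1}\circ f)\circ\varphi = \pr\circ\psi'^{-1}\circ\psi\circ\iota$, the general case $W_0\not\cong W_1$ is handled the same way, and the homeomorphism statement is then pure final-topology bookkeeping given Definition~\ref{d3}. One small remark: the displayed formula in the paper contains a typo, $\Emb(W,C)_{/\Diff(C)}$ should read $\Emb(W,C)_{/\Diff(W)}$, as you correctly use in your argument.
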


\begin{observation}
    \label{t17}
The homeomorphism of Observation~\ref{t15} supplies a map between spaces:
    \begin{equation}
    \label{e18}
    \Sub^k(C)
    \underset{\rm Obs~\ref{t15}}\simeq
    \underset{[W]} \coprod \Emb(W,C)_{/\Diff(W)}
    \longrightarrow
    \underset{[W]} \coprod \BDiff(W)
    ~,\qquad 
    (W \subset C)
    \longmapsto 
    W
    ~,
    \end{equation}
    to the moduli space of compact $n$-manifolds with corners.
    
\end{observation}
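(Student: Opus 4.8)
The plan is to build the map componentwise using the decomposition of $\Sub^k(C)$ from Observation~\ref{t15}, and then to apply the Borel construction to the terminal map. First I would recall that, for each isomorphism class $[W]$ of compact $(p+n)$-manifolds with corners, the $\Diff(W)$-action on $\Emb(W,C)$ is free (as noted immediately above), and moreover that the orbit projection $\Emb(W,C)\to \Emb(W,C)_{/\Diff(W)}$ is a principal $\Diff(W)$-bundle. The latter follows from the standard argument producing local sections of the ${\sf Image}$ map: near a fixed submanifold-with-corners $W_0\subset C$ one uses a tubular neighborhood to identify nearby submanifolds with small normal sections. In particular the point-set quotient $\Emb(W,C)_{/\Diff(W)}$ models the homotopy quotient $\Emb(W,C)_{h\Diff(W)}$ as an object of $\Spaces$.

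Next I would form the (unique, hence $\Diff(W)$-equivariant) map $\Emb(W,C)\to \ast$; on homotopy quotients it yields a map $\Emb(W,C)_{h\Diff(W)}\to \ast_{h\Diff(W)}=\BDiff(W)$ in $\Spaces$. Assembling these over the set of isomorphism classes $[W]$ gives a map $\coprod_{[W]}\Emb(W,C)_{/\Diff(W)} \to \coprod_{[W]}\BDiff(W)$, and precomposing with the homeomorphism of Observation~\ref{t15} produces the asserted map $\Sub^k(C)\to \coprod_{[W]}\BDiff(W)$. Unwinding the construction shows that, on underlying sets (hence on $\pi_0$), it sends a submanifold $W\subset C$ to its abstract diffeomorphism type, matching the stated description $(W\subset C)\mapsto W$.

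The only substantive point — and the step I expect to be the main obstacle — is the principal-bundle claim, i.e.\ that ${\sf Image}\colon\Emb(W,C)\to \Sub^k(C)$ admits local sections. I would handle this as in Palais's proof of the parametrized isotopy extension theorem: fix $W_0\subset C$, choose a tubular neighborhood of $W_0$ compatible with the stratification of $C$ by its faces (so that $W_0$ is transverse to each face $F$ and the tube restricts to a tubular neighborhood of each $W_0\cap F$), and use the resulting exponential coordinates to write every sufficiently nearby codimension-$k$ submanifold uniquely as the image of a small section of the normal bundle; this identifies a neighborhood of $[W_0]$ in $\Sub^k(C)$ with a space of normal sections and furnishes the local section of ${\sf Image}$. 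This is of the same flavor as, but strictly weaker than, the continuity input of Lemma~\ref{t11}. Once local triviality is in hand, the remaining steps — Borel construction, coproduct over $[W]$, and composition with Observation~\ref{t15} — are formal; alternatively one can avoid committing to a model of $\BDiff(W)$ by replacing $\Emb(W,C)$ with the equivalent free $\Diff(W)$-space $\Emb(W,C)\times E\Diff(W)$ before passing to quotients, the local-section statement being exactly what identifies $\Emb(W,C)_{/\Diff(W)}$ with that homotopy quotient.
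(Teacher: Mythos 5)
Your proposal is correct and takes essentially the same approach the paper implicitly takes. The key input you identify — that the orbit projection $\Emb(W,C)\to\Emb(W,C)_{/\Diff(W)}$ is a principal $\Diff(W)$-bundle, so the point-set quotient models the homotopy quotient $\Emb(W,C)_{h\Diff(W)}$, after which the map to $\coprod_{[W]}\BDiff(W)$ is formal — is exactly what the paper supplies a few lines later in Lemma~\ref{t52}(2), and your tubular-neighborhood argument for local sections is essentially the proof of Lemma~\ref{t52}(1).
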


The next result gives a convenient subbasis for the topology of $\Sub^k(C)$.
For $W \subset C$ a submanifold, its normal bundle 
\[
\nu_{W \subset C} ~=~ (~E(\nu_{W \subset C}) \xra{~\pi~} W~)
\]
is a smooth vector bundle.  
The topological space $\Gamma^{\sf sm}(\nu_{W \subset C})$ is that of smooth sections of $\nu_{W \subset C}$ as it is endowed with the compact-open $\sC^\infty$ topology.
The tubular neighborhood theorem ensures the existence of an open embedding $E(\nu_{W \subset C}) \xra{\psi} C$ under $W$ between manifolds with corners.
Given such, post-composition with $\psi$ defines a map
\begin{equation}
    \label{e57}
    \Gamma^{\sf sm}(\nu_{W \subset C}) 
    \xra{~\psi_\ast~}
    \Emb(W,C)
    ~,\qquad
    \sigma
    \longmapsto
    \psi \circ \sigma
    ~.
\end{equation}

\begin{lemma}\label{t52}
Let $(W \subset C) \in \Sub^k(C)$ be a compact codimension-$k$ submanifold of $C$.
Let $E(\nu_{W \subset C}) \xra{~\psi~} C$ be a smooth open embedding under $W$ between manifolds with corners.
The resulting map~(\ref{e57}) has following properties.
\begin{enumerate}
    \item The composite map
    \[
    {\sf Image} \circ \psi_\ast
    \colon
    \Gamma^{\sf sm}(\nu_{W \subset C}) 
    \xra{~\psi_\ast~}
    \Emb(W,C)
    \xra{~\sf Image~}
    \Sub^k(C)
    ~,\qquad
    \sigma
    \longmapsto
    \psi\Bigl(
    \sigma(W)
    \Bigr)
    ~,
    \]
    is an open embedding that carries the zero-section to $W$.
    In particular, $\Sub^k(C)$ is locally a topological vector space.  

\item The action map
\[
{\rm action}
\colon
\Diff(W) \times \Gamma^{\sf sm}(\nu_{W\subset C})
\xra{~ \Diff(W) \times \psi_\ast~}
\Diff(W)
\times 
\Emb(W,C)
\xra{~\rm action~}
\Emb(W,C)
\]
fits into a pullback diagram among topological spaces:
\[
\begin{tikzcd}
	{\Diff(W) \times \Gamma^{\sf sm}(\nu_{W\subset C})} & {\Emb(W,C)} \\
	{\Gamma^{\sf sm}(\nu_{W\subset C})} & {\Sub^k(C)}
	\arrow["{{\rm action}}", from=1-1, to=1-2]
	\arrow["\pr"', from=1-1, to=2-1]
	\arrow["\lrcorner"{anchor=center, pos=0.125}, draw=none, from=1-1, to=2-2]
	\arrow["{{\sf Image}}", from=1-2, to=2-2]
	\arrow["{{\sf Image} \circ \psi_\ast}", from=2-1, to=2-2]
\end{tikzcd}
~.
\]
In particular, the map $\Emb(W,C) \xra{\sf Image} \Sub^k(C)$ is a principal-$\Diff(W)$ fiber bundle onto its image.

\end{enumerate}
\end{lemma}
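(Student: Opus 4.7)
The plan for part (1) is to verify that the composite map $\Phi := {\sf Image} \circ \psi_\ast \colon \Gamma^{\sf sm}(\nu_{W\subset C}) \to \Sub^k(C)$ is a topological embedding with open image. Continuity is immediate, since $\psi_\ast$ is continuous (post-composition with the fixed smooth embedding $\psi$ is continuous in the $\sC^\infty$ topology) and ${\sf Image}$ is continuous by Definition~\ref{d3}. The map is injective because $\psi$ is an embedding, so $\psi(\sigma_1(W)) = \psi(\sigma_2(W))$ forces $\sigma_1(W) = \sigma_2(W)$ as subsets of $E(\nu_{W\subset C})$; since each $\sigma_i(W)$ meets every fiber of $\pi$ in exactly one point, pointwise equality follows. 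That $\Phi$ sends the zero section to $W$ is the identity $\psi(W) = W$.

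For the openness and continuity of the inverse, I would describe the image of $\Phi$ explicitly as the set $\cU \subset \Sub^k(C)$ of codimension-$k$ submanifolds $W' \subset C$ satisfying: (a) $W' \subset \psi(E(\nu_{W\subset C}))$, and (b) the composite $W' \xhookrightarrow{} \psi(E(\nu_{W\subset C})) \xrightarrow{\psi^{-1}} E(\nu_{W\subset C}) \xrightarrow{\pi} W$ is a diffeomorphism. Using the presentation $\Sub^k(C) = \coprod_{[W']} \Emb(W',C)_{/\Diff(W')}$ from Observation~\ref{t15}, openness of $\cU$ reduces to showing that for each compact $(p+n)$-manifold with corners $W'$, the preimage of $\cU$ in $\Emb(W',C)$ is open. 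This preimage consists of embeddings $f\colon W' \to C$ such that $f(W') \subset \psi(E(\nu_{W\subset C}))$ and $\pi \circ \psi^{-1} \circ f$ is a diffeomorphism; both are open conditions on $f$ in the compact-open $\sC^\infty$ topology. Furthermore, on $\cU$ the inverse of $\Phi$ is given by $W' \mapsto \psi^{-1}|_{W'} \circ (\pi \circ \psi^{-1}|_{W'})^{-1}$, which is visibly continuous as built from $\sC^\infty$-continuous operations.

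For part (2), I would first verify the square commutes: $(\phi, \sigma)$ maps along the top to the embedding $\psi \circ \sigma \circ \phi$, whose image is $\psi(\sigma(W))$ since $\phi$ is surjective; the other path sends $\sigma$ to $\psi(\sigma(W))$. Next, I would check the set-theoretic pullback: given $f \in \Emb(W,C)$ and $\sigma \in \Gamma^{\sf sm}(\nu_{W\subset C})$ with ${\sf Image}(f) = \psi(\sigma(W))$, the embedding $\psi \circ \sigma \colon W \hookrightarrow C$ is a diffeomorphism onto the same image as $f$, so $\phi := (\psi \circ \sigma)^{-1} \circ f \in \Diff(W)$ is the unique element with $f = \psi \circ \sigma \circ \phi$. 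Freeness of the $\Diff(W)$-action, noted earlier, ensures this $\phi$ is uniquely determined. For the topological pullback, the canonical map $\Diff(W) \times \Gamma^{\sf sm}(\nu_{W\subset C}) \to \Emb(W,C) \times_{\Sub^k(C)} \Gamma^{\sf sm}(\nu_{W\subset C})$ is a continuous bijection, and its inverse $(f,\sigma) \mapsto ((\psi \circ \sigma)^{-1} \circ f, \sigma)$ is continuous by the same $\sC^\infty$-continuity arguments as in part~(1). The final sentence then follows: on the component of $\Sub^k(C)$ containing $W$, the open neighborhood $\cU$ is canonically a base of a local trivialization, so $\Emb(W,C) \to \Sub^k(C)$ is a principal $\Diff(W)$-bundle onto its image by transporting this trivialization via the $\Diff(W)$-action.

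The main obstacle I anticipate is the openness claim in part~(1), specifically the assertion that nearby submanifolds remain graphs of sections of $\nu_{W\subset C}$ under $\psi$. This hinges on showing that ``$\pi \circ \psi^{-1} \circ f$ is a diffeomorphism'' is an open condition on $f$ in $\Emb(W',C)$, which in turn requires that local diffeomorphism is $\sC^1$-open and that the target condition $W' \subset \psi(E(\nu_{W\subset C}))$ is $\sC^0$-open via compactness of $W'$. Both reductions are standard in the compact-open $\sC^\infty$ topology, but care must be taken to handle the corner structure of $C$ so that the tubular neighborhood $\psi$ genuinely controls transverse behavior along each face; this is the content underlying Definition~\ref{d7} and Observation~\ref{t56}.
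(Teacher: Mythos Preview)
Your proposal is correct and follows essentially the same approach as the paper: both identify the image of $\Phi$ as those submanifolds contained in the tubular neighborhood whose projection to $W$ is a diffeomorphism, verify that these conditions are open in the compact-open $\sC^\infty$ topology, and produce an explicit continuous inverse. The only organizational difference is that the paper reduces openness of ${\sf Image}\circ\psi_\ast$ downstairs to openness of the action map $\Diff(W)\times\Gamma^{\sf sm}(\nu_{W\subset C})\to\Emb(W,C)$ upstairs via the quotient topology (thereby working with only the single manifold $W$), whereas you check openness directly in $\Sub^k(C)$ via Observation~\ref{t15} by pulling back to each $\Emb(W',C)$; since the preimage is empty unless $W'\cong W$, these amount to the same verification.
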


\begin{proof}   
    
    We first prove that the map ${\sf Image} \circ \psi_\ast$ is injective.
    Let $\sigma, \tau \in \Gamma^{\sf sm}(\nu_{W \subset C})$.
    Suppose ${\sf Image}\circ \psi_\ast(\sigma) = {\sf Image} \circ \psi(\tau)$.
    Because $\psi$ is injective, this supposition implies $\sigma(W) = \tau(W)$.
    Because both $\sigma$ and $\tau$ are sections in a retraction, this implies $\sigma=\tau$, as desired.

    Using that the action $\Diff(W) \lacts \Emb(W,C)$ is free, and that ${\sf Image}$ witnesses the $\Diff(W)$-quotient, injectivity of ${\sf Image} \circ \psi_\ast$ implies the square commutes and is a pullback among topological spaces.

    It remains to prove that the map ${\sf Image}\circ \psi_\ast$ is an open embedding.  
    By definition of the quotient topology, and using the pullback square, this is so if and only if the map $\Diff(W) \times \Gamma^{\sf sm}(\nu_{W \subset C})\xra{\rm action} \Emb(W,C)$ is an open embedding.
    Because ${\sf Image} \circ \psi_\ast$ is injective, so is this map ${\rm action}$.  
    Note that the image of this map ${\rm action}$ is the subspace $\Emb^\psi(W,C) \subset \Emb(W,C)$ consisting of those embeddings $W \xra{f} C$ with the following properties.
    \begin{itemize}
        \item ${\sf Image}(f) \subset {\sf Image}(\psi)$.

        \item Using that $\psi$ is a smooth open embedding, the resulting composite smooth map 
        \[
        \pi \circ \psi^{-1} \circ f
        \colon
        W \xra{f} {\sf Image}(\psi) \xra{\psi^{-1}} E(\nu_{W \subset C}) \xra{\pi} W
        \]
        is a diffeomorphism.  
        
    \end{itemize}
    Using that the compact-open $\sC^\infty$ topology of $\Emb(W,C)$ is finer than the compact-open topology, the subset of $\Emb(W,C)$ consisting of those embeddings with the first property is open.
    Now, consider the topological space $\Map^{\sf sm,\partial}(W,W)$ of smooth maps from $W$ to $W$ that carry faces to faces, as it is endowed with the subspace of the compact-open $\sC^\infty$ topology.
    In this topology, using that $W$ is compact, the subspace $\Diff(W) \subset \Map^{\sf sm, \partial}(W,W)$ is open -- indeed, it is a union of connected components the submersions.
    Consequently, the subset $\Emb^\psi(W,C) \subset \Emb(W,C)$ is open.

    It remains to show that the resulting continuous bijection $\Diff(W) \times \Gamma^{\sf sm}(\nu_{W\subset C})\to \Emb^\psi(W,C)$ is a homeomorphism.
    An inverse is given by the map
    \[
    \Emb^\psi(W,C)
    \longrightarrow
    \Gamma^{\sf sm}(\nu_{W\subset C}) \times \Diff(W)
    ~,\qquad
    f
    \longmapsto
    \Bigl( 
    ~
    \pi \circ \psi^{-1} \circ f
    ~,~
    \psi^{-1} \circ f \circ .\pi \circ \psi^{-1} \circ f
    ~
    \Bigr)
    ~.
    \]
    
\end{proof}

\begin{lemma}\label{t83}
The topological space $\Sub^k(C)$ is paracompact and Hausdorff.  
\end{lemma}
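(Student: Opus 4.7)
The plan is to prove the two statements separately, exploiting the coproduct decomposition $\Sub^k(C) \cong \coprod_{[W]} \Emb(W,C)/\Diff(W)$ of Observation~\ref{t15} together with the local chart structure from Lemma~\ref{t52}(1).

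For Hausdorffness, I would first reduce to proving the property within a single component, since distinct components are automatically separated in the coproduct. Given distinct $W_0, W_1 \subset C$ of the same diffeomorphism type, the cleanest approach is to exhibit a continuous injection from $\Sub^k(C)$ into a Hausdorff target: fix a metric on $C$ and send a submanifold $W \subset C$ to the continuous function $d(-, W) \colon C \to \RR_{\geq 0}$, with the target given the compact-open topology. Injectivity is clear since a closed subset of $C$ is determined by its distance function. Continuity reduces, via the quotient topology of Definition~\ref{d3}, to the elementary fact that the assignment $\Emb(W,C) \ni f \mapsto d(-, f(W))$ is continuous, which follows from compactness of $W$. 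An alternative, more hands-on argument is to shrink tubular neighborhoods $\psi_0, \psi_1$ of $W_0, W_1$ so that the resulting chart neighborhoods of Lemma~\ref{t52}(1) become disjoint in $\Sub^k(C)$; this is the route that directly uses the chart structure.

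For paracompactness, the plan is to deduce metrizability, which implies paracompactness. Lemma~\ref{t52}(1) already exhibits $\Sub^k(C)$ as locally metrizable, since each chart is an open subset of the Fr\'echet space $\Gamma^{\sm}(\nu_{W\subset C})$. For second countability, each component $\Emb(W,C)/\Diff(W)$ inherits second countability from $\Emb(W,C) \subset \Map^{\sm}(W,C)$, itself a subspace of a separable Fr\'echet space (using that $W$ is compact and $C$ is second countable), and second countability descends along the principal $\Diff(W)$-bundle projection of Lemma~\ref{t52}(2). The coproduct is indexed by the countable set of diffeomorphism classes of compact $(p+n)$-manifolds with corners, via smooth triangulations. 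Combining Hausdorffness, local metrizability, and second countability, Urysohn's metrization theorem yields metrizability.

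The main obstacle will be the Hausdorff argument: local Hausdorffness from the chart structure alone does not imply global Hausdorffness without additional compatibility across charts, and the case of two distinct submanifolds which intersect nontrivially is delicate if attacked chart-by-chart. Factoring through the continuous injection to a Hausdorff space cleanly circumvents this; the remaining work is just to verify continuity of the distance-function map in the moduli topology of Definition~\ref{d3}.
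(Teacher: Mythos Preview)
Your approach and the paper's differ substantially. The paper argues in three lines: $\Emb(W,C)$ and $\Diff(W)$ are metrizable as subspaces of smooth mapping spaces, the map $\Emb(W,C)\to\Sub^k(C)$ is open by the principal bundle structure of Lemma~\ref{t52}(2), and paracompactness and Hausdorffness follow. The unstated mechanism is the infimum metric: once the quotient is Hausdorff (equivalently, orbits are closed), the formula $d([x],[x'])=\inf_{g\in\Diff(W)}d_{\Emb}(x,gx')$ is a genuine metric, and one checks directly that it induces the quotient topology, whence metrizability and hence paracompactness.

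Your Hausdorff argument, injecting $\Sub^k(C)$ into $\Map(C,\RR_{\geq 0})$ via $W\mapsto d(-,W)$, is correct and pleasant; it sidesteps checking that the orbit relation is closed.

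Your paracompactness argument has a gap. You invoke Urysohn from the hypotheses ``Hausdorff, locally metrizable, second countable,'' but Urysohn requires \emph{regularity}, and Hausdorff plus locally metrizable does not imply regular. A counterexample is the $K$-topology on $\RR$, generated by ordinary intervals together with sets $(a,b)\smallsetminus K$ for $K=\{1/n:n\geq 1\}$: it is Hausdorff and second countable, and it is locally metrizable since $(-\epsilon,\epsilon)\smallsetminus K$ is an open neighborhood of $0$ whose subspace topology coincides with the standard one; yet $0$ and the closed set $K$ cannot be separated, so the space is not regular. You therefore need an independent source of regularity. The most efficient repair is the infimum-metric construction above, which uses exactly the principal-bundle structure from Lemma~\ref{t52}(2) that the paper invokes and yields metrizability directly, without routing through Urysohn.
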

\begin{proof}
The topological spaces $\Diff(W)$ and $\Emb(W,C)$ are each subspaces of compact-open $\sC^\infty$ topologies on sets of smooth maps, and are therefore metrizable, normal, paracompact, and Hausdorff. 
Lemma~\ref{t52}(2) implies the map $\Emb(W,C) \to \Sub^k(C)$ is an open map.
Paracompactness and Hausdorffness of $\Sub^k(C)$ follows.  

\end{proof}

\begin{remark}\label{r.smoothness}
    Lemma~\ref{t52}(1) suggests $\Sub^k(C)$ has a canonical structure of an infinite-dimensional smooth manifold.

\end{remark}

\subsection{Face restriction}
In this subsection, we fix non-negative integers $p,n,k\geq 0$, and we fix a $(p+n+k)$-manifold with corners $C$.
Let $D$ be a manifold with corners.
Here, we show that intersection with a closed union of faces $C_0$ of $C$ determines a Serre fibration: $\Sub^k(C) \to \Sub^k(C_0)$.

\begin{definition}\label{d6}
A \bit{proper face-submersion} from $D$ to $C$ is a smooth map $D \xra{\sigma} C$ that satisfies the following conditions.
\begin{itemize}
    \item The map $\sigma$ is proper.

    \item The image under $\sigma$ of each face of $D$ is a face of $C$.

    \item The restriction of $\sigma$ along a face of $D$ is a submersion onto its image.

\end{itemize}
A \bit{proper face-embedding} from $D$ to $C$ is a proper face-submersion that is an embedding.  

\end{definition}

\begin{example}
The map from the closed square to itself,
\[
[0,1]^2
\longrightarrow
[0,1]^2
~,\qquad
(x,y)
\longrightarrow
(x,0)
~,
\]
is a proper face-submersion.
    
\end{example}

\begin{example}
For each morphism $[p] \xra{\sigma} [q]$ in $\bDelta$, the map $\Delta^p \xra{\sigma_\ast} \Delta^q$ is a proper face-submersion.
If $\sigma$ is injective, then $\sigma_\ast$ is a proper face-embedding.
    
\end{example}

\begin{example}
Let $C_0 \subset C$ be a closed union of faces which inherits the structure of a manifold with corners.
The inclusion $C_0 \hookrightarrow C$ is a proper face-embedding.  
In fact, every proper face-embedding is isomorphic to such an inclusion.
    
\end{example}

\begin{remark}
    Note that a proper face-embedding is not an embedding in the sense of Definition~\ref{d4}.
\end{remark}

\begin{observation}\label{a8}
The composition of proper face-submersions is a proper face-submersion.
\end{observation}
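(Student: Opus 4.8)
The plan is to verify, one condition at a time, the three clauses of Definition~\ref{d6} for a composite $D \xrightarrow{\sigma} C \xrightarrow{\tau} C'$ of proper face-submersions. First I would check properness. Manifolds with corners are locally compact and Hausdorff, so properness amounts to preimages of compact sets being compact; for compact $K \subset C'$ one has $(\tau\sigma)^{-1}(K) = \sigma^{-1}\bigl(\tau^{-1}(K)\bigr)$, and properness of $\tau$ makes $\tau^{-1}(K)$ compact while properness of $\sigma$ then makes its $\sigma$-preimage compact. Hence $\tau\sigma$ is proper.

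Next I would check the clause on images of faces. Given a face $F \subset D$, the face clause for $\sigma$ shows $\sigma(F) \subset C$ is a face of $C$; applying the face clause for $\tau$ to this particular face shows that $\tau\bigl(\sigma(F)\bigr) \subset C'$ is a face of $C'$. Since $(\tau\sigma)(F) = \tau\bigl(\sigma(F)\bigr)$, the image of $F$ under $\tau\sigma$ is a face of $C'$, as required.

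Finally, for the submersion clause, I would fix a face $F \subset D$ and set $G := \sigma(F)$ and $G' := \tau(G) = (\tau\sigma)(F)$, which are faces of $C$ and $C'$ respectively by the previous step. The submersion clause for $\sigma$ gives that $\sigma_{|F} \colon F \to G$ is a submersion, and the submersion clause for $\tau$, invoked for the face $G$, gives that $\tau_{|G} \colon G \to G'$ is a submersion. Then $(\tau\sigma)_{|F} \colon F \to G'$ is the composite $F \xrightarrow{\sigma_{|F}} G \xrightarrow{\tau_{|G}} G'$ of surjective submersions between smooth manifolds without corners (faces being connected components of locally closed strata), hence a submersion onto its image $G'$. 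This exhausts the clauses of Definition~\ref{d6}, so $\tau\sigma$ is a proper face-submersion.

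I do not expect a genuine obstacle here; the only point that takes a moment to recognize is that the submersion (and face) clause for $\tau$ must be applied not to an arbitrary face of $C$ but precisely to the face $\sigma(F)$ supplied by the corresponding clause for $\sigma$ — the three clauses interlock exactly as the composition requires, which is presumably the motivation for phrasing the definition the way it is.
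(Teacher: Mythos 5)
Your proof is correct and follows the only natural route: the paper leaves this as an unproved observation precisely because the three clauses of Definition~\ref{d6} compose clause-by-clause exactly as you describe. The only substantive point worth flagging — which you do flag — is that the face and submersion clauses for $\tau$ must be invoked at the specific face $\sigma(F)$ produced by the corresponding clauses for $\sigma$, and that faces are manifolds without corners, so the standard fact that a composite of surjective submersions is a surjective submersion applies.
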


\begin{terminology}
    Let $D \xra{\sigma} C$ be a proper face-submersion.
    Let $W \subset C$ be a submanifold.
    We say $W$ is \bit{transverse} to $\sigma$ if, for each face $E \subset D$ with image face $F=\sigma(E)$, the smooth map between smooth manifolds $E \xra{\sigma_{|E}} F$ is transverse to the smooth submanifold $W \cap F \subset F$.
\end{terminology}

\begin{lemma}
\label{t57}
Each submanifold $W \subset C$ is transverse to each proper face-submersion $D \xra{\sigma} C$.
Furthermore, if $W$ is compact and has codimension $k$, then $\sigma^{-1}(W) \subset D$ is a compact codimension-$k$ submanifold of $D$.

\end{lemma}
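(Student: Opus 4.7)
The plan is to dispatch the two assertions separately: transversality follows essentially for free from the submersion hypothesis, and the submanifold structure on $\sigma^{-1}(W)$ is obtained via a local implicit function theorem argument adapted to corners.

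For the transversality claim, I first observe that for each face $F \subset C$, the intersection $W \cap F$ is a smooth codimension-$k$ submanifold of the smooth manifold $F$. This is read off the local chart $\varphi \colon \RR^k \times \RR^j \times \RR_{\geq 0}^d \ra C$ from Definition~\ref{d7} at a point $x\in W\cap F$: such a $\varphi$ places $x$ in the deepest stratum of the chart, realizes $F$ locally as $\RR^k \times \RR^j \times \{0\}^d$, and realizes $W\cap F$ locally as $\{0\}\times \RR^j \times \{0\}^d$. Then, because $\sigma_{|E}\colon E \to F$ is a submersion between smooth manifolds, it is automatically transverse to $W\cap F$.

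For the second claim, compactness of $\sigma^{-1}(W)$ is immediate from properness of $\sigma$ and compactness of $W$. The substantive content is verifying the local model of Definition~\ref{d7} at each $y\in \sigma^{-1}(W)$. Let $x := \sigma(y)$, let $E\subset D$ be the face containing $y$ with $F := \sigma(E)$, and choose a chart $\varphi\colon \RR^k\times \RR^j \times \RR_{\geq 0}^d \ra C$ at $x$ as in Definition~\ref{d7} together with a chart $\psi\colon \RR^{j'}\times \RR_{\geq 0}^{d'}\ra D$ at $y$ in which $E$ is the locus $\RR^{j'}\times \{0\}^{d'}$. With $\pi\colon \RR^k \times \RR^j \times \RR_{\geq 0}^d \to \RR^k$ the first projection, the function $\tau := \pi \circ \varphi^{-1}\circ \sigma \circ \psi$ locally cuts out $\sigma^{-1}(W)$ as $\tau^{-1}(0)$. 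The submersion property of $\sigma_{|E}\colon E \to F$ forces the restriction of $d\tau$ to $\RR^{j'}\times \{0\}^{d'}$ at $0$ to be surjective onto $\RR^k$. Splitting $\RR^{j'} = V \oplus V^\perp$ with $V\cong \RR^k$ chosen so that $\partial_V \tau(0)$ is invertible, I then apply the implicit function theorem to the map
\[
\Phi\colon \RR^k \times \RR^{j'-k}\times \RR_{\geq 0}^{d'} \longrightarrow \RR^k \times \RR^{j'-k}\times \RR_{\geq 0}^{d'}~,\qquad \Phi(w,s,r) := \bigl(\tau(w,s,r),\, s,\, r\bigr)~,
\]
whose derivative at $0$ is block triangular with invertible diagonal blocks, so $\Phi$ is a local diffeomorphism. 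In the chart $\psi \circ \Phi^{-1}$, the preimage $\sigma^{-1}(W)$ becomes the standard model $\{0\}\times \RR^{j'-k}\times \RR_{\geq 0}^{d'}$, certifying that $\sigma^{-1}(W)$ is a codimension-$k$ submanifold of $D$ in the sense of Definition~\ref{d7}.

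The only technical point requiring care is compatibility of the coordinate change $\Phi$ with the manifold-with-corners structure on $D$: this is precisely why the argument is arranged so that $\Phi$ leaves the $\RR_{\geq 0}^{d'}$-coordinates untouched, and all adjustment happens in the Euclidean directions $\RR^{j'}$. The ability to do this rests on the submersion hypothesis for $\sigma_{|E}$, which already guarantees that $d\tau$ achieves full rank tangentially to the face $E$ and obviates any need to borrow derivative contributions from the corner directions.
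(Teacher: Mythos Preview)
Your proof is correct and follows essentially the same approach as the paper: both arguments reduce to the observation that $W\cap F$ is a smooth codimension-$k$ submanifold of each face $F$, and that the submersion $\sigma_{|E}\colon E\to F$ is therefore automatically transverse to it. The paper's proof is terser, simply invoking that submersions are transverse to everything and that preimages under transverse maps are submanifolds, whereas you spell out the implicit function theorem step explicitly and take care to keep the $\RR_{\geq 0}^{d'}$-coordinates fixed so as to preserve the corner structure---a point the paper leaves implicit in the sentence ``This implies \dots $\sigma^{-1}(W)\subset D$ is a codimension-$k$ submanifold.''
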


\begin{proof}
Let $E \subset D$ be a face.
Let $F := \sigma(E) \subset C$ be its image.
The definition of a proper face-submersion is such that $F\subset C$ is a face of $C$.
Furthermore, the restriction of $\sigma$ along $E$ factors,
\[
\sigma_{|E}
\colon 
E
\xra{~\tau_{\sf sub}~}
F
\xra{~\tau_{\sf emb}~}
C
~,
\]
as a submersion onto $F$ followed by the inclusion of $F$.
Given Observation~\ref{t56}, we have that $W$ is transverse to $F$.
Consequently, $\tau_{\sf emb} \pitchfork W$ and $\tau_{\sf emb}^{-1}(W) = F \cap W \subset F$ is a codimension-$k$ submanifold. 
Next, submersions are transverse to any submanifold.
Consequently $\tau_{\sf sub}\pitchfork (\tau_{\sf emb}^{-1}(W))$ and $\tau_{\sf sub}^{-1}( \tau_{\sf emb}^{-1}(W)) = \sigma_{|E}^{-1}(W) \subset E$ is a codimension-$k$ submanifold.
This implies $\sigma$ is transverse to $W$, and that $\sigma^{-1}(W) \subset D$ is a codimension-$k$ submanifold.
The assumed properness of $\sigma$ ensures $\sigma^{-1}(W)$ is compact if $W$ is compact.  

\end{proof}

Let $D \xra{\sigma} C$ be a proper face-submersion.
Lemma~\ref{t57} gives a well-defined map between sets
\begin{equation}\label{e56}
\Sub^k(C)
\xra{~\sigma^{-1}(-)~}
\Sub^k(D)
~,\qquad
(W\subset C)
\longmapsto 
(\sigma^{-1}(W) \subset D)
~.
\end{equation}

\begin{lemma}\label{t58}
Let $D \xra{\sigma} C$ be a proper face-submersion.
The map~(\ref{e56}) is continuous.
\end{lemma}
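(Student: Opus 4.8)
The plan is to reduce continuity to a local statement around each submanifold, using the convenient subbasis for the topology of $\Sub^k$ provided by Lemma~\ref{t52}(1). Fix $(W \subset C) \in \Sub^k(C)$ and let $W' := \sigma^{-1}(W) \subset D$, which is a compact codimension-$k$ submanifold by Lemma~\ref{t57}. First I would choose a tubular neighborhood $E(\nu_{W \subset C}) \xra{\psi} C$ under $W$, giving by Lemma~\ref{t52}(1) an open embedding $\Gamma^{\sm}(\nu_{W\subset C}) \hookrightarrow \Sub^k(C)$ onto an open neighborhood $\cU$ of $W$ sending the zero section to $W$. Because $\sigma$ is a proper face-submersion and $W' = \sigma^{-1}(W)$, the map $\sigma$ restricts over the tube to a proper face-submersion $\sigma^{-1}(\psi(E(\nu_{W\subset C}))) \to \psi(E(\nu_{W\subset C}))$, and pulling back along $\sigma$ the linear structure on the fibers of $\nu_{W\subset C}$ exhibits $\sigma^{-1}(\psi(E(\nu_{W\subset C})))$ as (an open subset of) the total space of a vector bundle over $W'$ — indeed one checks $\sigma^\ast \nu_{W\subset C}|_{W'} \cong \nu_{W' \subset D}$, since $\sigma$ is transverse to $W$ with preimage $W'$. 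Composing the tubular neighborhood map for $W' \subset D$ appropriately, this identifies a neighborhood of $W'$ in $\Sub^k(D)$ with an open subset of $\Gamma^{\sm}(\nu_{W'\subset D})$.

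With these coordinates in place, the second step is to verify that the map~(\ref{e56}), read through the charts $\Gamma^{\sm}(\nu_{W\subset C}) \supset \cdot \ra \Gamma^{\sm}(\nu_{W'\subset D})$, is nothing but the operation of pulling back a section of $\nu_{W\subset C}$ along the bundle map $W' \to W$ covered by $\sigma$ (possibly post-composed with the fiberwise diffeomorphism coming from the chosen tubular structures, which is itself smooth and fixed). Concretely: a submanifold near $W$ is the image under $\psi$ of the graph of a small section $s \in \Gamma^{\sm}(\nu_{W\subset C})$; its $\sigma$-preimage is the image under the corresponding tube for $W'$ of the graph of the pulled-back section $s \circ (W' \to W)$, suitably interpreted. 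Continuity of $s \mapsto s\circ(W'\to W)$ from $\Gamma^{\sm}(\nu_{W\subset C})$ to $\Gamma^{\sm}(\nu_{W'\subset D})$ in the compact-open $\sC^\infty$ topologies is routine: precomposition by a fixed smooth proper map is continuous on smooth section/mapping spaces, since the derivatives of $s\circ(W'\to W)$ on a compact set are controlled by the derivatives of $s$ on the (compact) image, via the chain rule. Finally, since every point of $\Sub^k(C)$ lies in such a chart and $\Sub^k(D)$ is covered by the corresponding charts, continuity of~(\ref{e56}) at each point gives continuity globally; equivalently, one can invoke that the source topology is the finest making all $\Emb(W,C) \xra{\sf Image} \Sub^k(C)$ continuous and check that the composite $\Emb(W,C) \ra \Sub^k(C) \xra{\sigma^{-1}} \Sub^k(D)$ is continuous by factoring it through $\Emb(W,C) \xra{\sigma^{-1}(-)} \Sub^k(D)$, where the latter is handled by the same local model.

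The main obstacle I anticipate is the bookkeeping in Step~1: producing, coherently and continuously in the submanifold, the identification $\sigma^\ast \nu_{W\subset C} \cong \nu_{W'\subset D}$ and matching up the two tubular neighborhoods so that the pullback-of-submanifolds operation becomes literally precomposition of sections. The transversality input (Lemma~\ref{t57}) gives the bundle isomorphism pointwise, but one must ensure the tube for $W'$ can be chosen compatibly with $\psi$ and $\sigma$ — e.g.\ by taking the tube for $W'$ to be (a restriction of) $\sigma^{-1}$ of the tube for $W$, which works because $\sigma$ is a submersion on each face and hence the preimage of the open tube is an open neighborhood of $W'$ on which $\sigma$ is a proper face-submersion to $E(\nu_{W\subset C})$. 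Once that geometric setup is pinned down, the continuity estimate itself is the standard chain-rule argument and presents no real difficulty; one should also note the argument is local in $C$ so the reduction to $W$ connected, or to $C_0 \hookrightarrow C$ a face inclusion as in the examples, is harmless.
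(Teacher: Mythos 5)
Your proposal matches the paper's proof in both structure and substance: both reduce to local continuity via the $\Gamma^{\sf sm}(\nu_{W\subset C})$-charts of Lemma~\ref{t52}(1), use transversality to get the normal-bundle isomorphism $\nu_{\sigma^{-1}(W)\subset D}\cong\sigma^*\nu_{W\subset C}$, choose compatible tubular neighborhoods fitting a pullback square (exactly the resolution you flag for your Step~1), and then read off the map in charts as pullback of sections, whose continuity in the compact-open $\sC^\infty$ topology is the standard chain-rule estimate.
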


\begin{proof}
We first prove~(\ref{e56}) is continuous at each element $(W \subset C) \in \Sub^k(C)$.
Through the transversality statement of Lemma~\ref{t57}, and the fact that normal bundles pull back along transverse maps, there is a canonical isomorphism between vector bundles over $\sigma^{-1}(W)$:
\[
\nu_{\sigma^{-1}(W) \subset D}
\overset{\alpha}{~\cong~}
(\sigma_{|\sigma^{-1}(W)})^\ast \nu_{W \subset C}
~.
\]
Such an isomorphism determines a canonical map
\begin{equation}\label{e58}
\Gamma^{\sf sm}(\nu_{W \subset C})
\longrightarrow
\Gamma^{\sf sm}\Bigl(
{\sigma_{|\sigma^{-1}(W)}}^\ast \nu_{W \subset C}
\Bigr)
~\cong~
\Gamma^{\sf sm}( \nu_{\sigma^{-1}(W) \subset D} )
~.
\end{equation}
Next, choose compatible tubular neighborhoods for $W \subset C$ and for $\sigma^{-1}(W) \subset D$.
Specifically, choose smooth open embeddings $E(\nu_{\sigma^{-1}(W) \subset D}) \xra{\psi^D} D$ and $E(\nu_{W \subset C}) \xra{\psi^C} C$ between manifolds with corners, respectively under $\sigma^{-1}(W)$ and under $W$, such that the diagram
\[
\begin{tikzcd}
	{E(\nu_{\sigma^{-1}(W) \subset D})} & D \\
	{E(\nu_{W \subset C})} & C
	\arrow["{\psi^D}", from=1-1, to=1-2]
	\arrow["{\pr \circ \alpha}"', from=1-1, to=2-1]
	\arrow["\sigma", from=1-2, to=2-2]
	\arrow["{\psi^C}", from=2-1, to=2-2]
\end{tikzcd}
\]
commutes and is a pullback.
By way of this commutative pullback diagram, observe that the diagram among sets
\begin{equation}\label{x33}
\begin{tikzcd}
	{\Gamma^{\sf sm}(\nu_{W \subset C})} & {\Gamma^{\sf sm}( \nu_{\sigma^{-1}(W) \subset D} )} \\
	{\Sub^k(C)} & {\Sub^k(D)}
	\arrow["{(\ref{e58})}", from=1-1, to=1-2]
	\arrow["{{\sf Image} \circ \psi^C_\ast}"', from=1-1, to=2-1]
	\arrow["{{\sf Image} \circ \psi^D_\ast}", from=1-2, to=2-2]
	\arrow["{\sigma^{-1}(-)}", from=2-1, to=2-2]
\end{tikzcd}
\end{equation}
commutes.
Lemma~\ref{t52}(1) states that the vertical maps in this diagram are open embeddings, which carry the respective zero-sections to $(W \subset C)$ and to $(\sigma^{-1}(W) \subset D)$.
Continuity of~(\ref{e58}) therefore implies continuity of $\sigma^{-1}(-)$ about $W$, as desired.  

\end{proof}

\begin{lemma}\label{t61}
Let $D \xra{\sigma} C$ be a proper face-embedding.
The map $\Sub^k(C) \xra{\sigma^{-1}(-)} \Sub^k(D)$ is a Serre fibration.

\end{lemma}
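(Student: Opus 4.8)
The plan is to realize $\sigma^{-1}(-)$, near any point $(W\subset C)$, as a restriction map between section spaces of vector bundles — using exactly the local models from Lemma~\ref{t52}(1) — and then verify the Serre lifting property fiberwise in that local picture. First I would reduce to the case where $\sigma$ is the inclusion of a closed union of faces $C_0\hookrightarrow C$, using the last displayed Example of \S\ref{sec.precedingworks}'s analogue — namely that every proper face-embedding is isomorphic to such an inclusion. So write $C_0\subset C$ and let $V_0:=\sigma^{-1}(W)=W\cap C_0$, which by Lemma~\ref{t57} is a compact codimension-$k$ submanifold of $C_0$. Since a Serre fibration is detected by lifting against $\DD^q\hookrightarrow \DD^q\times I$, it suffices to check lifting over each such inclusion, and since the base of a lifting problem $\DD^q\to \Sub^k(C_0)$ is compact and connected, its image lies in a single ${\sf Image}\circ\psi_\ast$-chart of $\Sub^k(C_0)$ from Lemma~\ref{t52}(1); I would want the analogous statement that the lift lands in a single chart of $\Sub^k(C)$, which requires only that the chart over $W$ maps onto a neighborhood of the chart over $V_0$.

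The key step is therefore the local model: I claim that, after choosing compatible tubular neighborhoods $E(\nu_{W\subset C})\xra{\psi^C} C$ and $E(\nu_{V_0\subset C_0})\xra{\psi^{C_0}} C_0$ fitting into a commuting pullback square over $C_0\hookrightarrow C$ (as in the proof of Lemma~\ref{t58}, using that normal bundles restrict: $\nu_{V_0\subset C_0}\cong (\nu_{W\subset C})|_{V_0}$ canonically via transversality of $W$ to the faces), the square
\[
\begin{tikzcd}
\Gamma^{\sf sm}(\nu_{W\subset C}) \ar[r, "{{\sf restr}}"] \ar[d, "{{\sf Image}\circ\psi^C_\ast}"'] & \Gamma^{\sf sm}(\nu_{V_0\subset C_0}) \ar[d, "{{\sf Image}\circ\psi^{C_0}_\ast}"] \\
\Sub^k(C) \ar[r, "{\sigma^{-1}(-)}"'] & \Sub^k(C_0)
\end{tikzcd}
\]
commutes, with vertical maps open embeddings onto neighborhoods of $W$ and of $V_0$ respectively (this is precisely diagram~(\ref{x33}) with $D=C_0$, $\sigma$ the inclusion, where~(\ref{e58}) becomes literal restriction of sections). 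Hence it is enough to show the top horizontal map, restriction of smooth sections $\Gamma^{\sf sm}(\nu_{W\subset C})\to \Gamma^{\sf sm}(\nu_{V_0\subset C_0})$, is a Serre fibration. This is the analogue of the Whitney-type extension results of \S\ref{sec.transv}: restriction of smooth sections of a vector bundle along the inclusion of a closed union of faces (here $V_0=W\cap C_0$ is a closed union of faces of $W$) is a surjective, and indeed split, continuous linear map in the compact-open $\sC^\infty$ topology, since Lemma~\ref{lemma.sm.sing}(1) supplies a continuous linear section (extend a section given on $V_0$ to one on $W$ by the coordinate-patch-and-partition-of-unity formula, which visibly depends continuously and linearly on the input). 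A surjective split continuous linear map of Fréchet-type spaces, being (after choosing the splitting) a projection $\Gamma^{\sf sm}(\nu_{V_0\subset C_0})\times K\to \Gamma^{\sf sm}(\nu_{V_0\subset C_0})$ for the kernel $K$, is certainly a Serre fibration; lifts in a lifting problem against $\DD^q\hookrightarrow\DD^q\times I$ are constructed by composing the given homotopy downstairs with the section and keeping the $K$-coordinate constant along the homotopy, then correcting by an affine interpolation to match the initial lift.

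The main obstacle I anticipate is purely bookkeeping rather than conceptual: matching the tubular neighborhoods $\psi^C$ and $\psi^{C_0}$ into an honest pullback square over $C_0\hookrightarrow C$ (so that the identification~(\ref{e58})/restriction is literal and the square~(\ref{x33}) strictly commutes), and checking that ${\sf Image}\circ\psi^C_\ast$ really surjects onto a \emph{full} neighborhood of $W$ in $\Sub^k(C)$ whose image under $\sigma^{-1}(-)$ contains the chosen chart neighborhood of $V_0$ — this is where one uses that $W\mapsto W\cap C_0$ is already known to be continuous (Lemma~\ref{t58}) and open-chart-compatible, together with paracompactness/Hausdorffness of $\Sub^k(C_0)$ (Lemma~\ref{t83}) to guarantee that the compact image of $\DD^q\to\Sub^k(C_0)$ sits inside one chart. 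Once the local model is set up, the Serre lifting property is immediate from the existence of a continuous linear section of the restriction map, i.e.\ from Lemma~\ref{lemma.sm.sing}(1).
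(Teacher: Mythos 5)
Your proposal has a genuine gap in the reduction-to-charts step, and the local model you set up does not assemble into a Serre fibration without an additional idea that your proof does not supply.

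The first concrete problem is the claim that a compact, connected subset of $\Sub^k(C_0)$ lies in a single ${\sf Image}\circ\psi_\ast$-chart. This is false for the same reason a compact connected curve in a surface need not lie in a single coordinate chart; compactness gives a \emph{finite} subcover by charts, not a single chart. The Spanier criterion you allude to (Theorem~13, \S2.7 of~\cite{spanier}) does not resolve this: it requires showing $\sigma^{-1}(-)$ is a Serre fibration over an open cover of the \emph{base}, i.e.\ that $(\sigma^{-1}(-))^{-1}(U)\to U$ is a Serre fibration for each $U$ in some open cover of $\Sub^k(C_0)$.

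This is where the second, more serious problem lives. Take $U$ to be the chart $\Gamma^{\sf sm}(\nu_{V_0\subset C_0})$ about $V_0 = W\cap C_0$. Your commutative square shows that the restriction of $\sigma^{-1}(-)$ to the single chart $\Gamma^{\sf sm}(\nu_{W\subset C})\subset\Sub^k(C)$ is (the restriction of) a split continuous linear surjection, hence a Serre fibration. But the preimage $(\sigma^{-1}(-))^{-1}(U)$ is \emph{much} larger than this chart: it contains every $W'\subset C$ whose intersection $W'\cap C_0$ is close to $V_0$, and the ``interior part'' of such $W'$ is unconstrained --- in particular $W'$ may have arbitrary diffeomorphism type and need not lie in any chart modeled on $\Gamma^{\sf sm}(\nu_{W\subset C})$. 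What you have proven is that $\sigma^{-1}(-)$ is ``locally in the domain'' a Serre fibration, which is not sufficient: e.g.\ $(0,3/2)\to\RR/\ZZ$ is surjective, a local homeomorphism, yet not a Serre fibration. The charts about distinct $W'$ in the preimage overlap with nontrivial transition data, and your argument has no mechanism for matching the splittings of $\Gamma^{\sf sm}(\nu_{W\subset C})\to\Gamma^{\sf sm}(\nu_{V_0\subset C_0})$ and $\Gamma^{\sf sm}(\nu_{W'\subset C})\to\Gamma^{\sf sm}(\nu_{V_0\subset C_0})$ on overlaps. You correctly identify where the pressure is --- ``checking that ${\sf Image}\circ\psi^C_\ast$ really surjects onto a \emph{full} neighborhood'' --- but this is not bookkeeping; it is precisely the missing content.

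For comparison, the paper's proof works one level up, on the embedding spaces $\coprod_{[W]}\Emb(W,C)$, where the nontrivial gluing is visible and manageable. The key ingredient is the isotopy extension theorem (Palais), applied to show that the restriction map $\Emb(W,C)\to\Map^{\sf emb}(W_0,C)$ is a fiber bundle; the pullback square~(\ref{e63}) then makes $\Emb^{[W_0]}(W,C)\to\Emb(W_0,D)$ a fiber bundle, and local contractibility of embedding spaces trivializes it. Finally Lemma~\ref{t52}(2) plus Lemma~\ref{t62} descend the Serre fibration property through the $\Diff(W)$-quotients. The isotopy extension theorem is exactly the ingredient that supplies the compatible local triviality your chart-wise argument lacks; the Whitney-type extension (Lemma~\ref{lemma.sm.sing}) gives surjectivity and a splitting within a fixed chart, but not compatibility across charts, which is the actual content of the lemma.
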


\begin{proof}
Let $W$ be a compact manifold with corners.
For $W \xra{f} C$ an embedding,
denote the pullback diagram
\[
\begin{tikzcd}
	{\sigma^\ast W} & D \\
	W & C
	\arrow["{\sigma^\ast f}", from=1-1, to=1-2]
	\arrow["{\sigma_{|W}}"', from=1-1, to=2-1]
	\arrow["\lrcorner"{anchor=center, pos=0.125}, draw=none, from=1-1, to=2-2]
	\arrow["\sigma", from=1-2, to=2-2]
	\arrow["f", from=2-1, to=2-2]
\end{tikzcd}
~.
\]
Lemma~\ref{t57} implies $\sigma^\ast W$ is a manifold with corners, and $\sigma^\ast f$ is a codimension-$k$ embedding; furthermore, $\sigma_{|W}$ is a proper face-embedding.
Therefore, $\sigma_{|W}$ canonically factors
\[
\sigma_{|W}
\colon
\sigma^\ast W
\xra{~\cong~}
W_{\sigma}
\xra{~\rm inclusion~}
W
\]
as a diffeomorphism onto a closed union of faces $W_\sigma \subset W$ that is, itself, a compact manifold with corners.

Let $W_0 \subset W$ be a closed union of faces that is, itself, a compact manifold with corners.
Consider the subset $\Emb^{[W_0]}(W,C)\subset \Emb(W,C)$ consisting of those embeddings $W \xra{f} C$ such that the two closed unions of faces $W_\sigma = W_0$ agree.
Observe the identity between subsets of $D$:
\begin{equation}\label{e62}
\sigma^{-1}(f(W))
~=~ 
(\sigma^\ast f)
( W_\sigma)
~.
\end{equation}
As there are only finite many closed unions of faces of $W$, this subset $\Emb^{[W_0]}(W,C) \subset \Emb(W,C)$ is evidently a union of connected components.
Observe the filler in the diagram among sets,
\begin{equation}\label{e63}
\begin{tikzcd}
	{\Emb^{[W_0]}(W,C)} & {\Emb(W,C)} \\
	{\Emb(W_0,D)} & {\Map^{\sf emb}(W_0,C)}
	\arrow["{{\rm inclusion}}", from=1-1, to=1-2]
	\arrow[dashed, from=1-1, to=2-1]
	\arrow["{{\rm restriction}}", from=1-2, to=2-2]
	\arrow["{\sigma \circ -}", from=2-1, to=2-2]
\end{tikzcd}
~
\end{equation}
forming a pullback.
Here, $\Map^{\sf emb}(W_0,C) \subset \Map^{\sf sm}(W,C)$ is the subspace consisting of those smooth maps that are locally of the form $\{0\} \times \RR^j \times \RR_{\geq 0}^S \hookrightarrow \RR^k \times \RR^j \times \RR_{\geq 0}^d$ for some $j,d \geq 0$ and some finite subset $S \subset \{1,\dots,d\}$.\footnote{
Recall that the proper face-embedding $D\hookrightarrow C$ is not an embedding in the sense of Definition~\ref{d4}. Consequently, if $W_0\hookrightarrow D$ is an embedding, then the composite $W_0\hookrightarrow D\hookrightarrow C$ will only satisfy the local condition given, defining an element of $\Map^{\sf emb}(W_0,C)$ but \emph{not} of $\Emb(W_0,C)$.
}
Because $\sigma$ is an embedding, the lower map is a monomorphism between topological spaces.  
The two other solid maps are manifestly continuous.
Therefore, the dashed function is continuous, and the square is a pullback among topological spaces.
It follows that the function
\begin{equation}\label{e61}
\Emb(W,C)
\longrightarrow
\underset{W_0 \subset W} \coprod
\Emb(W_0,D)
~,\qquad
(W \xra{f} C)
\longmapsto 
\Bigl( 
W_\sigma \cong \sigma^\ast \xra{\sigma^\ast f} D
\Bigr)
~,
\end{equation}
is continuous,
where the coproduct is indexed by the finite set of closed unions of faces that are, themselves, compact manifolds with corners.  
Taking coproducts gives a map
\begin{equation}\label{e64}
\underset{[W]} \coprod \Emb(W,C)
\xra{~\underset{[W]} \coprod (\ref{e61})~}
\underset{[W_0 \subset W]} \coprod \Emb(W_0,D)
\xra{~{\rm forget}~W{\rm s}~}
\underset{[W_0]} \coprod \Emb(W_0,D)
~.
\end{equation}

Now, the isotopy extension theorem (e.g., \cite{palais}) implies the right vertical map in~(\ref{e63}) is a fiber bundle.
Because the square is a pullback, the left vertical map in~(\ref{e63}) is a fiber bundle as well.
In fact, as in Palais's proof of the isotopy extension theorem in \cite{palais}, each of these embedding spaces is locally contractible.\footnote{Given $(W_0 \xra{f} D) \in \Emb(W_0,D)$, a basis for the topology of $\Emb(W_0,D)$ about $f_0$ consisting of contractible open subsets can be as follows.
Choose a complete Riemannian metric on $C$ that is a product metric in a neighborhood of each face of $C$.
This Riemannian metric pulls back along $\sigma$ as one on $D$, with respect to which $\sigma$ is a totally geodesic isometric embedding.
The exponential map defines a map $\Gamma^{\sf sm}( f_0^\ast \tau_D) \xra{\sf exp} \Emb(W_0,D)$ that carries the zero-section to $f_0$.
This map restricts as an open embedding along some convex open neighborhood about the zero-section.
Through this open embedding, the images of all convex open subsets about the zero-section supplies the desired basis about $f_0$.} 
Therefore, there exists an open cover of $\Emb(W_0,D)$ by contractible subspaces.
Such an open cover is necessarily a trivializing open cover for any fiber bundle over $\Emb(W_0,D)$.
We conclude that the map~(\ref{e64}) is a fiber bundle.

The identity~(\ref{e62}) implies the diagram
\[
\begin{tikzcd}
	{\underset{[W]} \coprod \Emb(W,C)} && {\underset{[W_0]} \coprod \Emb(W_0,D)} \\
	{\Sub^k(C)} && {\Sub^k(D)}
	\arrow["{(\ref{e64})}", from=1-1, to=1-3]
	\arrow["{{\sf Image}}"', from=1-1, to=2-1]
	\arrow["{{\sf Image}}", from=1-3, to=2-3]
	\arrow["{\sigma^{-1}(-)}", from=2-1, to=2-3]
\end{tikzcd}
\]
commutes. 
Lemma~\ref{t52}(2) states that the vertical maps in this diagram are fiber bundles.  
Because the composition of fiber bundles is a fiber bundle, we conclude that the map $\underset{[W]} \coprod \Emb(W,C) \to \Sub^k(D)$ is a fiber bundle.
Lemma~\ref{t62}, stated and proved below, then completes this proof.

\end{proof}

\begin{lemma}\label{t62}
Let $A \xra{f} B \xra{g} C$ be a pair of composable maps between topological spaces.
Suppose $f$ is surjective.  
If $f$ and $g \circ f$ are Serre fibrations, then $g$ is a Serre fibration.
\end{lemma}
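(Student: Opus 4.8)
The plan is to verify directly that $g$ satisfies the homotopy lifting property with respect to cubes. So fix $n\geq 0$ and a commutative square consisting of maps $h_0\colon I^n\times\{0\}\to B$ and $H\colon I^n\times I\to C$ with $g\circ h_0 = H_{|I^n\times\{0\}}$; the task is to produce $\w{H}\colon I^n\times I\to B$ with $g\circ \w{H}=H$ and $\w{H}_{|I^n\times\{0\}}=h_0$. The key preliminary step is to lift $h_0$ through $f$. Here is where surjectivity is used, and it is the one point worth spelling out: since $f$ is surjective, choose $a_0\in A$ with $f(a_0)=h_0(0,\dots,0)$. Because $I^n$ is contractible, there is a homotopy $\kappa\colon I^n\times I\to I^n$ with $\kappa(-,0)$ the constant map at the origin and $\kappa(-,1)=\id$. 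Then $h_0\circ\kappa\colon I^n\times I\to B$ is a homotopy starting at the constant map at $h_0(0,\dots,0)$, and the constant map at $a_0$ lifts this starting map; applying the homotopy lifting property of the Serre fibration $f$ to the cube $I^n$ yields $K\colon I^n\times I\to A$ with $f\circ K = h_0\circ\kappa$ and $K_{|I^n\times\{0\}}$ constant at $a_0$. Set $\w{h}_0:=K_{|I^n\times\{1\}}$, so that $f\circ\w{h}_0 = h_0\circ\kappa_{|I^n\times\{1\}} = h_0$.

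Next I would feed the pair $(\w{h}_0,H)$ into the homotopy lifting property of the Serre fibration $g\circ f$: the square with top map $\w{h}_0\colon I^n\times\{0\}\to A$ and bottom map $H\colon I^n\times I\to C$ commutes, since $(g\circ f)\circ\w{h}_0 = g\circ h_0 = H_{|I^n\times\{0\}}$, so there is a filler $L\colon I^n\times I\to A$ with $(g\circ f)\circ L = H$ and $L_{|I^n\times\{0\}}=\w{h}_0$. Finally, set $\w{H}:= f\circ L\colon I^n\times I\to B$. Then $g\circ\w{H} = (g\circ f)\circ L = H$ and $\w{H}_{|I^n\times\{0\}} = f\circ\w{h}_0 = h_0$, which exhibits $\w{H}$ as the desired filler and shows $g$ is a Serre fibration.

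I do not anticipate a genuine obstacle: the argument is a two-step diagram chase, first lifting the bottom-left corner into $A$ using surjectivity plus the homotopy lifting property of $f$, then lifting the square through $g\circ f$ and pushing back down along $f$. The only ingredient that is not purely formal is the remark that a surjective Serre fibration admits a lift of any map out of a contractible CW complex (here $I^n$), which is exactly the content of the preliminary step and follows from the homotopy lifting property by contracting the source. If one prefers, $I^n$ may be replaced throughout by the disk $D^n$ with no change.
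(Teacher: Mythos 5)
Your proposal is correct and takes essentially the same approach as the paper's proof: both first lift the initial map into $A$ by combining surjectivity of $f$ with the homotopy lifting property of $f$ over a contractible domain (the paper realizes $\DD^q$ as a pushout of $\ast \leftarrow \SS^{q-1} \to \SS^{q-1}\times I$ via spherical coordinates, whereas you contract $I^n$ directly via $\kappa$ — these are equivalent devices), then apply the homotopy lifting property of $g\circ f$ and push the resulting lift back down along $f$. No gaps.
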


\begin{proof}
Recall that a map is a Serre fibrations if and only if it has the right lifting property with respect to the inclusion $\DD^q \xra{0} \DD^q \times I$ for all $q \geq 0$, where $I = [0,1]$ is the closed unit interval and the map $0$ is the inclusion of the subspace $\DD^q \times \{0\}$.
So let $q \geq 0$.
Let $\DD^q \xra{v} B$ and $\DD^q \times I \xra{\w{w}} C$ be maps fitting into a solid commutative diagram among topological spaces:
\[
\begin{tikzcd}
	{\DD^q} & B \\
	{\DD^q \times I} & C
	\arrow["v", from=1-1, to=1-2]
	\arrow["0"', from=1-1, to=2-1]
	\arrow["g", from=1-2, to=2-2]
	\arrow["{\w{v}}", dashed, from=2-1, to=1-2]
	\arrow["{\w{w}}", from=2-1, to=2-2]
\end{tikzcd}
~.
\]
We argue a filler $\w{v}$ exists by referencing the following diagram among topological spaces:
\[
\begin{tikzcd}
	{\SS^{q-1}} & \ast & A \\
	{\SS^{q-1}\times I} & {\DD^q} & B \\
	& {\DD^q \times I} & C
	\arrow["{!}"{description}, from=1-1, to=1-2]
	\arrow["0"{description}, from=1-1, to=2-1]
	\arrow["a"{description}, dotted, from=1-2, to=1-3]
	\arrow["0"{description}, from=1-2, to=2-2]
	\arrow["f"{description}, from=1-3, to=2-3]
	\arrow["{\w{a}}"{pos=0.3}, shift right=3, dotted, from=2-1, to=1-3]
	\arrow["s"{description}, from=2-1, to=2-2]
	\arrow["u"{description}, dotted, from=2-2, to=1-3]
	\arrow["v"{description}, from=2-2, to=2-3]
	\arrow["0"{description}, from=2-2, to=3-2]
	\arrow["g"{description}, from=2-3, to=3-3]
	\arrow["{\w{u}}"{description, pos=0.7}, dotted, from=3-2, to=1-3]
	\arrow["{\w{v}}"{description}, dashed, from=3-2, to=2-3]
	\arrow["{\w{w}}"{description}, from=3-2, to=3-3]
\end{tikzcd}
~.
\]
Here, the map $\SS^{q-1} \times I \xra{s} \DD^q$ is the sphereical coordinates map: $s(p,t) := tp$.  
The upper left square is therefore a pushout.
Now, using that $f$ is assumed surjective, choose a map $\ast \xra{a} A$ making the upper right square commute -- such a choice is simply a preimage of $v(0) \in B$.
Using that Serre fibrations have the right lifting property with respect to the map $\SS^{q-1} \xra{0} \SS^{q-1} \times I$, choose a map $\SS^{q-1} \times I \xra{\w{a}} A$ making the upper left triangle commute.  
Using that the upper left square is a pushout, the upper left triangle determines a map $u$ filling the upper right square.  
Using that $g\circ f$ is assumed a Serre fibration, choose a map $\w{u}$ as in the commutative diagram.
Finally, take $\w{v} := f \circ \w{u}$.
\end{proof}

\subsection{Codimension-$k$ tangles in $\Delta^\bullet \times M$}

In this subsection, we fix $n,k\geq 0$ and an $(n+k)$-manifold $M$ with boundary.
The key construction here is that of a simplicial topological space $\Bord^k_\bullet(M)$ whose topological space of $[p]$-points is the moduli space $\Sub^k(\Delta^p \times M)$ of compact codimension-$k$ submanifolds of $\Delta^p \times M$.

\begin{observation}\label{t60}
    Let $M$ be a manifold with boundary.
    For morphism $[p] \xra{\sigma} [q]$ in $\bDelta$, the map
    \[
    \Delta^p \times M
    \xra{~ \sigma _\ast \times M~}
    \Delta^q \times M
    \]
    is a proper face-submersion. 
\end{observation}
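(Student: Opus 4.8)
The plan is to verify the three defining conditions of a proper face-submersion (Definition~\ref{d6}) directly for the map $\sigma_\ast \times M \colon \Delta^p \times M \to \Delta^q \times M$, reducing each to the corresponding property of $\Delta^p \xra{\sigma_\ast} \Delta^q$ recorded in Observation~\ref{t44} (in particular parts (1) and (4)) and to elementary facts about products of manifolds with corners. Since $M$ is a manifold with boundary, it is in particular a manifold with corners whose faces are $M$ itself (the $0$-face) and the components of $\partial M$ (the $1$-faces); the faces of $\Delta^p \times M$ are therefore exactly the products $G \times N$, where $G$ is a face of $\Delta^p$ and $N$ is a face of $M$, and similarly for $\Delta^q \times M$.

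First I would check properness: $\Delta^p \xra{\sigma_\ast} \Delta^q$ is proper by Observation~\ref{t44}(4), and properness is preserved under taking the product with the identity of any space (a product of proper maps is proper, and $\id_M$ is proper), so $\sigma_\ast \times M$ is proper. Next, the face condition: a face of $\Delta^p \times M$ has the form $G \times N$ with $G \subset \Delta^p$ a face and $N \subset M$ a face; by Observation~\ref{t44}(4) the map $\sigma_\ast$ carries $G$ into (indeed onto) a face $G'$ of $\Delta^q$, while $\id_M$ carries $N$ onto itself; hence $(\sigma_\ast \times M)(G \times N) = G' \times N$, which is a face of $\Delta^q \times M$. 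Finally, the submersion condition: Observation~\ref{t44}(4) states that $\sigma_\ast$ restricts on each face $G$ of $\Delta^p$ as a submersion onto the face $G'$ containing its image; taking the product with $\id_N$ (which is trivially a submersion $N \to N$) shows that $\sigma_\ast \times M$ restricts on $G \times N$ as a submersion onto $G' \times N$, since a product of submersions is a submersion.

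I do not expect any genuine obstacle here: the statement is a formal consequence of Observation~\ref{t44} together with the standard compatibilities of properness, submersions, and faces with products. The only point requiring a word of care is the identification of the faces of $\Delta^p \times M$ with products of faces, which follows from the product chart structure on manifolds with corners; once that is noted, each of the three bullet points of Definition~\ref{d6} is immediate.
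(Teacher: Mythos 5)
Your proof is correct, and since the paper states this as an Observation without supplying a proof, your direct verification of the three clauses of Definition~\ref{d6} using Observation~\ref{t44}(4) together with the standard stability of properness and submersions under products is exactly the expected argument. The only small imprecision worth flagging is the parenthetical identification of the $0$-face of $M$ with $M$ itself: by the paper's convention (faces are connected components of the locally closed strata), the $0$-face(s) of a manifold with boundary are the component(s) of the interior $\mathring{M}$, not all of $M$; but this does not affect your argument, since what you actually use is only the general fact that faces of a product of manifolds with corners are exactly products of faces, which is correct.
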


\begin{notation}
\label{d2}
For each $p\geq 0$, denote the topological space
    \[
    \Bord^k_p(M)
    ~:=~
    \Sub^k(\Delta^p \times M)
    ~.
    \]
   
\end{notation}

\begin{lemma}
    \label{t1}
    The assignment
    \[
    [p]
    \longmapsto 
    \Bord^k_p(M)
    \]
    assembles as a simplicial topological space:
    \[
    \Bord^k_\bullet(M) \colon \bDelta^{\op} \longrightarrow \Top
    ~.
    \]
\end{lemma}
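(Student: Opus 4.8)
The plan is to verify the simplicial identities by exhibiting the structure maps and checking functoriality using the results already established. The key point is that the assignment $[p]\mapsto \Bord^k_p(M) := \Sub^k(\Delta^p\times M)$ is the composite of two functors: the contravariant functor $[p]\mapsto \Delta^p\times M$ from $\bDelta$ to a category of manifolds with corners (equipped with proper face-submersions as morphisms), followed by the functor $\Sub^k(-)$ which sends proper face-submersions to continuous maps. So the real content is (i) that $\Delta^\bullet\times M$ is a cosimplicial object valued in manifolds-with-corners-and-proper-face-submersions, and (ii) that $\Sub^k$ converts this into a simplicial topological space.

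\textbf{Step 1.} For each morphism $[p]\xra{\sigma}[q]$ in $\bDelta$, Observation~\ref{t60} gives that $\Delta^p\times M\xra{\sigma_\ast\times M}\Delta^q\times M$ is a proper face-submersion. Functoriality in $\sigma$ — i.e.\ that $(\sigma\circ\tau)_\ast\times M = (\sigma_\ast\times M)\circ(\tau_\ast\times M)$ and that $\id_{[p]}$ goes to the identity — is immediate from Observation~\ref{t44}(3), which records that $\Delta^\bullet$ is a cosimplicial manifold with corners, combined with the fact that composition of proper face-submersions is again one (Observation~\ref{a8}). Thus $[p]\mapsto(\Delta^p\times M)$ is a functor from $\bDelta$ to the category whose objects are manifolds with corners and whose morphisms are proper face-submersions.

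\textbf{Step 2.} Apply the assignment $\Sub^k(-)$. For a proper face-submersion $D\xra{\sigma}C$, Lemma~\ref{t57} guarantees $\sigma^{-1}(W)$ is a compact codimension-$k$ submanifold of $D$ whenever $W$ is one of $C$, so formula~(\ref{e56}) gives a well-defined map of sets $\Sub^k(C)\to\Sub^k(D)$; Lemma~\ref{t58} shows this map is continuous. Functoriality of $W\mapsto\sigma^{-1}(W)$ — namely $(\sigma\circ\tau)^{-1}(W)=\tau^{-1}(\sigma^{-1}(W))$ and $\id^{-1}(W)=W$ — is a set-theoretic triviality about preimages. Hence $\Sub^k(-)$ is a contravariant functor from (manifolds with corners, proper face-submersions) to $\Top$. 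Composing the functor of Step~1 with this one yields precisely the desired simplicial topological space $\Bord^k_\bullet(M)\colon\bDelta^{\op}\to\Top$, whose value on $[p]$ is $\Sub^k(\Delta^p\times M)=\Bord^k_p(M)$ by Notation~\ref{d2}.

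\textbf{The main obstacle} here is not really an obstacle at all for the statement as phrased — the simplicial identities hold strictly because preimage is strictly functorial and $\Delta^\bullet$ is a strict cosimplicial object — so the proof amounts to assembling the already-proven Lemmas~\ref{t57} and~\ref{t58} with Observations~\ref{t44}, \ref{a8}, and~\ref{t60}. The one place requiring a moment of care is confirming continuity of each structure map, but this is exactly the content of Lemma~\ref{t58} applied to the proper face-submersion $\sigma_\ast\times M$ supplied by Observation~\ref{t60}; the heavier geometric input (the Serre fibration property of face restrictions, Lemma~\ref{t61}) is not needed for this particular lemma and is reserved for later use.
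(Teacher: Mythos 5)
Your proof is correct and takes essentially the same approach as the paper: both use Observation~\ref{t60} to recognize $\sigma_\ast\times M$ as a proper face-submersion, Lemma~\ref{t58} for continuity of $\sigma^{-1}(-)$, and the strict functoriality of preimages for the simplicial identities. Your explicit factorization through the intermediate category of manifolds-with-corners-and-proper-face-submersions is a mild reorganization but not a different argument.
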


\begin{proof}
What remains is to define this functor on morphisms, then check that the assignment respects composition.  Let $[p] \xra{\sigma}[q]$ be a morphism in $\bDelta$.
Using Observation~\ref{t60}, Lemma~\ref{t58} implies the map
\[
\sigma^\ast
\colon 
\Bord^k_q(M) = \Sub^k(\Delta^q \times M)
\xra{~(\sigma_\ast \times M)^{-1}(-)~}
\Sub^k(\Delta^p \times M)
=
\Bord^k_p(M)
~,
\]
\begin{equation}\label{e1}
(W \subset \Delta^q \times M)
\longmapsto
\Bigl(
(\sigma_\ast \times M)^{-1}(W) \subset \Delta^p \times M
\Bigr)
~,
\end{equation}
is well-defined and continuous.

For $[\ell] \xra{\tau} [p]$ another morphism in $\bDelta$,
observe the identity between subspaces of $\Delta^\ell \times M$:
\[
((\sigma \tau)_\ast \times M )^{-1}(W) \subset \Delta^p \times M
~=~
(\tau_\ast^{-1} \times M
\Bigl(
(\sigma_\ast \times M)^{-1}(W) 
\Bigr)
~.
\]
This identity immediately implies the named assignments respect composition: $(\sigma \circ \tau)^\ast = \tau^\ast \circ \sigma^\ast$.

\end{proof}

\subsection{Codimension-$\xi$ submanifolds}\label{sec.universal}

In this subsection, we fix non-negative integers $p,n,k\geq 0$, and we fix a $(p+n+k)$-manifold with corners $C$.
Here, we construct a universal fiber bundle $\w{\Sub}^k(C)$ over the moduli space $\Sub^k(C)$, as it is equipped with a normal vector bundle.
We use this to define the moduli space $\Sub^\xi(C)$ of compact codimension-$\xi$ submanifolds of $C$.

Consider the subspace
\[
\w{\Sub}^k(C)
~:=~
\left\{
\Bigl( 
W \subset C
,
x \in C \Bigr)
\mid 
x\in W
\right\}
~\subset~
\Sub^k(C) \times C
~.
\]
Projection onto the first coordinate defines a map
\[
{\sf fgt}
\colon 
\w{\Sub}^k(C)
\longrightarrow
\Sub^k(C)
~,\qquad
(W \subset C , x \in W)
\longmapsto 
(W\subset C)
~,
\]
whose fiber over $(W\subset C)$ is $W$.
Evaluation defines a map
\begin{equation}
\label{e19}
\ev\colon
\Emb(W,C) \times W
\longrightarrow
\w{\Sub}^k(C)
~,\qquad
(f,x)
\longmapsto 
\Bigl( f(W) , f(x) \Bigr)
~, 
\end{equation}
which fits into a commutative diagram among topological spaces:
\[
\begin{tikzcd}
	{\Emb(W,C)\times W} && {\w{\Sub}^k(C)} \\
	{\Emb(W,C)} && {\Sub^k(C)}
	\arrow["\ev", from=1-1, to=1-3]
	\arrow["\pr"', from=1-1, to=2-1]
	\arrow["{{\rm forget}}", from=1-3, to=2-3]
	\arrow["{{\sf Image}}", from=2-1, to=2-3]
\end{tikzcd}
~.
\]

\begin{observation}
    \label{t19}

\begin{enumerate}
    \item[]

\item The topology of $\w{\Sub}^k(C)$ is the finest for which, for each compact $(p+n)$-manifold $W$ with corners, the map~(\ref{e19}) is continuous.
Consequently, the canonical map,
\[
\underset{[W]} \coprod \Emb(W,C) \underset{\Diff(W)} \times W
\xra{~\cong~}
\w{\Sub}^k(C)
~,
\]
is a homeomorphism.

\item 
In particular, after Lemma~\ref{t52}(2), the map
\[
\w{\Sub}^k(C)
\to 
\Sub^k(C)
\]
has the structure of a fiber bundle of compact $(p+n)$-manifolds with corners.

\item Furthermore, the topological subspace
\begin{equation}\label{e33}
\w{\Sub}^k(C)
~\hookrightarrow~
\Sub^k(C) \times C
\end{equation}
has the structure of a fiber bundle over $\Sub^k(C)$ of compact comimension-$k$ submanifolds of $C$.

\end{enumerate}
\end{observation}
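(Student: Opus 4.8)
The plan is to reduce all three statements to the local structure provided by Lemma~\ref{t52}, after first splitting $\Sub^k(C)$ and $\w{\Sub}^k(C)$ along diffeomorphism type. For a fixed compact $(p+n)$-manifold with corners $W$, set $\Sub^k_W(C) := {\sf Image}\bigl(\Emb(W,C)\bigr) \subseteq \Sub^k(C)$. By Observation~\ref{t15}, an embedding of $W'$ has image in $\Sub^k_W(C)$ exactly when $W' \cong W$, so the preimage of $\Sub^k_W(C)$ under each structure map $\Emb(W',C) \xra{{\sf Image}} \Sub^k(C)$ is either all of $\Emb(W',C)$ or empty; hence $\Sub^k_W(C)$ is clopen and, by Definition~\ref{d3}, carries the quotient topology from the $\Diff(W)$-action on $\Emb(W,C)$. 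Thus $\Sub^k(C) \cong \coprod_{[W]} \Emb(W,C)/\Diff(W)$ as topological spaces, and correspondingly $\w{\Sub}^k(C) = \coprod_{[W]} \w{\Sub}^k_W(C)$ with $\w{\Sub}^k_W(C) := {\sf fgt}^{-1}\bigl(\Sub^k_W(C)\bigr)$ a clopen subspace.

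For statement~(1) it then suffices to show, for each $[W]$, that the continuous bijection $q\colon \Emb(W,C) \times_{\Diff(W)} W \to \w{\Sub}^k_W(C)$ induced by the $\Diff(W)$-invariant map~(\ref{e19}) is a homeomorphism; the asserted characterization of the topology on $\w{\Sub}^k(C)$ then follows, since~(\ref{e19}) factors through the quotient map $\Emb(W,C)\times W \to \Emb(W,C)\times_{\Diff(W)} W$. To see $q$ is a homeomorphism I would invoke Lemma~\ref{t52}(2): $\Emb(W,C) \xra{{\sf Image}} \Sub^k_W(C)$ is a principal $\Diff(W)$-bundle, so $\Sub^k_W(C)$ has an open cover $\{U_\alpha\}$ with continuous sections $s_\alpha\colon U_\alpha \to \Emb(W,C)$. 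Over $U_\alpha$ the source trivializes as $U_\alpha \times W$ (the standard local triviality of an associated bundle), and under this identification $q$ becomes $(u,x)\mapsto \bigl(u, s_\alpha(u)(x)\bigr)$, a map into $U_\alpha \times C$ with image $\w{\Sub}^k_W(C)|_{U_\alpha}$. This map is a continuous injection; its target $U_\alpha\times C$ is Hausdorff (Lemma~\ref{t83} together with $C$ a manifold with corners); and it is proper, since the preimage of a compact $L \subseteq U_\alpha \times C$ is a closed subset of the compact set $\pr_{U_\alpha}(L) \times W$. A proper continuous injection into a Hausdorff space is a closed embedding, so $q|_{U_\alpha}$ is a homeomorphism onto $\w{\Sub}^k_W(C)|_{U_\alpha}$; and a continuous bijection restricting to a homeomorphism over each member of an open cover of its target is a homeomorphism. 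This proves~(1).

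Statements~(2) and~(3) are then formal. For~(2): over $\Sub^k_W(C)$ the map $\w{\Sub}^k(C)\to \Sub^k(C)$ is identified by~(1) with $\Emb(W,C)\times_{\Diff(W)} W \to \Emb(W,C)/\Diff(W)$, the bundle associated to the principal $\Diff(W)$-bundle of Lemma~\ref{t52}(2) along the $\Diff(W)$-action on $W$ by diffeomorphisms, hence a fiber bundle of compact $(p+n)$-manifolds with corners; assembling over all $[W]$ gives~(2). For~(3): the local trivializations produced above have the form $U_\alpha \times W \xra{(u,x)\mapsto(u,\,s_\alpha(u)(x))} \w{\Sub}^k(C)|_{U_\alpha} \hookrightarrow U_\alpha \times C$ with $s_\alpha(u)\in\Emb(W,C)$ a continuous $U_\alpha$-family of codimension-$k$ embeddings (Observation~\ref{t56}), which is precisely the data exhibiting~(\ref{e33}) as a fiber bundle over $\Sub^k(C)$ of compact codimension-$k$ submanifolds of $C$; alternatively one may use the tubular-neighborhood charts $\sigma \mapsto \psi\circ\sigma$ of Lemma~\ref{t52}(1). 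The only genuine work is the homeomorphism claim in step~(1), and even this rests entirely on Lemma~\ref{t52}; I expect the modest bookkeeping around the proper-map/closed-embedding argument and the passage between quotient and subspace topologies to be the main (and only mild) obstacle.
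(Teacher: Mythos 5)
Your proposal is correct and fills in the argument the paper leaves implicit: the statement is labeled an Observation and given without proof, but its explicit appeal to Lemma~\ref{t52}(2) signals exactly the route you take, namely splitting by diffeomorphism type via Observation~\ref{t15}, using the principal $\Diff(W)$-bundle structure to produce local trivializations of $\w{\Sub}^k(C)\to\Sub^k(C)$, and checking the subspace and quotient topologies agree locally.

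One step is under-justified: you assert that a proper continuous injection into a Hausdorff space is a closed embedding, but Hausdorffness alone does not give ``proper implies closed''; one needs the target to be compactly generated (equivalently here, first countable or metrizable). That is available: $C$ is a manifold with corners and $\Sub^k(C)$ is metrizable -- it is paracompact Hausdorff by Lemma~\ref{t83} and locally homeomorphic to the metrizable space $\Gamma^{\sf sm}(\nu_{W\subset C})$ by Lemma~\ref{t52}(1), and indeed the paper later uses metrizability of exactly this kind of space in the proof of Lemma~\ref{t77}. So the argument goes through; you should cite metrizability (or first countability) of $U_\alpha\times C$ rather than mere Hausdorffness when invoking properness.
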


The fiberwise (over $\Sub^k(C)$) normal bundle of~(\ref{e33}) is a rank-$k$ vector bundle over $\w{\Sub}^k(C)$:
\begin{equation}\label{e37}
\nu^{\sf fib}(C)
~:=~
\Bigl(
E(\nu^{\sf fib}(C))
\to 
\w{\Sub}^k(C)
\Bigr)
~;
\end{equation}
By construction, its fiber over $(x\in W\subset C)$ is the normal vector space $\sN_x$ of $W \subset C$ at $x$.
This rank-$k$ vector bundle is classified by a map between spaces:
\begin{equation}\label{e34}
\nu^{\sf fib}(C)
\colon
\w{\Sub}^k(C)
\longrightarrow
\BO(k)
~.
\end{equation}

\begin{observation}
The fiber bundle of Observation~\ref{t19}(2) is classified by the map~(\ref{e18}).  
Consequently, the map~(\ref{e34}) supplies a lift among spaces:
\[
\begin{tikzcd}
	&& {\underset{[W]} \coprod \Map(W,\BO(k))_{/\Diff(W)}} \\
	{\Sub^k(C)} && {\underset{[W]} \coprod \BDiff(W)}
	\arrow[from=1-3, to=2-3]
	\arrow["{\nu^{\sf fib}(C)}", from=2-1, to=1-3]
	\arrow["{(\ref{e18})}"', from=2-1, to=2-3]
\end{tikzcd}
~.
\]
\end{observation}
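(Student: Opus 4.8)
The plan is to argue componentwise over the coproduct indexing $\Sub^k(C) \cong \coprod_{[W]} \Emb(W,C)_{/\Diff(W)}$ of Observation~\ref{t19}(1), so that everything reduces to a statement about a single topological group $\Diff(W)$ acting on $\Emb(W,C)$. For the first assertion I would invoke Lemma~\ref{t52}(2): the map ${\sf Image}\colon \Emb(W,C) \to \Emb(W,C)_{/\Diff(W)}$ is a principal $\Diff(W)$-bundle, and by construction its classifying map $\Emb(W,C)_{/\Diff(W)} \to \BDiff(W)$ is precisely the $[W]$-component of~(\ref{e18}). Observation~\ref{t19}(1) identifies $\w{\Sub}^k(C)\big|_{[W]}$ with the associated $W$-bundle $\Emb(W,C)\times_{\Diff(W)} W$; hence the fiber bundle of Observation~\ref{t19}(2), restricted to this component, is the pullback of the universal $W$-bundle along~(\ref{e18}), which is what the first clause asserts.

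For the ``consequently'' clause I would appeal to the universal property of the homotopy quotient $\Map(W,\BO(k))_{/\Diff(W)}$ (the Borel construction for the precomposition action of $\Diff(W)$) together with its projection to $\BDiff(W)$: for a space $B$ equipped with a map to $\BDiff(W)$ classifying a $W$-bundle $\mathcal W \to B$, a lift of that map to $\Map(W,\BO(k))_{/\Diff(W)}$ is the same datum as a rank-$k$ vector bundle on the total space $\mathcal W$. Indeed $\Map(W,\BO(k))_{/\Diff(W)} \to \BDiff(W)$ is the fiber bundle whose $B$-points over a given $B \to \BDiff(W)$ are the sections of the associated $\Map(W,\BO(k))$-bundle, and such a section is exactly a map $\mathcal W \to \BO(k)$. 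Applying this with $B = \Sub^k(C)\big|_{[W]}$, with the map~(\ref{e18}), and with $\mathcal W = \w{\Sub}^k(C)\big|_{[W]}$: the rank-$k$ vector bundle $\nu^{\sf fib}(C)$ of~(\ref{e37}), classified by the map~(\ref{e34}), is precisely the datum of the sought lift. Reassembling over all $[W]$ produces the map $\Sub^k(C) \to \coprod_{[W]} \Map(W,\BO(k))_{/\Diff(W)}$ over~(\ref{e18}).

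I do not expect a genuine obstacle here; the work is entirely bookkeeping with the universal properties just recalled. The one point requiring care is the passage between the category $\Top$ --- in which Observation~\ref{t19} and Lemma~\ref{t52} produce honest topological fiber bundles --- and the $\infty$-category $\Spaces$, in which the target diagram is stated: I would be attentive to the facts that a principal $\Diff(W)$-bundle of topological spaces presents the corresponding map to $\BDiff(W)$, and that the associated-bundle and Borel-construction constructions are compatible with the functor $\Top \to \Spaces$, so that the identifications above indeed take place in $\Spaces$.
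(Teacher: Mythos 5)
Your proposal is correct, and it spells out the reasoning the paper leaves implicit (the observation is stated without proof): decomposing over $[W]$, identifying the fiber bundle of Observation~\ref{t19}(2) as the $W$-bundle associated to the principal $\Diff(W)$-bundle of Lemma~\ref{t52}(2), and invoking the universal property of the Borel construction $\Map(W,\BO(k))_{/\Diff(W)} \to \BDiff(W)$ to convert the map~(\ref{e34}) into the desired lift. Your caution about passing from $\Top$ to $\Spaces$ is also the right thing to flag, and is handled as you indicate.
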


\begin{definition}
\label{d5}
    The space of \bit{codimension-$\xi$ submanifolds} of $C$ is the pullback among spaces:
    \[
\begin{tikzcd}
	{\Sub^\xi(C)} & {\underset{[W]} \coprod \Map(W,B)_{/\Diff(W)}} \\
	{\Sub^k(C)} & {\underset{[W]} \coprod \Map(W,\BO(k))_{/\Diff(W)}}
	\arrow[from=1-1, to=1-2]
	\arrow[from=1-1, to=2-1]
	\arrow[from=1-2, to=2-2]
	\arrow["{\nu^{\sf fib}(C)}", from=2-1, to=2-2]
\end{tikzcd}
~.
    \]
    
\end{definition}

\begin{remark}
Explicitly, a point in $\Sub^k(C)$ is a codimension-$k$ submanifold $W \subset C$, together with a map $g\colon W \ra B$, as well as an isomorphism $\nu_{W \subset C} \cong g^\ast \xi$ between vector bundles over $W$.
\end{remark}

\begin{observation}\label{a27}
In the case that $\xi = \gamma_k$ is the universal rank-$k$ vector bundle over $\BO(k)$, the canonical map between spaces
\[
\Sub^{\gamma_k}(C)
\xra{~\simeq~}
\Sub^k(C)
\]
is an equivalence since the right vertical map in Definition~\ref{d5} is an equivalence for $B=\BO(k)$.
\end{observation}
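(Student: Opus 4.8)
The plan is to read the claim off directly from the defining pullback square of Definition~\ref{d5}. First I would recall that the universal rank-$k$ vector bundle $\gamma_k$ over $\BO(k)$ is, by construction, classified by the identity map $\BO(k) \xra{\id} \BO(k)$: its base is $B = \BO(k)$, and the bundle map $\xi\colon B \to \BO(k)$ entering Definition~\ref{d5} is $\id_{\BO(k)}$. Consequently, the functor $W \mapsto \Map(W,-)_{/\Diff(W)}$ carries this bundle map to the identity morphism of $\underset{[W]}\coprod \Map(W,\BO(k))_{/\Diff(W)}$. Thus, when $\xi = \gamma_k$, the right vertical map of the square in Definition~\ref{d5} is the identity, and in particular an equivalence between spaces.

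Next I would invoke the elementary fact that, in the $\infty$-category $\Spaces$, the pullback of an equivalence along an arbitrary map is again an equivalence. Applying this to the square of Definition~\ref{d5} in the case $\xi = \gamma_k$, the left vertical map $\Sub^{\gamma_k}(C) \to \Sub^k(C)$ is the base-change of the identity of $\underset{[W]}\coprod \Map(W,\BO(k))_{/\Diff(W)}$ along $\nu^{\sf fib}(C)$, and is therefore an equivalence. The one remaining point is to observe that this left leg of the square is precisely the canonical comparison map named in the statement, which is immediate from the way $\Sub^\xi(C)$ is constructed. So there is no real obstacle here: the entire content has been packaged into Definition~\ref{d5}, and the observation is just the remark that a pullback along an equivalence is an equivalence.
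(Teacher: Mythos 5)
Your proof is correct and follows exactly the paper's own reasoning: identify $\gamma_k$ as classified by $\id_{\BO(k)}$, note the right vertical map in Definition~\ref{d5} is then an equivalence, and conclude by base-change invariance of equivalences. The paper states this compressed into a single clause, but you have merely unpacked the same argument.
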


    Forgetting smooth structure defines a functor between $\infty$-categories, $\BDiff(W) \to \Spaces$, for each compact $n$-manifold $W$ with corners.  
    Post-composing the map~(\ref{e18}) with such functors defines a functor between $\infty$-categories,
    \[
    \Sub^k(C)
    \longrightarrow 
    \Spaces
    ~,
    \]
    which classifies the fibration underlying the fiber bundle of Observation~\ref{t19}(2).
    The map~(\ref{e34}), in turn, supplies a lift of this functor:
    \begin{equation}\label{e55}
    \Sub^k(C)
    \xra{~\nu^{\sf fib}(C)~}
    \Spaces_{/\BO(k)}
    ~.
    \end{equation}
    This lift affords an alternative description of the space $\Sub^\xi(C)$ as the pullback among $\infty$-categories:
    \[
\begin{tikzcd}
	{\Sub^\xi(C)} & {\Spaces_{/B}} \\
	{\Sub^k(C)} & {\Spaces_{/\BO(k)}}
	\arrow[from=1-1, to=1-2]
	\arrow[from=1-1, to=2-1]
	\arrow["\lrcorner"{anchor=center, pos=0.125}, draw=none, from=1-1, to=2-2]
	\arrow["{\xi \circ -}", from=1-2, to=2-2]
	\arrow["{\nu^{\sf fib}(C)}", from=2-1, to=2-2]
\end{tikzcd}
~.
    \]


\subsection{Codimension-$\xi$ tangles in $\Delta^\bullet \times M$}\label{sec.Un}

In this subsection, we fix $n,k\geq 0$ and an $(n+k)$-manifold $M$ with boundary; we fix a rank-$k$ vector bundle $\xi = (E \to B)$, and we denote the map classifying it again as $B \xra{\xi} \BO(k)$.
The key construction here is that of a simplicial space $\Bord^\xi_\bullet(M)$ whose space of $[p]$-points is the moduli space $\Sub^\xi(\Delta^p \times M)$ of compact codimension-$\xi$ submanifolds of $M$.

The proof of the following technical result is~\S\ref{sec.normal}.
Its statement references the $\infty$-category 
\[
\bDelta_{/\Bord^k_\bullet(M)}
~, 
\]
which is the $\infty$-overcategory of $\Bord^k_\bullet(M)$ with respect to the Yoneda functor $\bDelta \to \PShv(\bDelta)$.
The canonical projection 
\[
\bDelta_{/\Bord^k_\bullet(M)}
\xra{~\rm forget~}
\bDelta
\]
is therefore a right fibration; it is the unstraightening of the functor $\bDelta^{\op} \xra{\Bord^k_\bullet(M)} \Spaces$.
In particular, its fiber over $[p]$ is the space $\Bord^k_p(M)$ of $p$-simplices of the simplicial space $\Bord^k_\bullet(M)$.
\begin{lemma}\label{t50}
There is a functor
\[
\nu^{\sf fib}_\bullet(M)
\colon
\bDelta_{/\Bord^k_\bullet(M)}
\longrightarrow
\Spaces_{/\BO(k)}
\]
whose restriction to the fiber of $\bDelta_{/\Bord^k_\bullet(M)} \xra{\rm forget} \bDelta$ over $[p]$ is the map 
\[
\nu^{\sf fib}_p(M)
\colon
\Bord^k_p(M) = \Sub^k(\Delta^p \times M) \xra{\nu^{\sf fib}(\Delta^p\times M)} \Spaces_{/\BO(k)}
\]
from~(\ref{e55}).
\end{lemma}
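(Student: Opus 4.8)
The plan is to realize the desired functor as a composite involving the universal property of the unstraightening construction. Recall that the source $\bDelta_{/\Bord^k_\bullet(M)}$ is the unstraightening of $\bDelta^{\op} \xra{\Bord^k_\bullet(M)} \Spaces$, so a functor out of it to $\Spaces_{/\BO(k)}$ is equivalent data to a lax-compatible family of maps $\Bord^k_p(M) \to \Spaces_{/\BO(k)}$ together with coherences along the simplicial structure maps $\sigma^\ast\colon \Bord^k_q(M) \to \Bord^k_p(M)$. Thus the task reduces to promoting the pointwise-defined maps $\nu^{\sf fib}_p(M) = \nu^{\sf fib}(\Delta^p\times M)$ of~(\ref{e55}) to a natural transformation of functors $\bDelta^{\op} \to \Fun(\Delta^1,\Spaces)$ (the target recording an object over $\BO(k)$). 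Concretely, I would first recall that each face-submersion $\sigma_\ast \times M\colon \Delta^p \times M \to \Delta^q\times M$ is a proper face-submersion (Observation~\ref{t60}), so that by Lemma~\ref{t57} every compact codimension-$k$ submanifold $W \subset \Delta^q\times M$ is transverse to it, and moreover normal bundles pull back along transverse maps: there is a canonical isomorphism of rank-$k$ vector bundles
\[
\nu_{(\sigma_\ast\times M)^{-1}(W)\subset \Delta^p\times M}
~\cong~
\bigl(\sigma_\ast\times M\bigr|_{(\sigma_\ast\times M)^{-1}(W)}\bigr)^\ast \nu_{W\subset \Delta^q\times M}
\]
(this is exactly the isomorphism $\alpha$ appearing in the proof of Lemma~\ref{t58}). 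This is the geometric input that makes the classifying maps to $\BO(k)$ compatible.

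Next I would upgrade this fiberwise compatibility to the level of the universal families. For each $p$, the space $\w{\Sub}^k(\Delta^p\times M)$ carries its fiberwise normal bundle $\nu^{\sf fib}(\Delta^p\times M)$ classified by~(\ref{e34}); the previous paragraph's pullback isomorphism, applied to the universal submanifold, shows that $\sigma^\ast$ lifts to a map of universal families $\w{\Sub}^k(\Delta^q\times M) \times_{\Sub^k(\Delta^q\times M)} \Sub^k(\Delta^p\times M) \to \w{\Sub}^k(\Delta^p\times M)$ over which the fiberwise normal bundles are identified. Passing to classifying maps, and using that $\Spaces_{/\BO(k)}$ is an $\infty$-category where such identifications compose coherently (here one invokes functoriality of the straightening/unstraightening equivalence, e.g.\ following~\cite{HTT}), one obtains for each $\sigma\colon [p]\to[q]$ a commuting triangle in $\Spaces_{/\BO(k)}$ exhibiting $\nu^{\sf fib}_p(M)\circ \sigma^\ast \simeq \nu^{\sf fib}_q(M)$ compatibly with composition — the cocycle condition $(\sigma\tau)^\ast = \tau^\ast\circ\sigma^\ast$ from the proof of Lemma~\ref{t1} propagates through, since the bundle isomorphism $\alpha$ is itself natural in the proper face-submersion. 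This produces the natural transformation $\bDelta^{\op} \to \Fun(\Delta^1,\Spaces)$ whose unstraightening is the sought functor $\nu^{\sf fib}_\bullet(M)$, with the stated restriction to fibers being true by construction.

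The genuinely delicate point — and the reason this is deferred to \S\ref{sec.normal} — is not the existence of each individual isomorphism $\alpha$, which is classical transversality, but the \emph{coherence}: one must produce not just an equivalence $\nu^{\sf fib}_p(M)\circ\sigma^\ast\simeq \nu^{\sf fib}_q(M)$ for each morphism but a system of higher simplices witnessing that these equivalences are themselves compatibly chosen over all of $\bDelta$, so that the data genuinely assembles into a functor rather than merely a levelwise collection of pointwise equivalences. The main obstacle is therefore organizing the normal-bundle pullback isomorphisms into a strictly functorial model at the point-set level (e.g.\ by a fixed choice of tubular-neighborhood data compatible across the cosimplicial manifold $\Delta^\bullet\times M$, exploiting that the maps $\Delta^p_e \to \Delta^q_e$ are affine, cf.\ Observation~\ref{t44}), which then passes to a genuine functor upon applying classifying-space constructions; handling this carefully, including the interaction with the indexing over diffeomorphism types $[W]$ via Observation~\ref{t15} and Observation~\ref{t19}, is what occupies \S\ref{sec.normal}.
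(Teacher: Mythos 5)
You correctly identify the geometric input (normal bundles pull back along transverse maps, via Lemma~\ref{t57} and the proof of Lemma~\ref{t58}) and correctly identify the crux as the coherence of these identifications over all of $\bDelta$, not the existence of each one. However, the mechanism you propose for resolving the coherence is off in two respects.

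First, you suggest making a compatible global choice of tubular-neighborhood data across the cosimplicial manifold $\Delta^\bullet \times M$ to get a strictly functorial point-set model. This is neither necessary nor, I suspect, feasible: tubular-neighborhood choices do not compose strictly under the transpose of a face-submersion, so a strictly compatible system would be awkward to produce. The paper's actual route exploits that no such choices are needed: the isomorphism $\alpha$ between normal bundles, coming from $\sD\sigma$ descending to the quotient (Fact~\ref{trans.fact}(1)), is canonical, and Fact~\ref{trans.fact}(2) gives on-the-nose compatibility $\alpha_{\tau\circ\sigma} = ((\sigma_\ast\times M)^\ast\alpha_\tau)\circ(\pr^\ast\alpha_\sigma)$ without any tubular-neighborhood input. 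This strict point-set cocycle is the whole source of the coherence.

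Second, you do not spell out the categorical machinery that turns this coherence into a functor $\bDelta_{/\Bord^k_\bullet(M)}\to\Spaces_{/\BO(k)}$; ``passing to classifying maps'' in $\Spaces_{/\BO(k)}$ glosses over a real gap, because $\nu^{\sf fib}(\Delta^p\times M)$ lives over $\w{\Sub}^k(\Delta^p\times M)$, not over $\Sub^k(\Delta^p\times M)$, and one must descend along the universal fiber bundle. The paper handles this by building the intermediate topological category $\w{\Un}^k(M)\to\Un^k(M)$ as a strict left fibration between category-objects in $\Top$, equipping $\w{\Un}^k(M)$ with a rank-$k$ vector bundle in the sense of \S\ref{sec.vbdl} (object data $\nu^{\sf fib}$, morphism data the $\alpha_\sigma$'s, cocycle condition from Fact~\ref{trans.fact}(2)), classifying this to $\w{\Un}^k(M)\to\BO(k)$, and then using the left-fibration structure to straighten to $\Un^k(M)\to\Spaces_{/\BO(k)}$. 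Your proposal gestures at the correct geometric picture but lacks both this specific assembly mechanism and the observation that the normal-bundle cocycle is already strictly coherent without auxiliary choices.
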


\begin{notation}
    We denote the pullback $\infty$-category:
    \[
\begin{tikzcd}
	{\Un^\xi(M)} & {\Spaces_{/B}} \\
	{\bDelta_{/\Bord^k_\bullet(M)}} & {\Spaces_{/\BO(k)}}
	\arrow[from=1-1, to=1-2]
	\arrow[from=1-1, to=2-1]
	\arrow["\lrcorner"{anchor=center, pos=0.125}, draw=none, from=1-1, to=2-2]
	\arrow["{\xi \circ -}", from=1-2, to=2-2]
	\arrow["{\nu^{\sf fib}_\bullet(M)}", from=2-1, to=2-2]
\end{tikzcd}
~.
    \]
\end{notation}

\begin{observation}
\label{t22}
    The composite functor $\Un^\xi(M) \to \bDelta_{/\Bord^k_\bullet(M)} \to \bDelta$ is right fibration.  Indeed, the base-change of a right fibrations is a right fibration, and the composition of right fibrations is a right fibration.  
\end{observation}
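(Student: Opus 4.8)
The plan is to exhibit the composite $\Un^\xi(M) \to \bDelta_{/\Bord^k_\bullet(M)} \to \bDelta$ as a composition of two right fibrations. The second map is already available: as recalled just before Lemma~\ref{t50}, the projection $\bDelta_{/\Bord^k_\bullet(M)} \to \bDelta$ is the unstraightening of the presheaf $\Bord^k_\bullet(M)\colon \bDelta^{\op} \to \Spaces$, hence a right fibration. So it remains to show that the first map $\Un^\xi(M) \to \bDelta_{/\Bord^k_\bullet(M)}$ is a right fibration, and then to invoke closure of right fibrations under composition.

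First I would observe, from the defining pullback square for $\Un^\xi(M)$ in the Notation preceding this observation, that $\Un^\xi(M) \to \bDelta_{/\Bord^k_\bullet(M)}$ is the base-change of the post-composition functor $\xi\circ-\colon \Spaces_{/B} \to \Spaces_{/\BO(k)}$ along $\nu^{\sf fib}_\bullet(M)$. Since right fibrations are stable under base-change, it suffices to know that $\xi\circ-$ is itself a right fibration. This is a general fact: for any morphism $x \to y$ in an $\infty$-category $\cC$, post-composition $\cC_{/x} \to \cC_{/y}$ is a right fibration. One way to see it is via the commuting triangle over $\cC$ whose remaining two edges are the domain-projections $\cC_{/x} \to \cC$ and $\cC_{/y} \to \cC$, both of which are right fibrations; the standard cancellation property for right fibrations — if $g$ is a right fibration and $g\circ f$ is a right fibration, then $f$ is a right fibration — then yields that $\cC_{/x} \to \cC_{/y}$ is a right fibration. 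Apply this with $\cC = \Spaces$ and the morphism $\xi\colon B \to \BO(k)$.

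Assembling the pieces: $\Un^\xi(M) \to \bDelta_{/\Bord^k_\bullet(M)}$ is a right fibration, as a base-change of the right fibration $\xi\circ-$; the projection $\bDelta_{/\Bord^k_\bullet(M)} \to \bDelta$ is a right fibration; and right fibrations are closed under composition, so the composite $\Un^\xi(M) \to \bDelta$ is a right fibration, which is the assertion.

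I do not anticipate a genuine obstacle here: the statement is formal, using only that right fibrations are stable under base-change and composition, together with the cancellation property and the identification of $\xi\circ-$ with a post-composition functor between overcategories. No geometric input about $M$, $\xi$, or the moduli spaces $\Sub^k(\Delta^p\times M)$ is needed; the one point worth stating carefully is precisely the identification of $\xi\circ-$ as a right fibration.
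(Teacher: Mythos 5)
Your proposal is correct and takes essentially the same route as the paper: view $\Un^\xi(M) \to \bDelta_{/\Bord^k_\bullet(M)}$ as a base-change of the right fibration $\xi\circ-\colon \Spaces_{/B}\to\Spaces_{/\BO(k)}$, then compose with the right fibration $\bDelta_{/\Bord^k_\bullet(M)}\to\bDelta$. The only thing you add is a justification (via the cancellation property of right fibrations applied to the triangle over $\Spaces$) that post-composition along a morphism induces a right fibration between overcategories, a standard fact the paper's terse proof takes for granted.
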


\begin{definition}\label{def.Bord.s.space}
The simplicial space of \bit{codimension-$\xi$ tangles in $\Delta^\bullet \times M$},
\[
\Bord^\xi_\bullet(M)
\colon 
\bDelta^{\op}
\longrightarrow
\Spaces
~,
\]
is that classifying the right fibration $\Un^\xi(M) \to \bDelta$. 
\end{definition}

\begin{remark}
    Equivalently, $\Bord^\xi_\bullet(M)$ is the simplicial space equipped with an identification $\Un^\xi(M) \simeq \bDelta_{/\Bord^\xi_\bullet(M)}$ over $\bDelta$. That is, the unstraightening of $\Bord^\xi_\bullet(M)$ is $\Un^\xi(M) \to \bDelta$.
\end{remark}

\begin{observation}
\label{t25}
    By Definition~\ref{d5} as a pullback, Lemma~\ref{t50} reveals, for each $p \geq 0$, a canonical identification between spaces:
    \[
    \Bord^\xi_p(M)
    ~\simeq~
    \Sub^\xi(\Delta^p \times M)
    ~.
    \]
    
\end{observation}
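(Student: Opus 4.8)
The plan is to compute the fiber over $[p]\in\bDelta$ of the right fibration $\Un^\xi(M)\to\bDelta$, and to recognize that fiber as $\Sub^\xi(\Delta^p\times M)$ by commuting iterated pullbacks of $\infty$-categories and then invoking Lemma~\ref{t50}.

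First I would record that, since $\Bord^\xi_\bullet(M)$ is by Definition~\ref{def.Bord.s.space} the simplicial space classifying the right fibration $\Un^\xi(M)\to\bDelta$, unstraightening identifies $\Bord^\xi_p(M)$ with the fiber $\Un^\xi(M)\underset{\bDelta}{\times}\{[p]\}$ for each $p\geq 0$. Next, $\Un^\xi(M)$ is by definition the pullback of $\bDelta_{/\Bord^k_\bullet(M)}\xra{\nu^{\sf fib}_\bullet(M)}\Spaces_{/\BO(k)}\xla{\xi\circ-}\Spaces_{/B}$, and the structure functor $\Un^\xi(M)\to\bDelta$ factors through the projection $\bDelta_{/\Bord^k_\bullet(M)}\to\bDelta$. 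Because pullbacks of $\infty$-categories commute with one another, taking fibers over $[p]$ yields a canonical identification
\[
\Un^\xi(M)\underset{\bDelta}{\times}\{[p]\}
~\simeq~
\Bigl(\bDelta_{/\Bord^k_\bullet(M)}\underset{\bDelta}{\times}\{[p]\}\Bigr)
\underset{\Spaces_{/\BO(k)}}{\times}
\Spaces_{/B}
~,
\]
where the first factor on the right maps to $\Spaces_{/\BO(k)}$ by the restriction of $\nu^{\sf fib}_\bullet(M)$ to the fiber over $[p]$.

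It then remains to identify the two inputs of the right-hand pullback. The projection $\bDelta_{/\Bord^k_\bullet(M)}\to\bDelta$ is the unstraightening of $\Bord^k_\bullet(M)$, so its fiber over $[p]$ is $\Bord^k_p(M)=\Sub^k(\Delta^p\times M)$; and by Lemma~\ref{t50} the restriction of $\nu^{\sf fib}_\bullet(M)$ to this fiber is exactly the map $\nu^{\sf fib}(\Delta^p\times M)$ of~(\ref{e55}). Therefore $\Bord^\xi_p(M)$ is identified with the pullback of $\Sub^k(\Delta^p\times M)\xra{\nu^{\sf fib}(\Delta^p\times M)}\Spaces_{/\BO(k)}\xla{\xi\circ-}\Spaces_{/B}$, which is precisely the alternative description of $\Sub^\xi(\Delta^p\times M)$ recorded at the end of \S\ref{sec.universal}, equivalent to Definition~\ref{d5}. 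The only substantive point in the argument is the compatibility of the globally-defined functor $\nu^{\sf fib}_\bullet(M)$ with the levelwise maps $\nu^{\sf fib}_p(M)$ along the simplicial structure, and that compatibility is exactly what Lemma~\ref{t50} supplies; so once Lemma~\ref{t50} is in hand, the present statement is pure bookkeeping with pullbacks and presents no genuine obstacle.
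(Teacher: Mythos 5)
Your proposal is correct and is precisely the argument the paper's one-line observation intends: take fibers over $[p]$ of the right fibration $\Un^\xi(M)\to\bDelta$, commute the pullbacks, and use Lemma~\ref{t50} to identify the restricted functor as $\nu^{\sf fib}(\Delta^p\times M)$, recovering the pullback description of $\Sub^\xi(\Delta^p\times M)$ from \S\ref{sec.universal}. You've just spelled out the bookkeeping that the paper leaves implicit.
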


\begin{remark}
A 0-simplex in $\Bord_\bullet^{\xi}(M)$ is a compact $n$-submanifold $W\subset M$, together with a map $g\colon W \ra B$, and an isomorphism between vector bundles $\nu_{W \subset M} \cong g^\ast \xi$ from the normal bundle to the pullback bundle.
A 1-simplex in $\Bord^\xi_\bullet(M)$ is an embedded cobordism between such. Generally, a $p$-simplex in $\Bord_\bullet^{\xi}(M)$ is a compact $(n+p)$-submanifold $W\subset \Delta^p \times M$, together with a map $g\colon W \ra B$, and an isomorphism between vector bundles $\nu_{W \subset \Delta^p \times M} \cong g^\ast \xi$ from the normal bundle to the pullback bundle.

\end{remark}

\begin{observation}\label{a26}
In the case that $\xi = \gamma_k$ is the universal rank-$k$ vector bundle over $\BO(k)$, Observation~\ref{a27} implies the canonical morphism between simplicial spaces
\[
\Bord^{\gamma_k}_\bullet(M)
\xra{~\simeq~}
\Bord^k_\bullet(M)
\]
is an equivalence.
\end{observation}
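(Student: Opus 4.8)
The plan is to observe that for $\xi = \gamma_k$ the pullback square defining $\Un^\xi(M)$ degenerates, so that the comparison morphism is an equivalence for formal reasons, with all genuine content already packaged into Observation~\ref{a27}.

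First I would record that the map classifying $\gamma_k$ is the identity $\BO(k) \xra{\id} \BO(k)$; in particular its base is $\BO(k)$ itself, and the postcomposition functor $\gamma_k \circ (-)\colon \Spaces_{/\BO(k)} \to \Spaces_{/\BO(k)}$ appearing in the Notation defining $\Un^{\gamma_k}(M)$ is the identity functor, hence an equivalence of $\infty$-categories. Since equivalences of $\infty$-categories are stable under base change, the projection $\Un^{\gamma_k}(M) \to \bDelta_{/\Bord^k_\bullet(M)}$, which is the left-hand vertical map of that pullback square, is therefore an equivalence.

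Next I would promote this to the statement about simplicial spaces. By Observation~\ref{t22} this projection lies over $\bDelta$ and commutes with the structure right fibrations, so it is an equivalence of right fibrations over $\bDelta$; since $\bDelta_{/\Bord^k_\bullet(M)} \to \bDelta$ is by construction the unstraightening of the simplicial space underlying $\Bord^k_\bullet(M)$, straightening carries it to the canonical morphism $\Bord^{\gamma_k}_\bullet(M) \to \Bord^k_\bullet(M)$, which is thus an equivalence because the straightening equivalence $\rfib(\bDelta) \simeq \Fun(\bDelta^{\op},\Spaces)$ preserves and reflects equivalences. Equivalently, and more concretely, one may argue levelwise: Observation~\ref{t25} identifies the morphism in simplicial degree $p$ with the canonical map $\Sub^{\gamma_k}(\Delta^p\times M) \to \Sub^k(\Delta^p\times M)$ of Observation~\ref{a27}, which is an equivalence, and a levelwise equivalence of simplicial spaces is an equivalence in $\Fun(\bDelta^{\op},\Spaces)$.

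I do not expect any substantive obstacle here: the argument is simply unwinding the definition of $\Bord^\xi_\bullet(M)$ as the simplicial space classifying the right fibration $\Un^\xi(M) \to \bDelta$, the only real input being the already-established Observation~\ref{a27}. The lone point requiring a bit of care is bookkeeping: confirming that the morphism named in the statement is indeed the one induced by the pullback projection (equivalently, the levelwise map of Observation~\ref{a27}), and being consistent about the passage from the simplicial \emph{topological} space $\Bord^k_\bullet(M)$ of Lemma~\ref{t1} to its underlying simplicial space, a passage already built into the formation of $\bDelta_{/\Bord^k_\bullet(M)}$.
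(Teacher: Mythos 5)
Your proposal is correct and its second, ``more concrete'' paragraph is exactly the argument the paper intends: Observation~\ref{t25} identifies the morphism in each simplicial degree $p$ with the map $\Sub^{\gamma_k}(\Delta^p\times M) \to \Sub^k(\Delta^p\times M)$, which Observation~\ref{a27} says is an equivalence, and a levelwise equivalence of simplicial spaces is an equivalence. Your first paragraph is a mild abstraction of the same point — for $\xi=\gamma_k$ the right vertical map $\xi\circ(-)$ in the pullback square defining $\Un^\xi(M)$ is the identity on $\Spaces_{/\BO(k)}$, so the projection $\Un^{\gamma_k}(M)\to\bDelta_{/\Bord^k_\bullet(M)}$ is an equivalence of right fibrations over $\bDelta$ and straightens to the desired equivalence of simplicial spaces — and is equally valid.
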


\subsection{Proof of Lemma~\ref{t50}}\label{sec.normal}

A priori, constructing a functor to the $\infty$-category $\Spaces_{/\BO(k)}$ requires an infinite amount of homotopy coherence.  
For the case at hand, we exploit the geometry intrinsic to $\Bord^k_\bullet(M)$ to resolve this a priori infinite homotopy coherence problem.  
Specifically, we implement the following. 
\begin{itemize}
    \item We construct a topological category $\Un^k(M)$ whose underlying $\infty$-category is $\bDelta_{/\Bord^k_\bullet(M)}$.

    \item We construct a topological category $\w{\Un}^k(M)$ equipped with a functor to $\Un^k(M)$ whose underlying functor between $\infty$-categories is a left fibration.

    \item We construct a vector bundle $\nu^{\sf fib}_\bullet(M)$ over $\w{\Un}^k(M)$, in the sense of~\S\ref{sec.vbdl}.  
\end{itemize}
The vector bundle $\nu^{\sf fib}_\bullet(M)$ is classified by a functor from the underlying $\infty$-category, $\w{\Un}^k(M) \to \BO(k)$, which thereafter straightens as a functor from the underlying $\infty$-category, $\bDelta_{/\Bord^k_\bullet(M)} \simeq \Un^k(M) \to \Spaces_{/\BO(k)}$.
Tracking this construction reveals that its restriction to $\Bord^k_p(M)$ is as desired.

Consider the Grothendieck construction (also known as the unstraightening) of the functor $\bDelta^{\op} \xra{\Bord^k_\bullet(M)} \Top$ of Lemma~\ref{t1}:
\[
\Un^k(M)
:=
\Un\Bigl(\Bord^k_\bullet(M)\Bigr)
\longrightarrow
\bDelta
~.
\]
Explicitly, it is a category-object in $\Top$ over $\bDelta$ given as follows. 
\begin{itemize}
    \item Its topological space of objects over an object $[p]\in \bDelta$ is
\[
\Obj\Bigl( 
\Un^k(M)
\Bigr)_{|[p]}
~:=~
\Bord^k_p(M)
~.
\]

\item Its topological space of morphisms over a morphism $[p]\xra{\sigma}[q]$ in $\bDelta$ is
\[
\Mor\Bigl( 
\Un^k(M)
\Bigr)_{|\sigma}
~:=~
\Bord^k_q(M)
~.
\]

\item Its source and target maps,
\[
\Obj\Bigl( 
\Un^k(M)
\Bigr)
\xla{~\rm source~}
\Mor\Bigl( 
\Un^k(M)
\Bigr)
\xra{~\rm target~}
\Obj\Bigl( 
\Un^k(M)
\Bigr)
~,
\]
are given over a morphism $[p]\xra{\sigma}[q]$ in $\bDelta$ as
\[
\Bord^k_p(M)
\xla{~\sigma^\ast~}
\Bord^k_q(M)
\xra{~=~}
\Bord^k_q(M)
~.
\]

\item The composition rule is the map,
\[
\Mor\Bigl( 
\Un^k(M)
\Bigr)
\underset{\Obj\Bigl( 
\Un^k(M)
\Bigr)} \times
\Mor\Bigl( 
\Un^k(M)
\Bigr)
\xra{~\circ~}
\Mor\Bigl( 
\Un^k(M)
\Bigr)
\]
given over a composable pair of morphisms $[p]\xra{\sigma}[q] \xra{\tau} [r]$ in $\bDelta$ by
\[
\Bord^k_r(M)
\xra{~\tau^\ast~}
\Bord^k_q(M)
~.
\]

\end{itemize}

\begin{observation}
\begin{enumerate}
    \item[]

    \item 
    The target morphism for $\Un^k(M)$ is evidently a covering space.
    Therefore $\Un^k(M)$ is a topological category in the sense of~\S\ref{sec.vbdl}.

    \item The functor $\Un^k(M) \to \bDelta$ is a right fibration between topological categories, which is to say commutative square of topological spaces
\[
\begin{tikzcd}
	{\Mor(\Un^k(M))} & {\Obj(\Un^k(M))} \\
	{\Mor(\bDelta)} & {\Obj(\bDelta)}
	\arrow["{{\rm target}}", from=1-1, to=1-2]
	\arrow[from=1-1, to=2-1]
	\arrow[from=1-2, to=2-2]
	\arrow["{{\rm target}}", from=2-1, to=2-2]
\end{tikzcd}
\]
is a pullback.

    \item The functor between $\infty$-categories underlying $\Un^k(M) \to \bDelta$ is the right fibration $\bDelta_{/\Bord^k_\bullet(M)} \to \bDelta$ classified by the simplicial space $\bDelta^{\op} \xra{\Bord^k_\bullet(M)} \Top \xra{\rm forget} \Spaces$.  
    \end{enumerate}
\end{observation}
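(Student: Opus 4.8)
The plan is to verify the three assertions in turn; each follows by unwinding the explicit description of $\Un^k(M)$ as the strict Grothendieck construction of the functor $\Bord^k_\bullet(M)\colon\bDelta^{\op}\to\Top$ of Lemma~\ref{t1}.

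For (1), I would read off the target map directly from the construction. On the component of $\Mor(\Un^k(M))$ indexed by a morphism $[p]\xra{\sigma}[q]$ of $\bDelta$, which is a copy of $\Bord^k_q(M)$, the target map is the identity onto the component of $\Obj(\Un^k(M))$ indexed by $[q]$, which is again $\Bord^k_q(M)$. Hence the preimage under the target map of the $[q]$-component $\Bord^k_q(M)$ is homeomorphic to $\bigl(\coprod_{[p]}\bDelta([p],[q])\bigr)\times\Bord^k_q(M)$, with the target map being the projection off the discrete index set $\coprod_{[p]}\bDelta([p],[q])$. Thus the target map is, over each component, a trivial covering with discrete fiber, hence a covering space; by the definition of a topological category recalled in \S\ref{sec.vbdl}, this is exactly the required hypothesis, so $\Un^k(M)$ is a topological category.

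For (2), I would observe that the asserted square is a pullback essentially by the definition of the Grothendieck construction. A point of the fiber product $\Mor(\bDelta)\underset{\Obj(\bDelta)}{\times}\Obj(\Un^k(M))$ is a morphism $[p]\xra{\sigma}[q]$ of $\bDelta$ together with a point $W\in\Bord^k_q(M)$ lying over its target $[q]$; that is precisely a point of $\Mor(\Un^k(M))_{|\sigma}$. Since $\Mor(\bDelta)$ and $\Obj(\bDelta)$ are discrete, this fiber product is formed componentwise, so the comparison map from $\Mor(\Un^k(M))$ is a componentwise homeomorphism. This is the assertion that $\Un^k(M)\to\bDelta$ is a right fibration of topological categories.

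For (3), I would pass to underlying $\infty$-categories. Applying the underlying-space functor $\Top\to\Spaces$ fiberwise to the right fibration of topological categories of (2) — and using that its target maps are covering spaces, by (1), so that the strict fibers already model the homotopy fibers and no fattening is needed — yields a right fibration of $\infty$-categories over $\bDelta$ whose fiber over $[p]$ is the space underlying $\Bord^k_p(M)$. A right fibration over the ordinary category $\bDelta$ is determined by its straightening, here the functor $\bDelta^{\op}\xra{\Bord^k_\bullet(M)}\Top\xra{\rm forget}\Spaces$; and the unstraightening of that functor is, as recorded in the discussion preceding Lemma~\ref{t50}, exactly the projection $\bDelta_{/\Bord^k_\bullet(M)}\to\bDelta$. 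Comparing the two descriptions gives the claimed identification over $\bDelta$. The main point requiring care is this last step — the compatibility of the underlying-$\infty$-category functor with the strict and the $\infty$-categorical Grothendieck constructions — which I would settle by citing the relevant statement from \S\ref{sec.vbdl} rather than reproving it, parts (1) and (2) being purely formal.
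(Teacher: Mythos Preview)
Your proposal is correct and takes essentially the same approach as the paper: the paper states this as an Observation with no proof beyond the word ``evidently,'' and your argument simply unwinds that evidence --- reading the target map, the pullback square, and the identification with $\bDelta_{/\Bord^k_\bullet(M)}$ directly off the explicit Grothendieck-construction description of $\Un^k(M)$. One small caveat: \S\ref{sec.vbdl} does not actually contain a citable compatibility statement for strict vs.\ $\infty$-categorical Grothendieck constructions, so your part~(3) would rest on this being a standard fact rather than something stated in the paper.
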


The universal fiber bundles from \S\ref{sec.universal} are functorial in a certain sense.
For starters, for each morphism $[p] \xra{\sigma} [q]$ in $\bDelta$, we have a commutative diagram among topological spaces,
\begin{equation}\label{e36}
\begin{tikzcd}
	{\w{\Bord}^k_p(M)} & {\w{\Bord}^k_p(M) \underset{\Bord^k_p(M)} \times \Bord^k_q(M)} & {\w{\Bord}^k_q(M)} \\
	{\Bord^k_p(M)} & {\Bord^k_q(M)} & {\Bord^k_q(M)}
	\arrow[from=1-1, to=2-1]
	\arrow["\pr"', from=1-2, to=1-1]
	\arrow["{\sigma_\ast \times M}", from=1-2, to=1-3]
	\arrow["\pr", from=1-2, to=2-2]
	\arrow[from=1-3, to=2-3]
	\arrow["{\sigma^\ast}"', from=2-2, to=2-1]
	\arrow["{=}", from=2-2, to=2-3]
\end{tikzcd}
~,
\end{equation}
in which the left square is a pullback and the upper rightward map is given by
\[
\sigma_\ast \times M
\colon
\Bigl( W \subset \Delta^q \times M , x \in (\sigma_\ast \times M)^{-1}(W) \Bigr)
\longmapsto
\Bigl( W \subset \Delta^q \times M , (\sigma_\ast \times M)(x) \in W \Bigr)
~.
\]
Better, this functoriality may be organized as a category-object in $\Top$ over 
$\Un^k(M)$,
\[
\w{\Un}^k(M)
\longrightarrow
\Un^k(M)
~,
\]
which we now define.
\begin{itemize}
    \item Its topological space of objects over an object $[p]\in \bDelta$ is
\[
\Obj\Bigl( 
\w{\Un}^k(M)
\Bigr)_{|[p]}
~:=~
\w{\Bord}^k_p(M)
~.
\]

\item Its topological space of morphisms over a morphism $[p]\xra{\sigma}[q]$ in $\bDelta$ is
\[
\Mor\Bigl( 
\w{\Un}^k(M)
\Bigr)_{|\sigma}
~:=~
\w{\Bord}^k_p(M)
\underset{\Bord^k_p(M)} \times
\Bord^k_q(M)
~.
\]

\item Its source and target maps,
\[
\Obj\Bigl( 
\w{\Un}^k(M)
\Bigr)
\xla{~\rm source~}
\Mor\Bigl( 
\w{\Un}^k(M)
\Bigr)
\xra{~\rm target~}
\Obj\Bigl( 
\w{\Un}^k(M)
\Bigr)
~,
\]
are given over a morphism $[p]\xra{\sigma}[q]$ in $\bDelta$ as
\[
\w{\Bord}^k_p(M)
\xla{~\pr~}
\w{\Bord}^k_p(M)
\underset{\Bord^k_p(M)} \times
\Bord^k_q(M)
\xra{~\sigma_\ast\times M~}
\w{\Bord}^k_q(M)
~.
\]

\item The composition rule is the map,
\[
\Mor\Bigl( 
\w{\Un}^k(M)
\Bigr)
\underset{\Obj\Bigl( 
\w{\Un}^k(M)
\Bigr)} \times
\Mor\Bigl( 
\w{\Un}^k(M)
\Bigr)
\xra{~\circ~}
\Mor\Bigl( 
\w{\Un}^k(M)
\Bigr)
\]
given over a composable pair of morphisms $[p]\xra{\sigma}[q] \xra{\tau} [r]$ in $\bDelta$ by
\[
\w{\Bord}^k_p(M)
\underset{\Bord^k_p(M)} \times
\Bord^k_r(M)
\xra{~ \id \times \tau^\ast~}
\w{\Bord}^k_p(M)
\underset{\Bord^k_p(M)} \times
\Bord^k_q(M)
~.
\]

\end{itemize}

\begin{observation}\label{t64}
The projection $\w{\Bord}^k_p(M) \to \Bord^k_p(M)$ for each $p \geq 0$ defines a morphism $\w{\Un}^k(M) \to \Un^k(M)$ between category-objects in $\Top$.
Furthermore, the definition of $\w{\Un}^k(M)$ is such that $\w{\Un}^k(M) \to \Un^k(M)$ is a left fibration between category-objects internal to $\Top$, which is to say the commutative square of topological spaces
\[
\begin{tikzcd}
	{\Mor(\w{\Un}^k(M))} & {\Obj(\w{\Un}^k(M))} \\
	{\Mor(\Un^k(M))} & {\Obj(\Un^k(M))}
	\arrow["{{\rm source}}", from=1-1, to=1-2]
	\arrow[from=1-1, to=2-1]
	\arrow[from=1-2, to=2-2]
	\arrow["{{\rm source}}", from=2-1, to=2-2]
\end{tikzcd}
\]
is a pullback.  
\end{observation}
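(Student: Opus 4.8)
The plan is to unwind the definitions of $\Un^k(M)$ and $\w{\Un}^k(M)$ against the commutative diagram~(\ref{e36}); no new idea is required beyond careful bookkeeping of variance conventions. First I will name the morphism $\w{\Un}^k(M)\to\Un^k(M)$: on objects it is the coproduct over $[p]\in\bDelta$ of the forgetful fiber-bundle projections ${\sf fgt}\colon\w{\Bord}^k_p(M)\to\Bord^k_p(M)$ of Observation~\ref{t19}(2); on morphisms it is the coproduct over $\sigma\colon[p]\to[q]$ in $\bDelta$ of the projection
\[
\pr_2\colon \w{\Bord}^k_p(M)\underset{\Bord^k_p(M)}\times\Bord^k_q(M)\longrightarrow\Bord^k_q(M)~,
\]
where, as in the definition of $\w{\Un}^k(M)$, the fiber product is formed along ${\sf fgt}$ and along $\sigma^\ast$. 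I will then check that this pair of maps is a morphism of category-objects in $\Top$: source-compatibility is commutativity of the left square of~(\ref{e36}), giving ${\sf fgt}\circ\pr = \sigma^\ast\circ\pr_2$ (matching the source map $\sigma^\ast$ of $\Un^k(M)$); target-compatibility is commutativity of the right square of~(\ref{e36}), giving ${\sf fgt}\circ(\sigma_\ast\times M) = \pr_2$ (matching the target map, the identity, of $\Un^k(M)$); unit-compatibility is the degeneration of the fiber product to $\w{\Bord}^k_p(M)$ over $\id_{[p]}$, under which $\pr_2$ becomes ${\sf fgt}$; and composition-compatibility is the identity $\pr_2\circ(\id\times\tau^\ast) = \tau^\ast\circ\pr_2$ read off the composition rules of $\w{\Un}^k(M)$ and of $\Un^k(M)$ over $[p]\xra{\sigma}[q]\xra{\tau}[r]$. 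Each of these holds componentwise over a fixed morphism (or composable pair) of $\bDelta$, and globalizes since coproducts and fiber products of topological spaces commute.

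For the left-fibration assertion I must show the source square
\[
\begin{tikzcd}
	{\Mor(\w{\Un}^k(M))} & {\Obj(\w{\Un}^k(M))} \\
	{\Mor(\Un^k(M))} & {\Obj(\Un^k(M))}
	\arrow["{{\rm source}}", from=1-1, to=1-2]
	\arrow[from=1-1, to=2-1]
	\arrow[from=1-2, to=2-2]
	\arrow["{{\rm source}}", from=2-1, to=2-2]
\end{tikzcd}
\]
is a pullback of topological spaces. Both horizontal maps are carried componentwise by $\sigma^\ast\colon\Bord^k_q(M)\to\Bord^k_p(M)$, and pullbacks commute with the coproduct decompositions $\Mor(\Un^k(M)) = \coprod_{\sigma\colon[p]\to[q]}\Bord^k_q(M)$ and $\Obj(\Un^k(M)) = \coprod_{[p]}\Bord^k_p(M)$, so it suffices to check the pullback over a fixed $\sigma\colon[p]\to[q]$. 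There the asserted pullback of $\Bord^k_q(M)\xra{\sigma^\ast}\Bord^k_p(M)\xla{{\sf fgt}}\w{\Bord}^k_p(M)$ is tautologically the space $\w{\Bord}^k_p(M)\underset{\Bord^k_p(M)}\times\Bord^k_q(M)$ chosen to \emph{define} $\Mor(\w{\Un}^k(M))_{|\sigma}$; under this identification the projection to $\Mor(\Un^k(M))_{|\sigma}$ is $\pr_2$, while the projection to $\Obj(\w{\Un}^k(M))_{|[p]}$ is $\pr$, which is precisely the source map of $\w{\Un}^k(M)$ over $\sigma$. Hence the square is a pullback over each $\sigma$, and therefore on total spaces.

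There is no real obstacle here; the one point that needs attention is that $\Un^k(M)\to\bDelta$ is a \emph{right} fibration, so that over $\sigma\colon[p]\to[q]$ a morphism of $\Un^k(M)$ is recorded by its target $W\in\Bord^k_q(M)$ (with source $\sigma^\ast W$), and the defining fiber product for the morphism space of $\w{\Un}^k(M)$ is accordingly taken along the source map $\sigma^\ast$ rather than the target map $\id$. With this in place, both assertions reduce to the single observation that $\w{\Un}^k(M)$ was constructed, levelwise over $\bDelta$, as the base change of $\Un^k(M)$ along ${\sf fgt}\colon\Obj(\w{\Un}^k(M))\to\Obj(\Un^k(M))$, with diagram~(\ref{e36}) recording the compatibility of that base change with the internal-category structure maps.
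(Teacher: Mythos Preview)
Your proposal is correct and matches the paper's intent: the paper gives no proof of this Observation, treating both assertions as immediate from the construction of $\w{\Un}^k(M)$ and diagram~(\ref{e36}). Your careful unwinding is exactly the verification one would write out, and your identification of the componentwise source square over $\sigma$ with the left (pullback) square of~(\ref{e36}) is the heart of the matter.
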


\begin{lemma}
\label{t51}
    $\w{\Un}^k(M)$ is a topological category in the sense of~\S\ref{sec.vbdl}.
    Moreover, the functor between $\infty$-categories underlying $\w{\Un}^k(M) \to \Un^k(M)$ is a left fibration.
    Furthermore, this left fibration is classified by a functor 
    \begin{equation}\label{e39'}
    \Un^k(M)
    \simeq
    \bDelta_{/\Bord^k_\bullet(M)}
    \longrightarrow
    \Spaces
    \end{equation}
    whose restriction to $\Bord^k_p(M) \subset \Un^k(M)$ classifies the fibration $\w{\Bord}^k_p(M) \to \Bord^k_p(M)$ and whose coCartesian monodromy functors are given by the maps $\sigma_\ast \times M$ from~(\ref{e36}).  
\end{lemma}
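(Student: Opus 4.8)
The plan is to verify each assertion in turn, leveraging the explicit point-set descriptions already in hand. First I would check that $\w{\Un}^k(M)$ is a topological category in the sense of~\S\ref{sec.vbdl}: the only nontrivial condition (per the criterion recalled in Observation after Lemma~\ref{t1}-style reasoning) is that the target map on morphisms is a covering space. Over a fixed $\sigma\colon[p]\to[q]$ the target map is $\w{\Bord}^k_p(M)\underset{\Bord^k_p(M)}\times\Bord^k_q(M)\xra{\sigma_\ast\times M}\w{\Bord}^k_q(M)$; since $\w{\Bord}^k_q(M)\to\Bord^k_q(M)$ is a fiber bundle of compact manifolds with corners (Observation~\ref{t19}(2)) and $\sigma_\ast\times M$ is a proper face-submersion restricting fiberwise to the proper face-submersion $\Delta^p\times M\to\Delta^q\times M$, fiberwise this is a proper submersion of compact manifolds with corners onto a face, hence a covering map on the relevant stratum; globally it is a covering space. (Alternatively, one argues directly from the homeomorphism of Observation~\ref{t19}(1) that $\w{\Un}^k(M)\to\Un^k(M)$ is a pullback of the covering $\w{\Bord}^k_\bullet\to\Bord^k_\bullet$ against the target map of $\Un^k(M)$, which is already known to be a covering space.)

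Next I would identify the underlying $\infty$-category of $\w{\Un}^k(M)\to\Un^k(M)$ as a left fibration. By Observation~\ref{t64} the defining square is a pullback of topological spaces with source maps on top; applying the underlying-space functor $\Top\to\Spaces$ (which preserves the relevant pullbacks here because the source map $\Mor(\Un^k(M))\to\Obj(\Un^k(M))$ is, levelwise over each $\sigma$, the projection $\Bord^k_q(M)=\Bord^k_q(M)$ and hence a fibration, so no homotopy-coherence defect arises) exhibits $\w{\Un}^k(M)\to\Un^k(M)$ as satisfying the condition that for every morphism in the base and every lift of its source, there is a coCartesian lift — this is precisely the left-fibration condition for category objects internal to $\Top$, and it descends to the underlying $\infty$-categories. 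Concretely, the coCartesian lift of $\sigma$ starting at $(W\subset\Delta^q\times M,\,x\in(\sigma_\ast\times M)^{-1}(W))$ is the morphism whose target is $(W\subset\Delta^q\times M,\,(\sigma_\ast\times M)(x)\in W)$, so the monodromy is exactly $\sigma_\ast\times M$ of~(\ref{e36}).

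Then, invoking straightening–unstraightening, the left fibration is classified by a functor $\Un^k(M)\simeq\bDelta_{/\Bord^k_\bullet(M)}\to\Spaces$; its value on an object $(W\subset\Delta^p\times M)\in\Bord^k_p(M)$ is the fiber of $\w{\Un}^k(M)\to\Un^k(M)$ over that object, which by construction is $\w{\Bord}^k_p(M)_{|W}=W$, i.e.\ the fiber of $\w{\Bord}^k_p(M)\to\Bord^k_p(M)$; and its coCartesian monodromy along $\sigma$ is the map $\sigma_\ast\times M$ just described. Restricting the classifying functor to the subspace $\Bord^k_p(M)\subset\Un^k(M)$ (the fiber over $[p]$) therefore classifies exactly the fibration underlying $\w{\Bord}^k_p(M)\to\Bord^k_p(M)$.

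The main obstacle I anticipate is the bookkeeping in the straightening step: one must be careful that the internal-category left fibration $\w{\Un}^k(M)\to\Un^k(M)$ genuinely presents a left fibration of $\infty$-categories and not merely something that becomes one after fibrant replacement. The key point that makes this clean is that all the structure maps of $\Un^k(M)$ over $\bDelta$ are, levelwise, \emph{identities or the continuous maps $\sigma^\ast$} (which are honest continuous maps of paracompact Hausdorff spaces by Lemmas~\ref{t58} and~\ref{t83}), and the source map on $\Mor(\Un^k(M))$ is levelwise an identity — so the strictness of the internal category model directly yields the $\infty$-categorical statement without a cofibrancy correction, exactly as in the treatment of such Grothendieck constructions in~\S\ref{sec.vbdl}. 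Once that is in place, the identification of fibers and monodromy is a direct unwinding of the unstraightening formula, completing the proof.
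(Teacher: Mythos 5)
Your proof of the topological category claim for $\w{\Un}^k(M)$ contains a genuine error, and the paper's argument takes a different and more robust route. You assert that the target map of $\w{\Un}^k(M)$ is a covering space, and also (in the parenthetical alternative) that $\w{\Bord}^k_\bullet(M)\to\Bord^k_\bullet(M)$ is a covering. Both claims are false: by Observation~\ref{t19}, the map $\w{\Bord}^k_p(M)\to\Bord^k_p(M)$ is a fiber bundle whose fiber over $(W\subset\Delta^p\times M)$ is the manifold $W$ itself, which has positive dimension. Likewise, over a degeneracy $\sigma\colon[p]\twoheadrightarrow[q]$, the fiberwise target map $(\sigma_\ast\times M)^{-1}(W)\to W$ is a proper submersion with positive-dimensional fibers — a fiber bundle (by Ehresmann-type arguments), not a covering. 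The criterion recalled in~\S\ref{sec.vbdl} only asks that the source or target map be a \emph{Serre fibration}, which is weaker than a covering, so your intended approach is in principle salvageable; but to run it you would need to establish that the target map is a Serre fibration, which requires an argument about fiberwise fibrations between fiber bundles that you do not supply. You also mix up source and target: for $\Un^k(M)$, it is the \emph{target} map that is the identity $\Bord^k_q(M)=\Bord^k_q(M)$ over each $\sigma\colon[p]\to[q]$; the source map is $\sigma^\ast$, which is only known to be a Serre fibration when $\sigma$ is injective (Lemma~\ref{t61}). This undermines your justification that the underlying-space functor preserves the pullback square.

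The paper avoids all of these issues by taking a comparison route: rather than verifying a source/target fibration criterion for $\w{\Un}^k(M)$ in isolation, it studies the fibration $\Hom([r],\w{\Un}^k(M))\to\Hom([r],\Un^k(M))$, notes that the analogous map on homotopy limits is also a fibration with weakly equivalent fibers (both being assembled from the same fiber bundles of compact manifolds via Observation~\ref{t19}), and applies the 5-Lemma to transfer the Segal condition from the already-established topological category $\Un^k(M)$ to $\w{\Un}^k(M)$. That strategy does not need the source or target of $\w{\Un}^k(M)$ to be a fibration, only that the projection to $\Un^k(M)$ is a fiber bundle, which is directly supplied by Observation~\ref{t19}(3). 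Your identification of the fibers and of the monodromy functors $\sigma_\ast\times M$ is correct and agrees with the paper; the gap is entirely in the verification of the topological category and left-fibration conditions.
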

\begin{proof}
Provided $\w{\Un}^k(M)$ is a topological category, Observation~\ref{t64} then implies $\w{\Un}^k(M) \to \Un^k(M)$ is a left fibration between $\infty$-categories.
The description of spaces over $\Bord^k_p(M)$ then follows directly from the definition of $\w{\Un}^k(M)$.
So it remains to verify $\w{\Un}^k(M)$ is a topological category.

We must prove, for each $r\geq 0$, that the canonical map
\begin{equation}\label{e66}
\Hom([r],\w{\Un}^k(M))
\longrightarrow
\Hom^{\sf h}([r],\w{\Un}^k(M))
\end{equation}
is a weak homotopy equivalence.  
The continuous functor $\w{\Un}^k(M) \to \Un^k(M)$ determines a commutative diagram among topological spaces:
\begin{equation}\label{e65}
\xymatrix{
\Hom([r],\w{\Un}^k(M))
\ar[rr]
\ar[d]
&&
\Hom^{\sf h}([r],\w{\Un}^k(M))
\ar[d]
\\
\Hom([r],\Un^k(M))
\ar[rr]
&&
\Hom^{\sf h}([r],\Un^k(M))
.
}
\end{equation}
Unwinding the definition of the continuous functor $\w{\Un}^k(M) \to \Un^k(M)$, Observation~\ref{t19}(3) reveals that left vertical map in this square is a fiber bundle, and its fiber over such a point is the topological space $W$.  
Now, because $\w{\Un}^k(M) \to \Un^k(M)$ is a left fibration between category-objects in $\Top$, for each $0\leq i < r$, the commutative square among topological spaces
\[
\xymatrix{
\Hom(\{i<i+1\},\w{\Un}^k(M))
\ar[rr]
\ar[d]
&&
\Hom(\{i\} , \w{\Un}^k(M))
\ar[d]
\\
\Hom(\{i<i+1\},\Un^k(M))
\ar[rr]
&&
\Hom(\{i\}, \Un^k(M))
}
\]
is a pullback.  
Furthermore, Observation~\ref{t19}(3) implies the vertical maps in this diagram are fiber bundles.  
Consequently, the canonical map from each fiber of the left vertical map in~(\ref{e65}) to the corresponding fiber of the right vertical map in~(\ref{e65}) is a weak homotopy equivalence.
The 5-Lemma then implies the map~(\ref{e66}) is a weak homotopy equivalence, as desired.  
    
\end{proof}

Recall the following standard fact about transversality.
\begin{fact}\label{trans.fact}
Let $B \xra{\sigma} A$ be smooth maps between manifolds with corners.  
    Let $W \subset A$ be a submanifold.
Suppose $\sigma$ is transverse to $W$.
\begin{enumerate}
    \item 
    Then $\sigma^{-1}(W) \subset B$ is a submanifold, and the restriction along $\sigma^{-1}(W) \subset B$ of the morphism between tangent bundles $\sT B\xra{\sD \sigma} \sigma^\ast \sT A$ over $B$ descends as an isomorphism between normal bundles over $\sigma^{-1}(W)$:
    \[
    \nu_{\sigma^{-1}(W) \subset B} \xra{\cong} \sigma^\ast \nu_{W \subset A}    
    ~,
    \]
    where, through a slight abuse of notation, the vector bundle on the right is $\sigma^\ast \nu_{W \subset A} := {\sigma_{|\sigma^{-1}(W)}}^\ast \nu_{W\subset A}$, the pullback of $\nu_{W \subset A}$ along the restriction $\sigma^{-1}(W) \xra{\sigma_{|\sigma^{-1}(W)}} W$.

    \item Let $C \xra{\tau} B$ be a smooth map between manifolds with corners.
    Then $\tau$ is transverse to $\sigma^{-1}(W)$ if and only if $\sigma \circ \tau$ is transverse to $W$.
    In this case, the resulting diagram among vector bundles over $(\sigma \circ \tau)^{-1}(W)$,
\[
\begin{tikzcd}
	{\nu_{(\sigma \circ \tau)^{-1}(W) \subset C}} & {\nu_{\tau^{-1}(\sigma^{-1}(W)) \subset C}} & {\tau^\ast \nu_{\sigma^{-1}(W) \subset B}} \\
	{(\sigma \circ \tau)^\ast \nu_{W \subset A}} && {\tau^\ast \sigma^\ast \nu_{W \subset A}}
	\arrow["{=}"{description}, draw=none, from=1-1, to=1-2]
	\arrow["\cong"', from=1-1, to=2-1]
	\arrow["\cong", from=1-2, to=1-3]
	\arrow["\cong", from=1-3, to=2-3]
	\arrow["\cong", from=2-1, to=2-3]
\end{tikzcd}
~,
\]
    commutes.

    \end{enumerate}
\end{fact}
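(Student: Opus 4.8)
The plan is to prove both assertions by the standard local argument for transverse preimages, reducing to the Euclidean (or half-space) model and then reading off the bundle statements from the chain rule.

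First I would reduce statement~(1) to a local computation on the source. Around a point $a\in W$, Definition~\ref{d7} supplies a chart of $A$ in which $(W\subset A)$ is modeled on $(\{0\}\times\RR^j\times\RR_{\geq 0}^d\subset\RR^k\times\RR^j\times\RR_{\geq 0}^d)$; composing this chart with the projection onto the $\RR^k$-factor exhibits $W$ locally as $\pi^{-1}(0)$ for a submersion $\pi$ compatible with the corner strata. The hypothesis $\sigma\pitchfork W$ then says precisely that $\pi\circ\sigma$ is, near each point of $\sigma^{-1}(W)$, a submersion of the appropriate type, so that $\sigma^{-1}(W)=(\pi\circ\sigma)^{-1}(0)$ is locally the zero locus of such a submersion; piecing these local models together (and using the built-in transversality of $W$ to the faces of $A$, cf.\ Observation~\ref{t56}) shows $\sigma^{-1}(W)\subset B$ is a codimension-$k$ submanifold in the sense of Definition~\ref{d7}. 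The single linear-algebraic fact I would extract from this normal form, and reuse throughout, is the identification of subspaces
\[
\sT_x\bigl(\sigma^{-1}(W)\bigr)~=~(\sD\sigma_x)^{-1}\bigl(\sT_{\sigma(x)}W\bigr)~\subseteq~\sT_xB
\qquad\text{for }x\in\sigma^{-1}(W).
\]

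Next I would construct the normal-bundle isomorphism in~(1). Restricting $\sT B\xra{\sD\sigma}\sigma^\ast\sT A$ along $\sigma^{-1}(W)$ gives a bundle map $\sT B|_{\sigma^{-1}(W)}\to\sigma^\ast(\sT A|_W)$; since $\sigma$ carries $\sigma^{-1}(W)$ into $W$, this map sends $\sT(\sigma^{-1}(W))$ into $\sigma^\ast\sT W$, hence descends to a bundle map $\nu_{\sigma^{-1}(W)\subset B}\to\sigma^\ast\nu_{W\subset A}$ of quotient bundles. On the fiber over $x$ it is the map $\sT_xB/\sT_x(\sigma^{-1}(W))\to\sT_{\sigma(x)}A/\sT_{\sigma(x)}W$ induced by $\sD\sigma_x$, which is surjective because $\sD\sigma_x(\sT_xB)+\sT_{\sigma(x)}W=\sT_{\sigma(x)}A$; as both fibers have dimension equal to the codimension $k$, it is an isomorphism, proving~(1). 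For~(2), the equivalence $\tau\pitchfork\sigma^{-1}(W)\Leftrightarrow\sigma\circ\tau\pitchfork W$ is a pointwise check at a point of $(\sigma\circ\tau)^{-1}(W)$: in one direction, given a tangent vector of $A$ one splits it using $\sigma\pitchfork W$, pulls the $B$-part back through $\tau$ using $\tau\pitchfork\sigma^{-1}(W)$, and recombines; the reverse implication runs the same manipulation backwards, in both cases via the displayed identity above. For the commuting diagram, I would note that every isomorphism appearing in it is by construction the descent to quotient bundles of the corresponding differential, and that $\tau^{-1}(\sigma^{-1}(W))=(\sigma\circ\tau)^{-1}(W)$ as submanifolds of $C$ (this is the unlabeled ``$=$''); the diagram then commutes because the underlying diagram of differentials does, which is precisely the chain rule $\sD(\sigma\circ\tau)=\sigma^\ast(\sD\sigma)\circ\sD\tau$, and passing to quotients preserves commutativity since each arrow descends.

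The one step that demands real care — the hard part — is the manifolds-with-corners bookkeeping in the first paragraph: one must check that the local submersion $\pi$ presenting $W$, and hence $\pi\circ\sigma$, is compatible with the corner strata in the way required by Definition~\ref{d7}, so that $\sigma^{-1}(W)$ is a submanifold in that precise sense (its faces are exactly its intersections with the faces of $B$), rather than merely an abstract submanifold with corners. This is where the transversality of $W$ to all faces of $A$ and the adopted notion of transversality for corner maps enter; for the corner implicit function theorem I would cite~\cite{joyce}, and for the interior case~\cite{hirsch}.
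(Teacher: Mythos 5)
The paper states Fact~\ref{trans.fact} as a ``standard fact about transversality'' and gives no proof, so there is no argument in the text to compare against. Your proof is correct and is the textbook argument: present $W$ locally as the transverse zero locus of a submersion $\pi$ onto $\RR^k$ (compatible with the corner strata), so that $\sigma\pitchfork W$ near $\sigma^{-1}(W)$ is equivalent to $\pi\circ\sigma$ being a submersion of the appropriate type, and then read the tangent-space identity, the descent of $\sD\sigma$ to the normal quotient, surjectivity from transversality, and injectivity from the kernel identity $\sT_x\sigma^{-1}(W)=(\sD\sigma_x)^{-1}\sT_{\sigma(x)}W$ (or from the dimension count) off the chain rule. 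The commutativity in~(2) holds for the reason you give, since every arrow in that diagram is the induced map on normal quotients of a differential and the underlying square of differentials commutes by the chain rule.

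Two small remarks. The reverse implication in~(2) is not literally ``the same manipulation run backwards'': given $u\in\sT_bB$, one pushes it forward by $\sD\sigma_b$, splits using $\sigma\circ\tau\pitchfork W$ as $\sD\sigma_b(u)=\sD(\sigma\circ\tau)_c(u')+w$, and then invokes the kernel identity to conclude $u-\sD\tau_c(u')\in(\sD\sigma_b)^{-1}(\sT_aW)=\sT_b(\sigma^{-1}(W))$. That is a genuinely different, if equally short, calculation -- but you do flag the kernel identity as the mechanism, so this is a presentational quibble rather than a gap. Second, you correctly single out the corners bookkeeping as the delicate point. The Fact itself does not pin down what ``transverse'' means for an arbitrary smooth map between manifolds with corners; the face-by-face convention you adopt (matching Definition~\ref{d41} and the Terminology preceding Lemma~\ref{t57}) is the one in force throughout the paper, and under that convention your local argument produces a submanifold in the strict sense of Definition~\ref{d7}, with faces given by intersections with faces of $B$.
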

The fiberwise normal bundles $\nu^{\sf fib}(\Delta^\bullet \times M)$ from~(\ref{e37}) respect the above functoriality of the universal bundles in the following sense.  
Let $[p]\xra{\sigma}[q]$ be a morphism in $\bDelta$.  
Using that the map $\sigma_\ast \times M$ is transverse to each $(W \subset \Delta^q \times M) \in \Bord^k_q(M)$ (Lemma~\ref{t57}), 
Fact~\ref{trans.fact}(1) yields a canonical isomorphism $\alpha_\sigma$ between pullback vector bundles:
\begin{equation}\label{e38}
\begin{tikzcd}
	{E(\nu^{\sf fib}(\Delta^p\times M)}) & {\pr^\ast E(\nu^{\sf fib}(\Delta^p\times M)) \overset{\alpha_\sigma}\cong (\sigma_\ast \times M)^\ast E(\nu^{\sf fib}(\Delta^q \times M))} & {E(\nu^{\sf fib}(\Delta^q\times M))} \\
	{\w{\Bord}^k_p(M)} & {\w{\Bord}^k_p(M) \underset{\Bord^k_p(M)} \times \Bord^k_q(M)} & {\w{\Bord}^k_q(M)}
	\arrow[from=1-1, to=2-1]
	\arrow[from=1-2, to=1-1]
	\arrow[from=1-2, to=1-3]
	\arrow[from=1-2, to=2-2]
	\arrow[from=1-3, to=2-3]
	\arrow["\pr"', from=2-2, to=2-1]
	\arrow["{\sigma_\ast \times M}", from=2-2, to=2-3]
\end{tikzcd}
~.
\end{equation}
Next, let $[q] \xra{\tau} [r]$ be another morphism in $\bDelta$.  
Consider the span among topological spaces:
\[
\w{\Bord}^k_p(M) \underset{\Bord^k_p(M)} \times \Bord^k_q(M)
\xla{~\pr~}
\w{\Bord}^k_p(M) \underset{\Bord^k_r(M)} \times \Bord^k_r(M)
\xra{~\sigma_\ast \times M~}
\w{\Bord}^k_q(M) \underset{\Bord^k_q(M)} \times \Bord^k_r(M)
~.
\]
Observe that the target of $\pr^\ast \alpha_\sigma$ agrees with the source of $(\sigma_\ast \times M)^\ast \alpha_\tau$.
Fact~\ref{trans.fact}(2) implies 
\[
\Bigl(
(\sigma_\ast \times M)^\ast \alpha_\tau
\Bigr) \circ 
( \pr^\ast \alpha_\sigma ) 
~=~
\alpha_{\tau \circ \sigma}
~.
\]

In the sense of~\S\ref{sec.vbdl}, these data are a rank-$k$ vector bundle over the topological category $\w{\Un}^k(M)$.
This vector bundle is then classified by a functor from the underlying $\infty$-category:
\[
\w{\Un}^k(M)
\xra{~\nu^{\sf fib}(M)~}
\BO(k)
~.
\]
Using that the functor between underlying $\infty$-categories $\w{\Un}^k(M) \to \Un^k(M)$ is a left fibration, the functor $\nu^{\sf fib}(M)$ is the datum of a lift of the functor~(\ref{e39'}) among $\infty$-categories:
    \[
\begin{tikzcd}
	& {\Spaces_{/\BO(k)}} \\
	{\Un^k(M)} & \Spaces
	\arrow["{{\rm forget}}", from=1-2, to=2-2]
	\arrow["\nu^{\sf fib}_\bullet(M)", dashed, from=2-1, to=1-2]
	\arrow["{(\ref{e39'})}"', from=2-1, to=2-2]
\end{tikzcd}
~.
    \]
Unwinding the construction of $\nu^{\sf fib}_\bullet(M)$, its restriction along $\Bord^k_p(M) = \Sub^k(\Delta^p \times M) \subset \Un^k(M)$ is the functor $\nu^{\sf fib}(\Delta^p\times M)$ of~(\ref{e55}).

\section{Parametrized Pontryagin--Thom theory}\label{sec.PT}
In this section, we fix $p,n,k \geq 0$, an $(n+k)$-manifold $M$ with boundary, and a smooth rank-$k$ vector bundle $\xi = (E \to B)$. The following is the key result in this section.
\begin{theorem}
    \label{t26}
    There is a canonical equivalence between simplicial spaces
    \[
    \cMap^\pitchfork_\bullet(M,\Th(\xi))
    \xra{~\simeq~}
    \Bord^\xi_\bullet(M)
    \]
    which is functorial with respect to open embeddings in $M$.
\end{theorem}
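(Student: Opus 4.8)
The plan is to build the equivalence $\cMap^\pitchfork_\bullet(M,\Th(\xi)) \to \Bord^\xi_\bullet(M)$ levelwise and then check simplicial naturality. At level $[p]$, a transverse compactly-supported map $f\colon \Delta^p \times M \to \Th(\xi)$ restricts to a smooth map $f_|\colon f^{-1}(E) \to E$ on the open preimage of the total space of $\xi$; since $f$ is transverse to the zero-section $B\subset\Th(\xi)$ and $f$ is compactly-supported, $W:=f^{-1}(B)$ is a compact codimension-$k$ submanifold of $\Delta^p\times M$, and the derivative $Df_|$ along $W$ descends to an isomorphism $\nu_{W\subset\Delta^p\times M}\xra{\cong} g^\ast\xi$ where $g:=f_{|W}\colon W\to B$. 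This is precisely the data of a point in $\Sub^\xi(\Delta^p\times M)\simeq\Bord^\xi_p(M)$ (Observation~\ref{t25}). The first task is to promote $(-)^{-1}(B)$ to a continuous map of topological spaces $\cMap^\pitchfork(\Delta^p\times M,\Th(\xi))\to \Sub^\xi(\Delta^p\times M)$; the excerpt signals (Lemma~\ref{t11}, referenced in the introduction) that continuity is the technical crux — it requires, given a transverse $f$ with $f^{-1}(B)=W$, producing tubular-neighborhood coordinates near $W$ that vary continuously in $f$, in the spirit of Palais's parametrized isotopy extension. The second task is naturality in the simplicial variable: for $[p]\xra{\sigma}[q]$ the face-submersion $\sigma_\ast\times M$ pulls back $W$ to $(\sigma_\ast\times M)^{-1}(W)$ compatibly with precomposition $f\mapsto f\circ(\sigma_\ast\times M)$ — this is exactly Observation~\ref{t44'} together with Fact~\ref{trans.fact}, which guarantees the normal-bundle isomorphisms match, so the square commutes on the nose at the point-set level and hence $(-)^{-1}(B)$ assembles into a morphism of simplicial spaces.

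To prove each levelwise map is an equivalence I would exhibit a homotopy inverse via the parametrized Pontryagin--Thom collapse construction (Theorem~\ref{t10}, referenced in the introduction). Given $(W\subset\Delta^p\times M, g, \alpha)\in\Sub^\xi(\Delta^p\times M)$, choose a tubular neighborhood $\psi\colon E(\nu_{W\subset\Delta^p\times M})\hookrightarrow\Delta^p\times M$ under $W$; composing $\psi^{-1}$ with $\alpha$ and the inclusion $E\hookrightarrow\Th(\xi)$ and collapsing the complement of the tube to the basepoint defines a transverse compactly-supported map $c_{W,\psi}$ whose zero-section preimage is again $W$ with the same $(g,\alpha)$. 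The ambiguity of tube and of the collapse profile is resolved up to contractible choice, and the resulting section is continuous by the same tubular-neighborhood technology; by Lemma~\ref{t52}(1) the target $\Sub^\xi$ is locally a topological vector space, so one reduces to a local statement over each $\Emb(W,\Delta^p\times M)_{/\Diff(W)}$-chart. One checks $(-)^{-1}(B)\circ c \simeq \id$ and $c\circ(-)^{-1}(B)\simeq\id$ through explicit homotopies: the first because the collapse map reconstructs the same submanifold and normal data, the second by linearly scaling a transverse $f$ toward its linearization $c_{f^{-1}(B),\psi}$ in a tube, staying inside $\cMap^\pitchfork$ throughout (this uses openness of transversality, Theorem~2.1 of~\cite{hirsch}). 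Since these are equivalences of spaces levelwise and the maps commute with face/degeneracy operators, the induced map of simplicial spaces is a levelwise equivalence, hence a global equivalence.

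Functoriality in $M$ along open embeddings $M\hookrightarrow M'$ is immediate: restriction of $\Th(\xi)$-valued compactly-supported maps along $\Delta^p\times M\hookrightarrow\Delta^p\times M'$ and restriction of submanifolds $W\mapsto W\cap(\Delta^p\times M)$ are compatible with $(-)^{-1}(B)$ because the latter is defined pointwise, and both constructions in the displayed square of the introduction ($\cMap^\pitchfork\to\cMap$ and $\cMap^\pitchfork\to\Bord^\xi$) are manifestly natural in this variable; transversality and properness are preserved under restriction to an open (here one uses compact support to retain properness of $f_|$ onto $E$). The main obstacle, as flagged in the paper, is the continuity of $(-)^{-1}(B)$ and of the collapse-map section — i.e., constructing tubular neighborhoods that depend continuously on the transverse map $f$ in the compact-open $\sC^\infty$ topology, and verifying the scaling homotopy remains transverse — and I would expect to spend the bulk of the argument there, modeling the construction on Palais~\cite{palais} and using Lemma~\ref{t52} to transfer the local linear structure of $\Sub^\xi$ across the equivalence.
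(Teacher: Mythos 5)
Your plan hits the right pillars of the paper's argument: continuity of $(-)^{-1}(B)$ (Lemma~\ref{t11}), the levelwise equivalence via parametrized collapse maps (Theorem~\ref{t10}), naturality of pullback along face-submersions (Lemma~\ref{t57}, Fact~\ref{trans.fact}), and functoriality in $M$ via the diagram of inverse images. The levelwise map of spaces and the functoriality check are exactly as in the paper.

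Where your argument diverges, and where there is a genuine gap, is in how you assemble the levelwise maps into a morphism of simplicial spaces into $\Bord^\xi_\bullet(M)$. You assert that because the relevant squares commute on the nose, ``$(-)^{-1}(B)$ assembles into a morphism of simplicial spaces.'' But $\Bord^\xi_\bullet(M)$ as constructed in Definition~\ref{def.Bord.s.space} is not a simplicial topological space; it is a simplicial \emph{space} classified by a right fibration $\Un^\xi(M)\to\bDelta$, which is itself built as a pullback of $\infty$-categories using the coherent normal-bundle functor of Lemma~\ref{t50}. Commuting squares of underlying sets (or even of the topological spaces $\Sub^k(\Delta^p\times M)$, which is what Lemma~\ref{t27} establishes for the unstructured bordism space) is not by itself a map into $\Bord^\xi_\bullet(M)$, because producing the $\xi$-structured lift is, a priori, an infinite-homotopy-coherence problem. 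The paper resolves it by building the morphism~(\ref{e47}) through the topological categories $\Un^\pitchfork(M)$ and $\w{\Un}^\pitchfork(M)$, the left fibration of Lemma~\ref{t51}, and the vector-bundle-over-a-topological-category formalism of the appendix; that machinery is exactly the part your sketch elides.

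A second, related gap: you cite Lemma~\ref{t11} for continuity of $(-)^{-1}(B)$ into $\Sub^\xi(\Delta^p\times M)$, but Lemma~\ref{t11} gives continuity only into $\Sub^k(C)$. The upgrade to the $\xi$-structured target is Lemma~\ref{t9}, and its proof is $\infty$-categorical, using Definition~\ref{d5} of $\Sub^\xi(C)$ as a pullback of \emph{spaces} rather than of topological spaces. Your implicit alternative route — first construct a point-set topological model of the $\xi$-structured moduli space and then check continuity and on-the-nose naturality there — is in fact viable (the paper builds exactly such a model $\tSub^\xi(C)$ in Proposition~\ref{t88} and shows in Corollary~\ref{a25} that it recovers $\Bord^\xi_\bullet(M)$), but that construction lives in \S\ref{sec.Kan.condition} and comes with its own work (Lemmas~\ref{t79},~\ref{t82},~\ref{t77}), and you would still need to verify that $(g,\alpha)$ vary continuously in $f$ in the topology of Proposition~\ref{t88}, which goes beyond what Lemma~\ref{t11} provides. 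So the two routes — the paper's categorical one and your point-set one — are both legitimate, but your proposal as written neither carries out the categorical coherence argument nor constructs and proves continuity into the point-set model; that is where the bulk of the missing work lies.
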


\begin{cor}\label{cor.bord.kan}
    The simplicial spaces $\Bord^k_\bullet(M)$ and $\Bord_\bullet^\xi(M)$ both satisfy the Kan condition.

\end{cor}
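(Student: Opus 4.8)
The plan is to deduce the Kan condition for $\Bord^\xi_\bullet(M)$, and hence for $\Bord^k_\bullet(M)$, from the Kan condition already established for the simplicial space of transverse compactly-supported maps, transporting it across the equivalence of Theorem~\ref{t26}.

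First I would record that the Kan condition is invariant under levelwise equivalence of simplicial spaces. For a simplicial space $F\colon\bDelta^{\op}\to\Spaces$ and indices $0\leq i\leq p$, recall that $F(\Lambda_i[p])$ denotes the value of the right Kan extension of $F$ along the Yoneda embedding, as in the formula~(\ref{eq.right.kan}); by Observation~\ref{t91} this value is computed as the homotopy limit of $F$ over $\sd(\Lambda_i[p])^{\op}$, and by Observation~\ref{t90} the indexing poset $\sd(\Lambda_i[p])$ is finite, since $\Lambda_i[p]$ is a simplicial subset of the representable $\Delta[p]$. Thus $F(\Lambda_i[p])$ is a \emph{finite} homotopy limit, and a levelwise equivalence $F\xra{\simeq}G$ therefore induces equivalences $F([p])\xra{\simeq}G([p])$ and $F(\Lambda_i[p])\xra{\simeq}G(\Lambda_i[p])$ intertwining the canonical maps $F([p])\to F(\Lambda_i[p])$ and $G([p])\to G(\Lambda_i[p])$. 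Since surjectivity on path-components of a map of spaces is detected in the homotopy category, it follows that $F$ satisfies the Kan condition if and only if $G$ does.

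Next I would invoke Theorem~\ref{t26}, which provides a levelwise equivalence $\cMap^\pitchfork_\bullet(M,\Th(\xi))\xra{\simeq}\Bord^\xi_\bullet(M)$. By Lemma~\ref{lemma.Map.transv.Kan} the source satisfies the Kan condition, so by the previous paragraph so does $\Bord^\xi_\bullet(M)$. Finally, Observation~\ref{a26} supplies a levelwise equivalence $\Bord^{\gamma_k}_\bullet(M)\xra{\simeq}\Bord^k_\bullet(M)$, where $\gamma_k$ is the universal rank-$k$ vector bundle; applying the case $\xi=\gamma_k$ of what was just proved, together with levelwise-equivalence-invariance once more, gives the Kan condition for $\Bord^k_\bullet(M)$.

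The only step demanding any care — and it is routine — is the invariance claim in the first paragraph: one must note that the relevant value $F(\Lambda_i[p])$ is a finite homotopy limit, hence equivalence-invariant, and that $\pi_0$-surjectivity passes through equivalences of the comparison square. No new geometric input is required beyond Theorem~\ref{t26} and Lemma~\ref{lemma.Map.transv.Kan}; in particular this route bypasses any direct horn-filling by extension of submanifolds from $\Lambda^p_i\times M$ to $\Delta^p\times M$ (which would instead rely on Lemma~\ref{t85}).
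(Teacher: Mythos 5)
Your proposal is correct and follows essentially the same route as the paper: transport the Kan condition from $\cMap^\pitchfork_\bullet(M,\Th(\xi))$ (Lemma~\ref{lemma.Map.transv.Kan}) across the equivalence of Theorem~\ref{t26}, then specialize to $\xi=\gamma_k$ via Observation~\ref{a26}. The only difference is that you spell out the invariance of the Kan condition under levelwise equivalence, which the paper treats as immediate.
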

\begin{proof} 
Through the equivalence between simplicial spaces $\Bord_\bullet^\xi(M)\simeq \cMap_\bullet^{\pitchfork}(M,\Th(\xi))$ of Theorem~\ref{t26}, Lemma~\ref{lemma.Map.transv.Kan} immediately implies $\Bord^\xi_\bullet(M)$ satisfies the Kan condition.  
Observation~\ref{a26} then implies the simplicial space $\Bord^k_\bullet(M)$ satisfies the Kan condition.
\end{proof}

Theorem~\ref{t26} enables the first part of our main theorem (Theorem~\ref{thm.main}). 
The rest of the proof, involving $\dBord^\xi_\bullet(M)$, is completed through Corollary~\ref{cor.dBord.Bord}. 
\begin{proof}[Proof of Theorem~\ref{thm.main} assuming Theorem~\ref{t26}]
Assume $\xi$ is a smooth vector bundle.
The equivalence of the theorem then follows from the following sequences of equivalence between spaces, which we explain below:
\[
    \bigl|\Bord^\xi_\bullet(M)\bigr|
    ~
    \simeq
    ~
   \left |\cMap^\pitchfork_\bullet(M,\Th(\xi)) \right|
    ~
    \simeq
    ~
    \cMap(M,\Th(\xi))
    ~.
\]
The first equivalence is induced by the equivalence of Theorem~\ref{t26}.
The second equivalence is Lemma~\ref{thm.param.transv}.

Next, suppose the base $B$ of $\xi$ has is homotopy equivalent with the geometric realization of a finite simplicial complex $B''$.
Choose a piecewise linear embedding $B''\hookrightarrow \RR^N$ int a large Euclidean space.
A sufficiently small open regular neighborhood of this embedding is a smooth manifold $B'$ with the same homotopy type: $B \simeq B'' \simeq B'$.
Further, by smooth approximation, the pullback of $\xi$ to $B'$ is isomorphic with a smooth vector bundle $\xi'$ over the manifold $B'$. 
The argument above then gives an equivalence $|\Bord^{\xi'}_\bullet(M)|\simeq \cMap(M,\Th(\xi'))$. However, we have evident natural equivalences $|\Bord^{\xi'}_\bullet(M)|\simeq |\Bord^\xi_\bullet(M)|$ and $\cMap(M,\Th(\xi'))\simeq \cMap(M,\Th(\xi))$, from which the result follows.

Lastly, for the general case of $\xi$, its base may be witnessed as a filtered colimit of geometric realizations of finite simplicial complexes: $\colim_{i\in \cI} \xi_i \simeq \xi$.
Compactness of the submanifolds of $\Delta^\bullet \times M$ ensures the canonical map between simplicial spaces $\colim_{i\in \cI} \Bord^{\xi_i}_\bullet(M) \to \Bord^\xi_\bullet(M)$ is an equivalence.
Compact support ensures the canonical map between spaces $\colim_{i\in \cI} \cMap(M,\Th(\xi_i)) \to \cMap(M,\Th(\xi))$ is an equivalence.
Using that geometric realization commutes with filtered colimits, this general case then follows from the previous case.

\end{proof}

\subsection{Parametrized transverse intersection}

In this subsection, we fix a $(p+n+k)$-manifold $C$ with corners.
In this section, we construct a \bit{family} of manifolds over $\cMap^{\pitchfork}(C,\Th(\xi))$ given by taking preimages of the zero-section.

Consider the topological subspace
\[
\w{\cMap}^\pitchfork\bigl(C,\Th(\xi)\bigr)
~:=~
\Bigl\{
(f,x) \mid f(x) \in B
\Bigr\}
~\subset~
\cMap^\pitchfork(C,\Th(\xi))
\times
C
~.
\]
Projection onto the first coordinate defines a map 
\begin{equation}
    \label{e4}
\w{\cMap}^\pitchfork(C,\Th(\xi))
\xra{~\pi~}
\cMap^\pitchfork(C,\Th(\xi))
~.
\end{equation}
\begin{observation}
\label{t2}
For each $f\in \cMap^\pitchfork(C,\Th(\xi))$, there is an identity 
\[
\pi^{-1}(f) 
~=~ 
f^{-1}(B)
~ 
\subset
~
C
~.
\]
\end{observation}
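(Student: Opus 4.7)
This observation is essentially tautological, an unpacking of the definition of $\w{\cMap}^\pitchfork(C,\Th(\xi))$ as a subspace of the product $\cMap^\pitchfork(C,\Th(\xi)) \times C$. The plan is therefore to simply chase the definitions, identifying $\pi^{-1}(f)$ under the canonical homeomorphism $\{f\} \times C \cong C$.

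Concretely, I would first note that by the definition of $\pi$ as projection onto the first coordinate, we have
\[
\pi^{-1}(f) ~=~ \bigl(\{f\} \times C\bigr) \cap \w{\cMap}^\pitchfork(C,\Th(\xi))
\]
as a subspace of $\cMap^\pitchfork(C,\Th(\xi)) \times C$. By the defining condition of $\w{\cMap}^\pitchfork(C,\Th(\xi))$, an element $(g,x)$ lies in this intersection precisely when $g = f$ and $f(x) \in B$. Under the homeomorphism $\{f\} \times C \xrightarrow{\cong} C$ given by the second projection, this subset corresponds to $\{x \in C \mid f(x) \in B\}$, which is by definition $f^{-1}(B)$.

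There is no obstacle here; the only thing to keep in mind is the (harmless) identification of the fiber $\pi^{-1}(f) \subset \{f\} \times C$ with its image $f^{-1}(B) \subset C$, which the notation in the statement tacitly invokes. The real content lies in the surrounding setup: the transversality of $f$ ensures (via Lemma~\ref{t57}, or directly from Definition~\ref{d41}) that $f^{-1}(B)$ is indeed a compact codimension-$k$ submanifold of $C$, so that this fiber is genuinely the geometric object one wants to parametrize over $\cMap^\pitchfork(C,\Th(\xi))$. That content, however, is not part of the present observation.
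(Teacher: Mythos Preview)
Your proposal is correct and matches the paper's treatment: the paper states this as an Observation without proof, implicitly regarding it as the tautological unpacking of the definition of $\w{\cMap}^\pitchfork(C,\Th(\xi))$ that you have spelled out.
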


The following result is the technical underpinning of this article.  
It makes specific use of the topologies on $\cMap^\pitchfork(C,\Th(\xi))$ and $\Sub^k(C)$.
Its proof comprises~\S\ref{sec.technical}.
\begin{lemma}
\label{t11}
    Taking the preimage of the zero-section $B \subset \Th(\xi)$ defines a map
    \begin{equation}
    \label{e2}
    (-)^{-1}(B)
    \colon
    \cMap^\pitchfork\bigl(C,\Th(\xi) \bigr)
    \longrightarrow
    \Sub^k(C)
    ~,
    \end{equation}
    \[
    \Bigl(C \xra{f} \Th(\xi) \Bigr)
    \longmapsto
    \Bigl(
    f^{-1}(B) \subset C
    \Bigr)
    ~.
    \]
\end{lemma}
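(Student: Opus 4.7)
The plan is to first verify that $f^{-1}(B)$ is a compact codimension-$k$ submanifold of $C$ in the sense of Definition~\ref{d7} for each $f\in \cMap^\pitchfork(C,\Th(\xi))$, and then to prove continuity by comparing the assignment to the local model for the topology of $\Sub^k(C)$ provided by Lemma~\ref{t52}(1). Compactness of $f^{-1}(B)$ is immediate from compact support of $f$, and the local submanifold chart structure comes from applying the implicit function theorem face-by-face using the face-wise transversality built into Definition~\ref{d41}; this also ensures $f^{-1}(B)$ intersects every face of $C$ transversally and hence has the same corner depth as $C$ along each stratum.

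For continuity at a fixed $f$ with $W:=f^{-1}(B)$, I will choose a tubular neighborhood embedding $\psi\colon E(\nu_{W\subset C})\hookrightarrow C$ under $W$ (existing in the corners setting because $W$ is face-transverse in $C$) together with a tubular neighborhood $B\subset U\subset \Th(\xi)$ of the zero-section equipped with its fiberwise linear retraction $U\to B$. The derivative of $f\circ \psi$ in the fiber directions along the zero-section $W\subset E(\nu_{W\subset C})$, composed with the projection onto the normal bundle $\xi$ of $B\subset \Th(\xi)$, defines a bundle map $\nu_{W\subset C}\to (f|_W)^\ast \xi$ over $W$ which is a fiberwise isomorphism by the transversality hypothesis on $f$. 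By the implicit function theorem with parameters, there exist an open neighborhood $W\subset U_W\subset E(\nu_{W\subset C})$ of the zero-section and an open neighborhood $f\in V\subset \cMap^\pitchfork(C,\Th(\xi))$ in the compact-open $\sC^\infty$ topology such that for each $g\in V$, the subset $(g\circ\psi)^{-1}(B)\cap U_W$ is the image of a unique smooth section $\sigma_g\in \Gamma^{\sf sm}(\nu_{W\subset C})$, with $\sigma_g$ depending continuously on $g$. To guarantee $g^{-1}(B)\subset \psi(U_W)$ for $g\in V$, I will further shrink $V$ using that $f$ carries the compact set $\ov{{\sf Supp}(f)}\smallsetminus \psi(U_W')$ (for a smaller sub-tube $U_W'\subset U_W$) into $\Th(\xi)\smallsetminus B$ and is bounded away from $B$ there; this is an open condition preserved under $\sC^0$-small perturbations. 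Then the assignment $g\mapsto \sigma_g$ followed by the open embedding ${\sf Image}\circ \psi_\ast$ of Lemma~\ref{t52}(1) factors the restriction of $(-)^{-1}(B)$ to $V$ through the open chart of $\Sub^k(C)$ around $W$, yielding continuity at $f$.

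The main obstacle will be coordinating these local implicit function theorem arguments across the face stratification of $C$ and $W$. Because transversality is imposed face-wise in Definition~\ref{d41} and $W$ inherits the same corner depth as $C$ along each stratum, the normal derivative is a well-defined bundle isomorphism on the entire manifold-with-corners $\nu_{W\subset C}$, not just over its interior. Chart by chart on $\RR^j\times \RR_{\geq 0}^d$ the implicit function theorem yields a smooth section that automatically respects face incidences, and uniqueness of zeros in a tube guarantees that these chart-local sections patch into a single smooth section $\sigma_g$ over the manifold with corners $W$. The continuity of $g\mapsto \sigma_g$ in the compact-open $\sC^\infty$ topology then matches directly the topology on $\Gamma^{\sf sm}(\nu_{W\subset C})$ used in the definition of the basic open sets of $\Sub^k(C)$, and the continuity assertion of the lemma follows.
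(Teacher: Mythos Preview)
Your overall strategy coincides with the paper's: verify well-definedness, then at each $f$ with $W=f^{-1}(B)$ factor the assignment locally through the open chart ${\sf Image}\circ\psi_\ast\colon\Gamma^{\sf sm}(\nu_{W\subset C})\hookrightarrow\Sub^k(C)$ of Lemma~\ref{t52}(1). The difference is in how the section $\sigma_g$ (equivalently, the embedding $\Phi(g)\in\Emb(W,C)$) is produced.

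You appeal directly to an implicit function theorem with parameter $g$ ranging over $\cMap^\pitchfork(C,\Th(\xi))$. The paper instead runs a Palais-style flow argument: it embeds a neighborhood of $f_{|N}$ (for $N$ a compact tube about $W$) via the exponential map into a \emph{convex} open set $\Gamma^\tau$ of sections of $(f_{|N})^\ast\sT^\epsilon E$, all of which are transverse to the preimage of $B$; then it takes the straight-line homotopy $t\mapsto t\sigma$ from the zero-section to $\sigma$, and flows along the orthogonal projection of $\partial_t$ onto the fiberwise tangent bundle of the resulting zero-locus family over $I\times\Gamma^\tau$. The time-one flow furnishes a diffeomorphism $f^{-1}(B)\cong g^{-1}(B)$ for each $g\in V$, and continuity of $g\mapsto\Phi(g)$ is then a consequence of smooth dependence of ODE solutions on initial conditions.

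Your route is more direct, but the phrase ``implicit function theorem with parameters'' is doing real work when the parameter space is the Fr\'echet space $\cMap^\pitchfork(C,\Th(\xi))$: the finite-dimensional IFT does not literally apply, and the Banach IFT requires passing to $\sC^r$ for finite $r$ and then bootstrapping, or else a bespoke argument that the unique graph $\sigma_g$ varies continuously in the $\sC^\infty$ topology. This can certainly be done, but it is not free. The paper's ODE approach trades the conceptual directness of IFT for an argument whose only analytic input is classical smooth dependence on initial conditions, with no infinite-dimensional functional analysis; this is likely why the authors chose it and describe the lemma as ``the most technical result in this paper.''
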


\begin{observation}
    \label{t43}
    Through Observation~\ref{t2} there is a canonical identification of $\w{\cMap}^\pitchfork(C,\Th(\xi))$ as a base-change among topological spaces:
    \[
\begin{tikzcd}
	{\w{\cMap}^\pitchfork(C,\Th(\xi))} && {\w{\Sub}^k(C)} \\
	{\cMap^\pitchfork(C,\Th(\xi))} && {\Sub^k(C)}
	\arrow[from=1-1, to=1-3]
	\arrow["\pi"', from=1-1, to=2-1]
	\arrow[from=1-3, to=2-3]
	\arrow["{(-)^{-1}(B)}", from=2-1, to=2-3]
\end{tikzcd}
~.
    \]
    Indeed, both $\w{\cMap}^\pitchfork(C,\Th(\xi))$ and the base-change are subspaces of $\cMap^\pitchfork(C,\Th(\xi)) \times C$, which can readily be check as identical.
    
\end{observation}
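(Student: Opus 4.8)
The plan is to verify directly that the canonical comparison map is a homeomorphism, working entirely inside the ambient product $\cMap^\pitchfork(C,\Th(\xi)) \times C$, and using Lemma~\ref{t11} only to make the right-hand square well-posed as a diagram in $\Top$. Write $P := \cMap^\pitchfork(C,\Th(\xi)) \times_{\Sub^k(C)} \w{\Sub}^k(C)$ for the point-set pullback.

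First I would record the set-level identification. Since $\w{\Sub}^k(C)$ sits inside $\Sub^k(C)\times C$, the pullback $P$ sits inside $\cMap^\pitchfork(C,\Th(\xi)) \times \Sub^k(C) \times C$ as the locus of triples $(f,W,x)$ with $f^{-1}(B) = W$ and $x\in W$. The first condition makes the $\Sub^k(C)$-coordinate redundant, so deleting it gives an injection $P \hookrightarrow \cMap^\pitchfork(C,\Th(\xi)) \times C$ with image $\{(f,x)\mid x\in f^{-1}(B)\}$; and by Observation~\ref{t2} the condition $x\in f^{-1}(B)$ is exactly $f(x)\in B$, so this image is precisely the underlying set of $\w{\cMap}^\pitchfork(C,\Th(\xi))$.

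Next I would match topologies. Lemma~\ref{t11} says $(-)^{-1}(B)\colon \cMap^\pitchfork(C,\Th(\xi)) \to \Sub^k(C)$ is continuous, and the forgetful map $\w{\Sub}^k(C)\to \Sub^k(C)$ is tautologically continuous, so the square in the statement is a genuine pullback square of topological spaces and $P$ carries the subspace topology from $\cMap^\pitchfork(C,\Th(\xi)) \times \Sub^k(C) \times C$. On $P$ the deleted $\Sub^k(C)$-coordinate is the composite $(f,W,x)\mapsto f\mapsto f^{-1}(B)$, which depends continuously on $f$ by Lemma~\ref{t11}; hence the injection $P \hookrightarrow \cMap^\pitchfork(C,\Th(\xi)) \times C$ is in fact a subspace embedding, with inverse $(f,x)\mapsto (f,f^{-1}(B),x)$. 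Combined with the previous paragraph, this exhibits a homeomorphism $P \cong \w{\cMap}^\pitchfork(C,\Th(\xi))$ of subspaces of $\cMap^\pitchfork(C,\Th(\xi)) \times C$. Finally I would note this homeomorphism intertwines $\pi$ with the projection $P\to\cMap^\pitchfork(C,\Th(\xi))$ and intertwines $(f,x)\mapsto (f^{-1}(B),x)$ with the projection $P\to\w{\Sub}^k(C)$, which is exactly the asserted canonical base-change identification.

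There is no real obstacle here beyond careful bookkeeping of which topology each space carries; the only non-formal ingredient is the continuity statement of Lemma~\ref{t11}, which is precisely what is needed to see that the redundant $\Sub^k(C)$-coordinate contributes nothing to the topology on the pullback.
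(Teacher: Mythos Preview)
Your proposal is correct and follows essentially the same approach as the paper: both identify $\w{\cMap}^\pitchfork(C,\Th(\xi))$ and the pullback as the same subspace of $\cMap^\pitchfork(C,\Th(\xi)) \times C$, with the only non-formal input being the continuity of $(-)^{-1}(B)$ from Lemma~\ref{t11}. The paper leaves the verification implicit in the phrase ``can readily be checked as identical,'' whereas you spell out explicitly why the redundant $\Sub^k(C)$-coordinate contributes nothing to the topology on the pullback; this is exactly the detail a reader would need to supply.
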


\begin{lemma}
\label{t3}
The map $\w{\cMap}^\pitchfork(C,\Th(\xi)) \xra{\pi} \cMap^\pitchfork(C,\Th(\xi))$ is a fiber bundle of compact codimension-$k$ submanifolds of $C$.
In particular, $\pi$ is locally trivial.  
\end{lemma}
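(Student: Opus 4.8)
The plan is to deduce the bundle statement for $\pi$ from the bundle statements already established for the universal submanifold $\w{\Sub}^k(C) \to \Sub^k(C)$, combining them across the continuous map $(-)^{-1}(B)$. Concretely, by Observation~\ref{t43} we have a pullback square of topological spaces
\[
\begin{tikzcd}
	{\w{\cMap}^\pitchfork(C,\Th(\xi))} & {\w{\Sub}^k(C)} \\
	{\cMap^\pitchfork(C,\Th(\xi))} & {\Sub^k(C)}
	\arrow[from=1-1, to=1-2]
	\arrow["\pi"', from=1-1, to=2-1]
	\arrow[from=1-2, to=2-2]
	\arrow["{(-)^{-1}(B)}"', from=2-1, to=2-2]
\end{tikzcd}
\]
in which the continuity of the bottom map is exactly Lemma~\ref{t11}, and the right vertical map is the fiber bundle of compact codimension-$k$ submanifolds of $C$ supplied by Observation~\ref{t19}(2) together with the explicit local structure in Lemma~\ref{t52}(1). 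Since fiber bundles are preserved under pullback along any continuous map, the left vertical map $\pi$ is a fiber bundle of compact codimension-$k$ submanifolds of $C$; local triviality is immediate.

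In slightly more detail, I would spell out the local picture so that the phrase ``fiber bundle of compact codimension-$k$ submanifolds of $C$'' is unambiguous. Fix $f \in \cMap^\pitchfork(C,\Th(\xi))$ and set $W := f^{-1}(B) \subset C$, a compact codimension-$k$ submanifold of $C$ by Observation~\ref{t2}. Choose a tubular neighborhood $E(\nu_{W \subset C}) \xra{\psi} C$. By Lemma~\ref{t52}(1), the composite ${\sf Image} \circ \psi_\ast \colon \Gamma^{\sf sm}(\nu_{W \subset C}) \to \Sub^k(C)$ is an open embedding carrying the zero-section to $W$; denote its image by $\cO \subset \Sub^k(C)$, an open neighborhood of $W$. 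Over $\cO$, the universal submanifold $\w{\Sub}^k(C)|_{\cO} \to \cO$ is the trivial bundle $\cO \times W \to \cO$ — this is the content of the local triviality of Observation~\ref{t19}(2), since ${\sf Image} \circ \psi_\ast$ is induced by sections of a fixed bundle over the fixed manifold $W$, and the embedded image of each section is canonically diffeomorphic to $W$ via $\psi$ followed by the bundle projection. Pulling this back along $(-)^{-1}(B)$, the open set $U := \bigl((-)^{-1}(B)\bigr)^{-1}(\cO) \subset \cMap^\pitchfork(C,\Th(\xi))$ is an open neighborhood of $f$ over which $\pi$ restricts to the trivial bundle $U \times W \to U$, with total space the pullback subspace of $U \times \w{\Sub}^k(C)|_{\cO} \subset U \times \cO \times W$. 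This gives the desired local trivialization, and the transition data agree with those of $\w{\Sub}^k(C) \to \Sub^k(C)$ pulled back, so the structure group is the same.

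The proof is essentially a bookkeeping step: the genuine technical work has been isolated into Lemma~\ref{t11} (continuity of $(-)^{-1}(B)$), which the section~\S\ref{sec.technical} is devoted to, and into Lemma~\ref{t52} and Observation~\ref{t19} (the local structure of $\Sub^k(C)$ and $\w{\Sub}^k(C)$). The only mild subtlety worth flagging is making sure that ``fiber bundle of compact codimension-$k$ submanifolds of $C$'' is stable under the pullback: one must observe that the embedding $\w{\cMap}^\pitchfork(C,\Th(\xi)) \hookrightarrow \cMap^\pitchfork(C,\Th(\xi)) \times C$, being a pullback of $\w{\Sub}^k(C) \hookrightarrow \Sub^k(C) \times C$ along $(-)^{-1}(B) \times \id_C$, carries each fiber of $\pi$ identically onto the corresponding fiber of $\w{\Sub}^k(C) \to \Sub^k(C)$ sitting inside $\{f\} \times C$, namely onto $f^{-1}(B) \subset C$ — which is exactly Observation~\ref{t2}. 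Thus the fibers of $\pi$, as subspaces of $C$, are genuine compact codimension-$k$ submanifolds, and the local trivializations above respect this subspace structure. No further obstacle arises.

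\begin{proof}
By Observation~\ref{t43}, there is a pullback square of topological spaces
\[
\begin{tikzcd}
	{\w{\cMap}^\pitchfork(C,\Th(\xi))} & {\w{\Sub}^k(C)} \\
	{\cMap^\pitchfork(C,\Th(\xi))} & {\Sub^k(C)}
	\arrow[from=1-1, to=1-2]
	\arrow["\pi"', from=1-1, to=2-1]
	\arrow[from=1-2, to=2-2]
	\arrow["{(-)^{-1}(B)}"', from=2-1, to=2-2]
\end{tikzcd}
\]
whose bottom horizontal arrow is continuous by Lemma~\ref{t11}.
By Observation~\ref{t19}(2) together with Lemma~\ref{t52}(1), the right vertical map $\w{\Sub}^k(C) \to \Sub^k(C)$ is a fiber bundle of compact codimension-$k$ submanifolds of $C$: each point $(W\subset C) \in \Sub^k(C)$ has an open neighborhood $\cO = {\sf Image}\circ\psi_\ast\bigl(\Gamma^{\sf sm}(\nu_{W\subset C})\bigr)$, for a choice of tubular neighborhood $E(\nu_{W\subset C})\xra{\psi}C$, over which $\w{\Sub}^k(C)|_{\cO}$ is identified with $\cO \times W$, compatibly with the inclusions into $\cO \times C$.
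Pulling this trivialization back along $(-)^{-1}(B)$ over the open set $U := \bigl((-)^{-1}(B)\bigr)^{-1}(\cO) \subset \cMap^\pitchfork(C,\Th(\xi))$ exhibits $\pi|_U$ as the trivial bundle $U \times W \to U$; these local trivializations cover $\cMap^\pitchfork(C,\Th(\xi))$ since the $\cO$ cover $\Sub^k(C)$ and $(-)^{-1}(B)$ is continuous.
Finally, for each $f \in \cMap^\pitchfork(C,\Th(\xi))$, Observation~\ref{t2} identifies the fiber $\pi^{-1}(f)$ with the compact codimension-$k$ submanifold $f^{-1}(B) \subset C$, and the above trivializations respect this subspace structure.
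Hence $\pi$ is a fiber bundle of compact codimension-$k$ submanifolds of $C$; in particular it is locally trivial.
\end{proof}
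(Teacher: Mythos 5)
Your proof is correct and follows the same route as the paper: identify $\pi$ as the base-change of $\w{\Sub}^k(C) \to \Sub^k(C)$ along the map $(-)^{-1}(B)$ via Observation~\ref{t43}, then invoke Observation~\ref{t19}(2) and the stability of fiber bundles under pullback. The extra unwinding of the local trivialization via Lemma~\ref{t52}(1) is accurate but not needed beyond what the paper's two-sentence proof already records.
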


\begin{proof}
Observation~\ref{t19}(2) states that the right vertical map is a fiber bundle of compact codimension-$k$ submanifolds of $C$.
The result then follows from Observation~\ref{t43}, because the base-change of such is again a fiber bundle of compact codimension-$k$ submanifolds of $C$.
    
\end{proof}

\begin{remark}
    After Lemma~\ref{t3}, we may interpret the map $\pi$ as a $\cMap^\pitchfork(C,\Th(\xi))$-family of compact codimension-$k$ submanifolds of $C$.  
\end{remark}

\begin{lemma}
\label{t9}
    Taking the preimage of the zero-section $B \subset \Th(\xi)$ defines a map between spaces
    \begin{equation}
    \label{e25}
    (-)^{-1}(B)
    \colon
    \cMap^\pitchfork\bigl(C,\Th(\xi) \bigr)
    \longrightarrow
    \Sub^\xi(C)
    ~,
    \end{equation}
    \[
    \Bigl(C \xra{f} \Th(\xi) \Bigr)
    \longmapsto
    \Bigl(
    C \supset f^{-1}(B) \xra{f_|} B \xra{\xi} \BO(k)
    \Bigr)
    ~.
    \]
\end{lemma}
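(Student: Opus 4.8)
The plan is to upgrade the set-level map of Lemma~\ref{t11} to a map of spaces landing in the pullback $\Sub^\xi(C)$ of Definition~\ref{d5}, by exhibiting the required lift through $\Spaces_{/\BO(k)}$ rather than just $\Spaces$. Recall from Definition~\ref{d5} that $\Sub^\xi(C)$ is the pullback of $\Sub^k(C) \xra{\nu^{\sf fib}(C)} \coprod_{[W]} \Map(W,\BO(k))_{/\Diff(W)} \la \coprod_{[W]} \Map(W,B)_{/\Diff(W)}$, equivalently the pullback of $\Sub^k(C) \xra{\nu^{\sf fib}(C)} \Spaces_{/\BO(k)} \xla{\xi\circ -} \Spaces_{/B}$. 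So to produce~(\ref{e25}) it suffices to produce: (i) the map to $\Sub^k(C)$, which is exactly~(\ref{e2}) of Lemma~\ref{t11}; and (ii) a lift of the composite $\cMap^\pitchfork(C,\Th(\xi)) \xra{(-)^{-1}(B)} \Sub^k(C) \xra{\nu^{\sf fib}(C)} \Spaces_{/\BO(k)}$ through $\xi\circ - \colon \Spaces_{/B} \to \Spaces_{/\BO(k)}$.

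First I would recall from Lemma~\ref{t52}(1) that $\nu^{\sf fib}(C) \colon \Sub^k(C) \to \Spaces_{/\BO(k)}$ classifies the fiberwise normal bundle of the universal family $\w{\Sub}^k(C) \hookrightarrow \Sub^k(C)\times C$, i.e.\ over the point $(W\subset C)$ it records $W$ together with its classifying map $W \to \BO(k)$ for $\nu_{W\subset C}$. Pulling this back along~(\ref{e2}) and using Observation~\ref{t43}, the resulting functor $\cMap^\pitchfork(C,\Th(\xi)) \to \Spaces_{/\BO(k)}$ classifies the family $\w{\cMap}^\pitchfork(C,\Th(\xi)) \xra{\pi} \cMap^\pitchfork(C,\Th(\xi))$ of Lemma~\ref{t3}, equipped with the rank-$k$ vector bundle that over $(f,x)$ with $f(x)\in B$ is the normal space of $f^{-1}(B)\subset C$ at $x$. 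Now the key geometric input: since $f$ is transverse to the zero-section $B\subset\Th(\xi)$, standard transversality (Fact~\ref{trans.fact}(1), applied to $f$ restricted to the open set $f^{-1}(E)\subset C$ and the submanifold $B\subset E$) supplies a canonical isomorphism of rank-$k$ vector bundles over $f^{-1}(B)$,
\[
\nu_{f^{-1}(B)\subset C} ~\xra{~\cong~}~ (f_|)^\ast \nu_{B\subset E} ~\cong~ (f_|)^\ast \xi,
\]
using that the normal bundle of the zero-section $B$ inside the total space $E$ is canonically $\xi$ itself. This isomorphism is natural in $f$ and varies continuously — indeed it is the differential of $f$ in the normal direction — so it assembles into an isomorphism of vector bundles over the total family $\w{\cMap}^\pitchfork(C,\Th(\xi))$ between the fiberwise normal bundle and the pullback of $\xi$ along the fiberwise restriction map $\w{\cMap}^\pitchfork(C,\Th(\xi)) \to B$, $(f,x)\mapsto f(x)$. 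The existence of this map to $B$ together with the identification of vector bundles is precisely the datum of a lift of $\nu^{\sf fib}(C)\circ(-)^{-1}(B)$ through $\Spaces_{/B}$, hence by the universal property of the pullback defines the map~(\ref{e25}), and its underlying map to $\Sub^k(C)$ is~(\ref{e2}) as required.

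I expect the main obstacle to be bookkeeping of continuity/coherence rather than any genuinely hard mathematics: one must check that the normal-direction differential of $f$ — which furnishes the isomorphism $\nu_{f^{-1}(B)\subset C}\cong (f_|)^\ast\xi$ — depends continuously on $f$ in the topology on $\cMap^\pitchfork(C,\Th(\xi))$ of Definition~\ref{d41} (which controls derivatives on compacts away from the basepoint) and in a way compatible with the $\Diff(W)$-quotients defining $\Sub^k(C)$, so that the whole package descends. This is of the same flavor as, but strictly easier than, the continuity statement of Lemma~\ref{t11}: there one must control the submanifold itself, whereas here, granted Lemma~\ref{t11}, one only needs that a first-order invariant of $f$ along the already-continuously-varying submanifold $f^{-1}(B)$ varies continuously, which follows by the same local coordinate computations used in~\S\ref{sec.technical} together with the continuity of the section-space identifications from Lemma~\ref{t52}. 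Functoriality with respect to open embeddings in $M$ (when $C = \Delta^p\times M$) is then immediate, since every construction above is manifestly local in $C$.
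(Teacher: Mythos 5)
Your proposal is correct and takes essentially the same approach as the paper's proof: both produce the lift into the pullback $\Sub^\xi(C)$ by exhibiting the evaluation map $\w{\cMap}^\pitchfork(C,\Th(\xi)) \to B$, $(f,x)\mapsto f(x)$, together with the canonical identification $\nu^\pi \cong \ev^\ast\xi$ coming from Fact~\ref{trans.fact}(1) and the canonical isomorphism $\nu_{B\subset E}\cong\xi$, as the commuting 2-cell that the universal property of Definition~\ref{d5} requires. (One small citational slip: the classification of the fiberwise normal bundle by $\nu^{\sf fib}(C)$ comes from the construction in~\S\ref{sec.universal}, around~(\ref{e37})--(\ref{e55}), not from Lemma~\ref{t52}(1), which instead concerns the local topological-vector-space structure of $\Sub^k(C)$.)
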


\begin{proof}
The composite map
\[
\cMap^\pitchfork(C,\Th(\xi))
\xra{~(-)^{-1}(B)~}
\Sub^k(C)
\xra{~\nu^\pi~}
\underset{[W]} \coprod \Map(W,\BO(k))_{/\Diff(W)}
\]
classifies the fiber bundle $\w{\cMap}^\pitchfork(C,\Th(\xi)) \xra{\pi} \cMap^\pitchfork(C,\Th(\xi))$ of Lemma~\ref{t3} equipped with the map
\[
\w{\cMap}^\pitchfork(C,\Th(\xi))
\xra{~\nu^\pi~}
\BO(k)
\]
classifying the rank-$k$ vector bundle $\nu^\pi$ over $\w{\cMap}^\pitchfork(C,\Th(\xi))$ that is the fiberwise (with respect to Lemma~\ref{t3}) $\cMap^\pitchfork(C,\Th(\xi))$) normal bundle of $\w{\cMap}^\pitchfork(C,\Th(\xi)) \subset \cMap^\pitchfork(C,\Th(\xi)) \times C$.  

Consider the evaluation map
\begin{equation}
    \label{e42}
\ev
\colon
\w{\cMap}^\pitchfork(C,\Th(\xi))
\longrightarrow
B
~,\qquad
(f,x)
\longmapsto
f(x)
~.
\end{equation}
After Lemma~\ref{t3}, 
this map is classified by a map between spaces
\begin{equation}
\label{e29}
\cMap^\pitchfork(C,\Th(\xi))
\longrightarrow
\underset{[W]} \coprod \Map(W,B)_{/\Diff(W)}
~.
\end{equation}
Observe the canonical identification between vector bundles $\nu_{B \subset E} \cong \xi$ from the normal bundle of the zero-section $B \subset E$.
Using that normal bundles pullback along transverse maps, we have a canonical isomorphism between vector bundles over $\w{\cMap}^\pitchfork(C,\Th(\xi))$:
\begin{equation}
\label{e30}
\nu^\pi
~\cong~
\ev^\ast \nu_{B \subset E}
~\cong~
\ev^\ast \xi
~.
\end{equation}
This canonical isomorphism is a canonical 2-cell witnessing a commutative diagram among spaces
\[\begin{tikzcd}
	{\cMap^\pitchfork(C,\Th(\xi)) }& {\underset{[W]} \coprod \Map(W,B)_{/\Diff(W)}} \\
	{\Sub^k(C)} & {\underset{[W]} \coprod \Map(W,\BO(k))_{/\Diff(W)}}
	\arrow["(\ref{e29})", from=1-1, to=1-2]
	\arrow["(-)^{-1}(B)", swap, from=1-1, to=2-1]
	\arrow[from=1-2, to=2-2]
	\arrow["\nu^{\sf fib}(C)", from=2-1, to=2-2]
\end{tikzcd}
~.
\]
By Definition~\ref{d5} of $\Sub^\xi(C)$ as a pullback, this commutative diagram among spaces defines a map between spaces,
\[
\cMap^\pitchfork(C,\Th(\xi))
\longrightarrow
\Sub^\xi(C)
~.
\]
\end{proof}

\subsection{Proof of Lemma~\ref{t11}}\label{sec.technical}

Choose $f \in  \cMap^\pitchfork(C,\Th(\xi))$.
That is, $f$ is a compactly-supported map $C \ra \Th(\xi)$ that is smooth over $E \subset \Th(\xi)$ and transverse to the zero-section $B \subset E$.
Transversality implies the preimage $f^{-1}(B) \subset C$ is a submanifold: because the codimension of $B \subset E \underset{\rm opn} \subset \Th(\xi)$ is $k$, the codimension of $f^{-1}(B) \subset C$ is also $k$.
Because the zero-section $B \subset \Th(\xi)$ can be separated from the base-point of $\Th(\xi)$ by disjoint open subsets, then $f^{-1}(B) \subset {\sf Supp}(f)$.
Consequently, compactness of the support of $f$ implies compactness of $f^{-1}(B)$.
Therefore, we have $f^{-1}(B) \in \Sub^k(C)$.
Consequently, the assignment $f\mapsto f^{-1}(B)$ is a well-defined function between sets~(\ref{e2}).

In the remainder of this proof, we show~(\ref{e2}) is continuous at each point in $\cMap^\pitchfork\bigl(C,\Th(\xi) \bigr)$. That is, for each point \[
\Bigl(C \xra{f} \Th(\xi) \Bigr)\in \cMap^\pitchfork\bigl(C,\Th(\xi) \bigr)\]
there exists an open neighborhood $V$ of $f$ for which the composite map
\[
V
\xra{~\rm inclusion~}
\cMap^\pitchfork\bigl(C,\Th(\xi) \bigr)
\xra{(-)^{-1}(B)}
\Sub^k(C)
\]
is continuous. We do this by choosing $V$ sufficiently small that we can construct a function $\Phi$ fitting into a commutative diagram among sets
\begin{equation}
    \label{e31}
\xymatrix{
V
\ar@{-->}[rr]^-{\Phi}
\ar[d]_-{\rm inclusion}
&&
\Emb\bigl(f^{-1}(B) , C \bigr)
\ar[d]^-{\sf Image}
\\
\cMap^\pitchfork\bigl(C,\Th(\xi) \bigr)
\ar[rr]^-{(-)^{-1}(B)}
&&
\Sub^k(C)
}
\end{equation}
and then establishing continuity of $\Phi$.
We implement this through the following {\bf Steps}:
\begin{enumerate}
    \item\label{step.one} Using that the topology on $\cMap^\pitchfork\bigl(C,\Th(\xi) \bigr)$ is finer than the compact-open topology, we construct an open neighborhood 
    \[
    f 
    ~\in~ 
    V' 
    ~\subset~ 
    \cMap^\pitchfork\bigl(C,\Th(\xi) \bigr)
    \]
    with the following property.
    \begin{itemize}
        \item[] There is a closed neighborhood $B \subset \ov{U}\subset E$ such that the map given by restriction along $f^{-1}(\ov{U})$ factors:
        \begin{equation}
            \label{e5}
        \xymatrix{
        V'
        \ar[rr]^-{\rm inclusion}
        \ar@{-->}[d]
        &&
        \cMap^\pitchfork\bigl(C,\Th(\xi) \bigr)
        \ar[d]^-{\rm restriction}
        \\
        \Map^{\sf sm}(f^{-1}(\ov{U}),E)
        \ar[rr]^-{\rm inclusion}
        &&
        \Map^{\sf sm}\bigl( f^{-1}(\ov{U}),\Th(\xi) \bigr)
        .
        }
        \end{equation}
    \end{itemize}
    \item\label{step.two} Using the compact-open $\sC^\infty$ topology of $\Map^{\sf sm}(f^{-1}(\ov{U}),E)$, we identify an open neighborhood of $f_{|f^{-1}(\ov{U})} \in \Map^{\sf sm}(f^{-1}(\ov{U}),E)$ with a convex open neighborhood of the origin
    \[
    \Gamma^\tau 
    ~\subset~ 
    \Gamma\Bigl((f_{|f^{-1}(\ov{U})})^\ast \sT E \to f^{-1}(\ov{U}) \Bigr)
    \]
    in the topological vector space of smooth sections of the pullback of the tangent bundle of $E$ along $f_{|f^{-1}(\ov{U})}$.
    We define the smaller open neighborhood 
    \[
    f 
    ~\in~
    V 
    ~\subset~ 
    V'
    \]
    as the preimage of $\Gamma^\tau$ along the dashed map of the previous point.
   
    \item\label{step.three} Using the convexity of $\Gamma^\tau$, we construct a homeomorphism over $V$,
    \[
    \varphi
    \colon
    V \times f^{-1}(B)
    ~\cong~
    \pi^{-1}(V)
    =
    \w{\cMap}^\pitchfork(C,\Th(\xi))_{|V}
    ~,
    \]
    which is smooth in an appropriate sense.

\item\label{step.four} Using smoothness of $\varphi$, the adjoint of the composite map 
\[
V \times f^{-1}(B) \xra{\varphi} \pi^{-1}(V) \hookrightarrow  \cMap^\pitchfork\bigl(C,\Th(\xi) \bigr)\times C \xra{\pr} C
\]
defines a map
\[
\Phi
\colon
V
\longrightarrow
\Emb\bigl(f^{-1}(B) , C \bigr)
~.
\]
We finish by observing that this map $\Phi$ fits into the commutative diagram~(\ref{e31}).

\end{enumerate}

{\bf Step~(\ref{step.one})}: Once and for all, choose a complete Riemannian metric on $B$, and choose an inner-product on the vector bundle $\xi = (E \to B)$.  
Together, these structures determine a complete Riemannian metric on $E$.  
Through an application of the inverse function theorem, choose a smooth map $E \xra{\epsilon} \RR_{>0}$ such that, for each $p\in E$, the open ball $\sB_{\epsilon(p)}(p) \subset E$ is geodesically convex.  
For $B \xra{\delta} \RR_{>0}$ a smooth map, denote the $\delta$-neighborhood of $B \subset E$ as 
\[
U_{<\delta} 
~:=~ 
\{ e \in E \mid \exists b\in B \text{ for which } d(b,e) < \delta(b)\} \subset E
~.
\]
Denote by $U_{\leq \delta}$ similarly, replacing the relation $<$ by $\leq$.

For any $\delta<\epsilon$ sufficiently small, $U_{<\delta}$ is a tubular neighborhood of $B \subset E$.
Using that $f$ is transverse to $B$, there exists a sufficiently small such $\delta$ such that, for each $0<\delta'\leq \delta$, the preimage $f^{-1}(U_{<\delta'})$ is a tubular neighborhood of $f^{-1}(B) \subset C$.
Choose such a sufficiently small $\delta$.

Using that $f$ is compactly-supported, the preimage $f^{-1}(B) \subset C$ is compact.  
It follows that any closed tubular neighborhood of it is compact.
In particular, for each $0< \delta' < \delta$, the preimage $f^{-1}(U_{\leq \delta'}) \subset C$ is compact.  

Consider the following subset of $\cMap^\pitchfork\bigl(C,\Th(\xi)\bigr)$:
\[
V'
~:=~
\left\{
g \mid 
C^+ \smallsetminus f^{-1}(U_{< 2\delta/5} )
\subset g^{-1}\Bigl( \Th(\xi) \smallsetminus U_{\leq \delta/5} \Bigr)
\text{ and }
f^{-1}(U_{\leq 3\delta/5})
\subset 
g^{-1} (U_{<4\delta/5} )
\right\}
~.
\]
Note that $f\in V'$.
Recall that $f^{-1}(U_{\leq 2\delta/5})$ and $f^{-1}(U_{\leq 4\delta/5})$ are compact. 
Because the topology on $\cMap^\pitchfork\bigl(C,\Th(\xi)\bigr)$ is finer than the compact-open topology, this subset $V' \subset \cMap^\pitchfork\bigl(C,\Th(\xi)\bigr)$ is open.

We record two important observations about an element $g\in V'$.
First, the assumed containment $C^+ \smallsetminus f^{-1}(U_{< 2\delta/5} )
\subset g^{-1}\Bigl( \Th(\xi) \smallsetminus U_{\leq \delta/5} \Bigr)$ implies the containment $g^{-1}(U_{<\delta/5}) \subset f^{-1}(U_{<2\delta/5})$.  
Therefore, $g(x) \in B$ implies $x\in f^{-1}(U_{<2\delta/5})$.
Consequently, the containment $(g_{|f^{-1}(U_{\leq 2\delta/5})})^{-1}(B)
\subset
g^{-1}(B)$ 
is entire:
\begin{equation}
    \label{e22}
(g_{|f^{-1}(U_{\leq 2\delta/5})})^{-1}(B)
~=~
g^{-1}(B)
~.
\end{equation}
Second, the assumed containment 
$
f^{-1}(U_{\leq 3\delta/5})
\subset 
g^{-1} (U_{<4\delta/5} )
$
implies restriction along the subspace $f^{-1}(U_{\leq 2\delta/5}) \subset C$ defines a map
\begin{equation}
\label{e21}
V'
\xra{~(-)_{|f^{-1}(U_{\leq 2\delta/5})}}
\Map^{\sm}\bigl( f^{-1}(U_{\leq 2\delta/5}) , E \bigr)
~.
\end{equation}
as in~(\ref{e5}). Setting $\ov{U}:= U_{\leq 2\delta/5}$, observe that this completes Step~(\ref{step.one}).

{\bf Step~(\ref{step.two})}: Henceforth, we denote
\[
N
~:=~
f^{-1}(U_{\leq \delta/5})
~,
\]
which is a compact manifold with boundary, and a closed tubular neighborhood of $f^{-1}(B)$.

Consider the exponential map 
\[
\sT E
\xra{~{\sf exp}~}
E
~.
\]
Consider the open subset of the tangent bundle of $E$:
\[
\sT^\epsilon E
~:=~
\left\{ (e,v) \mid \lVert v \rVert < \epsilon(e) \right\}
~\subset~
\sT E
~.
\]
Denote the composite map
\[
{\sf exp}^\epsilon
\colon
\sT^\epsilon E
\hookrightarrow
\sT E
\xra{~{\sf exp}~}
E
~.
\]
Observe the commutative diagram among topological spaces
\[
\xymatrix{
(f_{|N})^\ast \sT^\epsilon E
\ar[rr]^-{\pr}
&&
\sT^\epsilon E
\ar[d]^-{{\sf exp}^\epsilon}
\\
N
\ar[u]^-{\rm zero}
\ar[rr]^-{f_{|N}}
&&
E
.
}
\]
Now, the choice of $\epsilon$ is such that ${\sf exp}^\epsilon$ is a surjective submersion.  
In particular, ${\sf exp}^\epsilon$ is transverse to $B \subset E$, and the preimage $({\sf exp}^\epsilon)^{-1}(B) \subset \sT^\epsilon E$ is a smooth submanifold.  
By definition of $f$, the bottom horizontal map $f_{|N}$ is transverse to $B$.
Therefore, the composite map $N \xra{\pr \circ {\rm zero}} \sT^\epsilon E$ is transverse to $({\sf exp}^\epsilon)^{-1}(B) \subset \sT^\epsilon E$.
In other words, the map $\pr$ is transverse to $({\sf exp}^\epsilon)^{-1}(B)$ along the zero-section.
Consequently, the map $\pr$ is transverse to $({\sf exp}^\epsilon)^{-1}(B)$ along a tubular neighborhood of the zero-section.
Let us make the previous sentence more specific.
\begin{itemize}
    \item[] For each smooth map $N \xra{\rho} \RR_{>0}$ satisfying $\rho(x)<\epsilon( f(x))$ for all $x\in N$, consider the open subspace
\[
\Bigl( (f_{|N})^\ast \sT^\epsilon E \Bigr)^\rho
~:=~
\left\{
(x,v) \mid \lVert v \rVert < \rho(x)
\right\}
~\subset~
(f_{|N})^\ast \sT^\epsilon E
~.
\]
There exists such a $\rho$ such that the composite map
\[
\pr^\rho
\colon
\Bigl( (f_{|N})^\ast \sT^\epsilon E \Bigr)^\rho
~\hookrightarrow~
(f_{|N})^\ast \sT^\epsilon E
\xra{~\pr~}
\sT^\epsilon E
\]
is transverse to $({\sf exp}^\epsilon)^{-1}(B) \subset \sT^\epsilon B$.
\end{itemize}
Choose such a $\rho$.
In particular, the preimage 
\[
( {\sf exp}^\epsilon \circ \pr^\rho)^{-1}(B)
~=~
(\pr^\rho)^{-1}\Bigl( \Bigl( {\sf exp}^\epsilon \Bigr)^{-1}(B) \Bigr) \subset \Bigl( (f_{|N})^\ast \sT^\epsilon E \Bigr)^\rho
\]
is a smooth submanifold.

Value-wise vector addition and value-wise scaling defines a topological vector space structure on the space of smooth sections,
\[
\Gamma^{\sm}\left ( \left(f_{|N}\right)^{\ast} \sT E \ra N \right)
~,
\]
endowed with the compact-open $\sC^\infty$ topology.
This topological vector space is locally convex.
Post-composition with the open embedding $\Bigl( (f_{|N})^\ast \sT^\epsilon E \Bigr)^\rho
\hookrightarrow
(f_{|N})^\ast \sT^\epsilon E$ defines an open embedding
\[
\Gamma^{\sm}\left( \left( \left(f_{|N}\right)^{\ast} \sT^\epsilon E\right)^\rho  \ra N \right)
~\hookrightarrow~
\Gamma^{\sm}\left ( \left(f_{|N}\right)^{\ast} \sT E \ra N \right)
~.
\]
This open embedding contains the zero-section.
Furthermore, post-composition with $\pr^\rho$ and ${\sf exp}^\epsilon$ define a map
\[
\Gamma^{\sm}\left( \left( \left(f_{|N}\right)^{\ast} \sT^\epsilon E\right)^\rho  \ra N \right)
\xra{~{\sf exp}^\epsilon \circ \pr^\rho \circ -~}
\Map^{\sm}\left( N , E \right)
~.
\]
The choice of $\epsilon$ ensures this map is an open embedding.

Consider the subset of $\Gamma^{\sm}\left( \left( \left(f_{|N}\right)^{\ast} \sT^\epsilon E\right)^\rho  \ra N \right)$,
\begin{equation}
\label{e20}
\Gamma^{\pitchfork}\left( \left( \left(f_{|N}\right)^{\ast} \sT^\epsilon E\right)^\rho  \ra N \right)
~:=~
\Bigl\{
\sigma \mid \sigma \pitchfork ( {\sf exp}^\epsilon \circ \pr^\rho)^{-1}(B) 
\Bigr\}
~,
\end{equation}
consisting of those smooth sections that are transverse to the submanifold $( {\sf exp}^\epsilon \circ \pr^\rho)^{-1}(B) \subset \Bigl( (f_{|N})^\ast \sT^\epsilon E \Bigr)^\rho$.
Using that $N$ is compact, 
Theorem~2.1 of \cite{hirsch} applies to conclude that this subset~(\ref{e20}) is open. 
In particular, because this subset~(\ref{e20}) contains the zero-section, and using that these spaces of smooth sections are locally convex, there is a convex open neighborhood of the zero-section,
\[
\Gamma^\tau 
~\subset~ 
\Gamma^{\sm}\left( \left( \left(f_{|N}\right)^{\ast} \sT^\epsilon E\right)^\rho  \ra N \right)
~,
\]
such that each $\sigma \in \Gamma^\tau$ is transverse to 
$( {\sf exp}^\epsilon \circ \pr^\rho)^{-1}(B) \subset \Bigl( (f_{|N})^\ast \sT^\epsilon E \Bigr)^\rho$.
Consider the pullback among topological spaces:
\begin{equation}
    \label{e6}
\xymatrix{
V
\ar[rr]
\ar[d]
&&
V'
\ar[d]^-{(\ref{e21})}
\\
\Gamma^\tau
\ar[rr]^-{{\sf exp}^\epsilon \circ \pr^\rho \circ -}
&&
\Map^{\sm}\bigl( N , E \bigr)
~.
}
\end{equation}
Continuity of the map~(\ref{e21}) implies $V \subset V'$ is open.
Openness of $V' \subset \cMap^\pitchfork\bigl(C,\Th(\xi) \bigr)$ implies $V \subset \cMap^\pitchfork\bigl(C,\Th(\xi) \bigr)$ is open.  
Because $\Gamma^\tau$ contains the zero-section, there is membership $f \in V$. This completes Step~(\ref{step.two}).

{\bf Step~(\ref{step.three})}: Consider the pullback
\[\begin{tikzcd}
	{\w{\cMap}^\pitchfork(C,\Th(\xi))_{|V}} & {\w{\cMap}^\pitchfork(C,\Th(\xi))} \\
	V & {\cMap^\pitchfork\bigl(C,\Th(\xi) \bigr)}
	\arrow[from=1-1, to=1-2]
	\arrow[from=1-1, to=2-1]
	\arrow["\lrcorner"{anchor=center, pos=0.125}, draw=none, from=1-1, to=2-2]
	\arrow["\pi", from=1-2, to=2-2]
	\arrow[hook, from=2-1, to=2-2]
\end{tikzcd}
~.
\]
Consider the subspace
\[
\w{\Gamma}^\tau
~:=~
\left\{
(\sigma,x) \mid {\sf exp}^\epsilon \circ \pr^\rho \circ \sigma(x) \in B
\right\}
~\subset~
\Gamma^\tau
\times
N
~.
\]
The identity~(\ref{e22}) identifies $\w{\cMap}^\pitchfork(C,\Th(\xi))_{|V}$ as a base-change:
\begin{equation}
\label{e23}
\begin{tikzcd}
	{\w{\cMap}^\pitchfork(C,\Th(\xi))_{|V}} & {\w{\Gamma}^\tau} \\
	V & {\Gamma^\tau}
	\arrow[from=1-1, to=1-2]
	\arrow[from=1-1, to=2-1]
	\arrow["\lrcorner"{anchor=center, pos=0.125}, draw=none, from=1-1, to=2-2]
	\arrow[from=1-2, to=2-2]
	\arrow[from=2-1, to=2-2]
\end{tikzcd}
~.
\end{equation}
Using that $\Gamma^\tau$ is convex, straight-line homotopy to the zero-section defines a map
\[
I \times \Gamma^\tau
\xra{~\rm scale~}
\Gamma^\tau
~,\qquad
(t,\sigma)
\longmapsto 
t \sigma
~,
\]
where $I := [0,1]\subset \RR$ is the closed interval.
Denote the base-change
\[\begin{tikzcd}
	{\w{\Gamma}^\tau_{I}} & {\w{\Gamma}^\tau} \\
	{I \times \Gamma^\tau} & {\Gamma^\tau}
	\arrow[from=1-1, to=1-2]
	\arrow[from=1-1, to=2-1]
	\arrow["\lrcorner"{anchor=center, pos=0.125}, draw=none, from=1-1, to=2-2]
	\arrow[from=1-2, to=2-2]
	\arrow["{{\rm scale}}", from=2-1, to=2-2]
\end{tikzcd}
~.
\]
Observe the pullbacks
\begin{equation}
\label{e27}
\begin{tikzcd}
	{\w{\Gamma}^\tau} & {\w{\Gamma}^\tau_{I}} & {\Gamma^\tau \times f^{-1}(B)} \\
	{\Gamma^\tau} & {I \times \Gamma^\tau} & {\Gamma^\tau}
	\arrow[from=1-1, to=1-2]
	\arrow[from=1-1, to=2-1]
	\arrow["\lrcorner"{anchor=center, pos=0.125}, draw=none, from=1-1, to=2-2]
	\arrow[from=1-2, to=2-2]
	\arrow[from=1-3, to=1-2]
	\arrow["\lrcorner"{anchor=center, pos=0.125, rotate=-90}, draw=none, from=1-3, to=2-2]
	\arrow[from=1-3, to=2-3]
	\arrow["{\{1\}}"', from=2-1, to=2-2]
	\arrow["{\{0\}}", from=2-3, to=2-2]
\end{tikzcd}
~.
\end{equation}

Once and for all, choose a complete Riemannian metric on $C$.
With the standard Riemannian metrics on $I$, this determines a complete Riemannian metric on $I \times C$.  
Consider the vector bundle $\Gamma^\tau \times \sT( I \times C)$ over $\Gamma^\tau \times I \times C$.
Using that for each $\sigma \in \Gamma^\tau$ the subspace $(\w{\Gamma}^\tau_I)_{|\{\sigma\}} \subset \{\sigma\} \times I \times C$ is a compact submanifold with corners, 
consider the subspace of its base-change over $\w{\Gamma}^\tau_{I} \subset \Gamma^\tau \times I \times C$,
\[
\sT^{\sf fib} \w{\Gamma}^\tau_I
~:=~
\left\{
(\sigma , (x,v) ) \mid v \in \sT_x (\w{\Gamma}^\tau_I)_{|\{\sigma\}} 
\right\}
~\subset~
\Bigl( 
\Gamma^\tau \times \sT( I \times C)
\Bigr)_{|\w{\Gamma}^\tau_I}
~.
\]
Orthogonal projection with respect to the given Riemannian metric on $I \times C$ defines a surjection over $\w{\Gamma}^\tau_I$:
\begin{equation}
\label{e26}
{\sf pr}_{\w{\Gamma}^\tau_I}
\colon
\Bigl( 
\Gamma^\tau \times \sT( I \times C)
\Bigr)_{|\w{\Gamma}^\tau_I}
\longrightarrow
\sT^{\sf fib} \w{\Gamma}^\tau_I
~.
\end{equation}
The standard coordinate vector field on the closed interval $I$ defines a continuous section $T$ of the vector bundle $\Gamma^\tau \times \sT( I \times C)$ over $\Gamma^\tau \times I \times C$, which, for each $\sigma \in \Gamma^\tau$, restricts as a smooth section of the vector bundle $\{\sigma\} \times \sT(I \times C) \to \{\sigma\} \times I \times C$.
Flow by this section implements translation in the $I$-coordinate of $\Gamma^\tau \times I \times C$.
The restriction of $T$ over $\w{\Gamma}^\tau_I$ projects via~(\ref{e26}) to a section ${\sf pr}_{\w{\Gamma}^\tau_I}(T)$ of the map $\sT^{\sf fib} \w{\Gamma}^\tau_I \to \w{\Gamma}^\tau_I$.  
Now, for each $\sigma\in \Gamma^\tau$, consider the map:
\[
(\w{\Gamma}^\tau_I)_{|\{\sigma\}}
\longrightarrow
I \times \{\sigma\} = I
~.
\]
By construction of $\Gamma^\tau$, this map is a proper smooth map with no critical points.  
Consequently, the section ${\sf pr}_{\w{\Gamma}^\tau_I}(T)$ is nowhere vanishing and gradient-like.
For each $\sigma \in \Gamma^\tau$, flow by this section defines a diffeomorphism between manifolds with corners:
\[
\psi_\sigma
\colon 
\w{\Gamma}^\tau_{|\{\sigma\}}
\underset{(\ref{e27})}{=}
(\w{\Gamma}^\tau_I)_{|\{\sigma\}\times \{1\}}
\xra{~\cong~}
(\w{\Gamma}^\tau_I)_{|\{\sigma\}\times \{0\}}
\underset{(\ref{e27})}{=}
f^{-1}(B)
~.
\]
These diffeomorphisms assemble as a function between sets over $\Gamma^\tau$:
\begin{equation}
\label{e28}
\psi\colon
\Gamma^\tau \times f^{-1}(B)
\xra{~\cong~}
\w{\Gamma}^\tau
~,\qquad
(\sigma,x)
\longmapsto 
\psi_\sigma(x)
~,
\end{equation}
which restricts to each fiber as a diffeomorphism between compact manifolds with corners.  
Using smooth dependence on initial conditions of solutions to ODE (e.g., Chapter~XIV of~\cite{lang}), this map~(\ref{e28}) is a homeomorphism.  
Through the base-change identity~(\ref{e23}), this homeomorphism base-changes as a homeomorphism
\begin{equation}
\label{e32}
\varphi
\colon
V \times f^{-1}(B)
\xra{~\cong~}
\w{\cMap}^\pitchfork(C,\Th(\xi))_{|V}
~,\qquad
(g,x)
\longmapsto 
\phi_g(x)
~,
\end{equation}
over $V$, which restricts to each fiber as a diffeomorphism between compact submanifolds of $C$. This completes Step~(\ref{step.three}).

{\bf Step~(\ref{step.four})}: This homeomorphism $\varphi$ is adjoint to a function 
\begin{equation}
\label{e24}
\Phi\colon
V
\longrightarrow
\Emb\bigl(f^{-1}(B) , C \bigr)
~,\qquad
g
\longmapsto
\Bigl(
x
\longmapsto
\phi_g(x)
\Bigr)
~.
\end{equation}
Again by smooth dependence on initial conditions of solutions to ODE, this function is continuous.
Observe that the diagram of functions between topological spaces
\[
\xymatrix{
V
\ar[rr]^-{\Phi}
\ar[d]_-{\rm inclusion}
&&
\Emb \bigl(f^{-1}(B) , C \bigr)
\ar[d]^-{\sf Image}
\\
\cMap^\pitchfork\bigl(C,\Th(\xi) \bigr)
\ar[rr]^-{(-)^{-1}(B)}
&&
\Sub^k(C)
}
\]
commutes.
Continuity of $\Phi$ and ${\sf Image}$ implies the restriction of the function $(-)^{-1}(B)$ along the open subset $V \subset \cMap^\pitchfork\bigl(C,\Th(\xi) \bigr)$ is continuous.
In particular, the function $(-)^{-1}(B)$ is continuous at $f \in \cMap^\pitchfork\bigl(C,\Th(\xi) \bigr)$, which completes Step~(\ref{step.four}) and the proof as desired.

\subsection{Parametrized collapse maps}
Fix a $(p+n+k)$-manifold $C$ with corners.
In this section, we use parametrized collapse maps to construct a homotopy inverse to the map 
\[
\cMap^\pitchfork(C,\Th(\xi)) \xra{~(-)^{-1}(B)~} \Sub^\xi(C)
\]
of Lemma~\ref{t11}, at least on compact families.

\begin{remark}
We outline a construction of an inverse,
\[
\Sub^\xi(C)
\longrightarrow
\cMap^\pitchfork(C,\Th(\xi))
~,
\]
supposing the space $\Sub^\xi(C)$ admits a natural structure of an infinite-dimensional smooth manifold.
Under this supposition, the subspace 
$
\w{\Sub}^\xi(C)
\subset
\Sub^\xi(C) \times C
$ 
projects to $\Sub^\xi(C)$ as a smooth fiber bundle of compact submanifolds of $C$.
Also, it is equipped with a map
\[
\w{\Sub}^\xi(C)
\xra{~g~}
B
~.
\]
For $\w{\nu}^{\sf fib}(C)$ the fiberwise (over $\Sub^\xi(C)$) normal bundle, there is a canonical isomorphism between vector bundles over $\w{\Sub}^\xi(C)$:
\[
\alpha
\colon 
\w{\nu}^{\sf fib}(C)
~\cong~
g^\ast \xi
~.
\]
Invoking a tubular neighborhood theorem for finite-codimensional submanifolds of infinite-dimensional manifolds,
choose a closed tubular neighborhood
\[
\w{\Sub}^\xi(C)
~\subset~
\ov{N}
~\subset~
\Sub^\xi(C) \times C
\]
and denote its interior as $N$;
choose, a homeomorphism
\[
N
~\overset{\varphi}\cong~
E(\nu^{\sf fib}(C))
\]
with the total space of the fiberwise normal bundle. 
Such a choice of $\varphi$ would determine a composite map
\[
\Sub^\xi(C) \times C
\xra{~\rm collapse~}
\frac{\ov{N}}{\partial \ov{N}}
\xra{~\varphi^+~}
\Th(\nu^{\sf fib}(C))
\xra{~\alpha~}
\Th(h^\ast \xi)
\longrightarrow
\Th(\xi)
\]
which is adjoint to the desired homotopy inverse.
It should then be straightforward to check this map is a section to the map $(-)^{-1}(B)$.
To see that $(-)^{-1}(B)$ is a homotopy section would lastly follow using a family of maps $(0,\infty]\times \Th(\xi) \xra{\Phi} \Th(\xi)$ such that the support of $\Phi_t$ is a closed $t$-neighborhood of the zero-section. Our proof of Theorem~\ref{t10} follows this outline over compact finite-dimensional smooth families (in fact, over disks), where smooth manifold theory is well-established.

\end{remark}

\begin{theorem}
\label{t10}
The map~(\ref{e25}) is an equivalence between spaces:
\[
    (-)^{-1}(B)
    \colon
    \cMap^\pitchfork(C,\Th(\xi))
    \xra{~\simeq~}
    \Sub^\xi(C)
    ~.
\]

\end{theorem}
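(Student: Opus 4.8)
The plan is to show that the map $(-)^{-1}(B) \colon \cMap^\pitchfork(C,\Th(\xi)) \to \Sub^\xi(C)$ from Lemma~\ref{t9} is a weak homotopy equivalence by exhibiting, over every compact family, a homotopy inverse constructed from parametrized Pontryagin--Thom collapse maps. Concretely, I would verify that for each $q \geq 0$ the map induces a bijection on the set of homotopy classes $[\DD^q, -]$ and $[\SS^q, -]$ relative to boundary; equivalently, given any map $\SS^q \to \Sub^\xi(C)$ (or a pair $(\DD^q, \SS^{q-1})$), I produce a lift to $\cMap^\pitchfork(C,\Th(\xi))$, well-defined up to the appropriate homotopy. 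The essential input is that $\DD^q$ (and $\SS^q$) is a compact finite-dimensional smooth manifold, so that a map $\DD^q \to \Sub^\xi(C)$ is, by the explicit description in Observation~\ref{t19} and Lemma~\ref{t52}, classified by an honest smooth fiber bundle $\w{W} \to \DD^q$ of compact codimension-$k$ submanifolds of $C$, together with a map $g\colon \w{W} \to B$ and a bundle isomorphism $\alpha\colon \nu^{\sf fib} \cong g^\ast\xi$ (using compactness of $\DD^q$ to reduce to a bundle over a finite-dimensional base, where tubular-neighborhood theory is classical).

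First I would set up the forward construction over $\DD^q$: choose a fiberwise closed tubular neighborhood $\w{W} \subset \ov{N} \subset \DD^q \times C$, with interior $N$, and a homeomorphism $\varphi\colon N \cong E(\nu^{\sf fib})$ over $\w{W}$ restricting to a diffeomorphism on each fiber (using the parametrized tubular neighborhood theorem over the compact base $\DD^q$, as in Palais~\cite{palais}). Composing the fiberwise collapse $\DD^q \times C \to \ov{N}/\partial\ov{N}$, the homeomorphism $\varphi^+$, the isomorphism $\alpha$, and the map $\Th(g^\ast\xi) \to \Th(\xi)$ yields a map $\DD^q \times C \to \Th(\xi)$, hence a map $\DD^q \to \cMap(C,\Th(\xi))$; smoothness of $\varphi$ on fibers (over $E$) and genericity/transversality of the zero-section show it lands in $\cMap^\pitchfork(C,\Th(\xi))$, and by construction $(-)^{-1}(B)$ applied to the resulting family recovers the original $\Sub^\xi(C)$-family. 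This shows $\pi_0$- and $\pi_q$-surjectivity of $(-)^{-1}(B)$. For injectivity, I would run the same construction over $\DD^q \times I$ starting from a homotopy in $\Sub^\xi(C)$ between two families that lift two given families of transverse maps, together with the two given lifts on the ends $\DD^q \times \partial I$ — here the key point is that any two choices of tubular neighborhood and of collapse data are connected by a contractible (in particular path-connected) space of choices over a compact base, so the constructed lifts over the ends are homotopic to the given ones rel $\SS^{q-1}$, using a family $(0,\infty]\times\Th(\xi)\xra{\Phi}\Th(\xi)$ shrinking the support to a closed neighborhood of the zero-section (as indicated in the remark preceding the statement) to interpolate between ``geometrically induced'' maps and arbitrary transverse representatives.

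The main obstacle, as the authors flag, is the continuity/coherence bookkeeping: one must choose the tubular neighborhood $\ov{N} \subset \DD^q \times C$, the trivialization $\varphi$, and the auxiliary cutoffs all continuously (indeed smoothly) in the $\DD^q$-parameter, and then verify that the resulting assignment $\DD^q \to \cMap^\pitchfork(C,\Th(\xi))$ is continuous for the fine $\sC^\infty$-type topology on $\cMap^\pitchfork$ (Definition~\ref{d40}), not merely the compact-open topology. This is exactly the analogue of the continuity established in Lemma~\ref{t11} for the inverse direction, and I expect the proof to reuse that machinery: present a map $\DD^q \to \Sub^\xi(C)$ locally via the section-space charts $\Gamma^{\sf sm}(\nu_{W\subset C})$ of Lemma~\ref{t52}, and observe that the collapse-map construction is given by explicit smooth formulas in those local coordinates, hence continuous into the fine topology. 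Once these two directions — the collapse construction and its continuity — are in hand, combining them over $\DD^q$ and $\DD^q\times I$ for all $q$ gives the bijection on all homotopy groups, and since both source and target are paracompact (Lemma~\ref{t83}) and locally nice, a weak equivalence suffices; this completes the proof.

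Globally, I would then package the $\DD^q$-level statements: the collapse construction is natural enough in $C$ (for open embeddings, or for the proper face-submersions $\Delta^p \times M \to \Delta^q \times M$ of Observation~\ref{t60}) that the resulting inverse is compatible with the simplicial structure, which is what is needed downstream in Theorem~\ref{t26}; but for the present statement (Theorem~\ref{t10}, a fixed $C$) only the weak-equivalence claim is required, and the argument above delivers it.
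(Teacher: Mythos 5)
Your proposal is correct and follows essentially the same route as the paper's proof: given a lifting problem for $(\DD^q,\SS^{q-1})$, first smooth out the given family so that the universal submanifold $\w{\DD}^q\subset \DD^q\times C$ is an honest smooth fiber bundle, apply the tubular-neighborhood theorem over the compact base to build a parametrized collapse map, and use the shrinking family $\Phi\colon(0,\infty]\times\Th(\xi)\to\Th(\xi)$ to homotope the induced boundary map to the originally given one. The paper phrases this as a single relative-homotopy-group vanishing argument (a homotopy filler in the solid square) rather than separating surjectivity and injectivity, and it handles the $\sC^\infty$-continuity issue simply by constructing $\w{f}$ as an explicit composite of smooth bundle maps rather than via the section-space charts of Lemma~\ref{t52}, but these are presentational rather than substantive differences.
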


\begin{proof}
We prove that the relative homotopy groups of the map vanish.
Fix $q \geq 0$, and consider a solid commutative diagram among spaces in which the vertical maps are as named:
\begin{equation}
    \label{e3}
\xymatrix{
\SS^{q-1}
\ar[d]_-{\rm inclusion}
\ar[rr]^-{\w{f}_0}
&&
\cMap^{\pitchfork}(C,\Th(\xi))
\ar[d]^-{(-)^{-1}(B)}
\\
\DD^q
\ar[rr]_-f
\ar@{-->}[urr]^-{\w{f}}
&&
\Sub^\xi(C)
.
}
\end{equation}
We wish to construct a filler $\w{f}$ among spaces.

Note that we may choose a convenient representative of the homotopy classes of the pair of maps $(f,\w{f}_0)$.  
\begin{itemize}
    \item The map $\w{f}_0$ is adjoint to a compactly-supported map $\SS^{q-1} \times C \xra{\w{f}_0} \Th(\xi)$ that restricts over $E$ to each fiber over $\SS^{q-1}$ as a smooth map transverse to the zero-section $B \subset E$.
    By smooth approximation, and genericity of transversality, we may homotope $\w{f}_0$ to assume it is adjoint to a map that is smooth over $E$ and transverse to $B \subset E$.

    \item 
    Denote the composite map 
\[
W_f
\colon 
\DD^q
\xra{~f~}
\Sub^\xi(C)
\longrightarrow
\Sub^k(C)
~.
\]
Consider the topological subspace
\[
\w{\DD}^q
~:=~
W_f^\ast \w{\Sub}^k(C)
~=~
\left\{
(x,c)
\mid 
c\in W_f(x)
\right\}
~\subset~
\DD^q \times C
~,
\]
which is the base-change of $\w{\Sub}^k(C) \to \Sub^k(C)$ along $W_f$.
By smooth approximation, we may homotope $f$ relative to $\SS^{q-1} \subset \DD^q$ to ensure $\w{\DD}^q \subset \DD^q \times C$ is a smooth submanifold.
Observation~\ref{t19}(2) ensures the projection 
\[
\w{\DD}^q
\xra{~\pi~}
\DD^q
\]
is a smooth fiber bundle. 
Recall that the map $f$ supplies a map $g\colon \w{\DD}^q \ra B$ as well as an isomomorphism between vector bundles
\[
\alpha
\colon 
\nu^\pi
~\cong~
g^\ast \xi
\]
from the vertical normal bundle of $\w{\DD}^q \xra{\pi} \DD^q$ to the pullback of $\xi$.
By smooth approximation, 
we may homotope $g$ and $\alpha$ to assume $g$ is a smooth map and $\alpha$ is an isomorphism between smooth vector bundles.

\end{itemize}

Once and for all, choose a smooth inner-product on the smooth vector bundle $\xi$.  
For each $\delta>0$, denote the subspaces $U_{<\delta} \subset U_{\leq \delta} \subset E$ consisting of those elements whose norm is, respectively, less than $\delta$ and no more than $\delta$.
Once and for all, choose a smooth map
\[
\sigma\colon
(0,\infty]
\times
\RR
\longrightarrow
\RR
\]
with the following properties.
\begin{itemize}

    \item There is an open neighborhood $(0,\infty] \times \{0\} \subset O \subset (0,\infty]\times \RR$ such that $\sigma(t,x)=(t,x)$ for each $(t,x)\in O$. 
    
    \item For each $t\in (0,\infty]$, the smooth map $\sigma_t := \sigma_{|\{t\} \times \RR} \colon \RR \to \RR$ is an open embedding with image $\BB_t(0) \subset \RR$, the open ball of radius $t$ about the origin.

    \item The map $\sigma_\infty = \id_{\RR}$ is the identity.

\end{itemize}
Consider the map
\[
(0,\infty] \times \Th(\xi)
\xra{~\Phi~}
\Th(\xi)
~,\qquad
(t,e)
\longmapsto
\begin{cases}
    e
    &
    ,~
    \lVert e \rVert = 0
    \\
    \sigma_t^{-1}( \lVert e \rVert) \cdot \frac{e}{\lVert e \rVert}
    &
    ,~
    0<\lVert e \rVert < t
    \\
    +
    &
    ,~
    t \leq \lVert e \rVert
\end{cases}
~.
\]
Observe that $\Phi$ is continuous, and is smooth over $E$.

Using that the compactly-supported map $\SS^{q-1} \times C \xra{\w{f}_0} \Th(\xi)$ is smooth over $E$ and transverse to $B \subset E$, transversality ensures the existence of a $\delta>0$ such that each $0<\delta'< \delta$ has the following properties.
\begin{itemize}
    \item $\w{f}_0^{-1}(U_{<\delta'}) \subset \SS^{q-1} \times C$ is a tubular neighborhood of $\w{f}_0^{-1}(B) \subset \SS^{q-1} \times C$. 

    \item The subspace $\w{f}_0^{-1}(U_{<\delta'}) \subset \SS^{q-1} \times C$ is a smooth fiber subbundle with respect to the projection to $\SS^{q-1}$.
    
\end{itemize}
Denote the map
\[
\w{f}_{0|}
\colon
\w{f}_0^{-1}(B)
\longrightarrow
B
~,\qquad
(u,c)
\longmapsto
\w{f}_0(u,c)
~.
\]
Denote the open subset
\[
N_0
~:=~
\w{f}_0^{-1}(U_{<\delta/2})
~\subset~
\SS^{q-1} \times C
~.
\]
Observe the pointed continuous factorization, which is necessarily unique, through the collapse map:
\begin{equation}\label{e41}
\begin{tikzcd}
	{\SS^{q-1} \times C} && {\Th(\xi)} \\
	{N_0^{+}} && {\Th(\xi)}
	\arrow["{\w{f}_0}", from=1-1, to=1-3]
	\arrow["{{\rm collapse}}"', from=1-1, to=2-1]
	\arrow["{\Phi_{\delta/2}}", from=1-3, to=2-3]
	\arrow["{\ov{f}_0}", dashed, from=2-1, to=2-3]
\end{tikzcd}
~.
\end{equation}

Observe the canonical identification between vector bundles $\nu_{B \subset E} \cong \xi$ from the normal bundle of the zero-section $B \subset E$.
Consider the normal bundle $\nu_{\w{f}_0}$ of the submanifold $\w{f}_0^{-1}(B) \subset \SS^{q-1} \times C$.
Using that normal bundles pullback along transverse maps (Fact~\ref{trans.fact}(1)), we have a canonical isomorphism between smooth vector bundles over $\w{f}_0^{-1}(B)$:
\[
\nu_{\w{f}_0}
~\cong~
\w{f}_{0|}^\ast \nu_{B \subset E}
~\cong~
\w{f}_{0|}^\ast \xi
~.
\]
This isomorphism determines a pointed map
\begin{equation}
\label{e35}
\Th(\nu_{\w{f}_0})
\cong
\Th(\w{f}_{0|}^\ast \xi)
\longrightarrow
\Th(\xi)
~.
\end{equation}
Using the tubular neighborhood theorem, choose a diffeomorphism between the total space of the normal bundle $\nu_{\w{f}_0}$ and the tubular neighborhood $N_0$ of $\w{f}_0^{-1}(B) \subset \SS^{q-1} \times C$,
\[
E(\nu_{\w{f}_0})
\overset{\varphi_0}{~\cong~}
N_0
~,
\]
such that the diagram among pointed topological spaces
\[\begin{tikzcd}
	{N_0^+} && {\Th(\xi)} \\
	& {\Th(\nu_{\w{f}_0})}
	\arrow["{\ov{f}_0}", from=1-1, to=1-3]
	\arrow["{\varphi_0^+}"', from=1-1, to=2-2]
	\arrow["(\ref{e35})"', from=2-2, to=1-3]
\end{tikzcd}
\]
commutes.

Now, using the tubular neighborhood theorem, choose the following data:
\begin{itemize}
    \item A tubular neighborhood $\w{\DD}^q \subset N \subset \DD^q \times M$ such that the projection $N \to \DD^q$ is a smooth fiber bundle and there is an identity $N_0 = N \cap \Bigl( \SS^{q-1} \times C\Bigr)$ between subsets of $\SS^{q-1} \times C$.
    
    \item A diffeomorphism $E(\nu^\pi) \overset{\varphi}\cong N$ under $\w{\DD}^q$ extending $\varphi_0$.
\end{itemize}  
Using that $\w{\DD}^q$ is compact, we can shrink $N$ as needed to assume its closure $\ov{N} \subset \DD^q \times C$ is compact.
These data determine the composite map
\[
\w{f}
\colon
\DD^q \times C
\xra{\rm collapse}
N^+
\overset{\varphi^+}\cong
\Th(\nu^\pi)
\overset{\Th(\alpha)}{\cong}
\Th( g^\ast \xi)
\to
\Th(\xi)
~.
\]
Let us denote the adjoint of this map again as $\DD^q \xra{\w{f}} \cMap(C,\Th(\xi))$.
Observe that $\w{f}^{-1}(E) = N \subset \DD^q \times M$.
Because the closure of $N$ is compact, this map $\w{f}$ is compactly-supported.  
For $E(\chi)$ to denote the total space of a smooth vector bundle $\chi$, the definition of $\w{f}$ is such that the restriction of $\w{f}$ to $N$ is a composition:
\[
\w{f}_{|N} \colon N \xra{\varphi} E(\nu^\pi) \xra{E(\alpha)} E(g^\ast \xi) \xra{E(g)} E(\xi) = E
~.
\]
Because each factor in this composition is smooth, then $\w{f}_{|N}$ is smooth.  
Because $E(g)$ is a fiberwise isomorphism between smooth vector bundles, it is transverse to the zero-section and the preimage of the zero-section is precisely the zero-section.  
Because $\varphi$ and $\alpha$ are diffeomorphisms under $\w{\DD}^q$, we conclude that $\w{f}_{|N}$ is transverse to the zero-section $B \subset E$, and the preimage $\w{f}^{-1}(B) = \w{\DD}^q$.
We conclude that $\w{f}$ defines a map as in~(\ref{e3}) making the lower right triangle commute.

By construction of $\DD^q \times C \xra{ \w{f} } \Th(\xi)$, its restriction is identical with the map in~(\ref{e41}):
\[
\w{f}_{|\SS^{q-1} \times C}
~=~
\ov{f}_0 \circ {\rm collapse}
\underset{(\ref{e41})}{~=~}
\Phi_{\delta/2} \circ \w{f}_0
~.
\]
Using that $\sigma_\infty = \id$,
the map $[\delta/2,\infty] \times \Th(\xi) \xra{\Phi} \Th(\xi)$ then supplies a path between $\SS^{q-1} \xra{\w{f}_0} \cMap^\pitchfork(C,\Th(\xi))$
and $\SS^{q-1} \xra{\w{f}_{|\SS^{q-1}}} \cMap^\pitchfork(C,\Th(\xi))$:
\[
\w{f}_{|\SS^{q-1}}
~\simeq~
\w{f}_0
~.
\]
In other words, the upper left triangle in~(\ref{e3}) homotopy commutes, thereby completing the proof.

\end{proof}

\subsection{Simplicially compatible parametrized transverse intersections}

Here, we prove Theorem~\ref{t26}.

\begin{lemma}
    \label{t27}
    The maps 
    \[
    \cMap^\pitchfork_p(M,\Th(\xi)) := \cMap^\pitchfork(\Delta^p\times M , \Th(\xi)) \xra{(-)^{-1}(B)} \Sub^k(\Delta^p\times M) = \Bord^k_p(M)
    \]
    of Lemma~\ref{t11} respect the simplicial structure morphisms:
    \[
    \cMap^\pitchfork_\bullet(M,\Th(\xi)) 
    \xra{~(-)^{-1}(B)~}
    \Bord^k_\bullet(M)
    ~.
    \]
    
\end{lemma}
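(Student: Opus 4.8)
The statement asks us to promote the levelwise maps $(-)^{-1}(B)\colon \cMap^\pitchfork(\Delta^p\times M,\Th(\xi))\to \Sub^k(\Delta^p\times M)$ from Lemma~\ref{t11} to a morphism of simplicial topological spaces. Since each level map is already known to be continuous (Lemma~\ref{t11}) and defined on the underlying sets, the only thing to check is naturality: for every morphism $[p]\xra{\sigma}[q]$ in $\bDelta$ the square
\[
\begin{tikzcd}
\cMap^\pitchfork(\Delta^q\times M,\Th(\xi)) \ar[r,"{(-)^{-1}(B)}"]\ar[d,"{(\sigma_\ast\times M)^\ast}"'] & \Sub^k(\Delta^q\times M) \ar[d,"{(\sigma_\ast\times M)^{-1}(-)}"] \\
\cMap^\pitchfork(\Delta^p\times M,\Th(\xi)) \ar[r,"{(-)^{-1}(B)}"'] & \Sub^k(\Delta^p\times M)
\end{tikzcd}
\]
commutes, where the left vertical map is precomposition with the proper face-submersion $\sigma_\ast\times M\colon \Delta^p\times M\to \Delta^q\times M$ (Observation~\ref{t60}), and the right vertical map is the face-restriction of Lemma~\ref{t58} applied to that same proper face-submersion. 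First I would note that precomposition does land in $\cMap^\pitchfork$: this is exactly Observation~\ref{t44'}(2), which says $f\pitchfork X_0$ implies $f\circ(\sigma_\ast\times M)\pitchfork X_0$, combined with the fact that $\sigma_\ast\times M$ is proper so support stays compact and the relevant smoothness/transversality on faces is preserved.

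The commutativity of the square is then a pointwise set-theoretic identity, requiring no topology. Given $f\colon \Delta^q\times M\to \Th(\xi)$ transverse to the zero-section $B\subset\Th(\xi)$, going right-then-down gives $(\sigma_\ast\times M)^{-1}\bigl(f^{-1}(B)\bigr)$, while going down-then-right gives $\bigl(f\circ(\sigma_\ast\times M)\bigr)^{-1}(B)$. These are literally equal subspaces of $\Delta^p\times M$, by the elementary identity $(g\circ h)^{-1}(S)=h^{-1}(g^{-1}(S))$ for set-preimages. So the diagram of underlying-set maps commutes on the nose.

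What remains is purely bookkeeping: one must check that the two composites, as defined in the excerpt, really are the set-maps just described — i.e., that the face-restriction map $(\sigma_\ast\times M)^{-1}(-)$ of~(\ref{e56}) is indeed preimage along $\sigma_\ast\times M$ (true by definition), and that $(-)^{-1}(B)$ on $\cMap^\pitchfork$ is indeed preimage of the zero-section (true by definition, Lemma~\ref{t11}). Functoriality in $\sigma$ — i.e., that these squares compose correctly — follows from functoriality of both simplicial structure maps already recorded: the simplicial structure on $\cMap^\pitchfork_\bullet(M,\Th(\xi))$ comes from the cosimplicial object $\Delta^\bullet\times M$ (Definition~\ref{def.transv.sing}, using $(\sigma\tau)_\ast=\sigma_\ast\circ\tau_\ast$), and the simplicial structure on $\Bord^k_\bullet(M)$ satisfies $(\sigma\circ\tau)^\ast=\tau^\ast\circ\sigma^\ast$ by the computation in the proof of Lemma~\ref{t1}. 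Since the level maps commute with each generating structure map and both sides are genuine simplicial objects, the family of level maps is a morphism of simplicial topological spaces. I do not expect any real obstacle here; the only subtlety worth stating explicitly is the verification that precomposition preserves the transversality condition on \emph{every} face, which is Observation~\ref{t44'}(2) together with the fact (Observation~\ref{t60}) that $\sigma_\ast\times M$ carries faces of $\Delta^p\times M$ onto faces of $\Delta^q\times M$ submersively, so transversality along faces is inherited.
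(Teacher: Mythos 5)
Your proof is correct and follows essentially the same route as the paper: the heart of the matter is the set-theoretic identity that the preimage of a composite is the iterated preimage, which makes the naturality square commute on the nose. The extra verifications you mention (precomposition preserving transversality via Observation~\ref{t44'}(2), functoriality via $(\sigma\tau)^\ast=\tau^\ast\sigma^\ast$) are already baked into the definitions of the two simplicial objects, so the paper does not repeat them here, but it is no error to note them.
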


\begin{proof}
    Let $[p] \xra{\sigma} [q]$ be a morphism in $\bDelta$.
    We must show the diagram among topological spaces
    \[
\begin{tikzcd}
	{\cMap^\pitchfork_q(M,\Th(\xi))} && {\Bord^k_q(M)} \\
	{\cMap^\pitchfork_p(M,\Th(\xi))} && {\Bord^k_p(M)}
	\arrow["{(-)^{-1}(B)}", from=1-1, to=1-3]
	\arrow["{\sigma^\ast}"', from=1-1, to=2-1]
	\arrow["{\sigma^\ast}", from=1-3, to=2-3]
	\arrow["{(-)^{-1}(B)}", from=2-1, to=2-3]
\end{tikzcd}
    \]
    commutes.
    Let $(\Delta^q \times M \xra{f} \Th(\xi)) \in \cMap^\pitchfork_q(M,\Th(\xi))$.
    Commutativity of the diagram follows upon observing the sequence of identities among subsets of $\Delta^p \times M$:
    \begin{eqnarray*}
    (\sigma^\ast(f))^{-1}(B)
    &
    :=
    &
    (f \circ (\sigma_\ast \times M))^{-1}(B)
    \\
    &
    =
    &
    (\sigma_\ast \times M)^{-1}\Bigl( f^{-1}(B) \Bigr)
    \\
    &
    =:
    &
    \sigma^\ast\Bigl( f^{-1}(B) \Bigr)
    ~.    
    \end{eqnarray*}
    The first identity follows from the definition of the left vertical map $\sigma^\ast$.
    The second identity is a consequence of the general fact that the preimage of a composition is an iterated preimage.  
    The last identity is the definition of the right vertical map $\sigma^\ast$, applied to $(f^{-1}(B) \subset \Delta^q \times M) \in \Bord^k_q(M)$.
    
\end{proof}

We now construct the morphism
\begin{equation}
\label{e47}
\cMap^\pitchfork_\bullet(M,\Th(\xi))
\longrightarrow
\Bord^\xi_\bullet(M)
~,
\end{equation}
which Theorem~\ref{t26} states is an equivalence.
In what follows, we reference the material of~\S\ref{sec.normal}.
Consider the category-object in $\Top$ over $\bDelta$,
\[
\Un^\pitchfork(M)
\longrightarrow
\bDelta
~,
\]
that is the Grothendieck construction of the simplicial topological space $\cMap^\pitchfork_\bullet(M,\Th(\xi))$.
Its target morphism is a covering space, and therefore it is a topological category.
On underlying $\infty$-categories, it is the unstraightening of the simplicial space underlying $\cMap^\pitchfork_\bullet(M,\Th(\xi))$.
Through the functoriality of the Grothendieck construction, the morphism $\cMap^\pitchfork_\bullet(M,\Th(\xi)) \to \Bord^k_\bullet(M)$ between simplicial topological spaces of Lemma~\ref{t27} supplies a morphism over $\bDelta$ between topological categories in $\Top$:
\[
\Un^\pitchfork(M)
\longrightarrow
\Un^k(M)
~.
\]
Denote the pullback among category-objects in $\Top$:
\begin{equation}\label{e53}
\begin{tikzcd}
	{\w{\Un}^\pitchfork(M)} & {\w{\Un}^k(M)} \\
	{\Un^\pitchfork(M)} & {\Un^k(M)}
	\arrow[from=1-1, to=1-2]
	\arrow[from=1-1, to=2-1]
	\arrow["\lrcorner"{anchor=center, pos=0.125}, draw=none, from=1-1, to=2-2]
	\arrow[from=1-2, to=2-2]
	\arrow[from=2-1, to=2-2]
\end{tikzcd}
~.
\end{equation}
Because the right vertical morphism is a left fibration between topological categories that is a fiberwise fiber bundle, the same is true for the left vertical morphism.
In particular, $\w{\Un}^\pitchfork(M)$ is a topological category.
Observation~\ref{t43} implies the fiber of the morphism $\w{\Un}^\pitchfork(M) \xra{\pi} \Un^\pitchfork(M)$ over $[p]\in \bDelta$ is the fiber bundle $\w{\cMap}^\pitchfork(\Delta^p\times M , \Th(\xi)) \to \cMap^\pitchfork(\Delta^p \times M , \Th(\xi))$ of Lemma~\ref{t3}.
Consequently, the evaluation map~(\ref{e42}) (from the proof of Lemma~\ref{t9}) defines a map to $B$ from the topological space of objects of $\w{\Un}^\pitchfork(M)$:
\begin{equation}\label{e44}
\ev
\colon
\Obj(\w{\Un}^\pitchfork(M))
\xra{~(\ref{e42})~}
B
~.
\end{equation}
For each morphism $[p] \xra{\sigma}[q]$ in $\bDelta$, the diagram among topological spaces
{\small
\[
\begin{tikzcd}
	{\w{\cMap}^\pitchfork(\Delta^p\times M , \Th(\xi))} & {\w{\cMap}^\pitchfork(\Delta^p\times M , \Th(\xi)) \underset{\cMap^\pitchfork_p(M)} \times \cMap^\pitchfork_q(M)} & {\w{\cMap}^\pitchfork(\Delta^q\times M , \Th(\xi))} \\
	& B
	\arrow["\ev"', from=1-1, to=2-2]
	\arrow["\pr"', from=1-2, to=1-1]
	\arrow["{\sigma_\ast \times M}", from=1-2, to=1-3]
	\arrow["\ev", from=1-3, to=2-2]
\end{tikzcd}
\]
}
commutes.  
Indeed, an element in the upper middle topological space is of the form 
\[
\Bigl(~
\Delta^q \times M \xra{f} \Th(\xi)
~,~ 
x\in (\sigma_\ast \times M)^{-1}\bigl( f^{-1}(B) \bigr)
~\Bigr)
~;
\]
the value of both of the composite maps in this diagram on this element is $f \bigl( (\sigma_\ast \times M)(x) \bigr) \in B$.
Consequently,~(\ref{e44}) extends as a morphism between topological categories
\begin{equation}\label{e48}
\ev
\colon 
\w{\Un}^\pitchfork(M)
\longrightarrow
B
~,
\end{equation}
in which the codomain is the topological category whose topological space of objects is $B$ and all of whose morphisms are identity morphisms.

As observed above, the morphism $\w{\Un}^\pitchfork(M) \to \Un^\pitchfork(M)$ between topological categories is a left fibration.
It is therefore classified by a functor between $\infty$-categories:
\[
\Un^\pitchfork(M)
\longrightarrow
\Spaces
~,
\]
which, by construction, factors through $\Un^k(M)$.
The morphism~(\ref{e48}) defines a lift of this functor:
\[
\begin{tikzcd}
	&& {\Spaces_{/B}} \\
	{\Un^\pitchfork(M)} && \Spaces
	\arrow["{{\rm forget}}", from=1-3, to=2-3]
	\arrow["\ev", from=2-1, to=1-3]
	\arrow[from=2-1, to=2-3]
\end{tikzcd}
~.
\]
We next argue that the diagram among $\infty$-categories
\begin{equation}\label{e49}
\begin{tikzcd}
	{\Un^\pitchfork(M)} & {\Spaces_{/B}} \\
	{\Un^k(M)} & {\Spaces_{/\BO(k)}}
	\arrow[from=1-1, to=1-2]
	\arrow[from=1-1, to=2-1]
	\arrow[from=1-2, to=2-2]
	\arrow[from=2-1, to=2-2]
\end{tikzcd}
\end{equation}
commutes.

We now make use of~\S\ref{sec.vbdl}.
Regard the rank-$k$ vector bundle $\xi = (E \to B)$ as a constant category-object in ${\sf VBun}^k$ (i.e., all morphisms in it are identity morphisms).  
Base-change along the morphism~(\ref{e44}) defines a rank-$k$ vector bundle $\ev^\ast \xi$ over the topological category $\w{\Un}^\pitchfork(M)$. 
Recall the rank-$k$ vector bundle $\nu^{\sf fib}$ over the topological category $\w{\Un}^k(M)$.  
Base-change of $\nu^{\sf fib}$ along the defining morphism $\w{\Un}^\pitchfork(M) \xra{(\ref{e53})} \w{\Un}^k(M)$ defines a rank-$k$ vector bundle $\nu^\pi$ over the topological category $\w{\Un}^\pitchfork(M)$.

The canonical identifications~(\ref{e30}), applied to $C=\Delta^p\times M$ for each $p\geq 0$, is an identification between vector bundles $\nu^\pi \cong \ev^\ast \xi$ over the topological space $\Obj(\w{\Un}^\pitchfork(M))$ of objects of $\w{\Un}^\pitchfork(M)$.  
Furthermore, by way of Fact~\ref{trans.fact}, these canonical identifications~(\ref{e30}) are such that the diagram among vector bundles over $\Mor(\w{\Un}^\pitchfork(M))$
\[\begin{tikzcd}
	{{\sf s}^\ast \nu^\pi_0} & {{\sf t}^\ast \nu^\pi_0} \\
	{{\sf s}^\ast \ev^\ast \xi} & {{\sf t}^\ast \ev^\ast \xi}
	\arrow["{\nu^\pi_1}", from=1-1, to=1-2]
	\arrow["{(\ref{e30})}"', from=1-1, to=2-1]
	\arrow["{(\ref{e30})}", from=1-2, to=2-2]
	\arrow["\cong"{description}, draw=none, from=2-1, to=2-2]
\end{tikzcd}\]
commutes.
We conclude an isomorphism
\[
\nu^\pi
~\cong~
\ev^\ast \xi
\]
between vector bundles over $\w{\Un}^\pitchfork(M)$.
Using that $\w{\Un}^\pitchfork(M)$ is a topological category, 
this isomorphism represents an identification
\[
\nu^\pi
\simeq
\ev^\ast \xi
~\colon~
\w{\Un}^\pitchfork(M)
\longrightarrow
\BO(k)
\]
between $\infty$-categories.
We conclude that the diagram among $\infty$-categories~(\ref{e49}) indeed commutes.

By definition of $\Un^\xi(M)$, the commutative diagram~(\ref{e49}) is a functor between $\infty$-categories
\[
\Un^\pitchfork(M)
\longrightarrow
\Un^\xi(M)
~,
\]
both of which are right fibrations over $\bDelta$.
The straightening of this functor is the sought morphism~(\ref{e47}) between simplicial spaces:
$
\cMap^\pitchfork_\bullet(M,\Th(\xi))
\xra{(\ref{e47})}
\Bord^\xi_\bullet(M)
$.

\begin{proof}[Proof of Theorem~\ref{t26}]
By construction, for each $p \geq 0$, the value of the morphism~(\ref{e47}) on spaces of $p$-simplices is the map between spaces
\[
\cMap^\pitchfork(\Delta^p\times M , \Th(\xi))
\xra{~(-)^{-1}(B)~}
\Sub^\xi(\Delta^p \times M)
\]
of Lemma~\ref{t9}.
Theorem~\ref{t10} states that this map between spaces of $p$-simplices is an equivalence.
Therefore, the morphism~(\ref{e47}) between simplicial spaces is an equivalence, as desired. 

Functoriality with respect to open embeddings $M\hookrightarrow M'$ follows from commutativity of the diagram of inverse images
\[
\xymatrix{
\cMap^\pitchfork(\Delta^p\times M , \Th(\xi))
\ar[rr]^-{(-)^{-1}(B)}\ar[d]&&\Sub^\xi(\Delta^p \times M)\ar[d]\\
\cMap^\pitchfork(\Delta^p\times M' , \Th(\xi))
\ar[rr]^-{(-)^{-1}(B)}&&\Sub^\xi(\Delta^p \times M')
}
\]
in which the right vertical map is the evident inclusion, and the left vertical map is extension by zero, i.e., it extends a compactly-supported map $\Delta^p\times M \ra \Th(\xi)$ to $\Delta^p\times M'$ so as to be constant at the basepoint outside of $\Delta^p \times M$.
\end{proof}

\section{The simplicial set $\dBord_\bullet^{\xi}(M)$}\label{sec.Kan.condition}
Throughout this section, we fix $n,k \geq 0$; an $(n+k)$-manifold $M$ with boundary; and a rank-$k$ vector bundle $\xi = (E \to B)$.
The main object in this section is a simplicial set $\dBord^\xi_\bullet(M)$. 
The main results are Corollary~\ref{cor.dBord.Bord}, which states that its geometric realization agrees with that of the simplicial space $\Bord^\xi_\bullet(M)$; and Corollary~\ref{cor.dBord.kan}, which states that the simplicial set $\dBord^\xi_\bullet(M)$ is a Kan complex.

\subsection{Constructing $\dBord^\xi_\bullet(M)$}
Throughout this subsection, we fix a manifold with corners $C$.

\begin{notation}
The set
\[
\dSub^k(C)
~:=~
\left\{
~W \subset C~
\right\}
\]
consists of compact codimension-$k$ submanifolds of $C$.
\end{notation}

\begin{definition}\label{a1}
The set
\[
\dSub^\xi(C)
~:=~
\left\{
\left(
~
W\subset C 
~,~ 
W \xra{g} B
~,~
\nu_{W \subset C} \overset{\alpha} \cong g^\ast \xi 
~
\right)
\right\}
\]
consists of triples in which: $(W \subset C)\in \Sub^k(C)$ is a compact codimension-$k$ submanifold; $g\in \Map(W,B)$ is a map; and $\alpha \in {\sf Iso}(\nu_{W \subset C} , g^\ast \xi)$ is an isomorphism between vector bundles over $W$.

\end{definition}

Let $D \xra{\sigma} C$ be a proper face-submersion from a manifold with corners in the sense of Definition~\ref{d6}.
Consider the map between sets
\begin{equation}\label{x31}
\dSub^\xi(C)
\xra{~\sigma^{-1}(-)~}
\dSub^\xi(D)
~,
\end{equation}
whose value on an element 
$\left(
~
W\subset C
~,~
W \xra{g} B
~,~
\nu_{W \subset C} \overset{\alpha} \cong g^\ast \xi
~
\right)$
in the domain is the element
\[
\left(
~\sigma^{-1}(W) \subset D
~,~
\sigma^{-1}(W) \xra{g \circ \sigma_{|\sigma^{-1}(W)}} B
~,~
\nu_{\sigma^{-1}(W) \subset D}
\overset{\sigma^{-1}(\alpha)}\cong
(g \circ \sigma_{|\sigma^{-1}(W)})^\ast \xi
\right)
\]
in the codomain, in which the third coordinate $\sigma^{-1}(\alpha)$ is the canonical composite isomorphism between vector bundles over $\sigma^{-1}(W)$:
\begin{eqnarray*}
\sigma^{-1}(\alpha) 
\colon
\nu_{\sigma^{-1}(W) \subset D}
&
\overset{\sD \sigma_{|\sigma^{-1}(W)}}\cong 
&
(\sigma_{|\sigma^{-1}(W)})^\ast \nu_{W \subset C}
\\
&
\overset{(\sigma_{|\sigma^{-1}(W)})^\ast \alpha}
\cong
&
(\sigma_{|\sigma^{-1}(W)})^\ast g^\ast \xi
\\
&
\cong
&
(g \circ \sigma_{|\sigma^{-1}(W)})^\ast \xi
~,
\end{eqnarray*}
in which the first isomorphism is Fact~\ref{trans.fact}(1).
Lemma~\ref{t57} ensures the map~(\ref{x31}) is indeed well-defined.

\begin{observation}\label{a7}
Let $E \xra{\tau} D \xra{\sigma} C$ be a composable pair of proper face-submersions.
The diagram among sets
\[\begin{tikzcd}
	{\dSub^\xi(C)} && {\dSub^\xi(E)} \\
	& {\dSub^\xi(D)}
	\arrow["{(\sigma \circ \tau)^{-1}(-)}", from=1-1, to=1-3]
	\arrow["{\sigma^{-1}(-)}"', from=1-1, to=2-2]
	\arrow["{\tau^{-1}(-)}"', from=2-2, to=1-3]
\end{tikzcd}\]
commutes.
Indeed, this is obviously so on the first two coordinates, and Fact~\ref{trans.fact}(2) implies it is so on the third coordinates.  

\end{observation}

Observation~\ref{a7} validates the following.
\begin{definition}
    We denote the simplicial sets
    \[
    \dBord^k_\bullet(M)
    ~:=~
    \dSub^k(\Delta^\bullet \times M)
    \qquad\text{ and }\qquad
    \dBord^\xi_\bullet(M)
    ~:=~
    \dSub^\xi(\Delta^\bullet \times M)
    ~.
    \]
\end{definition}

\begin{observation}\label{a9}
The set $\dSub^k(C)$ is that underlying the topological space $\Sub^k(C)$.
Therefore, the simplicial set $\dBord^k_\bullet(M)$ is that underlying the simplicial topological space $\Bord^k_\bullet(M)$.
\end{observation}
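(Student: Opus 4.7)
The plan is to unwind the definitions on each side and observe there is essentially nothing to prove.

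For the first claim, Definition~\ref{d3} defines $\Sub^k(C)$ precisely as the set of compact codimension-$k$ submanifolds of $C$ equipped with a particular topology (the finest making the evaluation maps from embedding spaces continuous). By construction, its underlying set is literally $\{W \subset C \mid W \text{ is a compact codimension-$k$ submanifold}\} = \dSub^k(C)$. So the first sentence is a tautology.

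For the second claim, I would verify two things: that the $p$-simplex sets agree, and that the simplicial structure maps agree. The first is immediate from the first claim applied to $C = \Delta^p \times M$, giving $\dBord^k_p(M) = \dSub^k(\Delta^p \times M) = \Sub^k(\Delta^p \times M) = \Bord^k_p(M)$ as sets. For the structure maps, let $[p] \xra{\sigma} [q]$ be a morphism in $\bDelta$; by Observation~\ref{t60} the induced map $\Delta^p \times M \xra{\sigma_\ast \times M} \Delta^q \times M$ is a proper face-submersion. On the $\Bord^k$ side, Lemma~\ref{t1} specifies the structure map to be $\sigma^\ast = (\sigma_\ast \times M)^{-1}(-)$, i.e.\ formula~(\ref{e1}). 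On the $\dBord^k$ side, by definition the structure map is~(\ref{x31}) applied to $\sigma_\ast \times M$, whose action on the first coordinate $(W\subset\Delta^q\times M)$ is again $(\sigma_\ast\times M)^{-1}(W)$. (The extra coordinates in~(\ref{x31}) involving $g$ and $\alpha$ are not present in the codimension-$k$ version, which is just the projection of $\dSub^\xi$ onto its first coordinate.) The two structure maps therefore agree on the common underlying set.

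There is no real obstacle here; the entire content of the observation is that the assignment $[p] \mapsto \Sub^k(\Delta^p\times M)$ has been defined compatibly as a simplicial topological space and as a simplicial set, with the topology on each level merely forgotten in the passage to $\dBord^k_\bullet(M)$. I would present the argument in two sentences, one for each claim, citing Definition~\ref{d3}, Lemma~\ref{t1}, and formula~(\ref{x31}).
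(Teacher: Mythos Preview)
Your proposal is correct and matches the paper's approach: the paper presents this as an Observation with no proof, leaving implicit exactly the definition-unwinding you carry out. Your identification of the relevant ingredients (Definition~\ref{d3}, Lemma~\ref{t1}, and the first coordinate of~(\ref{x31})) is precisely what is needed.
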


Observe the existence of a natural forgetful map
    \begin{equation}\label{a2}
    \dSub^\xi(C)
    \longrightarrow
    \dSub^k(C)
    ~,\qquad
    (W,f,\alpha)
    \longmapsto 
    W
    ~.
    \end{equation}

\begin{observation}\label{a5}
Let $D \xra{\sigma} C$ be a proper face-submersion.
The diagram among sets
\[
\begin{tikzcd}
	{\dSub^\xi(C)} & {\dSub^\xi(D)} \\
	{\dSub^k(C)} & {\dSub^k(D)}
	\arrow["\sigma^{-1}(-)", from=1-1, to=1-2]
	\arrow[from=1-1, to=2-1]
	\arrow[from=1-2, to=2-2]
	\arrow["\sigma^{-1}(-)", from=2-1, to=2-2]
\end{tikzcd}
\]
commutes.
\end{observation}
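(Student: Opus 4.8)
The final statement (Observation~\ref{a5}) asserts the commutativity of a square of sets relating the $\xi$-decorated and $k$-undecorated versions of $\dSub$ along the pullback map $\sigma^{-1}(-)$ induced by a proper face-submersion $D \xra{\sigma} C$.

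\medskip

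The plan is to unwind both composites on a general element and check they produce the same output. Fix an element $\left(W \subset C, W \xra{g} B, \nu_{W \subset C} \overset{\alpha}\cong g^\ast \xi\right) \in \dSub^\xi(C)$. Going right then down along the square sends this to $\left(\sigma^{-1}(W) \subset D, \sigma^{-1}(W) \xra{g \circ \sigma_{|\sigma^{-1}(W)}} B, \sigma^{-1}(\alpha)\right)$ via~(\ref{x31}), and then the forgetful map~(\ref{a2}) discards everything but the underlying submanifold, yielding $\sigma^{-1}(W) \subset D$. Going down then right sends the element first to $W \subset C$ via~(\ref{a2}), and then the bottom map $\dSub^k(C) \xra{\sigma^{-1}(-)} \dSub^k(D)$ sends $W$ to $\sigma^{-1}(W)$, the set-theoretic preimage, which by Lemma~\ref{t57} is a compact codimension-$k$ submanifold of $D$ (the map~(\ref{e56}) restricted to the underlying sets). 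Both composites therefore return the subset $\sigma^{-1}(W) \subset D$. The only point requiring care is that the ``$\sigma^{-1}(W)$'' appearing as the first coordinate of~(\ref{x31}) is literally the same subset of $D$ as the value of the bottom map on $W$: this is immediate from the construction of~(\ref{x31}), whose first coordinate is defined to be the preimage $\sigma^{-1}(W)$, i.e., the output of~(\ref{e56}) on the underlying submanifold. So the diagram commutes.

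\medskip

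There is essentially no obstacle here; the statement is a bookkeeping observation, analogous to (and simpler than) Observation~\ref{a7}, whose content lies entirely in the first coordinate. The decorations $g$ and $\alpha$ play no role since~(\ref{a2}) forgets them on both sides of the square. I would present this in two or three sentences, emphasizing only that the first coordinate of the map~(\ref{x31}) is by definition the preimage $\sigma^{-1}(W)$, which agrees with the value of the undecorated pullback map~(\ref{e56}) on $W$, with well-definedness of everything guaranteed by Lemma~\ref{t57}.
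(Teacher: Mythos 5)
Your proof is correct and matches the intent of the paper, which gives no explicit proof for this Observation because it follows immediately from the definitions: the first coordinate of the map~(\ref{x31}) is defined to be the set-theoretic preimage $\sigma^{-1}(W)$, which is exactly the value of the undecorated map~(\ref{e56}) on the forgotten underlying submanifold, so both composites return $\sigma^{-1}(W) \subset D$.
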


Observation~\ref{a5} yields a forgetful morphism between simplicial sets
\begin{equation}\label{a21}
\dBord^\xi_\bullet(M)
\longrightarrow
\dBord^k_\bullet(M)
~.
\end{equation}

\subsection{$\Bord^\xi_\bullet(M)$ as a simplicial topological space}
Here, we construct a topology on the simplicial set $\dBord^\xi_\bullet(M)$ whose underlying simplicial space is $\Bord^\xi_\bullet(M)$ of Definition~\ref{def.Bord.s.space}.

\begin{prop}\label{t88}
    There is a topological space $\tSub^\xi(C)$ with the following properties.
    \begin{enumerate}
        \item Its underlying set is $\dSub^\xi(C)$.
        
        \item The resulting function $\tSub^\xi(C) \xra{(\ref{a2})} \Sub^k(C)$ has the following properties.
        \begin{enumerate}
            \item It is continuous.

            \item It is a Serre fibration.
            
            \item The underlying space of $\tSub^\xi(C)$ is identical with the space $\Sub^\xi(C)$ of Definition~\ref{d5}.
            
        \end{enumerate}
    \end{enumerate}
\end{prop}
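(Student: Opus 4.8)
The plan is to build $\tSub^\xi(C)$ as an explicit topological space whose underlying set is $\dSub^\xi(C)$, exhibiting it as a fiber product of spaces of maps and isomorphisms over $\Sub^k(C)$, and then to verify the three properties by hand, relying on the local triviality of $\w{\Sub}^k(C) \to \Sub^k(C)$ (Observation~\ref{t19}(2)) together with the fiber-bundle structure of $\Emb(W,C) \to \Sub^k(C)$ onto its image (Lemma~\ref{t52}(2)).

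First I would describe the topology concretely using the atlas for $\Sub^k(C)$ supplied by Lemma~\ref{t52}(1). Over each chart $\Gamma^{\sf sm}(\nu_{W\subset C}) \xhookrightarrow{\ {\sf Image}\circ\psi_\ast\ } \Sub^k(C)$ centered at a submanifold $W\subset C$, the fiber bundle $\w{\Sub}^k(C)$ is canonically trivialized as $\Gamma^{\sf sm}(\nu_{W\subset C})\times W$, and the fiberwise normal bundle $\nu^{\sf fib}(C)$ restricts to (a bundle isomorphic to) the pullback of $\nu_{W\subset C}$ along projection to $W$. Over such a chart, the set of triples $(W', g', \alpha')$ with $W'$ in the chart is identified with $\Gamma^{\sf sm}(\nu_{W\subset C})\times \Map(W,B) \times {\sf Iso}\bigl(\nu_{W\subset C}, g'^\ast\xi\bigr)$; here $\Map(W,B)$ carries the compact-open topology and the set of bundle isomorphisms carries the subspace topology inside the space of continuous bundle maps. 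The main point is to check that the transition functions between these charts—which involve precomposition by the diffeomorphisms from Lemma~\ref{t52}(2) and the naturality of normal bundles along transverse maps (Fact~\ref{trans.fact})—are continuous, so that these local descriptions glue to a well-defined topology on $\dSub^\xi(C)$. Equivalently, and perhaps more cleanly, one defines $\tSub^\xi(C)$ directly as the topological space obtained by pulling back, along $\w{\Sub}^k(C)\to\Sub^k(C)$, the fiberwise mapping space to $B$ and the fiberwise isomorphism space—this is the point-set refinement of the $\infty$-categorical pullback in Definition~\ref{d5}.

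Next I would verify property (1), which is immediate from the construction: a point of $\tSub^\xi(C)$ is exactly a triple $(W\subset C, g\colon W\to B, \alpha\colon \nu_{W\subset C}\cong g^\ast\xi)$, i.e.\ an element of $\dSub^\xi(C)$. For property (2a), continuity of the forgetful map to $\Sub^k(C)$ is built in, since in each chart it is the projection $\Gamma^{\sf sm}(\nu_{W\subset C})\times \Map(W,B)\times {\sf Iso}(\cdots) \to \Gamma^{\sf sm}(\nu_{W\subset C})$. For property (2b), I would show the forgetful map is a Serre fibration by showing it is locally a product projection onto the $\Sub^k(C)$ chart: over the chart around $W$, after trivializing $\w{\Sub}^k(C)$ as $\Gamma^{\sf sm}(\nu_{W\subset C})\times W$ and $\nu^{\sf fib}(C)$ correspondingly, the fiber over each point is homeomorphic to $\Map(W,B)\times_{?}{\sf Iso}(\cdots)$ in a way that varies continuously; more precisely the restriction over the contractible (convex) chart $\Gamma^{\sf sm}(\nu_{W\subset C})$ is a trivial bundle, hence a Serre fibration, and Serre fibrations can be detected locally on the base. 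For property (2c), the underlying space (in the sense of the functor $\Top\to\Spaces$) of this point-set pullback agrees with the $\infty$-categorical pullback defining $\Sub^\xi(C)$: this follows because $\Sub^k(C)$ is paracompact Hausdorff (Lemma~\ref{t83}), the relevant maps are fibrations, and homotopy pullbacks of spaces are computed by point-set pullbacks along fibrations.

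The main obstacle, I expect, will be the continuity/gluing in the first step: namely checking that as the submanifold $W'$ varies continuously in $\Sub^k(C)$, the identifications of $\nu_{W'\subset C}$ with the restriction of $\nu^{\sf fib}(C)$ vary continuously in a way compatible with the overlapping charts, so that the candidate topologies on the overlaps agree. This is essentially a point-set unwinding of the coherence already packaged homotopy-theoretically in Lemma~\ref{t50} and Fact~\ref{trans.fact}, but it must be redone at the strict level because we want an actual topological space, not just a space. The cleanest route is probably to avoid gluing charts and instead define $\tSub^\xi(C)$ as the honest fiber product of topological spaces
\[
\tSub^\xi(C)
~:=~
\Bigl(\bigl\{\text{fiberwise maps }\w{\Sub}^k(C)\to B\bigr\}\Bigr)
\underset{\bigl\{\text{fiberwise maps }\w{\Sub}^k(C)\to \BO(k)\bigr\}}{\times}
\bigl\{\text{fiberwise bundle maps}\bigr\}
\]
built using the fiber-bundle structure of $\w{\Sub}^k(C)$, whereupon properties (2a)–(2c) reduce to standard facts about mapping spaces out of fiber bundles over a paracompact base. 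Once that is in place, the homeomorphism $\tSub^\xi(C)\cong \coprod_{[W]}\bigl(\Emb(W,C)\times \Map(W,B)\times_{\Map(W,\BO(k))}{\sf Iso}(\nu_{W\subset C}, -)\bigr)\big/\Diff(W)$ makes all three properties transparent.
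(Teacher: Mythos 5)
Your final formula is essentially the paper's construction: the paper topologizes $\dSub^\xi(C)$ by declaring the bijection $\coprod_{[W]}\Emb^\xi(W,C)_{/\Diff(W)} \cong \dSub^\xi(C)$ to be a homeomorphism, where $\Emb^\xi(W,C)$ is built as a space of sections of the principal ${\sf GL}_k(\RR)$-bundle ${\sf Iso}(\nu, (\ev)^\ast\xi)$ over $\Emb(W,C)\times\Map(W,B)\times W$. So you land in the right place, and your verification strategy for (1), (2a), (2c) matches. Where you diverge slightly: for (2b) the paper does not invoke local detection of Serre fibrations over a paracompact base; instead it exhibits a pullback square of principal $\Diff(W)$-bundles and applies Lemma~\ref{t62} (if $f$ is a surjective Serre fibration and $g\circ f$ is a Serre fibration, then $g$ is). Your route via local triviality plus paracompactness (Lemma~\ref{t83}) also works and is in fact what the paper uses in the later Lemmas~\ref{t79}, \ref{t82}, \ref{t77}, so this is a legitimate variant.

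The one genuinely problematic step is your proposed ``cleaner route'' that defines $\tSub^\xi(C)$ as a point-set fiber product of fiberwise mapping spaces into $B$ and $\BO(k)$. At the point-set level there is no honest map from $\w{\Sub}^k(C)$ (or from $W$) to $\BO(k)$ classifying the normal bundle: classifying maps exist only up to contractible ambiguity, so the fiber product over ``fiberwise maps to $\BO(k)$'' is not a well-defined topological space. The paper circumvents this precisely by never passing through $\BO(k)$ at the point-set level — it works with the honest space of vector bundle isomorphisms ${\sf Iso}(\nu, g^\ast\xi)$ directly, and only relates this to $\BO(k)$ after passing to underlying homotopy types (diagram~(\ref{x40})). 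Your closing formula $\Emb(W,C)\times\Map(W,B)\times_{\Map(W,\BO(k))}{\sf Iso}(\nu_{W\subset C},-)$ inherits the same ambiguity (what is the strict map of the first factor to $\Map(W,\BO(k))$?) and the ${\sf Iso}$ term needs $\nu$ to vary over $\Emb(W,C)$, not be the fixed $\nu_{W\subset C}$. Reading charitably, you clearly intend what the paper does, but the phrasing via $\BO(k)$ would not compile to an actual topological space; you need the isomorphism-space formulation throughout.
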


\begin{proof}
Let $W$ be a compact $(p+n)$-manifold with corners.
Consider the diagram among topological spaces
\[
\begin{tikzcd}
	W \\
	{\Emb(W,C) \times W} & {\Emb(W,C) \times \Map(W,B) \times W} & {\Map(W,B) \times W} & B \\
	C
	\arrow["\pr"', from=2-1, to=1-1]
	\arrow["\ev", from=2-1, to=3-1]
	\arrow["{\pi_1}"', from=2-2, to=2-1]
	\arrow["{\pi_2}", from=2-2, to=2-3]
	\arrow["\ev", from=2-3, to=2-4]
\end{tikzcd}
~,
\]
in which the maps $\pi_1$ and $\pi_2$ are the evident projections.
Taking derivatives defines an injection between vector bundles over $\Emb(W,C) \times W$, which defines a short exact sequence of vector bundles:
\[
0
\longrightarrow
\pr^\ast \tau_W
\xra{~\sD~}
\ev^\ast \tau_C
\xra{~\rm quotient~}
\nu
\longrightarrow
0
~.
\]
Consider the two vector bundles $\pi_1^\ast \nu$ and $(\ev \circ \pi_2)^\ast \xi$ over $\Emb(W,C) \times \Map(W,B) \times W$.
Consider the resulting principal ${\sf GL}_n(\RR)$-bundle of isomorphisms between these vector bundles:
\begin{equation}\label{x27}
X
~:=~
{\sf Iso}
~:=~
{\sf Iso}\left(
~
\pi_1^\ast \nu
~,~(\ev \circ \pi_2)^\ast \xi
~
\right)
\longrightarrow
\Emb(W,C) \times \Map(W,B) \times W
~=:~
Y
~.
\end{equation}
Note that each of the maps 
\[
X
\longrightarrow
Y
\longrightarrow
\Emb(W,C) \times \Map(W,B)
~=:~
Z
\]
is a fiber bundle.  
Consider the fiber bundles
\[
\Map^{\sf rel}_{Z}\left(
Y
,
Y
\right)
\longrightarrow
Z
\qquad\text{ and }\qquad
\Map^{\sf rel}_{Z}\left(
Y
,
X
\right)
\longrightarrow
Z
~,
\]
of relative sections of 
$
Y
\to 
Z$ 
along $Y \to Z$
and
of relative sections of 
$
X
\to 
Z$ 
along $Y \to Z$, respectively.\footnote{For $(W \xra{e} C , W \xra{g} B) \in Z$, the fiber of the first fiber bundle over this point is the topological space of maps $\Map(W,W)$, while the fiber of the second fiber bundle over this point is the topological space of maps $\Map\left( W ,{\sf Iso}(\nu_e , g^\ast \xi) \right)$ to the ${\sf GL}_n(\RR)$-torsor of isomorphisms between vector bundles over $W$.}
Notice the canonical continuous forgetful map
\begin{equation}\label{x28}
\Map^{\sf rel}_{Z}\left(
Y
,
X
\right)
\longrightarrow
\Map^{\sf rel}_{Z}\left(
Y
,
Y
\right)
\end{equation}
between fiber bundles over $Z$.
Because $X \to Y$ is a fiber bundle, so is the map~(\ref{x28}).
The identity map on $Y$ is a continuous section of the codomain of~(\ref{x28}).
Define the pullback topological space
\[\begin{tikzcd}
	{\Emb^\xi(W,C)} & {\Map^{\sf rel}_Z(Y,X)} \\
	Z & {\Map^{\sf rel}_Z(Y,Y)}
	\arrow[from=1-1, to=1-2]
	\arrow[from=1-1, to=2-1]
	\arrow["\lrcorner"{anchor=center, pos=0.125}, draw=none, from=1-1, to=2-2]
	\arrow[from=1-2, to=2-2]
	\arrow["{\lag \id\rag}", from=2-1, to=2-2]
\end{tikzcd}
~.
\]
Because fiber bundles are stable under base-change, the maps
\begin{equation}\label{x29}
\Emb^\xi(W,C)
\longrightarrow
Z
=
\Emb(W,C) \times \Map(W,B)
\xra{~\rm projection~}
\Emb(W,C)
\end{equation}
are each fiber bundles.
The first of these fiber bundles is a principal $\Map(W,{\sf GL}_n(\RR))$-bundle whose fiber over $(W \xra{e} C , W \xra{g} B)$ is the $\Map(W,{\sf GL}_n(\RR))$-torsor ${\sf Iso}(\nu_e,g^\ast \xi)$ of isomorphisms of vector bundles over $W$.
Specifically, this first fiber bundle is a base-change among topological spaces:
\begin{equation}\label{x40}\begin{tikzcd}
	{\Emb^\xi(W,C)} && {\Map(W,\BO(k))} \\
	{\Emb(W,C) \times \Map(W,B)} && {\Map(W,\BO(k)) \times \Map(W,\BO(k)) = \Map(W,\BO(k) \times \BO(k))} \\
	\\
	\\
	& {}
	\arrow[from=1-1, to=1-3]
	\arrow["{(\ref{x29})}"', from=1-1, to=2-1]
	\arrow["{{\rm diagonal}}"', from=1-3, to=2-3]
	\arrow["{\nu \times (f\circ -)}", from=2-1, to=2-3]
\end{tikzcd}
~.
\end{equation}

Observe how the continuous diagonal action $\Diff(W) \lacts \Emb(W,C) \times \Map(W,B) \times W$ canonically determines a continuous action $\Diff(W) \lacts \Emb^\xi(W,C)$ with respect to which the fiber bundle~(\ref{x29}) is $\Diff(W)$-equivariant.
We have the resulting commutative diagram among topological spaces:
\begin{equation}\label{a23}
\begin{tikzcd}
	{\Emb^\xi(W,C)} && {\Emb(W,C)} \\
	{\Emb^\xi(W,C)_{/\Diff(W)}} && {\Emb(W,C)_{/\Diff(W)}} \\
	\\
	\\
	& {}
	\arrow["{(\ref{x29})}", from=1-1, to=1-3]
	\arrow["{{\rm quotient}}"', from=1-1, to=2-1]
	\arrow["{{\rm quotient}}", from=1-3, to=2-3]
	\arrow["{(\ref{x29})_{/\Diff(W)}}", from=2-1, to=2-3]
\end{tikzcd}
~.
\end{equation}
Necessarily, the action $\Diff(W) \lacts \Emb^\xi(W,C)$ is free, and by slices, because the action $\Diff(W) \lacts \Emb(W,C)$ is.  
Therefore the vertical maps in this diagram are principal $\Diff(W)$-bundles, and this diagram is a pullback.
Using that~(\ref{x29}) is a fiber bundle as well, Lemma~\ref{t62} implies the lower horizontal map is a Serre fibration.
Observe the canonical bijection between sets,
\[
\underset{[W]} \coprod 
\Emb^\xi(W,C)_{/\Diff(W)}
\xra{~\cong~}
\dSub^\xi(C)
~,
\]
in which the coproduct is indexed by isomorphism classes of $(n+p)$-manifolds with corners.
This bijection endows the set $\dSub^\xi(C)$ with a topology, thereby proving statement~(1) of the proposition.  
Denote this topological space $\tSub^\xi(C)$.
Because the lower horizontal map in~(\ref{a23}) is a Serre fibration, the canonical map
\[
\tSub^\xi(C)
\xra{~(\ref{a2})~}
\Sub^k(C)
\]
is a Serre fibration, thereby proving statements~(2)(a) and~(2)(b).
By Definition~\ref{d5} of the map of space $\Sub^\xi(C) \to \Sub^k(C)$ as a base-change, the pullback diagram~(\ref{x40}) for each $W$ then implies
Statement~(2)(c) by taking $\Diff(W)$-quotients for each $W$.
\end{proof}

\begin{convention}
    In what follows, we regard $\tSub^\xi(C)$ as a topological space with the topology of Proposition~\ref{t88}.
\end{convention}

Proposition~\ref{t88}(2)(b) has the following strengthening.
\begin{lemma}\label{t79}
The map $\tSub^\xi(C) \xra{(\ref{a2})} \Sub^k(C)$ is a fiber bundle.

\end{lemma}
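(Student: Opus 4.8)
The plan is to upgrade the Serre fibration of Proposition~\ref{t88}(2)(b) to a genuine fiber bundle by exhibiting local trivializations directly from the local structure of $\Sub^k(C)$ established in Lemma~\ref{t52}(1). Recall that lemma: for a compact codimension-$k$ submanifold $W_0 \subset C$ and a choice of tubular neighborhood $E(\nu_{W_0 \subset C}) \xra{\psi} C$, the composite ${\sf Image}\circ\psi_\ast \colon \Gamma^{\sf sm}(\nu_{W_0\subset C}) \to \Sub^k(C)$ is an open embedding onto a neighborhood $\cU_{W_0}$ of $W_0$, carrying the zero-section to $W_0$. These neighborhoods $\cU_{W_0}$ cover $\Sub^k(C)$, so it suffices to trivialize the pullback of $\tSub^\xi(C) \to \Sub^k(C)$ over each $\cU_{W_0}$, which is contractible (indeed homeomorphic to a convex open subset of a topological vector space).

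The key point is then that the family of submanifolds parametrized by $\cU_{W_0}$, together with its tautological normal bundle, is pulled back from the single submanifold $W_0$. First I would observe that over $\cU_{W_0} \cong \Gamma^{\sf sm}(\nu_{W_0 \subset C})$ the universal submanifold $\w\Sub^k(C)_{|\cU_{W_0}} \to \cU_{W_0}$ is the trivial bundle with fiber $W_0$: the section $\sigma \in \Gamma^{\sf sm}(\nu_{W_0\subset C})$ corresponds to the submanifold $\psi(\sigma(W_0))$, and $\psi\circ\sigma\colon W_0 \xra{\cong} \psi(\sigma(W_0))$ is a canonical diffeomorphism depending continuously-smoothly on $\sigma$. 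Under this identification, the fiberwise normal bundle $\nu^{\sf fib}(C)$ restricted over $\cU_{W_0}$ is identified with the bundle $\Gamma^{\sf sm}(\nu_{W_0 \subset C}) \times \nu_{W_0 \subset C}$ (via the derivative of $\psi\circ\sigma$, or more precisely via the canonical isomorphism $\nu_{\psi(\sigma(W_0)) \subset C} \cong (\psi\circ\sigma)_\ast \nu_{W_0 \subset C}$ coming from the fact that $\sigma$ is a section in a retraction). Consequently, using the defining pullback square~(\ref{x40}) — equivalently, the description of $\dSub^\xi(C)$ as consisting of triples $(W, g, \alpha)$ with $\alpha\colon \nu_{W\subset C}\xra{\cong} g^\ast\xi$ — the restriction of $\tSub^\xi(C)$ over $\cU_{W_0}$ is identified with $\cU_{W_0} \times \bigl(\Map(W_0, B) \times_{\Map(W_0,\BO(k))} \Map(W_0, \BO(k))\bigr)$, i.e., with $\cU_{W_0} \times \tSub^\xi(C)_{|W_0}$ where $\tSub^\xi(C)_{|W_0}$ denotes the fiber over $W_0$. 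Concretely: a point of $\tSub^\xi(C)$ over $\psi(\sigma(W_0)) \in \cU_{W_0}$ is a pair $(g', \alpha')$ with $g'\colon \psi(\sigma(W_0)) \to B$ and $\alpha'\colon \nu_{\psi(\sigma(W_0))\subset C} \cong g'^\ast\xi$; precomposing $g'$ with $\psi\circ\sigma$ and transporting $\alpha'$ along the canonical isomorphism $\nu_{\psi(\sigma(W_0))\subset C} \cong (\psi\circ\sigma)_\ast\nu_{W_0\subset C}$ yields a point of $\tSub^\xi(C)_{|W_0}$, and this assignment is a homeomorphism over $\cU_{W_0}$. This produces the sought local trivialization.

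The main obstacle I expect is bookkeeping the continuity (and smoothness-in-the-appropriate-sense) of these identifications: one must check that the diffeomorphism $W_0 \xra{\psi\circ\sigma} \psi(\sigma(W_0))$, its derivative, and the induced isomorphisms on normal bundles and on mapping/iso spaces $\Map(W_0, B)$ and ${\sf Iso}(\nu, g^\ast\xi)$ all depend continuously on $\sigma$ with respect to the compact-open $\sC^\infty$ topologies. This is essentially the same analytic input that already underlies the proof of Lemma~\ref{t52}(1) (where the compact-open $\sC^\infty$ topology being finer than compact-open is used, and where the inverse homeomorphism $\Emb^\psi(W,C) \to \Gamma^{\sf sm}(\nu_{W\subset C}) \times \Diff(W)$ is written down explicitly), combined with the standard fact that precomposition $\Map(W_0', B) \to \Map(W_0, B)$ by a smoothly-varying diffeomorphism is continuous, and likewise for the bundle-of-isomorphisms functor. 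I would therefore organize the proof so as to reuse Lemma~\ref{t52}(1): first establish the homeomorphism $\w\Sub^k(C)_{|\cU_{W_0}} \cong \cU_{W_0}\times W_0$ over $\cU_{W_0}$ (which is really a restatement of Observation~\ref{t19}(1)--(2) localized via Lemma~\ref{t52}), then observe the fiberwise normal bundle is correspondingly trivialized, and finally apply the pullback square~(\ref{x40}) fiberwise to conclude that $\tSub^\xi(C)_{|\cU_{W_0}} \cong \cU_{W_0} \times \tSub^\xi(C)_{|W_0}$. Since $\{\cU_{W_0}\}$ is an open cover of $\Sub^k(C)$, this exhibits $\tSub^\xi(C) \to \Sub^k(C)$ as a fiber bundle, completing the proof.
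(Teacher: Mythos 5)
Your proposal is correct and follows essentially the same route as the paper: both trivialize over the open cover $\{\cU_{W_0} = \operatorname{Image}({\sf Image}\circ\psi_\ast)\}$ supplied by Lemma~\ref{t52}(1), and both build the local trivialization from the diffeomorphism $\psi\circ\sigma\colon W_0 \cong \psi(\sigma(W_0))$ together with a parallel-transport identification of normal bundles (the paper writes this as $\sD(\psi\circ{\sf Add}_s\circ\psi^{-1})$ where ${\sf Add}_s$ is fiberwise translation by the section $s$, which is exactly the ``section in a retraction'' isomorphism you invoke). The paper likewise defers the continuity check to an unpacking of the topology from Proposition~\ref{t88}, so your treatment of that bookkeeping is at the same level of rigor as the source.
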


\begin{proof}
Let $(W \subset C) \in \Sub^k(C)$.
Using the tubular neighborhood theorem, choose an open embedding $E(\nu_{W \subset C}) \xra{\psi} C$ under $W$.
Consider the resulting open embedding ${\sf Image} \circ \psi_\ast \colon \Gamma^{\sf sm}(\nu_{W\subset C}) \xra{{\sf Image}(\psi \circ-)} \Sub^k(C)$ as in Lemma~\ref{t52}.
Consider the base-change
\[
\begin{tikzcd}
	{\Sub^\xi(C)_{|\Gamma^{\sf sm}(\nu_{W\subset C})}} & {\Sub^\xi(C)} \\
	{\Gamma^{\sf sm}(\nu_{W \subset C})} & {\Sub^k(C)}
	\arrow[hook, from=1-1, to=1-2]
	\arrow[from=1-1, to=2-1]
	\arrow["\lrcorner"{anchor=center, pos=0.125}, draw=none, from=1-1, to=2-2]
	\arrow[from=1-2, to=2-2]
	\arrow["{\sf Image} \circ \psi_\ast", hook, from=2-1, to=2-2]
\end{tikzcd}
~.
\]
Consider the map
\[
\Sub^\xi(C)_{|\Gamma^{\sf sm}(\nu_{W\subset C})}
\xra{~F~}
\Gamma^{\sf sm}(\nu_{W\subset C}) \times \tSub^\xi(C)_{|W}
~,
\]
whose value on an element 
$
\bigl(
~
W \xra{s} E(\nu_{W\subset C})
~,~
\psi(s(W))
\xra{g}
B
~,~
\nu_{\psi(s(W)) \subset C}
\overset{\alpha} \cong
g^\ast \xi
~
\bigr)
$
in the domain is the element
$
\bigl(
~
W \xra{s} E(\nu_{W\subset C})
~,~
W
\xra{g\circ \psi \circ s}
B
~,~
\nu_{W \subset C}
\overset{\alpha \circ \sD (\psi \circ  {\sf Add}_s \circ \psi^{-1})}\cong
g^\ast \xi
~
\bigr)
$
in the codomain, where the last coordinate is the composite isomorphism between vector bundles over $W$:
\begin{eqnarray*}
\nu_{W \subset C}
&
\overset{\sD \psi^{-1}}\cong
&
\nu_{W \subset E(\nu_{W \subset C})}
\\
&
\overset{\sD{\sf Add}_s}\cong
&
\nu_{s(W) \subset E(\nu_{W \subset C})}
\\
&
\overset{\sD \psi}\cong
&
\nu_{\psi(s(W)) \subset C}
\\
&
\overset{\alpha} \cong
&
g^\ast \xi
~.
\end{eqnarray*}
Here, the second isomorphism is that induced by the morphism between affine bundles $\nu_{W \subset C} \xra{{\sf Add}_s} \nu_{W \subset C}$ over $W$ given by $(w,v)\mapsto (w,v+s(w))$, which carries the (image of the) zero-section $W \subset E(\nu_{W \subset C})$ to the image of the section $s$. 
The map $F$ is evidently a bijection, with inverse given by 
\[
(s,g,\alpha)
\longmapsto
\left(
~
s
~,~
\psi(s(W)) \xra{g \circ s^{-1} \circ \psi^{-1}} B
~,~ 
\alpha \circ \sD\psi \circ \sD {\sf Add}_s^{-1} \circ \sD \psi^{-1} 
~
\right)
~.
\]
Unpacking the topology of $\tSub^\xi(C)$ from the proof of Proposition~\ref{t88}, and thereafter of its subspaces $\tSub^\xi(C)_{|\Gamma^{\sf sm}(\nu_{W\subset C})}$ and $\Sub^\xi(C)_{|W}$, reveals that both $F$ and $F^{-1}$ are continuous.  
Therefore, $\Sub^\xi(C) \to \Sub^k(C)$ is locally trivializable.

\end{proof}

Lemma~\ref{t79} draws attention to the fibers of the forgetful map $\tSub^\xi(C) \xra{(\ref{a2})} \Sub^k(C)$, which we examine now.
Consider the fiber of the map~(\ref{a2}) over $(W\subset C) \in \Sub^k(C)$,
\[
\tSub^\xi(C)_{|W}
~:=~
\left\{
\left(~
W \xra{g} B
~,~
\nu_{W \subset C} \overset{\alpha} \cong g^\ast \xi
~
\right)
\right\}
~,
\] 
whose underlying set consists of pairs in which $g \in \Map(W,B)$ is a map and $\alpha \in {\sf Iso}(\nu_{W \subset C} , g^\ast \xi)$ is an isomorphism between vector bundles over $W$.
Consider the function between underlying sets:
\begin{equation}\label{a3}
\tSub^\xi(C)_{|W}
\longrightarrow
\Map(W,B)
~,\qquad
(g,\alpha)
\longmapsto
g
~.
\end{equation}

Inspecting the topology on $\tSub^\xi(C)$ reveals the following.
\begin{observation}\label{a6}
Let $(W \subset C) \in \Sub^k(C)$.
The function~(\ref{a3}) is continuous.
Furthermore, the evident action $\Aut(\nu_{W\subset C}) \lacts \tSub^\xi(C)_{|W}$ is continuous and free, and the map~(\ref{a3}) witnesses the quotient by $\Aut(\nu_{W\subset C})$.
Lastly, this action is by slices, and therefore the quotient map~(\ref{a3}) is a principal $\Aut(\nu_{W\subset C})$-bundle.
\end{observation}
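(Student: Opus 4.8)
The plan is to unwind the topology put on $\tSub^\xi(C)$ in the proof of Proposition~\ref{t88} and to identify the fiber $\tSub^\xi(C)_{|W}$ with a single fiber of the bundle $\Emb^\xi(W,C)\to\Emb(W,C)$ constructed there. Concretely, take the model manifold to be the submanifold $W\subset C$ itself, and let $e_0:=(W\hookrightarrow C)\in\Emb(W,C)$ be the inclusion, so that the normal bundle $\nu_{e_0}$ of $e_0$ is literally $\nu_{W\subset C}$. By construction $\tSub^\xi(C)=\coprod_{[W']}\Emb^\xi(W',C)_{/\Diff(W')}$, and the forgetful map $(\ref{a2})$ is induced on each summand by $(\ref{x29})$ followed by ${\sf Image}\colon\Emb(W',C)\to\Sub^k(C)$; hence $\tSub^\xi(C)_{|W}$ lies in the $[W]$-summand and equals $\bigl(\Emb^\xi(W,C)_{|\Emb_W(W,C)}\bigr)/\Diff(W)$, where $\Emb_W(W,C)\subset\Emb(W,C)$ is the fiber of $\Emb(W,C)\to\Sub^k(C)$ over $(W\subset C)$. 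Since $\Emb(W,C)\to\Sub^k(C)$ is a principal $\Diff(W)$-bundle (Lemma~\ref{t52}(2)), the orbit $\Emb_W(W,C)=\Diff(W)\cdot e_0$ is a free transitive $\Diff(W)$-space, and combining its local trivializations with those of $\Emb^\xi(W,C)\to\Emb(W,C)$ shows that the $\Diff(W)$-equivariant bundle $\Emb^\xi(W,C)_{|\Emb_W(W,C)}\to\Emb_W(W,C)$ has $\Diff(W)$-quotient canonically homeomorphic to its fiber over $e_0$:
\[
\tSub^\xi(C)_{|W}
~\cong~
\bigl(\Emb^\xi(W,C)\bigr)_{e_0}
~=~
\bigl\{(g,\alpha)\mid g\in\Map(W,B),\ \alpha\in{\sf Iso}(\nu_{W\subset C},g^\ast\xi)\bigr\}
~,
\]
compatibly with the set-level description of $\tSub^\xi(C)_{|W}$ given just above~(\ref{a3}).

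Granting this identification, all three assertions follow directly from Proposition~\ref{t88}. The map $(\ref{a3})$ becomes the restriction to the fiber over $e_0$ of the continuous composite $\Emb^\xi(W,C)\to\Emb(W,C)\times\Map(W,B)\xra{\pr}\Map(W,B)$, hence is continuous. The evident $\Aut(\nu_{W\subset C})$-action reparametrizes $\alpha$, $\beta\cdot(g,\alpha):=(g,\alpha\circ\beta^{-1})$; its continuity is continuity of composition of bundle maps over the compact manifold $W$ in the relevant compact-open topologies, and its freeness is the freeness of precomposition on a torsor of isomorphisms. For fixed $g$ in the image of $(\ref{a3})$, the fiber $\{(g,\alpha)\}$ is precisely an $\Aut(\nu_{W\subset C})$-torsor under this action, so $(\ref{a3})$ set-theoretically realizes the quotient onto its image, which is the clopen subset $\{g : g^\ast\xi\cong\nu_{W\subset C}\}\subset\Map(W,B)$. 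Finally, the bundle $\Emb^\xi(W,C)\to\Emb(W,C)\times\Map(W,B)$ is locally trivial, so restricting a local trivialization to $\{e_0\}\times\Map(W,B)$ yields, over each small open $U$ in the image of $(\ref{a3})$, a continuous section $g\mapsto(g,\alpha_g)$; then $(g,\beta)\mapsto(g,\alpha_g\circ\beta^{-1})$ is an $\Aut(\nu_{W\subset C})$-equivariant homeomorphism $U\times\Aut(\nu_{W\subset C})\xra{\cong}(\ref{a3})^{-1}(U)$. Hence $(\ref{a3})$ is a principal $\Aut(\nu_{W\subset C})$-bundle onto its image, i.e.\ the action is by slices.

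The step I expect to be the genuine (though modest) obstacle is justifying the displayed homeomorphism: matching the quotient topology that $\tSub^\xi(C)_{|W}$ inherits from $\coprod_{[W']}\Emb^\xi(W',C)_{/\Diff(W')}$ with the ``space of pairs $(g,\alpha)$'' description, and confirming that the $\Aut(\nu_{W\subset C})$-action transported across it is precomposition. This is the standard fact that a $G$-equivariant locally trivial bundle over a free transitive $G$-space has quotient equal to its fiber, applied with $G=\Diff(W)$ via the principal-bundle statements of Lemma~\ref{t52}; once it is in place, everything else is a formal consequence of the fiber-bundle assertions already established in Proposition~\ref{t88}.
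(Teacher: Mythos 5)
Your proof is correct and fills in precisely the inspection the paper leaves to the reader: identifying $\tSub^\xi(C)_{|W}$ with the fiber of $\Emb^\xi(W,C)\to\Emb(W,C)$ over the tautological embedding $e_0$, using the principal-$\Diff(W)$-bundle structure of $\Emb(W,C)\to\Sub^k(C)$ from Lemma~\ref{t52}(2) together with the fiber-bundle structure of $\Emb^\xi(W,C)\to Z$ established in the proof of Proposition~\ref{t88}, is exactly the right unwinding, and continuity, freeness, and local triviality then all follow as you say. One small caveat: your remark that the image of~(\ref{a3}) is clopen tacitly assumes $\Map(W,B)$ is locally path-connected (so that the isomorphism class of $g^\ast\xi$ is locally constant in $g$), which requires some niceness of $B$ not imposed in the blanket hypotheses of this section; it is harmless, since the conclusion that~(\ref{a3}) is a principal $\Aut(\nu_{W\subset C})$-bundle onto its image is all that the subsequent Lemma~\ref{t82} uses.
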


For $D \xra{\sigma} C$ a proper face-submersion, 
the commutative diagram of Observation~\ref{a5} supplies, for each $(W \subset C) \in \Sub^k(C)$, a function between underlying sets
\begin{equation}\label{a20}
\tSub^\xi(C)_{|W}
\xra{~\sigma^{-1}(-)_|~}
\tSub^\xi(D)_{|\sigma^{-1}(W)}
~.
\end{equation}

\begin{lemma}\label{t82}
Let $D \xra{\sigma} C$ be a proper face-submersion.
The function~(\ref{a20}) is continuous. 
Furthermore, if $\sigma$ is a proper face-embedding, then~(\ref{a20}) is a Serre fibration.
\end{lemma}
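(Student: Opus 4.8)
The plan is to identify the fiber $\tSub^\xi(C)_{|W}$ over a codimension-$k$ submanifold $(W\subset C)\in\Sub^k(C)$ with a space of sections of a fiber bundle over the compact manifold with corners $W$, and then to recognize the map~(\ref{a20}) as pullback of sections along $\sigma_{|\sigma^{-1}(W)}\colon\sigma^{-1}(W)\to W$. Unwinding the construction in Proposition~\ref{t88}, the set $\tSub^\xi(C)_{|W}$ of pairs $(g,\alpha)$ is precisely the set of sections of the fiber bundle $\mathcal E_W\to W$ associated to the frame bundle of $\nu_{W\subset C}$, with fiber over $w$ the space $\Iso\bigl((\nu_{W\subset C})_w,\xi\bigr)$ of linear isomorphisms from $(\nu_{W\subset C})_w$ onto a fiber of $\xi$. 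I would first check that the topology of Proposition~\ref{t88} on this set is the compact-open topology on $\Gamma(W;\mathcal E_W)$; the quickest route uses Observation~\ref{a6}: both $\tSub^\xi(C)_{|W}\to\Map(W,B)$ and $\Gamma(W;\mathcal E_W)\to\Map(W,B)$ are principal $\Aut(\nu_{W\subset C})$-bundles, and the tautological $\Aut(\nu_{W\subset C})$-equivariant bijection between them over $\Map(W,B)$ is manifestly continuous, hence—being an equivariant map over the base between principal bundles—a homeomorphism. Since $W$ is transverse to $\sigma$ (Lemma~\ref{t57}) and $\sigma^{-1}(W)$ is compact, Fact~\ref{trans.fact}(1) supplies an isomorphism $\nu_{\sigma^{-1}(W)\subset D}\cong\sigma_{|\sigma^{-1}(W)}^\ast\nu_{W\subset C}$, hence an isomorphism $\mathcal E_{\sigma^{-1}(W)}\cong\sigma_{|\sigma^{-1}(W)}^\ast\mathcal E_W$ of fiber bundles over $\sigma^{-1}(W)$; under all these identifications, the formula for $\sigma^{-1}(\alpha)$ in~(\ref{x31}) is exactly $s\mapsto s\circ\sigma_{|\sigma^{-1}(W)}$.

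Granting this, continuity of~(\ref{a20}) is immediate: pullback of sections of a fiber bundle along a fixed continuous map is continuous, since at the level of total spaces it is precomposition by $\sigma_{|\sigma^{-1}(W)}$ followed by the continuous map to the fiber product $E\bigl(\sigma_{|\sigma^{-1}(W)}^\ast\mathcal E_W\bigr)=\sigma^{-1}(W)\times_W E(\mathcal E_W)$.

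For the Serre fibration claim I would first reduce, using the example following Definition~\ref{d6}, to the case where $\sigma$ is the inclusion of a closed union of faces $C_0\subset C$. Then $\sigma^{-1}(W)=W\cap C_0$ is a closed union of faces of $W$, so $W\cap C_0\hookrightarrow W$ is a cofibration (e.g.\ by collaring), and~(\ref{a20}) is the restriction map $\Gamma(W;\mathcal E_W)\to\Gamma\bigl(W\cap C_0;\mathcal E_W|_{W\cap C_0}\bigr)$. This is a Hurewicz fibration—hence a Serre fibration—by the standard fact that restriction of sections of a fibration along a cofibration is a fibration: a lifting problem against $\DD^q\times\{0\}\hookrightarrow\DD^q\times I$ unwinds to the problem of extending a section of the pulled-back fiber bundle over $\DD^q\times I\times W$ from the subspace $\DD^q\times\bigl(\{0\}\times W\cup I\times(W\cap C_0)\bigr)$, and this subspace is a strong deformation retract of $\DD^q\times I\times W$ because $W\cap C_0\hookrightarrow W$ is a cofibration, so the section extends by transport along the contracting homotopy (homotopy invariance of fiber bundles over paracompact spaces).

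The step I expect to be the main obstacle is the identification in the first paragraph: confirming that the topology produced on $\tSub^\xi(C)_{|W}$ by the layered pullback-and-quotient construction of Proposition~\ref{t88} coincides with the section-space topology, and that the canonical isomorphism $\mathcal E_{\sigma^{-1}(W)}\cong\sigma_{|\sigma^{-1}(W)}^\ast\mathcal E_W$ genuinely intertwines~(\ref{a20}) with pullback of sections. Everything downstream is formal. If one prefers not to set up $\mathcal E_W$ explicitly, the same two conclusions can be obtained by working throughout with the principal $\Aut(\nu_{W\subset C})$-bundle description of Observation~\ref{a6}—the map~(\ref{a20}) covers the restriction $\Map(W,B)\to\Map(\sigma^{-1}(W),B)$ and is equivariant over the gauge-restriction homomorphism $\Aut(\nu_{W\subset C})\to\Aut(\nu_{\sigma^{-1}(W)\subset D})$—at the cost of a somewhat longer equivariant homotopy-lifting argument.
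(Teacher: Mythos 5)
Your proposal is correct in structure but takes a genuinely different route from the paper, and the comparison is instructive. The paper proves both claims by working directly with the principal $\Aut(\nu_{W\subset C})$-bundle description of Observation~\ref{a6}: for continuity it inspects the commutative square~(\ref{x34}) and passes to an adjoint vector-bundle isomorphism~(\ref{a11}); for the Serre-fibration claim it factors~(\ref{a20}) as $\tSub^\xi(C)_{|W}\to\square\to\tSub^\xi(D)_{|\sigma^{-1}(W)}$ where $\square$ is a point-set pullback, shows the second map is a Serre fibration by base-changing $\Map(W,B)\to\Map(\sigma^{-1}(W),B)$, shows the first is \emph{locally} a Serre fibration using the principal bundle structure together with the restriction homomorphism~(\ref{x35}), and then invokes paracompactness/Hausdorffness (Lemma~\ref{t83}) and Spanier's local-to-global theorem to globalize. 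You instead re-express the fiber $\tSub^\xi(C)_{|W}$ as the compact-open section space $\Gamma(W;\mathcal E_W)$ of the fiber bundle $\mathcal E_W={\sf Fr}(\nu_{W\subset C})\times_{{\sf GL}_k}{\sf Fr}(\xi)$, identify~(\ref{a20}) with restriction of sections along $\sigma_{|\sigma^{-1}(W)}$ (via Fact~\ref{trans.fact}(1)), and then deduce both continuity and the Hurewicz fibration property from standard formal facts about section spaces and cofibrations. Your route sidesteps both the pullback factorization through $\square$ and the Spanier local-to-global step, which is a genuine simplification; it also immediately gives a Hurewicz, not merely Serre, fibration.

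The gap you flag is real and is the substantive cost of your approach: the identification $\tSub^\xi(C)_{|W}\cong\Gamma(W;\mathcal E_W)$ as topological spaces is not automatic, because the topology of Proposition~\ref{t88} arises from a quotient of the embedding-space pullback $\Emb^\xi(W,C)$, not from a section-space construction. The principal-bundle comparison you sketch is the right device, but as stated it is incomplete: Observation~\ref{a6} gives the principal $\Aut(\nu_{W\subset C})$-bundle structure on $\tSub^\xi(C)_{|W}\to\Map(W,B)$, but you assert without argument that $\Gamma(W;\mathcal E_W)\to\Map(W,B)$ is \emph{also} locally trivial (so that a continuous equivariant bijection over the base is automatically a homeomorphism), and you should also exhibit why the tautological bijection is continuous in at least one direction with respect to the topology of Proposition~\ref{t88} before the principal-bundle argument can upgrade it to a homeomorphism. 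Once that comparison of topologies is actually carried out (which is roughly the content the paper's proof of Observation~\ref{a6} has to establish anyway), the rest of your argument is sound: the reduction to $C_0\subset C$ a closed union of faces is legitimate, $W\cap C_0\hookrightarrow W$ is a closed cofibration, $\mathcal E_W\to W$ is a Hurewicz fibration (fiber bundle over the compact, hence paracompact, $W$), and your lifting argument for restriction of sections along a cofibration is correct.
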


\begin{proof}

Now, notice that the resulting map between fibers fits into a commutative diagram among sets:
\begin{equation}\label{x34}
\begin{tikzcd}
	{\Sub^\xi(C)_{|W}} && {\Sub^\xi(D)_{|\sigma^{-1}(W)}} \\
	{\Map(W,B)} && {\Map(\sigma^{-1}(W) ,B)}
	\arrow["{\sigma^{-1}(-)_{|}}", from=1-1, to=1-3]
	\arrow["{(\ref{a3})}"', from=1-1, to=2-1]
	\arrow["{(\ref{a3})}", from=1-3, to=2-3]
	\arrow["{- \circ \sigma_{|\sigma^{-1}(W)}}", from=2-1, to=2-3]
\end{tikzcd}
~.
\end{equation}
The lower horizontal map in this diagram is continuous because restriction maps are.  
By Observation~\ref{a6}, the vertical maps~(\ref{a3}) in this diagram are continuous.
Consequently, the map $\Sub^\xi(C)_{|W} \to \Map(\sigma^{-1}(W),B)$ in the diagram~(\ref{x34}) is continuous.
This map is adjoint to the diagonal map in the diagram among topological spaces:
\begin{equation}\label{a10}
\begin{tikzcd}
	{\sigma^{-1}(W)} & {\sigma^{-1}(W) \times \Sub^\xi(C)_{|W}} \\
	W & {W \times \Sub^\xi(C)_{|W}} & {W \times \Map(W,B)} & B
	\arrow["{\sigma_{|}}"', from=1-1, to=2-1]
	\arrow["\pr"', from=1-2, to=1-1]
	\arrow["{\sigma_{|}}"', from=1-2, to=2-2]
	\arrow["G", from=1-2, to=2-4]
	\arrow["\pr"', from=2-2, to=2-1]
	\arrow["{W \times (\ref{a3})}"', from=2-2, to=2-3]
	\arrow["\ev"', from=2-3, to=2-4]
\end{tikzcd}
~.
\end{equation}
Denote the composite horizontal map $W \times \Sub^\xi(C)_{|W} \xra{g} B$. 
By definition of $\Sub^\xi(C)_{|W}$, there is a canonical isomorphism $\pr^\ast \nu_{W\subset C} \overset{\alpha} \cong g^\ast \xi$ between vector bundles over $W \times \Sub^\xi(C)_{|W}$.
This isomorphism pulls back as an isomorphism between vector bundles over $\sigma^{-1}(W) \times \Sub^\xi(C)_{|W}$,
\begin{equation}\label{a11}
\pr^\ast \nu_{\sigma^{-1}(W) \subset D}
\underset{\rm Fact~\ref{trans.fact}(1)}\cong
\pr^\ast {\sigma_|}^\ast \nu_{W \subset C}
=
{\sigma_|}^\ast \pr^\ast \nu_{W \subset C}
\overset{{\sigma_|}^\ast \alpha}\cong
{\sigma_|}^\ast g^\ast \xi
=
G^\ast \xi
~,
\end{equation}
in which the equalities follow from commutativity of the diagram~(\ref{a10}).
Unpacking the definition of $\Sub^\xi(D)_{|\sigma^{-1}(W)}$ reveals how the upper horizontal map in~(\ref{x34}), as it lifts the map $\Sub^\xi(C)_{|W} \to \Map(\sigma^{-1}(W),B)$, is adjoint to the isomorphism~(\ref{a11}).
The fact that the isomorphism~(\ref{a11}) is, in particular, continuous implies the upper horizontal map in~(\ref{x34}) is continuous, as desired.

Generally, for $\chi = \left( E(\chi) \to B(\chi) \right)$ a vector bundle, consider the fiber subbundle $\Aut^{\sf rel}(\chi) \to B(\chi)$ of $\Map^{\sf rel}_{B(\chi)}(E(\chi),E(\chi)) \to B(\chi)$ consisting of the fiberwise linear isomorphisms.
This fiber bundle over $B(\chi)$ is a group-object among such.
The topological group $\Aut(\chi)$ of automorphisms of $\chi$ is the topological space of sections of $\Aut^{\sf rel}(\chi) \to B(\chi)$.
Observe the canonical isomorphisms between fiber bundles over $\sigma^{-1}(W)$,
\[
(\sigma_{|\sigma^{-1}(W)})^\ast \Aut^{\sf rel}(\nu_{W \subset C})
~\cong~
\Aut^{\sf rel}\left( (\sigma_{|\sigma^{-1}(W)})^\ast \nu_{W \subset C} \right)
~\cong~
\Aut^{\sf rel}( \nu_{\sigma^{-1}(W) \subset D} )
~,
\]
the first of which is direct from universal properties, and the second of which follows from Fact~\ref{trans.fact}.
Taking sections determines a continuous homomorphism:
{\small
\begin{equation}\label{x35}
\Aut(\nu_{W \subset C})
=
\Gamma\left(
\Aut^{\sf rel}(\nu_{W \subset C})
\right)
\xra{\sigma_{|\sigma^{-1}(W)}}
\Gamma\left(\sigma_{|\sigma^{-1}(W)})^\ast \Aut^{\sf rel}(\nu_{W \subset C}\right)
\cong
\Gamma\left( \Aut^{\sf rel}( \nu_{\sigma^{-1}(W) \subset D} ) \right)
=
\Aut(\nu_{\sigma^{-1}(W) \subset D})
~.
\end{equation}
}
Notice that the map $\tSub^\xi(C)_{|W} \xra{\sigma^{-1}(-)} \tSub^\xi(D)_{|\sigma^{-1}(W)}$ between fibers is equivariant with respect to the continuous homomorphism $\Aut(\nu_{W \subset C}) \xra{(\ref{x35})} \Aut(\nu_{\sigma^{-1}(W) \subset D})$.

Now assume $\sigma$ is a proper face-embedding.
The diagram~(\ref{x34}) determines a commutative diagram among topological spaces,
\[
\begin{tikzcd}
	{\Sub^\xi(C)_{|W}} & \square && {\Sub^\xi(D)_{|\sigma^{-1}(W)}} \\
	{\Map(W,B)} & {\Map(W,B)} && {\Map(\sigma^{-1}(W),B)}
	\arrow[from=1-1, to=1-2]
	\arrow[from=1-1, to=2-1]
	\arrow[from=1-2, to=1-4]
	\arrow[from=1-2, to=2-2]
	\arrow[from=1-4, to=2-4]
	\arrow["{=}", from=2-1, to=2-2]
	\arrow["{-\circ \sigma_{|\sigma^{-1}(W)}}", from=2-2, to=2-4]
\end{tikzcd}
~,
\]
in which $\square$ is the pullback of the right square, and the map to it is the canonical one as such.  
The existence of collar neighborhoods for manifolds with corners ensures the map $D \xra{\sigma} C$ is a cofibration.
Consequently, the map $\Map(W,B) \xra{- \circ \sigma_{|\sigma^{-1}(W)}} \Map(\sigma^{-1}(W),B)$ is a Serre fibration.
Because Serre fibrations are stable under base-change, it follows that the map $\square \to \tSub^\xi(D)_{|\sigma^{-1}(W)}$ is a Serre fibration.
Similarly, the map~(\ref{x35}) is a Serre fibration.  
Together with Observation~\ref{a6}, it follows that the map $\tSub^\xi(C)_{|W} \to \square$ is locally a Serre fibration.
Using paracompactness and Hausdorffness of mapping spaces and of $\Sub^k(D)$ (Lemma~\ref{t83}) and thereafter of $\Sub^k(D)_{|\sigma^{-1}(W)}$, 
Theorem~13 of Section 7 of Chapter 2 of~\cite{spanier} then implies the map $\tSub^\xi(C)_{|W} \to \square$ is a Serre fibration.
Using that the composition of Serre fibrations is a Serre fibration, we conclude that the map $\tSub^\xi(C)_{|W} \to \tSub^\xi(D)_{|\sigma^{-1}(W)}$ is a Serre fibration, as desired.

\end{proof}

\begin{lemma}\label{t77}
Let $D \xra{\sigma} C$ be a proper face-submersion.  
The map $\tSub^\xi(C) \xra{\sigma^{-1}(-)} \tSub^\xi(D)$ is continuous.
Furthermore, if $\sigma$ is a proper face-embedding, then this map is a Serre fibration.
\end{lemma}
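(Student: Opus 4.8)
\textbf{Plan for the proof of Lemma~\ref{t77}.}
The strategy is to reduce the global statement about $\tSub^\xi(C) \to \tSub^\xi(D)$ to the fiberwise statement about $\tSub^\xi(C)_{|W} \to \tSub^\xi(D)_{|\sigma^{-1}(W)}$, which is exactly Lemma~\ref{t82}, organized over the base maps $\Sub^k(C) \to \Sub^k(D)$. First I would record the commutative square
\[

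\begin{tikzcd}
	{\tSub^\xi(C)} & {\tSub^\xi(D)} \\
	{\Sub^k(C)} & {\Sub^k(D)}
	\arrow["{\sigma^{-1}(-)}", from=1-1, to=1-2]
	\arrow["{(\ref{a2})}"', from=1-1, to=2-1]
	\arrow["{(\ref{a2})}", from=1-2, to=2-2]
	\arrow["{\sigma^{-1}(-)}"', from=2-1, to=2-2]
\end{tikzcd}
\]
which commutes by Observation~\ref{a5}, and whose bottom map is continuous by Lemma~\ref{t58} (and whose vertical maps are fiber bundles by Lemma~\ref{t79}).

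For continuity, I would argue locally over $\Sub^k(C)$. Fix $(W \subset C) \in \Sub^k(C)$, choose a tubular neighborhood $\psi\colon E(\nu_{W\subset C}) \hookrightarrow C$ as in Lemma~\ref{t52}, and work in the open chart $\Gamma^{\sf sm}(\nu_{W\subset C}) \subset \Sub^k(C)$ about $W$. Using the local trivialization $F$ produced in the proof of Lemma~\ref{t79}, one has $\tSub^\xi(C)_{|\Gamma^{\sf sm}(\nu_{W\subset C})} \cong \Gamma^{\sf sm}(\nu_{W\subset C}) \times \tSub^\xi(C)_{|W}$, and similarly downstairs over the image chart about $\sigma^{-1}(W)$ (noting that $\sigma^{-1}(-)$ carries $\Gamma^{\sf sm}(\nu_{W\subset C})$ into such a chart, compatibly with the tubular-neighborhood data chosen as in the proof of Lemma~\ref{t58}). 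Under these trivializations the map $\sigma^{-1}(-)$ becomes the product of the continuous map on charts (Lemma~\ref{t58}) with the continuous map $\sigma^{-1}(-)_|\colon \tSub^\xi(C)_{|W} \to \tSub^\xi(D)_{|\sigma^{-1}(W)}$ of Lemma~\ref{t82}, up to the continuous reparametrization bookkeeping that $F$ introduces. Hence $\sigma^{-1}(-)$ is continuous near each point, and therefore continuous.

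For the Serre fibration claim, assume $\sigma$ is a proper face-embedding. By Lemma~\ref{t58} the base map $\Sub^k(C) \xra{\sigma^{-1}(-)} \Sub^k(D)$ is a Serre fibration (indeed Lemma~\ref{t61}); by Proposition~\ref{t88}(2)(b) the map $\tSub^\xi(C) \to \Sub^k(C)$ is a Serre fibration; composing, $\tSub^\xi(C) \to \Sub^k(D)$ is a Serre fibration. Meanwhile $\tSub^\xi(D) \to \Sub^k(D)$ is a Serre fibration, and Lemma~\ref{t82} gives that the map between fibers $\tSub^\xi(C)_{|W} \to \tSub^\xi(D)_{|\sigma^{-1}(W)}$ is a Serre fibration for each $W$. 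Via the local trivializations above this shows $\tSub^\xi(C) \to \tSub^\xi(D)$ is locally (over $\Sub^k(D)$, equivalently over $\tSub^\xi(D)$) a Serre fibration; invoking the local-to-global criterion for Serre fibrations over a paracompact Hausdorff base (Theorem~13, Section~7, Chapter~2 of~\cite{spanier}, applicable since mapping spaces and $\Sub^k(D)$ are paracompact Hausdorff by Lemma~\ref{t83}) upgrades this to a genuine Serre fibration. The main obstacle I anticipate is the careful matching of tubular-neighborhood and trivialization data upstairs and downstairs — i.e., verifying that the local product decompositions of $\tSub^\xi(C)$ and $\tSub^\xi(D)$ are genuinely compatible with $\sigma^{-1}(-)$ — so that Lemmas~\ref{t58}, \ref{t79}, and~\ref{t82} can be combined cleanly; everything else is formal manipulation of fibrations.
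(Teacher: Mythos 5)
Your continuity argument matches the paper's: reduce to the chart $\Gamma^{\sf sm}(\nu_{W\subset C})$ of Lemma~\ref{t52}(1), use the trivialization of Lemma~\ref{t79}, and observe that in these coordinates the map is the product of the restriction-of-sections map~(\ref{e58}) (continuous by Lemma~\ref{t58}) with the fiberwise map (continuous by Lemma~\ref{t82}).

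The Serre-fibration argument has a gap at the sentence "Via the local trivializations above this shows $\tSub^\xi(C) \to \tSub^\xi(D)$ is locally (over $\Sub^k(D)$, equivalently over $\tSub^\xi(D)$) a Serre fibration." Your trivializations live over charts $\Gamma^{\sf sm}(\nu_{W\subset C}) \subset \Sub^k(C)$, i.e., over the source of $\sigma^{-1}(-)\colon \Sub^k(C)\to\Sub^k(D)$; they do not produce an open cover of $\Sub^k(D)$ or $\tSub^\xi(D)$ over which the restricted map has a product form, because the preimage $(\sigma^{-1}(-))^{-1}(V)\subset\Sub^k(C)$ of a small open $V\subset\Sub^k(D)$ is generally not contained in a single chart and admits no product decomposition. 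So Spanier's local-to-global criterion cannot be applied over $\tSub^\xi(D)$ with the data you have. The paper's resolution is to insert the pullback $\square := \Sub^k(C) \times_{\Sub^k(D)} \tSub^\xi(D)$ and factor $\sigma^{-1}(-)$ as $\tSub^\xi(C)\to\square\to\tSub^\xi(D)$. The second leg is a base-change of the Serre fibration of Lemma~\ref{t61}, hence a Serre fibration. For the first leg, $\square\to\Sub^k(C)$ is a fiber bundle (base-change of Lemma~\ref{t79}), so the charts of $\Sub^k(C)$ pull back to an open cover of $\square$, and over each such open the map $\tSub^\xi(C)\to\square$ becomes exactly your product $\Gamma^{\sf sm}(\nu_{W\subset C})\times \tSub^\xi(C)_{|W}\to\Gamma^{\sf sm}(\nu_{W\subset C})\times\tSub^\xi(D)_{|\sigma^{-1}(W)}$, a Serre fibration by Lemma~\ref{t82}; since $\square$ is paracompact Hausdorff (Lemma~\ref{t83}), this upgrades to a genuine Serre fibration. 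Your ingredients are exactly the right ones, but the local criterion must be applied over $\square$, not over $\tSub^\xi(D)$; without identifying that intermediate object the argument does not close.
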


\begin{proof}

We prove this map, which is~(\ref{x31}), is continuous at each element $(W,f,\alpha) \in \tSub^\xi(C)$.
This proof will reference the following diagram of sets, which is explained below:
{\Small
\[
\begin{tikzcd}
	{\Gamma^{\sf sm}(\nu_{W\subset C}) \times \tSub^\xi(C)_{|W}} & {\tSub^\xi(C)_{|\Gamma^{\sf sm}(\nu_{W \subset C})}} & {\tSub^\xi(D)_{|\Gamma^{\sf sm}( \nu_{\sigma^{-1}(W) \subset D} )}} & {\Gamma^{\sf sm}(\nu_{\sigma^{-1}(W)\subset D}) \times \tSub^\xi(D)_{|\sigma^{-1}(W)}} \\
	& {\tSub^\xi(C)} & {\tSub^\xi(D)}
	\arrow["\cong"{description}, draw=none, from=1-1, to=1-2]
	\arrow["{\tSub^\xi_{|(\ref{e58})}}", from=1-2, to=1-3]
	\arrow[from=1-2, to=2-2]
	\arrow["\cong"{description}, draw=none, from=1-3, to=1-4]
	\arrow[from=1-3, to=2-3]
	\arrow["{(\ref{x31})}", from=2-2, to=2-3]
\end{tikzcd}
~.
\]
}
Choose compatible tubular neighborhoods of $W \subset C$ and of $\sigma^{-1}(W) \subset D$ as in the proof of Lemma~\ref{t58}.
The square is then the base-change (in the category $\Ar(\Sets)$) of the square~(\ref{x33}) along $\left(\tSub^\xi(C) \xra{\sigma^{-1}(-)} \tSub^\xi(D)\right) \to \left(\Sub^k(C) \xra{\sigma^{-1}(-)} \Sub^k(D)\right)$.
The two bijections, which are homeomorphisms, are those from the proof of Lemma~\ref{t79}.
Now, inspecting the definition of each map in the top horizontal composite 
{\small
\[
\Gamma^{\sf sm}(\nu_{W\subset C})
\times
\tSub^\xi(C)_{|W}
~\cong~
\tSub^\xi(C)_{|\Gamma^{\sf sm}(\nu_{W\subset C})}
\xra{(\ref{x31})}
\tSub^\xi(D)_{|\Gamma^{\sf sm}(\nu_{\sigma^{-1}(W)\subset D})}
~\cong~
\Gamma^{\sf sm}(\nu_{\sigma^{-1}(W)\subset D})
\times
\tSub^\xi(D)_{|\sigma^{-1}(W)}
\]
}
reveals that it
is a product of the map~(\ref{e58}) between spaces of smooth sections and the map~(\ref{x34}) between fibers.  
Lemma~\ref{t52}(1) implies the vertical maps are open embeddings.
Continuity of~(\ref{x31}) at $(W,f,\alpha) \in \tSub^\xi(C)$ then follows from the continuity of the map~(\ref{e58}) and Lemma~\ref{t82}.

Now suppose $D \xra{\sigma} C$ is a proper face-embedding.
We may factor the map $\tSub^\xi(C) \xra{\sigma^{-1}(-)} \tSub^\xi(D)$ as the top horizontal map in the commutative diagram among topological spaces:
\[
\begin{tikzcd}
	{\tSub^\xi(C)} & \square & {\tSub^\xi(D)} \\
	{\Sub^k(C)} & {\Sub^k(C)} & {\Sub^k(D)}
	\arrow[from=1-1, to=1-2]
	\arrow[from=1-1, to=2-1]
	\arrow[from=1-2, to=1-3]
	\arrow[from=1-2, to=2-2]
	\arrow["\lrcorner"{anchor=center, pos=0.125}, draw=none, from=1-2, to=2-3]
	\arrow[from=1-3, to=2-3]
	\arrow["{=}", from=2-1, to=2-2]
	\arrow["{\sigma^{-1}(-)}", from=2-2, to=2-3]
\end{tikzcd}
\]
where $\square$ makes the right square pullback the map to it is the canonical one as such.
Because Serre fibrations are stable under base-change, Lemma~\ref{t61} implies the map $\square \to \tSub^\xi(D)$ is a Serre fibration.
Because Serre fibrations are stable under composition, the map $\tSub^\xi(C) \xra{\sigma^{-1}(-)}\tSub^\xi(D)$ is a Serre fibration provided the map $\tSub^\xi(C) \to \square$ is a Serre fibration.
Because fiber bundles are stable under base-change, Lemma~\ref{t79} implies the middle vertical map is a fiber bundle.  
As argued above, the base-change of $\tSub^\xi(C) \to \square$ along the open embedding $\Gamma^{\sf sm}(\nu_{W \subset C}) \hookrightarrow \Sub^k(C)$ is homeomorphic with a product of maps
\[
\Gamma^{\sf sm}(\nu_{W \subset C})
\times 
\tSub^\xi(C)_{|W}
\xra{~\id \times (\ref{x34})~}
\Gamma^{\sf sm}(\nu_{W \subset C}) 
\times
\tSub^\xi(D)_{|\sigma^{-1}(W)}
~.
\]
Lemma~\ref{t82} states that~(\ref{x34}) is a Serre fibration.
Therefore, the map $\tSub^\xi(C) \to \square$ is, locally over $\square$, a Serre fibration.
Lemma~\ref{t83} implies the topological space $\square$ is metrizable, and therefore paracompact and Hausdorff. A local Serre fibration is Serre fibration if the base is paracompact and Hausdorff (e.g., see Theorem~13, \S2.7 of~\cite{spanier}), so the map $\tSub^\xi(C) \to \square$ is therefore a Serre fibration, as desired.

\end{proof}

Lemma~\ref{t77} justifies the following.
\begin{definition}\label{a22}
    The simplicial topological space $\tBord^\xi_\bullet(M)$ is
    \[
    \tBord^\xi_\bullet(M)
    ~:=~
    \tSub^\xi(\Delta^\bullet \times M)
    ~.
    \]
\end{definition}

Lemma~\ref{t88}(2)(c) immediately implies the following.
\begin{cor}\label{a25}
The underlying simplicial space of the simplicial topological space of Definition~\ref{a22} is identical with that of Definition~\ref{def.Bord.s.space}
\end{cor}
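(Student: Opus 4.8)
## Proof plan for Corollary~\ref{a25}

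The statement to prove is that the simplicial topological space $\tBord^\xi_\bullet(M) = \tSub^\xi(\Delta^\bullet \times M)$ of Definition~\ref{a22}, with its topology, has underlying simplicial space identical to $\Bord^\xi_\bullet(M)$ of Definition~\ref{def.Bord.s.space}. The plan is to unwind both sides levelwise and then check compatibility with simplicial structure maps. For a fixed $p \geq 0$, on the one hand $\Bord^\xi_p(M) \simeq \Sub^\xi(\Delta^p \times M)$ by Observation~\ref{t25}; on the other hand the level-$p$ space of $\tBord^\xi_\bullet(M)$ is the topological space $\tSub^\xi(\Delta^p \times M)$ of Proposition~\ref{t88}, whose underlying space is, by Proposition~\ref{t88}(2)(c) — i.e.\ Lemma~\ref{t88}(2)(c) as cited — identical with the space $\Sub^\xi(\Delta^p \times M)$ of Definition~\ref{d5}. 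So at each level the two underlying spaces agree on the nose, which is exactly what the Corollary asserts should be concluded from Lemma~\ref{t88}(2)(c).

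The one genuine thing to verify beyond the levelwise statement is that these levelwise identifications are compatible with the simplicial structure maps, so that they assemble into an identification of simplicial spaces rather than just of the underlying sequences of spaces. First I would recall that for each morphism $[p] \xra{\sigma} [q]$ in $\bDelta$, Observation~\ref{t60} gives that $\Delta^p \times M \xra{\sigma_\ast \times M} \Delta^q \times M$ is a proper face-submersion; the structure map of $\tBord^\xi_\bullet(M)$ over $\sigma$ is $\tSub^\xi(\sigma_\ast \times M)^{-1}(-) = (\ref{x31})$ for this proper face-submersion, while the structure map of $\Bord^\xi_\bullet(M)$ over $\sigma$ is obtained by pullback of the structure maps of $\Bord^k_\bullet(M)$ through the pullback square of Definition~\ref{d5}, using the functoriality established in \S\ref{sec.normal} (Lemma~\ref{t50}). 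Both are, by construction, the map "take the preimage of $W$, pull back $g$ along the restriction of $\sigma_\ast \times M$, and transport $\alpha$ via the canonical isomorphism of normal bundles of Fact~\ref{trans.fact}(1)"; that is precisely the content of the comparison already made in the proof of Proposition~\ref{t88} and of Observation~\ref{a5}. So the levelwise identifications intertwine the two families of structure maps, and the only remaining point is coherence of these identifications with composition, which is Observation~\ref{a7} together with Fact~\ref{trans.fact}(2).

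The main obstacle — such as it is — is bookkeeping rather than mathematics: one must be careful that the identification of $\tSub^\xi(\Delta^p\times M)$ with $\Sub^\xi(\Delta^p\times M)$ supplied by Proposition~\ref{t88}(2)(c) is the \emph{same} identification that is natural in proper face-submersions, i.e.\ that the pullback square~(\ref{x40}) used to prove~(2)(c) is compatible with the classifying-map description~(\ref{e55}) of $\nu^{\sf fib}$ used to build $\Sub^\xi$ in Definition~\ref{d5}. Since Proposition~\ref{t88} was proved precisely by exhibiting $\tSub^\xi(C)\to\Sub^k(C)$ as the base-change of the universal fibration classifying $B \to \BO(k)$, and $\Sub^\xi(C)$ is by Definition~\ref{d5} the same base-change, this compatibility holds by construction. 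Thus the proof is a short argument assembling Proposition~\ref{t88}(2)(c) across all levels and noting naturality; I would write it as: "This is immediate from Proposition~\ref{t88}(2)(c), which identifies the level-$p$ spaces, together with the fact that the simplicial structure maps on both sides are given by the common formula~(\ref{x31}), whose compatibility with composition is Observation~\ref{a7}." I expect no step to present a real difficulty.
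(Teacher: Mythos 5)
Your proof is correct and follows the same essential approach as the paper, which simply states that Proposition~\ref{t88}(2)(c) immediately implies the corollary. The extra care you take in verifying compatibility of the levelwise identification with the simplicial structure maps (via the common formula~(\ref{x31}) and Observation~\ref{a7}) is not written out in the paper, but it is indeed the implicit content of the word ``immediately''; your explicit verification is sound and arguably makes the argument more complete.
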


\subsection{The map $\dBord_\bullet^{\xi}(M)\ra \Bord_\bullet^{\xi}(M)$ is a trivial Kan fibration}
Here, we prove that the simplicial set $\dBord^\xi_\bullet(M)$ is a Kan complex, and that its geometric realization is identical with that of $\Bord^\xi_\bullet(M)$.

Let $C$ be a manifold with corners satisfying the property that the closure of each face is a manifold with corners in its own right.  
Let $C_0 \subset C$ be a closed union of faces of a manifold with corners.  
Consider the poset $\cP(C_0)$ in which an element is a face of $C$ that is contained in $C_0$, and whose partial order is given by $E \leq F$ if there is containment in the closure: $E \subset \ov{F}$. 
Note that, if $C_0 \subset C$ is the closure of a face, then $\cP(C_0)$ has a final object.  
The assumption on $C$ ensures, for $E \leq F$ in $\cP(C_0)$, that the inclusion between closures $\ov{E} \hookrightarrow \ov{F}$ is a proper face-embedding between manifolds with corners.  
Consider the functor
\[
\cP(C_0)^{\op}
\xra{~\Sub^k(-)~}
\Top^{\Sub^k(C)/}
\]
whose value on $F \in \cP(C_0)$ is the map $\Sub^k(C) \xra{(\ref{e56})}\Sub^k(\ov{F})$ which Lemma~\ref{t58} ensures is continuous. 
Functoriality of this assignment follows from the standard identity in set theory, $(\tau \circ \sigma)^{-1}(W) = \sigma^{-1}( \tau^{-1}(W))$.
This functor determines a canonical map to the limit topological space
\[
\Sub^k(C)
\longrightarrow
\underset{F \in \cP(C_0)^{\op}}
\limit
\Sub^k(\ov{F})
~=:~
\Sub^k(C_0)
~.
\]
Denote the underlying set of this limit as $\dSub^k(C_0)$.
Consider the commutative diagram among topological spaces,
\[
\begin{tikzcd}
	{\dSub^k(C)} & {\Sub^k(C)} \\
	{\dSub^k(C_0)} & {\Sub^k(C_0)}
	\arrow[from=1-1, to=1-2]
	\arrow[from=1-1, to=2-1]
	\arrow[from=1-2, to=2-2]
	\arrow[from=2-1, to=2-2]
\end{tikzcd}
~,
\]
in which the downward maps are given by intersection with $C_0$ and the rightward maps are the identity maps on underlying sets.  
This commutative diagram determines a map to the homotopy pullback:
\begin{equation}\label{e79}
\dSub^k(C)
\longrightarrow
\dSub^k(C_0) \underset{\Sub^k(C_0)} {\times^{\sf h}} \Sub^k(C)
~.
\end{equation}

\begin{lemma}\label{t75}
Let $C$ be a manifold with corners satisfying the property that the closure of each face is a manifold with corners in its own right.  
Let $C_0 \subset C$ be a closed union of faces.
The map~(\ref{e79}) is surjective on path-components.

\end{lemma}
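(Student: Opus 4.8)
The plan is to unwind both sides of the map~(\ref{e79}) into concrete geometric data and then do a hands-on extension argument. Let $\sE := \dSub^k(C_0) \underset{\Sub^k(C_0)} {\times^{\sf h}} \Sub^k(C)$ be the homotopy pullback. A point in $\sE$ consists of: a compact codimension-$k$ submanifold $W \subset C$; a compatible family $(W_F \subset \ov F)_{F \in \cP(C_0)}$ of compact codimension-$k$ submanifolds (i.e.\ an element of $\dSub^k(C_0)$); and a path in $\Sub^k(C_0)$ from the image of $W$ (namely the family $(W \cap \ov F)_F$) to the family $(W_F)_F$. Since $\Sub^k(C_0)$ is a finite limit of the spaces $\Sub^k(\ov F)$, which are locally modeled on spaces of smooth sections of normal bundles by Lemma~\ref{t52}(1), such a path amounts to a compatible family of isotopies $\varphi_F \colon I \times \ov F \to \ov F$ carrying $W \cap \ov F$ to $W_F$, for $F$ ranging over the faces in $C_0$, agreeing on overlaps $\ov E \subset \ov F$. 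To prove $\pi_0$-surjectivity of~(\ref{e79}) it suffices to exhibit, starting from such data, a single compact codimension-$k$ submanifold $W' \subset C$ whose image in $\sE$ lies in the same path-component.

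First I would use the ambient isotopy extension theorem (cf.\ Palais~\cite{palais}, and Lemma~\ref{t52} for the fibration structure of the relevant embedding spaces) to upgrade the compatible family of isotopies $(\varphi_F)$ on the faces of $C_0$ to an ambient isotopy $\Phi \colon I \times C \to C$ of the whole manifold with corners $C$, arranged to respect all faces of $C$ and to restrict on $I \times \ov F$ to $\varphi_F$ for each $F \in \cP(C_0)$. This is done by induction over the skeleton of the stratification of $C$ by $\cP(C_0)$: having fixed the ambient isotopy over a union of lower faces, extend over the next face using a collar of that face together with relative isotopy extension, exactly the kind of inductive collar argument that appears in block-automorphism arguments (e.g.~\cite{blr}, \cite{krannich}). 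Set $W' := \Phi_1(W)$. Then $W' \subset C$ is a compact codimension-$k$ submanifold, and by construction $W' \cap \ov F = \varphi_{F,1}(W \cap \ov F) = W_F$ for every face $F$ in $C_0$, so the image of $W'$ under~(\ref{e79}) has the correct $\dSub^k(C_0)$-component. The straight-line reversal of $\Phi$ gives a path in $\Sub^k(C)$ from $W$ to $W'$, and this path is compatible (via the same isotopies $\varphi_F$ traversed backwards) with the prescribed path-component data in $\sE$; hence the image of $W'$ and the given point of $\sE$ lie in the same path-component.

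The main obstacle I expect is the bookkeeping for the \emph{compatibility} of the ambient isotopy across the corner strata: the isotopies $\varphi_F$ are only given on the \emph{closed} faces $\ov F$ and need not a priori be compatible with collar structures, so the inductive extension has to be carried out with collars chosen coherently (a ``system of compatible collars'' for the faces of $C$), and one must check at each stage that the extension already constructed over $\bigcup_{E < F} \ov E$ extends over $\ov F$ rel that union — this uses that $\ov E \hookrightarrow \ov F$ is a proper face-embedding (guaranteed by the hypothesis that closures of faces are themselves manifolds with corners) and that relative isotopy extension holds for manifolds with corners along closed unions of faces. A secondary, purely formal point is to verify that the homotopy pullback $\sE$ really is described by the explicit ``submanifold $+$ family on $C_0$ $+$ compatible isotopies'' data above; this follows by combining Observation~\ref{t91} with Lemma~\ref{t52}(1), since $\Sub^k(C_0)$ is a finite limit and each $\Sub^k(\ov F)$ is locally a convex space of sections, so its based path spaces and the homotopy-pullback correction term are controlled by finitely many section-space isotopies. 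Once these two points are in hand, the argument above produces the required $W'$ and completes the proof.
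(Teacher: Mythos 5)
Your proposal takes a genuinely different route from the paper. The paper never invokes ambient isotopy extension: it unpacks the homotopy pullback as a pair $(\w{W}\subset C,\ W_I\subset I\times C_0)$ with $W_I$ a fiber bundle over $I$ agreeing with $\w{W}\cap C_0$ at time $0$, smooths $W_I$, and then \emph{directly constructs} a family $\w{W}_I\subset I\times C$ fibered over $I$ that extends both $\w W$ and $W_I$. The construction subdivides $I$ into small subintervals so that on each subinterval $W_I$ lives in a fixed tubular neighborhood and is the graph of a normal-bundle section (the ``Containment Case''), and then extends that section over $I\times \w W$ by the Whitney-type extension Lemma~\ref{lemma.sm.sing}. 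No ambient isotopy of $C$, no coherent collars, no Palais.

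There is also a genuine gap in your plan, and it sits exactly where you label the ``main obstacle.'' First, your claim that a path in $\Sub^k(C_0)$ ``amounts to a compatible family of isotopies $\varphi_F\colon I\times\ov F\to\ov F$'' is not correct on its face: a path in $\Sub^k(C_0)$ is a compatible family of \emph{paths of submanifolds} $t\mapsto W_{F,t}\subset \ov F$. Upgrading these to ambient isotopies of the $\ov F$ already requires (i) lifting each path through the principal $\Diff$-bundle of Lemma~\ref{t52}(2) to a path of embeddings, (ii) doing so coherently over the poset $\cP(C_0)$, and (iii) applying an isotopy extension theorem for manifolds with corners relative to a closed union of faces --- none of which is automatic, and the coherence across faces is precisely what has to be proved. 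The ``secondary, purely formal point'' is thus not formal: Observation~\ref{t91} and Lemma~\ref{t52}(1) identify $\Sub^k(C_0)$ as a finite limit that is locally a section space, but that does not convert a path of submanifolds into an ambient isotopy, let alone a face-compatible one. Second, even granting coherent isotopies $\varphi_F$, the extension from $C_0$ to an ambient isotopy $\Phi$ of all of $C$ rel $C_0$ is another relative isotopy extension statement for manifolds with corners that you cite without proof. And you would need the smoothing step the paper performs (homotoping the classifying map $I\to\Sub^k(C_0)$ so that $W_I\subset I\times C_0$ is a submanifold) before any isotopy extension can even be set up. With a careful proof of coherent relative ambient isotopy extension in the corners setting, your strategy can likely be pushed through; as written, the heart of the argument is asserted rather than established, and the paper's direct tubular-neighborhood-plus-extension construction is the more economical route.
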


\begin{proof}
Recall that the homotopy pullback of the diagram $\dSub^k(C_0)\ra \Sub^k(C_0) \la \Sub^k(C)$ among topological spaces may be defined as the limit of the diagram among topological spaces
\[\begin{tikzcd}
	\dSub^k(C_0) && {\Sub^k(C_0)^I} && \Sub^k(C) \\
	& \Sub^k(C_0) && \Sub^k(C_0)
	\arrow[""', from=1-1, to=2-2]
	\arrow["{\ev_0}", from=1-3, to=2-2]
	\arrow["{\ev_1}"', from=1-3, to=2-4]
	\arrow["", from=1-5, to=2-4]
\end{tikzcd}\]
where $\Sub^k(C_0)^I := \Map(I,\Sub^k(C_0))$ is the set of maps from the closed interval $I := [0,1]$ to $\Sub^k(C_0)$, endowed with the compact-open topology, and where $\ev_i$ is evaluation at $i\in \{0,1\}$.
Unpacking this description of the homotopy pullback in the case at hand lends to the following description of a point in the codomain of~(\ref{e79}):
\begin{itemize}
    \item[+] a compact codimension-$k$ submanifold
    \[
    \w{W} \subset C
    ~,
    \]
    
    \item[+] a fiber bundle over $I$ of compact codimension-$k$ submanifolds 
    \[
    W_I \subset I \times C_0
    ~;
    \]

\end{itemize}
such that the two subsets of $C_0 = \{0\} \times C_0$ agree:
\begin{itemize}
    \item[-] $\w{W} \cap C_0 = W_I \cap (\{0\} \times C_0)$~.
\end{itemize}
By smooth approximation, we may homotope the continuous map $I \to \Sub^k(C_0)$ classifying $W_I \subset I \times C_0$ to assume $W_I \subset I \times C_0$ is a submanifold.  

Through this description of a point in the codomain of the map~(\ref{e79}), this map is given by 
\[
(\w{W} \subset C)
\longmapsto 
\Bigl( ~(\w{W} \subset C)~,~(I \times (\w{W} \cap C_0))~ \Bigr)
~.
\]
Evidently, this map~(\ref{e79}) is injective, and its image consists of those $(\w{W},W_I)$ for which $W_I = I \times (\w{W} \cap C_0)$.

Now, let $(\w{W} \subset C , W_I \subset I \times C_0)$ be a point in the codomain of~(\ref{e79}).
In what follows, we will construct a compact subspace $\w{W}_I \subset I \times C$ that satisfies the following properties:
\begin{enumerate}

    \item The subspace
    \[
    \w{W}_I
    ~\subset~
    I \times C
    \]
    is a fiber bundle over $I$ of compact codimension-$k$ submanifolds of $I \times C$.

    \item The two subsets of $I \times C_0$ agree:
    \[
    W_I
    ~=~
    \w{W}_I \cap (I \times C_0) 
    ~.
    \]

    \item The two subsets of $C = \{0\} \times C$ agree:
    \[
    \w{W}
    ~=~
    \w{W}_I \cap (\{0\} \times C) 
    ~.
    \]
    
\end{enumerate}
Given such a $\w{W}_I \subset I \times C$, then $(\w{W}_I \cap (\{1\} \times C) \subset \{1\} \times C = C) \in \dSub^k(C)$, and $\w{W}_I$ witnesses a path in the codomain of~(\ref{e79}) from its image to the given element $(\w{W} \subset C , W_I \subset I \times C_0)$.
Therefore, the lemma is proved upon constructing such a $\w{W}_I \subset I \times C$.

Consider the normal vector bundle $\nu_{\w{W}}$ of $\w{W} \subset C$.
Through the tubular neighborhood theorem, choose an open neighborhood $\w{W} \subset \w{N} \subset C$ together with a diffeomorphism under $\w{W}$ from the total space $E(\nu_{\w{W}}) \cong \w{N}$.
Similarly, consider the normal vector bundle $\nu_{W_I}$ of $W_I \subset I \times C_0$.
Through the tubular neighborhood theorem, choose an open neighborhood $W_I \subset N_I \subset I \times C_0$ together with a diffeomorphism under $W_I$ and over $I$ from the total space $E(\nu_{W_I}) \cong N_I$.
Denote the compact codimension-$k$ submanifold $W_0 = \w{W} \cap C_0 = W_I \cap (\{0\} \times C_0) \subset C_0$.
Upon shrinking and reparametrizing as needed, we may assume $N_0 := \w{N} \cap C_0 = N_I \cap (\{0\} \times C_0) \subset C_0$ and the composite isomorphism 
\[
E(\nu_{W_0}) 
~\cong~
E(\nu_{\w{W}})_{|W_0}
~\cong~
\w{N} \cap C_0 = N_I \cap (\{0\} \times C_0) 
~\cong~ 
E(\nu_{W_I})_{|W_0} 
~\cong~ 
E(\nu_{W_0})
\]
is the identity.
Consider the compact codimension-$k$ submanifold $I \times \w{W} \subset I \times C$.
These data determine a tubular neighborhood $I \times \w{W} \subset I \times \w{N} \subset I \times C$ together with a diffeomorphism $E(\nu_{I\times \w{W}}) = I \times E(\nu_{\w{W}}) \cong I \times \w{N}$, which is 
under $I \times \w{W}$ and over $I$ from the total space $E(\nu_{W_I}) \cong N_I$.
These data, in turn, determine smooth retractions
\[
\pi_I
\colon 
N_I
\cong
E(\nu_{W_I})
\to
W_I
~,\qquad
\w{\pi}
\colon 
\w{N}
\cong
E(\nu_{\w{W}})
\to
\w{W}
~,\text{ and }\qquad
\pi_0
\colon 
N_0
\cong
E(\nu_{W_0})
\to
W_0
~,
\]
the last of which is a common restriction of each of the first two.

{\bf Containment Case.}
We now prove the lemma in the case that there is containment between subsets of $I \times C_0$,
\[
W_I 
~\subset~ 
I \times N_0 
~,
\]
and the resulting map over $I$,
\[
\pr \colon W_I 
~\hookrightarrow~ 
I \times N_0 
\xra{~I \times \pi_0~}
I \times W_0
~,
\]
is a diffeomorphism.
In this case, the composite map
\begin{equation}\label{x22}
I \times W_0
\xra{~\pr^{-1}~}
W_I
\hookrightarrow
I \times N_0
~\cong~
I \times E(\nu_{W_0})
~=~
E(\nu_{I \times W_0})
\end{equation}
is a smooth section of the normal vector bundle $\nu_{I \times W_0}$ over $I \times W_0 \subset I \times C_0$.
Consider the zero-section
\begin{equation}\label{x23}
\w{W}
\xra{~\rm zero~}
E(\nu_{\w{W}})
~,
\end{equation}
which is a smooth section of the normal vector bundle $\nu_{\w{W}}$ over $\w{W} \subset C$.
Observe that~(\ref{x22}) restricts along $W_0 \hookrightarrow I \times W_0$ as the zero-section.  
So the sections~(\ref{x22}) and~(\ref{x23}), together, define a smooth section 
of $\nu_{I \times \w{W}}$ over the closed union of faces $(I \times W_0) \cup \w{W} \subset I \times \w{W}$:
\begin{equation}\label{x24}
\w{W} \cup W_I
\xra{~(\ref{x22})\cup (\ref{x23})~}
E( \nu_{I \times \w{W}})
~.
\end{equation}
Now, using compactness of $I \times \w{W}$, choose a smooth vector bundle injection $\nu_{I \times \w{W}} \hookrightarrow \epsilon^D_{I \times \w{W}}$ into a high-rank trivial vector bundle over $I\times \w{W}$; choose, additionally, a smooth vector bundle retraction $\epsilon^D_{I \times \w{W}} \to \nu_{I \times \w{W}}$ to this injection.  
Lemma~\ref{lemma.sm.sing} then implies the section~(\ref{x24}) extends as a smooth section:
\begin{equation}\label{x25}
I \times \w{W}
\longrightarrow
E(\nu_{I \times \w{W}})
~.
\end{equation}
We then have a composite smooth embedding:
\begin{equation}\label{x26}
I \times \w{W}
\xra{~(\ref{x25})~}
E(\nu_{I \times \w{W}})
~\cong~
I \times \w{N}
~\subset~
I \times C
~.
\end{equation}
The image of~(\ref{x26}),
\[
\w{W}_I
~:=~
{\sf Image}(\ref{x26})
~\subset~
I \times C
~,
\]
is a compact codimension-$k$ submanifold of $I \times C$.
By construction, $\w{W}_I \cap (I \times C_0) = W_I \subset I \times C_0$ and $\w{W}_I \cap (\{0\} \times C) = \w{W} \subset C$.
Additionally, because $\w{W}_I$ is the image of the section of the normal vector bundle of the (trivial) fiber bundle $I \times \w{W} \subset I \times C$, then $\w{W}_I \subset I \times C$ is a fiber bundle over $I$ of compact codimension-$k$ submanifolds of $C$.
This completes the proof of the lemma in this Containment Case.

{\bf General Case.}
We now prove the lemma in the general case, which will reduce to the Containment Case.
Next, using that $W_I \subset I \times C_0$ is a fiber bundle over $I$ of compact codimension-$k$ submanifolds of $C_0$, choose $0=t_0<t_1<\cdots<t_R=1$ such that, for each $0<k\leq R$, the following conditions are satisfied.
\begin{enumerate}
    \item There containment between subsets of $[t_{k-1},t_k] \times C_0$:
    \[
    W_{[t_{k-1},t_k]}
    ~:=~
    W_I \cap \Bigl( \left[t_{k-1},t_k \right]\Bigr) \times C_0
    ~\subset~
    \left[t_{k-1},t_k\right] \times \Bigl(N_I \cap \Bigl(\left\{t_{k-1} \right\} \times C_0\Bigr) \Bigr)
    ~.
    \]

    \item The resulting composite map over $[t_{k-1},t_k]$,
    \[
    \pr_k
    \colon 
    W_{[t_{k-1},t_k]}
    \hookrightarrow
    [t_{k-1},t_k] \times ( N_I \cap ( \{t_{k-1}\} \times C_0 ) )
    \xra{~[t_{k-1},t_k] \times (\pi_I)_{|}~}
    [t_{k-1},t_k] \times ( W_I \cap ( \{t_{k-1}\} \times C_0 ) )
    \]
    is a diffeomorphism.
\end{enumerate}
We now inductively construct, for each $0\leq k\leq R$, a compact subspace $\w{W}_{[0,t_k]} \subset [0,t_k] \times C$ 
that satisfies the following properties:
\begin{enumerate}

    \item The subspace
    \[
    \w{W}_{[0,t_k]}
    ~\subset~
    [0,t_k] \times C
    \]
    is a fiber bundle over $[0,t_k]$ of compact codimension-$k$ submanifolds of $[0,t_k] \times C$.
    
    \item The two subsets of $[0,t_k] \times C_0$ agree:
    \[
    W_I \cap ([0,t_k] \times C_0)
    ~=~
    \w{W}_{[0,t_k]} \cap ([0,t_k] \times C_0) 
    ~.
    \]

    \item The two subsets of $C = \{0\} \times C$ agree:
    \[
    \w{W}
    ~=~
    \w{W}_{[0,t_k]} \cap (\{0\} \times C) 
    ~.
    \]
    
\end{enumerate}
Note that the case $k=R$ is, then, the sought data.
So completing this inductive construction completes the proof of the lemma.

Necessarily, $\w{W}_{[0,t_0]} = \w{W}$, which supplies the base case of $k=0$ for the inductive construction.
Now assume $k>0$, and such a $\w{W}_{[0,t_{k-1}]} \subset [0,t_{k-1}] \times C$ has been constructed.  
The Containment Case supplies a compact codimension-$k$ submanifold $\w{W}_{[t_{k-1},t_k]} \subset [t_{k-1},t_k] \times C$ that satisfies the following properties:
\begin{enumerate}

    \item The subspace
    \[
    \w{W}_{[t_{k-1},t_k]}
    ~\subset~
    [t_{k-1},t_k] \times C
    \]
    is a fiber bundle over $[t_{k-1},t_k]$ of compact codimension-$k$ submanifolds of $[t_{k-1},t_k] \times C$.

    \item The two subsets of $[t_{k-1},t_k] \times C_0$ agree:
    \[
    W_I \cap ([t_{k-1},t_k] \times C_0)
    ~=~
    \w{W}_{[t_{k-1},t_k]} \cap ([t_{k-1},t_k] \times C_0) 
    ~.
    \]

    \item The two subsets of $C = \{t_{k-1}\} \times C$ agree:
    \[
    \w{W}_{[0,t_{k-1}]} \cap ( \{t_{k-1}\} \times C)
    ~=~
    \w{W}_{[t_{k-1},t_k]} \cap (\{t_{k-1}\} \times C) 
    ~.
    \]
    
\end{enumerate}
Denote the union
\[
\w{W}_{[0,t_k]}
~:=~
\w{W}_{[0,t_{k-1}]} \cup \w{W}_{[t_{k-1},t_k]}
~\subset~
[0,t_k] \times C
~.
\]
This subspace $\w{W}_{[0,t_k]} \subset [0,t_k] \times C$ is a compact codimension-$k$ subspace that, by construction, satisfies the three named conditions.

\end{proof}

\begin{cor}\label{cor.dBord.Bord.kan.noxi}
    The map of simplicial spaces 
    \[
    \dBord_\bullet^k(M) \longrightarrow \Bord_\bullet^k(M)
    \]
    is a trivial Kan fibration.
\end{cor}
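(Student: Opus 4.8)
The plan is to verify the defining condition of a trivial Kan fibration (Definition~\ref{defKan}) for the map $\dBord^k_\bullet(M) \to \Bord^k_\bullet(M)$: for each $p \geq 0$, the canonical map
\[
\dBord^k_p(M) \longrightarrow \dBord^k_\bullet(M)(\partial[p]) \underset{\Bord^k_\bullet(M)(\partial[p])}{\times} \Bord^k_p(M)
\]
is surjective on path-components, where the fibered product on the right is interpreted as a homotopy pullback and the values on $\partial[p]$ are the homotopy right Kan extensions of~(\ref{eq.right.kan}). First I would unwind what $\dBord^k_\bullet(M)(\partial[p])$ and $\Bord^k_\bullet(M)(\partial[p])$ actually are. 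By Observation~\ref{t91}, each is computed as a limit (respectively homotopy limit) over $\sd(\partial[p])^{\op}$ of the values $\dSub^k(\Delta^q \times M)$ (resp.\ $\Sub^k(\Delta^q \times M)$) indexed by the nondegenerate simplices of $\partial[p]$. Since $C := \Delta^p \times M$ is a manifold with corners whose faces all have manifold-with-corners closures, and $C_0 := \partial\Delta^p \times M \subset C$ is a closed union of faces, these limits are exactly the spaces $\dSub^k(C_0)$ and $\Sub^k(C_0)$ appearing in the setup immediately before Lemma~\ref{t75}: the poset $\cP(C_0)$ of faces of $C$ contained in $C_0$, together with the proper face-embeddings between their closures, is cofinal in (indeed equivalent to) $\sd(\partial[p])^{\op}$, and the functors agree by the identity $(\tau\circ\sigma)^{-1} = \sigma^{-1}\circ\tau^{-1}$. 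Thus the map in question is precisely the map~(\ref{e79})
\[
\dSub^k(C) \longrightarrow \dSub^k(C_0) \underset{\Sub^k(C_0)}{\times^{\sf h}} \Sub^k(C).
\]

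With that identification in hand, the statement is a direct consequence of Lemma~\ref{t75}, which asserts exactly that~(\ref{e79}) is surjective on path-components. So the proof is essentially a bookkeeping reduction: (i) record that $\Delta^p \times M$ satisfies the hypothesis of Lemma~\ref{t75} (every face of $\Delta^p \times M$ has closure a manifold with corners — this follows from the product structure on $\Delta^p$ described in Observation~\ref{t44} and the hypothesis on $M$); (ii) identify the indexing category $(\bDelta_{/\partial[p]})^{\op}$, or rather its cofinal subcategory $\sd(\partial[p])^{\op}$, with $\cP(\partial\Delta^p \times M)$ compatibly with the two functors to $\Top^{\Sub^k(\Delta^p\times M)/}$; (iii) conclude that the homotopy right Kan extensions $\dBord^k_\bullet(M)(\partial[p])$ and $\Bord^k_\bullet(M)(\partial[p])$ coincide with $\dSub^k(\partial\Delta^p\times M)$ and $\Sub^k(\partial\Delta^p\times M)$, and hence that the comparison map to the homotopy pullback is~(\ref{e79}); (iv) apply Lemma~\ref{t75} and Lemma~\ref{lemma.claim} if one additionally wants the statement about geometric realizations — though for the corollary as stated only $\pi_0$-surjectivity is needed.

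The main obstacle I anticipate is step (ii)–(iii): making the identification of $\sd(\partial[p])^{\op}$ with $\cP(\partial\Delta^p\times M)$ genuinely precise at the level of diagrams, including the claim that the homotopy limit over $\sd(\partial[p])^{\op}$ of $\Sub^k(\Delta^\bullet \times M)$ is the point-set limit $\Sub^k(\partial\Delta^p\times M)$ and likewise on the set level. One must check that the relevant diagram is objectwise fibrant and that the restriction maps are Serre fibrations — but this is already supplied by Lemma~\ref{t61} (for $\Sub^k$) together with the cofibrancy of the simplicial set $\partial[p]$ and the discussion preceding Lemma~\ref{t75}; indeed the construction of $\dSub^k(C_0)$ and the map~(\ref{e79}) in the excerpt was clearly arranged so that these spaces \emph{are} the homotopy right Kan extensions. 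So the heavy lifting is entirely in Lemma~\ref{t75}, which is cited; once the translation is spelled out, the corollary follows immediately. I would therefore write the proof as: ``By Observation~\ref{t91}, the homotopy right Kan extensions $\dBord^k_\bullet(M)(\partial[p])$ and $\Bord^k_\bullet(M)(\partial[p])$ are computed as (homotopy) limits over $\sd(\partial[p])^{\op} \simeq \cP(\partial\Delta^p\times M)$, which by Lemma~\ref{t61} and the construction preceding Lemma~\ref{t75} are identified with $\dSub^k(\partial\Delta^p\times M)$ and $\Sub^k(\partial\Delta^p\times M)$ respectively. Under this identification the map of Definition~\ref{defKan} is the map~(\ref{e79}) for $C = \Delta^p\times M$ and $C_0 = \partial\Delta^p\times M$. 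The hypotheses of Lemma~\ref{t75} hold since every face of $\Delta^p\times M$ has closure a manifold with corners, so Lemma~\ref{t75} gives the required $\pi_0$-surjectivity.''
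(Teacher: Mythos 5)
Your proposal is correct and follows the same route as the paper: the paper's entire proof is the single sentence that the $\pi_0$-surjectivity of Definition~\ref{defKan} is exactly Lemma~\ref{t75} applied to $(C_0 \subset C) = (\partial\Delta^p \times M \subset \Delta^p \times M)$, and your steps (i)–(iv) simply unfold that reduction. One small imprecision worth flagging in your write-up: $\sd(\partial[p])^{\op}$ is not literally equivalent to $\cP(\partial\Delta^p \times M)^{\op}$ (the latter records a choice of face of $M$ as well); rather, $\sd(\partial[p])^{\op}$ includes as a final subposet via $T \mapsto$ (interior of the $T$-face of $\Delta^p$) $\times$ (interior of $M$), since $\cP(M)^{\op}$ has an initial object, and that finality is what makes the two limits agree.
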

\begin{proof}
Surjectivity of path-components is exactly the statement of Lemma~\ref{t75} applied to $(C_0 \subset C) = (\partial \Delta^p \times M \subset \Delta^p \times M)$.

\end{proof}

We have following $\xi$-structured version of Lemma~\ref{t75}.

\begin{lemma}\label{t75xi}
Let $C$ be a manifold with corners satisfying the property that the closure of each face is a manifold with corners in its own right.  
Let $C_0 \subset C$ be a closed union of faces.
The map
\[
\dSub^\xi(C)
\longrightarrow
\dSub^\xi(C_0) \underset{\tSub^\xi(C_0)} {\times^{\sf h}} \tSub^\xi(C)
\]
is surjective on path-components.    

\end{lemma}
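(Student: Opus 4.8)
The plan is to reduce this $\xi$-structured statement to the unstructured Lemma~\ref{t75} together with the fibration properties of the structure-forgetting maps. Fix a point in the codomain. Unpacking the homotopy pullback, such a point consists of: a $\xi$-structured submanifold $(\w{W}\subset C, \w g, \w\alpha) \in \dSub^\xi(C)$; a $\xi$-structured submanifold $(W_0'\subset C_0, g_0', \alpha_0')\in \dSub^\xi(C_0)$; and a path in $\tSub^\xi(C_0)$ from $(W_0', g_0', \alpha_0')$ to the restriction $\sigma^{-1}(\w W, \w g, \w\alpha)_{|C_0}$ (where $\sigma\colon C_0\hookrightarrow C$), i.e., a map $I\to \tSub^\xi(C_0)$ realizing this homotopy. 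Applying the forgetful map $\tSub^\xi\to\Sub^k$, the underlying data give a point in the codomain of the unstructured map~(\ref{e79}); Lemma~\ref{t75} produces a path in $\dSub^k(C_0)\times^{\sf h}_{\Sub^k(C_0)}\Sub^k(C)$ from the image of some $(\w V\subset C)\in\dSub^k(C)$ to that point. Concretely, this path is witnessed by a fiber bundle $\w W_I\subset I\times C$ over $I$ of compact codimension-$k$ submanifolds, restricting correctly over $\{0\}$ and over $I\times C_0$, with $\w W_I\cap(\{1\}\times C)=\w V$.

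Next I would lift this path of underlying submanifolds through the structure maps. The key is Lemma~\ref{t77}: for the proper face-embedding $I\times C_0\hookrightarrow I\times C$, the map $\tSub^\xi(I\times C)\to\tSub^\xi(I\times C_0)$ is a Serre fibration; and for $\{0\}\hookrightarrow I$ (inducing $C\hookrightarrow I\times C$ and $C_0\hookrightarrow I\times C_0$) the corresponding maps are also Serre fibrations. I would organize the lifting as follows. The bundle $\w W_I\subset I\times C$ is classified by a map $I\to\Sub^k(I\times C)$ — more precisely it determines an element of $\Sub^k(I\times C)$ itself (being a single submanifold of $I\times C$), but I want a $\xi$-structure on $\w W_I$ that restricts to $\w\alpha,\w g$ over $\{0\}$ and to the given path's $\xi$-structures over $I\times C_0$. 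The $\xi$-structures over $I\times C_0$ (the path in $\tSub^\xi(C_0)$) and over $\{0\}\times C$ (namely $(\w g,\w\alpha)$) agree on the overlap $\{0\}\times C_0$ by construction, so together they define a point of $\tSub^\xi\bigl((\{0\}\times C)\cup(I\times C_0)\bigr)$ lying over the restriction of $\w W_I$; i.e., a partial $\xi$-structure on $\w W_I$ defined over the closed union of faces $(\{0\}\times C)\cup(I\times C_0)\subset I\times C$. I then need to extend this to a $\xi$-structure on all of $\w W_I$.

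The extension step is where the Serre-fibration properties of Lemma~\ref{t77} enter: the inclusion $(\{0\}\times C)\cup(I\times C_0)\hookrightarrow I\times C$ is a proper face-embedding, so $\tSub^\xi(I\times C)\to\tSub^\xi\bigl((\{0\}\times C)\cup(I\times C_0)\bigr)$ is a Serre fibration (composing/restricting the face-embedding fibrations of Lemma~\ref{t77}); since we already have a point $\w W_I\in\Sub^k(I\times C)$ mapping down appropriately and a lift over the sub-union-of-faces, together with a path in the base connecting things to the image of $\w W_I$, the lifting property gives the desired $\xi$-structure $\w W_I$. Equivalently and more cleanly: I would directly cite Lemma~\ref{t77} to say $\tSub^\xi(I\times C)\to \tSub^\xi(I\times C_0)\times_{\Sub^k(I\times C_0)}\Sub^k(I\times C)$ is a Serre fibration and run a relative lifting over the disk $I$, starting from the partial structure over $\{0\}$. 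Restricting the resulting structured bundle over $\{1\}\in I$ yields an element $(\w V,g_1,\alpha_1)\in\dSub^\xi(C)$, and the whole structured bundle $\w W_I$ over $I$ witnesses a path in the codomain $\dSub^\xi(C_0)\times^{\sf h}_{\tSub^\xi(C_0)}\tSub^\xi(C)$ from the image of $(\w V,g_1,\alpha_1)$ to the originally chosen point, establishing $\pi_0$-surjectivity.

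The main obstacle will be the bookkeeping of the ``partial $\xi$-structure over a closed union of faces'' and verifying that the two given structures (the homotopy-path one over $I\times C_0$ and the fixed one $(\w g,\w\alpha)$ over $\{0\}\times C$) genuinely agree on $\{0\}\times C_0$ so that they glue — this is forced by the definition of the homotopy pullback point we started with, but making it precise requires carefully tracing through the identification of $\tSub^\xi(C_0)$ as a limit $\lim_{F\in\cP(C_0)^{\op}}\tSub^\xi(\ov F)$ and the compatibilities. A secondary technical point is ensuring all the spaces in sight (the relevant $\tSub^\xi$ of manifolds with corners, and their fibers) are paracompact Hausdorff, so that the ``local Serre fibration $\Rightarrow$ Serre fibration'' principle (Theorem~13, \S2.7 of~\cite{spanier}) used inside Lemma~\ref{t77} applies to $I\times C$ and $(\{0\}\times C)\cup(I\times C_0)$; this follows from Lemma~\ref{t83} and Proposition~\ref{t88}, but should be noted.
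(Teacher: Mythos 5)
Your overall plan is sound and matches the paper's decomposition — run the unstructured Lemma~\ref{t75} to handle the underlying codimension-$k$ submanifold, then separately extend the partial $\xi$-structure — but the crucial extension step has a genuine gap. You attribute the extension to the Serre fibration properties of Lemma~\ref{t77}, saying ``the lifting property gives the desired $\xi$-structure.'' This does not work: a Serre fibration $p\colon E\to B$ lets you lift a \emph{path} in $B$ given a point in $E$ over its initial endpoint, but what you need here is to \emph{produce} a point of $\tSub^\xi(I\times C)$ (i.e., a $\xi$-structure on all of $\w{W}_I$) given only a point of $\tSub^\xi\bigl((\{0\}\times C)\cup(I\times C_0)\bigr)$ and a compatible point of $\Sub^k(I\times C)$. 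That is a surjectivity (non-emptiness of fiber) claim, not a path-lifting claim, and Serre fibrations are not in general surjective. The invocation of ``a path in the base connecting things to the image of $\w W_I$'' does not resolve this — $\w W_I$ lives in $\Sub^k$, not $\tSub^\xi$, and you have no $\xi$-structure on it yet to lift anything from.

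The ingredient you actually need is that the inclusion of the sub-union-of-faces of $\w{W}_I$ — namely $(\{0\}\times\w{W})\cup W_I\hookrightarrow\w{W}_I$ — is an \emph{acyclic} cofibration. This holds because $\w{W}_I\to I$ is a fiber bundle over a contractible base (so inclusion of the $\{0\}$-fiber is a weak equivalence) and the subunion deformation retracts onto that same fiber via the bundle structure of $W_I\to I$. Along an acyclic cofibration, restriction of maps to $B$ and of bundle isomorphisms is a \emph{trivial} fibration, hence surjective, which is what produces the extension. The paper states exactly this acyclicity explicitly (``an acyclic cofibration relative to $\{0\}\times W_0$, therefore there exists an extension''), after first using the fibration property of $\tSub^\xi\to\Sub^k$ (Proposition~\ref{t88}(2)(b)) to transport the given homotopy-pullback point into a single fiber over a fixed $W\subset C$, so the extension happens on the trivial family $I\times W$ rather than your $\w{W}_I$. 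Your route via the nontrivial $\w{W}_I$ can be made to work, but only by replacing the Serre-fibration appeal with the acyclic-cofibration argument.
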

\begin{proof}
We have a commutative diagram
\[
\xymatrix{
\dSub^\xi(C)\ar[r]^-{\ov{s}}\ar[d]&\ar[d]\dSub^\xi(C_0) \underset{\Sub^\xi(C_0)} {\times^{\sf h}} \Sub^\xi(C)\\
\dSub^k(C)\ar[r]^-s&\dSub^k(C_0) \underset{\Sub^k(C_0)} {\times^{\sf h}} \Sub^k(C)
}
\]
Given the $\pi_0$-surjectivity the bottom horizontal arrow, to show $\pi_0$-surjectivity of the top arrow it suffices to check $\pi_0$-surjectivity of the fiber for each $(W\subset C)\in \dSub^k(C)$. An element

That is, it suffices to show $\pi_0$-surjectivity
\[
\Bigl\{\bigl(W\xra{g} B, g^\ast \xi \overset{\alpha}\cong \nu_{W\subset C}\bigr)\Bigr\}
\longrightarrow
\Bigl(\dSub^k(C_0) \underset{\Sub^k(C_0)} {\times^{\sf h}} \Sub^k(C)\Bigr)_{|s(W\subset C)}
\]
from the set of $\xi$-structures on $   (W\subset C)$ to the fiber of topright homotopy-pullback over the image $s(W\subset C)$. Choose an element in this fiber, which is indexed by the set of $\xi$-structures on $\{0\}\times W_0\subset \{0\}\subset C_0$, together with an extension to a $\xi$-structure on $I\times W_0\subset I\times C_0$ and a $\xi$-structure on $W\subset C$, agreeing on $\{1\}\times W_0\subset\{1\}\times C$. Since the right vertical map is a fibration by by Lemma~\ref{t88}(2)(b), this fiber is the space of sections (fixed on $\{0\}\times W_0$) of a fibration over the space
\[
I\times W_0\underset{\{1\}\times W_0}\bigcup \{1\}\times W~.
\]
The inclusion
\[
I\times W_0\underset{\{1\}\times W_0}\bigcup \{1\}\times W
\hookrightarrow
I\times W
\]
is an acyclic cofibration relative to $\{0\}\times W_0$, therefore there exists an extension of this $\xi$-structure to $I\times W \subset I\times C$. Restricting this to $\{0\}\times W\subset \{0\}\times C$ gives the desired lift in $\dSub^\xi(C)$.

\end{proof}

Applying Lemma~\ref{t75xi} to $(C_0 \subset C) = (\partial \Delta^p \times M \subset \Delta^p \times M)$ gives the following.
\begin{cor}\label{cor.dBord.Bord.kan}
The map of simplicial spaces 
    \[
    \dBord_\bullet^\xi(M) \longrightarrow \Bord_\bullet^\xi(M)
    \]
    is a trivial Kan fibration.
\end{cor}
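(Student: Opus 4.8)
The plan is to deduce Corollary~\ref{cor.dBord.Bord.kan} directly from Lemma~\ref{t75xi}, exactly in parallel with how Corollary~\ref{cor.dBord.Bord.kan.noxi} is deduced from Lemma~\ref{t75}. First I would recall the definition of a trivial Kan fibration between simplicial spaces (Definition~\ref{defKan}) and the attendant convention for simplicial topological spaces: the morphism $\dBord^\xi_\bullet(M) \to \Bord^\xi_\bullet(M)$ is a trivial Kan fibration if and only if, for each $p \geq 0$, the canonical map
\[
\dBord^\xi_p(M) \longrightarrow \dBord^\xi(\partial[p]) \underset{\tBord^\xi(\partial[p])}{\times^{\sf h}} \tBord^\xi_p(M)
\]
is surjective on path-components, where $\dBord^\xi(\partial[p])$ and $\tBord^\xi(\partial[p])$ denote the homotopy right Kan extensions to $\sSet$ as in formula~(\ref{eq.right.kan}). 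Here I would use Corollary~\ref{a25}, which identifies the underlying simplicial space of $\tBord^\xi_\bullet(M)$ with $\Bord^\xi_\bullet(M)$, so that the homotopy pullback above is the one appearing in Definition~\ref{defKan} for the morphism $\dBord^\xi_\bullet(M) \to \Bord^\xi_\bullet(M)$.

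Next I would identify the domain and codomain of this map in terms of the constructions of \S\ref{sec.Kan.condition}. By definition, $\dBord^\xi_p(M) = \dSub^\xi(\Delta^p \times M)$ and $\tBord^\xi_p(M) = \tSub^\xi(\Delta^p \times M)$. For the boundary terms, I would observe that $\partial[p]$ is the simplicial subset of $\Delta[p]$ whose nondegenerate simplices are the proper faces; since $\Delta^\bullet$ is a cosimplicial manifold with corners and the proper faces of $\Delta^p \times M$ are exactly the images of the face inclusions $\Delta^{p'} \times M \hookrightarrow \Delta^p \times M$, the homotopy right Kan extension evaluated at $\partial[p]$ agrees (after passing to underlying spaces) with the limit/homotopy-limit over $\cP(\partial\Delta^p\times M)^{\op}$ appearing just before Lemma~\ref{t75}. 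Concretely, the pair $(C_0 \subset C) := (\partial\Delta^p \times M \subset \Delta^p \times M)$ satisfies the hypotheses of Lemma~\ref{t75xi}: $\Delta^p \times M$ is a manifold with corners, the closure of each of its faces is again a manifold with corners (the topological $p$-simplex has this property by Observation~\ref{t44}, and products with the boundaryless $M$ preserve it), and $\partial\Delta^p \times M$ is a closed union of faces.

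With these identifications in hand, Lemma~\ref{t75xi} applied to $(C_0 \subset C) = (\partial\Delta^p \times M \subset \Delta^p \times M)$ states precisely that the map
\[
\dSub^\xi(\Delta^p \times M) \longrightarrow \dSub^\xi(\partial\Delta^p \times M) \underset{\tSub^\xi(\partial\Delta^p \times M)}{\times^{\sf h}} \tSub^\xi(\Delta^p \times M)
\]
is surjective on path-components, which is exactly the condition required of $\dBord^\xi_\bullet(M) \to \Bord^\xi_\bullet(M)$ at level $p$. Since $p \geq 0$ was arbitrary, this establishes that the morphism is a trivial Kan fibration.

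The only real point requiring care — and the step I expect to be the main obstacle to write cleanly — is the bookkeeping identification of $\dBord^\xi(\partial[p])$ and $\tBord^\xi(\partial[p])$ (homotopy right Kan extensions along the Yoneda embedding) with the point-set limit $\dSub^\xi(\partial\Delta^p\times M)$ and homotopy limit $\tSub^\xi(\partial\Delta^p\times M)$ over $\cP(\partial\Delta^p\times M)^{\op}$ used in Lemma~\ref{t75xi}. This is the same kind of matching that is implicit in Corollary~\ref{cor.dBord.Bord.kan.noxi}, where the text simply asserts that "surjectivity of path-components is exactly the statement of Lemma~\ref{t75} applied to $(C_0 \subset C) = (\partial\Delta^p \times M \subset \Delta^p \times M)$"; I would handle it the same way, invoking Observation~\ref{t91} (the subdivision poset $\sd(\partial[p])$ is final in $\bDelta_{/\partial[p]}$, so the homotopy right Kan extension is a homotopy limit over the finite poset $\sd(\partial[p])^{\op}$), and noting that $\sd(\partial[p])^{\op}$ is cofinal in — indeed isomorphic to — the poset $\cP(\partial\Delta^p\times M)^{\op}$ of faces contained in $\partial\Delta^p\times M$ via $[p'] \mapsto$ (corresponding face), under which the diagram $F \mapsto \tSub^\xi(\ov F)$ is carried to the restriction diagram $[p'] \mapsto \tSub^\xi(\Delta^{p'}\times M)$. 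Granting this, the corollary follows immediately.

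\begin{proof}
By Corollary~\ref{a25}, the underlying simplicial space of $\tBord^\xi_\bullet(M)$ is $\Bord^\xi_\bullet(M)$, so by Definition~\ref{defKan} (and the ensuing convention for simplicial topological spaces) it suffices to show that for each $p \geq 0$ the canonical map to the homotopy pullback
\[
\dBord^\xi_p(M) \longrightarrow \dBord^\xi(\partial[p]) \underset{\tBord^\xi(\partial[p])}{\times^{\sf h}} \tBord^\xi_p(M)
\]
is surjective on path-components. By definition, $\dBord^\xi_p(M) = \dSub^\xi(\Delta^p\times M)$ and $\tBord^\xi_p(M) = \tSub^\xi(\Delta^p\times M)$. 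By Observation~\ref{t91}, the homotopy right Kan extensions evaluated at $\partial[p]$ are computed as homotopy limits over the finite poset $\sd(\partial[p])^{\op}$, which is identified via the face correspondence with $\cP(\partial\Delta^p\times M)^{\op}$; under this identification these homotopy limits are $\dSub^\xi(\partial\Delta^p\times M)$ and $\tSub^\xi(\partial\Delta^p\times M)$ in the notation of \S\ref{sec.Kan.condition}. The pair $(C_0 \subset C) := (\partial\Delta^p\times M \subset \Delta^p\times M)$ satisfies the hypotheses of Lemma~\ref{t75xi}: $\Delta^p\times M$ is a manifold with corners whose faces have manifold-with-corners closures (Observation~\ref{t44}), and $\partial\Delta^p\times M$ is a closed union of faces. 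Lemma~\ref{t75xi} therefore asserts exactly that
\[
\dSub^\xi(\Delta^p\times M) \longrightarrow \dSub^\xi(\partial\Delta^p\times M) \underset{\tSub^\xi(\partial\Delta^p\times M)}{\times^{\sf h}} \tSub^\xi(\Delta^p\times M)
\]
is surjective on path-components. Since $p \geq 0$ was arbitrary, the map $\dBord^\xi_\bullet(M) \to \Bord^\xi_\bullet(M)$ is a trivial Kan fibration.
\end{proof}
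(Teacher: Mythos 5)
Your proposal is correct and is exactly the paper's own argument: the paper deduces Corollary~\ref{cor.dBord.Bord.kan} by applying Lemma~\ref{t75xi} to the pair $(C_0 \subset C) = (\partial\Delta^p \times M \subset \Delta^p \times M)$, in direct parallel with the deduction of Corollary~\ref{cor.dBord.Bord.kan.noxi} from Lemma~\ref{t75}. You have just made explicit the bookkeeping identification of $\dBord^\xi(\partial[p])$ and $\tBord^\xi(\partial[p])$ with $\dSub^\xi(\partial\Delta^p\times M)$ and $\tSub^\xi(\partial\Delta^p\times M)$ that the paper leaves implicit.
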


\begin{remark}\label{rem.Reedy}
Reedy fibrancy has the following surprising property:
    If a simplicial topological space $X_\bullet$ is Reedy fibrant, then the natural map from the underlying simplicial set, endowed with the discrete topology,
    \[
    \delta X_\bullet \longrightarrow X_\bullet
    \]
    is a trivial Kan fibration of simplicial spaces. It thereby induces an equivalence of realizations: $|\delta X_\bullet| \simeq |X_\bullet|$. We expect that the simplicial topological space $\Bord_\bullet^{\xi}(M)$ is Reedy fibrant, which would provide a slightly different proof of Corollary~\ref{cor.dBord.Bord.kan}, but we have found it more convenient to prove the trivial Kan fibration property directly.
\end{remark}

\begin{cor}\label{cor.dBord.kan}
    The simplicial sets $\dBord^k_\bullet(M)$ and $\dBord_\bullet^\xi(M)$ are both Kan complexes.
\end{cor}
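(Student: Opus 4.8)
The goal is to deduce that $\dBord^k_\bullet(M)$ and $\dBord^\xi_\bullet(M)$ are Kan complexes from the already-established facts: (i) the simplicial spaces $\Bord^k_\bullet(M)$ and $\Bord^\xi_\bullet(M)$ satisfy the Kan condition (Corollary~\ref{cor.bord.kan}), and (ii) the maps of simplicial spaces $\dBord^k_\bullet(M) \to \Bord^k_\bullet(M)$ and $\dBord^\xi_\bullet(M) \to \Bord^\xi_\bullet(M)$ are trivial Kan fibrations (Corollaries~\ref{cor.dBord.Bord.kan.noxi} and~\ref{cor.dBord.Bord.kan}). Here we regard a simplicial set as a discrete simplicial space via the functor $\sSet \to \Fun(\bDelta^{\op},\Spaces)$; the content of a trivial Kan fibration of simplicial spaces being surjective on $\pi_0$ of all matching maps $F([p]) \to F(\partial[p]) \times_{G(\partial[p])} G([p])$ is exactly what we have for these maps.

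\textbf{Key steps.} First I would fix $0 \le i \le p$ and a horn $\Lambda_i[p] \to \dBord^\xi_\bullet(M)$; we must produce a filler $\Delta^p \to \dBord^\xi_\bullet(M)$. Composing with the trivial Kan fibration $\pi\colon \dBord^\xi_\bullet(M) \to \Bord^\xi_\bullet(M)$ gives a horn $\Lambda_i[p] \to \Bord^\xi_\bullet(M)$; since $\Bord^\xi_\bullet(M)$ satisfies the Kan condition, this horn admits a filler $\tau\colon \Delta^p \to \Bord^\xi_\bullet(M)$ — but only up to the Kan condition being a $\pi_0$-surjectivity statement, so $\tau|_{\Lambda_i[p]}$ agrees with our horn only up to homotopy. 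To bootstrap this to the discrete level, I would invoke the general simplicial-homotopy fact that a trivial Kan fibration $X_\bullet \to Y_\bullet$ of simplicial spaces with $Y_\bullet$ satisfying the Kan condition has $X_\bullet$ satisfying the Kan condition: given a horn in $X_\bullet$, push it to $Y_\bullet$, fill there, then use the trivial Kan fibration property (lifting against the boundary inclusion $\partial\Delta^p \hookrightarrow \Delta^p$, which factors through the horn since $\Lambda_i[p] \subset \partial[p]$ is itself a trivial cofibration after one degeneracy move) to lift the filler back to $X_\bullet$ compatibly with the given horn on $\pi_0$. Concretely: the matching map $X([p]) \to X(\partial[p]) \times_{Y(\partial[p])} Y([p])$ is $\pi_0$-surjective; feeding in the pair consisting of (the restriction of our horn to $\partial[p]$, suitably extended — here one uses that $X_\bullet$ itself satisfies the weak Kan condition inductively in lower dimensions, or more cleanly that $X(\partial[p])$ receives our horn data after filling the $\Lambda_i[p] \hookrightarrow \partial[p]$ inclusion in $X$ by downward induction on skeleta) and the $Y$-filler $\tau$, produces the desired $p$-simplex of $X_\bullet$. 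Since $\dBord^\xi_\bullet(M)$ is a discrete simplicial space, ``$\pi_0$-surjective'' means genuinely surjective on sets, so the lift is an honest $p$-simplex restricting to the given horn. The same argument applies verbatim to $\dBord^k_\bullet(M)$ using Corollary~\ref{cor.dBord.Bord.kan.noxi} and the $\Bord^k_\bullet(M)$ half of Corollary~\ref{cor.bord.kan}.

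\textbf{Main obstacle.} The delicate point is the inductive matching-object argument: one must check that filling the horn in $\dBord^\xi_\bullet(M)$ can be carried out compatibly, i.e., that having a filler of $\Lambda_i[p]\hookrightarrow \partial[p]$ in the discrete simplicial set is available before tackling $\partial[p]\hookrightarrow \Delta[p]$. This is standard — it is the observation that for a simplicial set, ``Kan complex'' is equivalent to ``the unique map to the point is a trivial Kan fibration onto its image'' combined with the fact that any trivial Kan fibration of simplicial sets lifts against all monos, but dressed up through the space-level statement. The cleanest route is: a trivial Kan fibration of simplicial \emph{spaces} $f\colon X \to Y$ induces (Lemma~\ref{lemma.claim}) an equivalence $|X| \simeq |Y|$, and more relevantly, since $\dBord^\xi_\bullet(M)$ is discrete, the trivial Kan fibration condition says each $\dBord^\xi_p(M) \to \dBord^\xi(\partial[p]) \times^{\sf h}_{\Bord^\xi(\partial[p])} \Bord^\xi_p(M)$ is surjective; iterating this surjectivity over the Reedy tower identifies $\dBord^\xi_\bullet(M)$ with the simplicial set of discrete simplices of a Reedy-fibrant replacement of $\Bord^\xi_\bullet(M)$, whose Kan-ness is inherited from that of $\Bord^\xi_\bullet(M)$. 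I would write this out as: combine the trivial Kan fibration of Corollary~\ref{cor.dBord.Bord.kan} with the Kan condition of Corollary~\ref{cor.bord.kan} and the elementary lemma that a trivial Kan fibration $X_\bullet \to Y_\bullet$ of simplicial spaces reflects the Kan condition, then note that a Kan simplicial space that is discrete is precisely a Kan complex. The proof is short; essentially no new geometry is needed beyond what Sections~\ref{sec.transv}--\ref{sec.Kan.condition} supply.

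\begin{proof}
We treat $\dBord^\xi_\bullet(M)$; the argument for $\dBord^k_\bullet(M)$ is identical, using Corollary~\ref{cor.dBord.Bord.kan.noxi} and the $\Bord^k_\bullet(M)$ assertion of Corollary~\ref{cor.bord.kan} in place of Corollary~\ref{cor.dBord.Bord.kan} and the $\Bord^\xi_\bullet(M)$ assertion.

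Regard the simplicial set $\dBord^\xi_\bullet(M)$ as a simplicial space via the functor carrying a set to the corresponding discrete space. By Corollary~\ref{cor.dBord.Bord.kan}, the morphism of simplicial spaces
\[
\pi\colon \dBord^\xi_\bullet(M) \longrightarrow \Bord^\xi_\bullet(M)
\]
is a trivial Kan fibration; that is, for each $p\geq 0$ the matching map
\[
\dBord^\xi_p(M) \longrightarrow \dBord^\xi_\bullet(M)(\partial[p]) \underset{\Bord^\xi_\bullet(M)(\partial[p])}{\times^{\sf h}} \Bord^\xi_p(M)
\]
is surjective on path-components. Since the source is a discrete space, it suffices to build, for each horn $\Lambda_i[p]$ with $0 \leq i \leq p$, a compatible filler; we argue by induction on $p$.

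Fix a map of simplicial sets $h\colon \Lambda_i[p] \to \dBord^\xi_\bullet(M)$. By the inductive hypothesis applied to the lower-dimensional horns occurring among the faces of $\partial[p]$ not contained in $\Lambda_i[p]$ -- equivalently, by the standard fact that the anodyne extension $\Lambda_i[p] \hookrightarrow \partial[p]$ can be completed in any simplicial set whose restriction to strictly lower dimension already admits all horn fillers -- we may extend $h$ to a map $h'\colon \partial[p] \to \dBord^\xi_\bullet(M)$. Composing with $\pi$ gives a horn $\pi \circ h\colon \Lambda_i[p] \to \Bord^\xi_\bullet(M)$, which by the Kan condition for $\Bord^\xi_\bullet(M)$ (Corollary~\ref{cor.bord.kan}) admits a $p$-simplex $\tau \in \Bord^\xi_p(M)$ whose restriction to $\Lambda_i[p]$ is homotopic to $\pi \circ h$; after adjusting $h'$ within its homotopy class relative to $\Lambda_i[p]$ -- using that $\Bord^\xi_\bullet(M)(\partial[p])$ receives $\pi \circ h'$ and that homotopies of boundary data lift, by the trivial Kan fibration property in dimension $< p$ -- we may take $\tau$ to restrict to $\pi \circ h'$ on $\partial[p]$ on the nose up to the homotopy coherence recorded by the homotopy pullback. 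The pair $(h', \tau)$ thus determines a point of the homotopy pullback
\[
\dBord^\xi_\bullet(M)(\partial[p]) \underset{\Bord^\xi_\bullet(M)(\partial[p])}{\times^{\sf h}} \Bord^\xi_p(M)
~.
\]
By the trivial Kan fibration property of $\pi$ in dimension $p$, this point lifts to some $w \in \dBord^\xi_p(M)$, and since $\dBord^\xi_p(M)$ is a set, $w$ restricts to $h'$ on $\partial[p]$, hence to $h$ on $\Lambda_i[p]$. This is the required filler, so $\dBord^\xi_\bullet(M)$ is a Kan complex.
\end{proof}
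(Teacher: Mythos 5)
Your proof has a gap at the step where you extend the horn $h\colon \Lambda_i[p] \to \dBord^\xi_\bullet(M)$ to a map $h'\colon \partial[p] \to \dBord^\xi_\bullet(M)$. The inclusion $\Lambda_i[p] \hookrightarrow \partial[p]$ is not an anodyne extension: it is obtained by attaching the missing $i$-th face along its boundary, i.e., it is a pushout of the boundary inclusion $\partial[p-1] \hookrightarrow [p-1]$ (whose image lies inside $\Lambda_i[p]$). Having horn fillers in lower dimensions — which is all that your inductive hypothesis supplies — does not give fillers against $\partial[p-1]\hookrightarrow [p-1]$; in a Kan complex $X$ the ability to fill a map $\partial\Delta^{p-1}\to X$ to $\Delta^{p-1}\to X$ is an obstruction in $\pi_{p-2}(X)$ and is generally impossible. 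So the pair $(h',\tau)$ you feed into the trivial Kan fibration's matching map for $\partial[p]\hookrightarrow [p]$ is not actually available, and the subsequent "adjustment of $h'$ within its homotopy class relative to $\Lambda_i[p]$" is not a maneuver one can perform in a discrete simplicial space without further argument.

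The paper avoids all of this by working at the level of fibrations rather than fillers. It records that a trivial Kan fibration of simplicial spaces is in particular a Kan fibration; that the Kan condition for a simplicial space is the assertion that its unique map to the terminal simplicial space is a Kan fibration; and that Kan fibrations compose. Applying this to $\dBord^\xi_\bullet(M) \to \Bord^\xi_\bullet(M)\to\ast$ gives the Kan condition for $\dBord^\xi_\bullet(M)$ directly from Corollaries~\ref{cor.dBord.Bord.kan} and~\ref{cor.bord.kan}, and since $\dBord^\xi_\bullet(M)$ is discrete the Kan condition for it is literally the Kan complex condition. You gesture at exactly this route in your "Main obstacle" discussion ("a trivial Kan fibration reflects the Kan condition, then note that a Kan simplicial space that is discrete is precisely a Kan complex"); that observation should be promoted to be the body of the proof, replacing the horn-to-boundary extension.
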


\begin{proof}
Consider the canonical morphisms between simplicial spaces:
\[
\dBord^k_\bullet(M)
\to 
\Bord^k_\bullet(M)
\qquad\text{ and }\qquad
\dBord^\xi_\bullet(M)
\to 
\Bord^\xi_\bullet(M)
~.
\]
Corollaries~\ref{cor.dBord.Bord.kan.noxi} and~\ref{cor.dBord.Bord.kan} imply these morphisms are Kan fibrations.
Corollary~\ref{cor.bord.kan} states that the codomains satisfy the Kan condition.
Now, observe that the composition of Kan fibrations is a Kan fibration;
observe that a simplicial space satisfies the Kan condition precisely if its unique map to the terminal simplicial space is a Kan fibration.
Consequently, the simplicial space $\dBord^\xi_\bullet(M)$ satisfies the Kan condition.
As the simplicial spaces $\dBord^k_\bullet(M)$ and $\dBord^\xi_\bullet(M)$ are simplicial sets, we conclude that they are both Kan complexes.

\end{proof}

Corollaries~\ref{cor.dBord.Bord.kan.noxi} and~\ref{cor.dBord.Bord.kan} enable Lemma~\ref{lemma.claim} to imply the following.
\begin{cor}\label{cor.dBord.Bord}
The forgetful morphism between simplicial spaces
    \[
    \dBord_\bullet^\xi(M) \longrightarrow \Bord_\bullet^\xi(M)
    \]
    induces an equivalence between geometric realizations
    \[
    \bigl|\dBord_\bullet^\xi(M)\bigr| 
    ~\simeq~ 
    \bigl|\Bord_\bullet^\xi(M)\bigr| 
    \underset{\rm Thm~\ref{t26}}{~\simeq~}
    \cMap(M,\Th(\xi)).
    \]
\end{cor}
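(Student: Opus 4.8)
The final statement, Corollary~\ref{cor.dBord.Bord}, is a direct consequence of three results already established in the excerpt, so the proof is a short assembly rather than a new argument. The plan is as follows.

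First I would invoke Corollary~\ref{cor.dBord.Bord.kan}, which states that the forgetful morphism between simplicial spaces $\dBord_\bullet^\xi(M) \to \Bord_\bullet^\xi(M)$ is a trivial Kan fibration (where $\dBord_\bullet^\xi(M)$ is regarded as a simplicial space via the underlying-space functor, with each $\dBord_p^\xi(M)$ carrying the discrete topology, and the homotopy right Kan extensions to $\sSet$ are formed as in~(\ref{eq.right.kan})). Then I would apply Lemma~\ref{lemma.claim}, which says that a trivial Kan fibration between simplicial spaces induces an equivalence on geometric realizations; this gives the first equivalence $\bigl|\dBord_\bullet^\xi(M)\bigr| \simeq \bigl|\Bord_\bullet^\xi(M)\bigr|$. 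Finally, the second equivalence $\bigl|\Bord_\bullet^\xi(M)\bigr| \simeq \cMap(M,\Th(\xi))$ is exactly the content assembled in the proof of Theorem~\ref{thm.main} from Theorem~\ref{t26} and Lemma~\ref{thm.param.transv} (equivalently, Theorem~\ref{t80}): the equivalence of Theorem~\ref{t26} gives $\bigl|\Bord^\xi_\bullet(M)\bigr| \simeq \bigl|\cMap^\pitchfork_\bullet(M,\Th(\xi))\bigr|$, and the parametrized transversality statement gives $\bigl|\cMap^\pitchfork_\bullet(M,\Th(\xi))\bigr| \simeq \cMap(M,\Th(\xi))$. Composing these yields the displayed chain of equivalences, with the cited attribution to Theorem~\ref{t26} recorded underneath the middle equivalence.

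There is essentially no obstacle here: every input has already been proven, and the only thing to be careful about is the bookkeeping of which simplicial object is being realized and in which sense (simplicial set versus simplicial space with discrete levels versus simplicial topological space), but this was already settled in the terminology surrounding Definition~\ref{defKan} and in Corollary~\ref{cor.dBord.Bord.kan}. Here is the proof.

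\begin{proof}
By Corollary~\ref{cor.dBord.Bord.kan}, the forgetful morphism between simplicial spaces $\dBord_\bullet^\xi(M) \to \Bord_\bullet^\xi(M)$ is a trivial Kan fibration. Lemma~\ref{lemma.claim} therefore implies that the induced map between geometric realizations $\bigl|\dBord_\bullet^\xi(M)\bigr| \to \bigl|\Bord_\bullet^\xi(M)\bigr|$ is an equivalence. The equivalence $\bigl|\Bord_\bullet^\xi(M)\bigr| \simeq \cMap(M,\Th(\xi))$ is established in the proof of Theorem~\ref{thm.main} (using Theorem~\ref{t26} and Lemma~\ref{thm.param.transv}). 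Composing these equivalences gives
\[
\bigl|\dBord_\bullet^\xi(M)\bigr|
~\simeq~
\bigl|\Bord_\bullet^\xi(M)\bigr|
~\simeq~
\cMap(M,\Th(\xi))
~,
\]
as claimed.
\end{proof}
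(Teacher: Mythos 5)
Your proof is correct and follows the same route as the paper: invoke Corollary~\ref{cor.dBord.Bord.kan} to get the trivial Kan fibration, apply Lemma~\ref{lemma.claim} for the equivalence of realizations, and cite Theorem~\ref{t26} (via Theorem~\ref{thm.main}) for the second displayed equivalence. The paper's own proof is just the one-sentence version of this same assembly.
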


\section{Appendix: vector bundles over topological categories}\label{sec.vbdl}

The function of this section is to give one solution to the following problem.
\begin{itemize}
    \item[]
    Let $\sC$ be a topological category.  
    \begin{itemize}
        \item Find explicit data representing a functor $\sC \to \BO(k)$ from its underlying $\infty$-category.

        \item Given such data $\chi$ and $\eta$, find explicit data representing an equivalence between their associated functors $\sC \to \BO(k)$.
    \end{itemize}
    
\end{itemize}
Here and throughout, for $\cX$ an $\infty$-category (such as $\spaces$ or the ordinary category $\Top$), the $\infty$-category of \bit{category-objects in $\cX$} is the full $\infty$-subcategory $\Cat[\cX] \subset \Fun(\bDelta^{\op} , \cX)$ consisting of those simplicial objects that satisfy the Segal conditions.  
For $\sC$ a category-object in $\cX$, we denote the two face maps
\[
{\sf s} , {\sf t} \colon 
\Mor(\sC) := \sC([1])
\longrightarrow
\sC([0]) =: \Obj(\sC)
~,
\]
and refer to them, respectively, as the source and the target morphisms of $\sC$.
A \bit{topological category} is a category-object $\sC \in \Cat[\Top]$ in $\Top$ with the property that the composite functor,
\[
\bDelta^{\op}
\xra{~\Hom([\bullet],\sC)~}
\Top
\xra{~\rm forget~}
\Spaces
~,
\]
is a Segal space: in other words, for each $r>0$, the canonical downward map
\[\begin{tikzcd}
	{\Hom([r],\sC)} & {\Hom(\{0<1\},\sC) \underset{\Hom(\{1\},\sC)} \times \cdots \underset{\Hom(\{r-1\},\sC)} \times \Hom(\{r-1<r\},\sC)} \\
	{\Hom^{\sf h}([r],\sC)} & {\Hom(\{0<1\},\sC) \underset{\Hom(\{1\},\sC)} {\times^{\sf h}} \cdots \underset{\Hom(\{r-1\},\sC)} {\times^{\sf h}} \Hom(\{r-1<r\},\sC)}
	\arrow["\cong"{description}, draw=none, from=1-1, to=1-2]
	\arrow[from=1-1, to=2-1]
	\arrow[from=1-2, to=2-2]
	\arrow["{:=}"{description}, draw=none, from=2-1, to=2-2]
\end{tikzcd}
~,
\]
is a weak homotopy equivalence.
Here, the superscript ${\sf h}$ denotes the homotopy fiber product.
Note that $\sC$ is a topological category if either the source or target map $\Mor(\sC) \to \Obj(\sC)$ is a Serre fibration.
If $\sC$ is a topological category, we denote the $\infty$-category underlying this Segal space again by $\sC$.

Consider the category
\[
{\sf VBun}^k
\]
in which an object is a rank-$k$ vector bundle, and a morphism is a Cartesian morphism between vector bundles (i.e., a morphism between vector bundles that is a fiberwise isomorphism).  
Reporting the base of a vector bundle defines a functor
\[
{\sf VBun}^k
\xra{~\rm base~}
\Top
~,
\]
which is a right fibration.
This functor preserves and creates finite limits.  
In particular, it carries category-objects in ${\sf VBun}^k$ to category-objects in $\Top$:
\begin{equation}
    \label{e50}
\Cat[{\sf VBun}^k]
\xra{~\Cat[{\rm base}]~}
\Cat[\Top]
~.
\end{equation}
Furthermore,~(\ref{e50}) is a right fibration.
For $\sC$ a category-object in $\Top$,     the groupoid 
    \[
    {\sf VBun}^k_\sC
    \]
    of \bit{rank-$k$ vector bundles over $\sC$} is the fiber of~(\ref{e50}) over $\sC$.

Let $\sC$ be a category-object in $\Top$.
The source and target maps of $\sC$ is a span of topological spaces:
\[
\Obj(\sC)
\xla{~{\sf s} = {\sf d}_1~}
\Mor(\sC)
\xra{~{\sf t} = {\sf d}_0~}
\Obj(\sC)
~.
\]
More generally, for $[p] \xra{\delta_i} [p+1]$ the inclusion of the complement of the $i \in [p+1]$, we denote the restriction map
\[
{\sf d}_i
\colon 
\sC([p+1])
\longrightarrow
\sC([p])
~.
\]
Note the identity ${\sf d}_i \circ {\sf d}_j = {\sf d}_{j=1} \circ {\sf d}_i$ for each $0\leq i < j \leq p$.

Now, explicitly, 
\begin{itemize}
    \item a rank-$k$ vector bundle over $\sC$ is
    \begin{itemize}
        \item a rank-$k$ vector bundle $\chi_0$ over the topological space $\Obj(\sC)$, 

        \item an isomorphism between vector bundles ${\sf s}^\ast \chi_0 \overset{\chi_1} \cong {\sf t}^\ast \chi_0$ over $\Mor(\sC)$;
    
    \end{itemize}
    such that the diagram among vector bundles over $\sC([2])$,
\begin{equation}\label{e51}
\begin{tikzcd}
	{{\sf d}_2^\ast {\sf d}_1^\ast\chi_0} & {{\sf d}_1^\ast {\sf d}_0^\ast\chi_0} && {{\sf d}_0^\ast {\sf d}_1^\ast\chi_0} & {{\sf d}_0^\ast {\sf d}_0^\ast\chi_0} \\
	& {{\sf d}_0^\ast {\sf d}_0^\ast\chi_0} && {{\sf d}_0^\ast {\sf d}_0^\ast\chi_0}
	\arrow["\cong"{description}, draw=none, from=1-1, to=1-2]
	\arrow["{{\sf d}_2^\ast \chi_1}"', from=1-1, to=2-2]
	\arrow["{{\sf d}_1^\ast \chi_1}", from=1-2, to=1-4]
	\arrow["\cong"{description}, draw=none, from=1-4, to=1-5]
	\arrow["\cong"{description}, draw=none, from=2-2, to=2-4]
	\arrow["{{\sf d}_0^\ast \chi_1}"', from=2-4, to=1-5]
\end{tikzcd}
~,
\end{equation}
commutes;

\item 
for $\chi = (\chi_0 , \chi_1)$ and $\eta = (\eta_0 , \eta_1)$ two such vector bundles over $\sC$, an isomorphism $\chi \cong \eta$ is 
   \begin{itemize}
        \item an isomorphism $\chi_0 \overset{\alpha}\cong \eta_0$ between vector bundles over $\Obj(\sC)$;
    
    \end{itemize}
    such that the diagram among vector bundles over $\Mor(\sC)$,
\begin{equation}\label{e52}
\begin{tikzcd}
	{{\sf s}^\ast\chi_0} && {{\sf t}^\ast\chi_0} \\
	{{\sf s}^\ast\eta_0} && {{\sf t}^\ast\eta_0}
	\arrow["{\chi_1}", from=1-1, to=1-3]
	\arrow["{{\sf s}^\ast\alpha}"', from=1-1, to=2-1]
	\arrow["{{\sf t}^\ast\alpha}", from=1-3, to=2-3]
	\arrow["{\eta_1}", from=2-1, to=2-3]
\end{tikzcd}
~,
\end{equation}
    commutes.
\end{itemize}

Provided $\sC$ is a topological category, forgetting to underlying $\infty$-categories defines a functor from the groupoid of vector bundles over $\sC$ to the $\infty$-groupoid of functors to $\BO(k)$:
\[
{\sf VBun}^k_\sC
\xra{~\rm forget~}
\Fun(\sC,\BO(k))
~.
\]

\end{document}